\documentclass[11pt,letterpaper]{amsart}
\usepackage[utf8]{inputenc}
\usepackage{amsmath,amssymb,graphicx,setspace,verbatim,url}
\usepackage{color}
\usepackage{hyperref}
\usepackage{multicol}
\usepackage{enumitem}
\usepackage{float}
\usepackage{soul}
\usepackage{tikz}
\usepackage[normalem]{ulem}

\usepackage{tikz-cd}
\usetikzlibrary{arrows.meta}
\newenvironment{red}{\relax\color{red}}{\hspace*{.5ex}\relax}
\newcommand{\bcp}{\begin{red} }
\newcommand{\ecp}{\end{red}}

\usepackage{comment}

\tikzcdset{
  arrow style={tikz,diagrams={>=Triangle}}
}

\input xy
\xyoption{all}

\newtheorem{prop}{Proposition}[section]
\newtheorem{coro}[prop]{Corollary}
\newtheorem{thm}[prop]{Theorem}
\newtheorem{lemma}[prop]{Lemma}

\newtheorem{definition}[prop]{Definition}

\newtheorem{example}[prop]{Example}

\DeclareMathOperator{\Cor}{Cor}

\newcommand{\GG}{\mathcal{G}}
\newcommand{\TT}{\mathcal{T}}

\newcommand{\KK}{\mathcal{K}}

\newcommand{\II}{\mathcal{I}}

\newcommand{\Aut}{\mathrm{Aut}}

\newcommand{\ZZ}{\mathbb{Z}}

\begin{document}
\title[From Group Operations to Geometric Structures]{From Group Operations to Geometric Structures: Amalgamations, HNN-Extensions, and Twisting in Coset Geometries}

\author[Claudio Alexandre Piedade]{Claudio Alexandre Piedade}
\address{Claudio Alexandre Piedade, Centro de Matemática da Universidade do Porto, Universidade do Porto, Portugal, Orcid number 0000-0002-0746-5893
}
\email{claudio.piedade@fc.up.pt}

\author[Philippe Tranchida]{Philippe Tranchida}
\address{Philippe Tranchida, Max Planck Institute for Mathematics in the Sciences, Inselstrasse 22 D-04107 Leipzig, Orcid number 0000-0003-0744-4934.
}
\email{tranchida.philippe@gmail.com}
\subjclass{20E06, 51E30, 20F55, 20F36 }{}
\keywords{Incidence Geometry, Coset Incidence Systems, Bass-Serre Theory, Free Amalgamated Product, HNN-Extension, Artin-Tits Group, Coxeter Group, Shephard Group}

\thanks{\textit{Declaration of interests:} none}
\date{\today}

\begin{abstract}
    Coset incidence geometries, introduced by Jacques Tits, provide a versatile framework for studying the interplay between group theory and geometry. 
    In this article, we build upon that idea by extending classical group-theoretic constructions (amalgamated products, HNN-extensions, semi-direct products, and twisting) to the setting of coset geometries. This gives a general way to glue together incidence geometries in various ways. This provides a general framework for combining or gluing incidence geometries in different ways while preserving essential properties such as flag-transitivity and residual connectedness.
    Using these techniques, we analyze families of Shephard groups, which generalize both Coxeter and Artin-Tits groups, and their associated simplicial complexes.
    Our results also point to the existence of a Bass-Serre theory for coset geometries and of a fundamental geometry of a graph of coset geometries.
\end{abstract}

\maketitle


\section{Introduction}

The fruitful interplay between geometry and group theory has shaped much of modern mathematics. A central theme, originating in 1892 with Felix Klein’s Erlangen program, is the study of geometric structures through the lens of their symmetry groups. Klein’s insight sparked countless investigations into how groups and geometries interact, building the foundations for exploring the relationships between group theory, geometry, and combinatorics.

A major advance in this direction came from Jacques Tits ~\cite{Tits1957,Tits1963geometries}, who introduced the notion of coset incidence system in the 1950s.
Given a group $G$ and a family $(G_i)_{i \in I}$ of subgroups, Tits’ construction builds an incidence structure, nowadays known as a coset incidence system, on which $G$ acts naturally. This framework provides a unifying language for studying a wide variety of combinatorial and geometric objects, such as Coxeter complexes, vector and projective spaces, and abstract polytopes. Later, Tits’ development of the theory of buildings~\cite{Handbook,Tits1974} -- highly symmetric simplicial complexes assembled from Coxeter complexes -- provided powerful tools for the study of algebraic groups and their actions. The theory of buildings has had far-reaching applications, from the cohomology of arithmetic groups~\cite{KenBrownBook2,Handbook} to the structure of $p$-adic groups~\cite{IM}, and J. Tits work was later recognized with the Abel Prize in 2008.

Coset incidence systems are particularly significant because they encode both the algebraic properties of the group and the combinatorial structure of the geometry. For example, the coset incidence system constructed from a Coxeter group $G$ and its standard parabolic subgroups recovers the Coxeter complex of $G$, a fundamental object in the theory of reflection groups and buildings~\cite{KenBrownBook2,Tits1974}. 
Similarly, for Artin-Tits groups, the associated coset geometry gives rise to the Artin-Tits complex~\cite{ParisGodelle2012}, which plays a key role in topological and combinatorial investigations of these groups. In particular, simplicial complexes, such as the (modified) Deligne complex~\cite{Charney1995}, the Salvetti complex~\cite{Salvetti_1987}, and the Artin-Tits complex~\cite{ParisGodelle2012}, have been widely used to study the $K(\pi,1)$ conjecture of Artin groups~\cite{paris2014k}. 
One of the main reasons these complexes have been used so successfully is that their associated coset incidence systems are flag-transitive and residually connected, showing that the properties of the incidence structures associated to Coxeter and Artin-Tits groups have far reaching consequences.
There has recently been a renewed interest in the study of coset geometries, particularly after the introduction of incidence structures that generalize polytopes, known as hypertopes \cite{hypertopes, zhang2024abelian, fernandes2018hexagonal,halving,ens2018rank, leemans2025flag,piedade2023infinite}.

This paper set out with two main objectives. The first one is to extend famous and ubiquitous group-theoretic constructions to coset incidence systems, providing a framework for ``gluing" incidence systems together while both respecting the group operation and
preserving many of the properties of the original coset incidence systems. The second objective is to strengthen the connections between coset incidence systems, especially coset geometries, and other areas of mathematics, such as geometric group theory and combinatorics. The authors started investigating the impact of algebraic constructions on incidence geometries in~\cite{halving}, where the halving operation is studied under a group-theoretical lens. The group-theoretical constructions investigated in the present paper are free amalgamated products, HNN-extensions and semi-direct products. At first glance, it seems rather surprising that the effects of such operations on incidence geometries has not already been extensively studied. 
We believe that the lack of research in this direction is explained by the fact that these group operations, when applied to incidence geometries, almost never preserve the linearity of the Buekenhout diagrams~\cite{buekenhout2013diagram} of these geometries. While this is not a problem in itself (in fact, it can be seen as an advantage), historically, the spotlight has mainly been on geometries with linear diagrams, such as regular polytopes or projective spaces. Therefore, group operations on coset geometries have been considered only occasionally, and always in very restricted settings that ensured the preservation of the linearity of diagrams. This can be seen quite clearly in the study of abstract polytopes (see \cite{ARP}), where amalgamations, twisting and other group-theoretical constructions sometimes play a role, but are heavily restricted to some very specific cases for which the resulting objects are still polytopes. In our opinion, this focus on linear diagrams heavily hindered the study of these operations in their most natural form.

The first part of the present paper focuses on amalgamated products and HNN-extensions, two fundamental operations in combinatorial and geometric group theory, introduced in the seminal papers \cite{neumann1948generalized}, \cite{higman1949embedding}. These operations allow the creation of complex groups starting from simpler components, while preserving some structural properties. The amalgamated product of two groups generalizes free products by identifying isomorphic subgroups, while HNN-extensions build over-groups in which two isomorphic subgroups become conjugated. Both operations are deeply connected with Bass-Serre theory, which describes groups acting on trees through their decompositions into a graph of groups~\cite{serre2002trees}. In the context of coset geometries, we will show that these group operations acquire geometric significance, enabling the creation of new geometries from existing ones, while preserving properties like flag-transitivity and residual connectedness. Amalgamations and HNN-extensions have also been explored in other geometric settings (see \cite{bestvinacombination}, \cite{AnosovAmalgam}, for example), showing the wide range of application of these concepts.

The second part of the paper investigates how semi-direct products and twisting constructions can be extended to coset incidence geometries. The semi-direct product has a nice and natural extension into coset geometries. However, this natural construction does not preserve the thinness of the starting geometries. That is where the twisting construction comes into play. The twisting operation we define for semi-direct products is inspired by the classical polytope operation of the same name, which itself borrows his name from the notion of twisted simple groups~\cite[Chapter 8]{ARP}. This operation classically comes with many conditions whose goal is to preserve linear diagram. Our generalized construction is obtained by keeping only the necessary conditions for preserving thinness.

The last part of the paper is dedicated to examples and applications. Indeed, the multiple constructions we present in this paper do not only generalize classical results in a theoretical sense, but are also easily and directly applicable. We already mentioned the various simplicial complexes associated to Coxeter and Artin-Tits groups. In general, it is possible to build such simplicial complexes from arbitrary coset incidence systems.
This construction can be formalized in a general setting~\cite{Handbook}, obtaining what we will call a coset complex.
Shephard groups~\cite{Kapovich1998,Goldman2024} are a generalization of both Coxeter and Artin-Tits groups, where the generators are allowed to be of arbitrary orders (see ~\cite{Kapovich1998}, for example).
Shephard groups can all be described by a graph, usually called the extended Coxeter diagram.
They have many applications, reaching beyond their two most prominent families (the Coxeter and Artin–Tits groups): these groups include the complex reflection groups, the automorphisms of regular complex polytopes~\cite{ShephardTodd1954}, the family $B_n(2,\infty)$, which led to the classification of Mikado braids~\cite{Baumeister2017}, and family $B_n(\infty,2)$, which was crucial to proving that Artin-Tits groups of affine type are in fact $K(\pi,1)$~\cite{McCammond2017,Paolini2020}.
In this article, we show that many Shephard groups, and their respective simplicial complexes, are as well-behaved as their Coxeter and Artin-Tits counterparts, particularly when these groups are built by free product with amalgamation, HNN-extension or twisting of Coxeter and Artin-Tits groups. Lastly, we propose a definition of a graph of coset incidence systems, inspired by the notion of graph of groups in Bass-Serre theory. We leave the precise study of this notion to the future, but we work out the details of the construction on an example, showing that different choices of trees yield the same fundamental coset geometry, in that case at least.

\subsection{Description of the results}
Here, we summarize the main contributions of this paper. The more precise statements can be found in the appropriate sections of the paper.

Let $\alpha = (A,(A_i)_{i \in I_\alpha})$ and $\beta = (B,(B_i)_{i \in I_\beta})$ be two coset incidence systems. We define a coset incidence system $\Gamma =\alpha * \beta$ for the free product $A * B$. Moreover, suppose that $\alpha$ and $\beta$ are compatible (see Section \ref{sec:Amalgams+HNN}) with respect to an isomorphism $\varphi$ between a parabolic subgroup of $\alpha$ and a parabolic subgroup of $\beta$. Then, we construct a coset incidence system $\Gamma = \alpha *_\varphi \beta$ for the free amalgamated product $A *_\varphi B$. If instead, $\varphi$ is an admissible isomorphism between two parabolic subgroups of $\alpha$, we show how to build a coset incidence system $\Gamma = \alpha *_\varphi$ for the HNN-extension $A *_\varphi$. Finally, if $\varphi \colon B \to \Aut(A)$ is an admissible action of $B$ on $A$, we construct two coset incidence systems $\Gamma = \alpha \rtimes_\varphi \beta$ and $\Gamma = T(\alpha,\beta)$ for the semi-direct product $A \rtimes_\varphi B$. The latter construction is called a Twisting. 
In each of the above constructions, we show that $\Gamma$ inherits both flag-transitivity and residual connectedness from $\alpha$ and $\beta$. Other specific properties, such as thinness and finiteness, are investigated in some of the cases.

We then use these constructions to build many new examples of flag-transitive and residually connected geometries. These examples include geometries associated to Shephard groups that can be built from Coxeter and Artin-Tits groups by using amalgamations and twistings. In particular, this shows that the coset complex associated to these Shephard groups is very well behaved (see Corollary \ref{coro:FreeProductArtinCoxeter} for an example). 
Finally, we introduce the concept of a graph of coset incidence systems, in order to introduce the idea of a Bass-Serre theory on coset incidence systems.

\subsection{Organization of the paper}

Section \ref{sec:prelims} contains all the necessary background knowledge for the rest of the paper.
Section \ref{sec:Amalgams+HNN} defines coset incidence systems for free products (with amalgamation) and HNN-extensions and proves that these constructions preserves flag-transitivity and residual connectedness. Both constructions are done with the objective of introducing a Bass-Serre theory for coset incidence systems, as discussed in Section~\ref{subsec:graphofcosetgeom}, by defining a notion of graph of coset incidence systems.

Section \ref{sec:SplitExt} investigates coset incidence systems built from semi-direct products. We first give a very general and natural construction and then a second, more refined version, that is called twisting, that allow us to preserve thinness.

Section \ref{sec:applications} contains various applications of the concept developed in the article. In particular, in Section \ref{sec:subsec:ProductArtinCoxeter}, we begin by performing free products of Coxeter/Artin-Tits groups to generate some Shephard groups, using our results to show intersection properties of standard parabolic subgroups of these Shephard groups. In Section \ref{sec:twist_shephard_groups}, we go one step further by applying our twisting operations on Shephard groups and some of their quotients.
In Section \ref{sec:TwistArtin}, we amalgamate and twist some Artin-Tits groups, allowing us to prove intersection properties 
of different families of Shephard groups, including the Shephard groups $B_n(2,\infty)$ associated with Mikado braids.
We conclude with Section \ref{sec:future}, where we give directions for future works related to the content exposed in this article.

\subsection{Acknowledgments}
We would like to thank Luis Paris for answering our questions and pointing us to the article~\cite{ParisGodelle2012}, which contains the necessary results for showing that the coset incidence systems constructed from Artin-Tits groups are flag-transitive (Theorem~\ref{thm:ArtinProperties}(c)).
We are also grateful to Travis Scrimshaw for pointing out that the groups we were defining in Section~\ref{sec:applications} are known as Shephard groups. The second author would like to thank Dimitri Leemans for his continued support and numerous useful conversations.

The first author was partially supported by CMUP, member of LASI, which is financed by national funds through FCT -- Funda\c c\~ao para a Ci\^encia e a Tecnologia, I.P., under the projects with reference numbers UIDB/00144/2020 and UIDP/00144/2020.

\section{Background and Preliminary results}\label{sec:prelims}

In this section, we present the essential background on incidence geometry, coset complexes, and Coxeter and Artin–Tits groups required for the remainder of the paper. While most of the results discussed here are available in the literature, they are often spread across various sources and stated in different forms, using a variety of different conventions. This section thus also serves the secondary purpose of consolidating these scattered results and notation under a common roof, to make them more accessible to the reader.
\subsection{Incidence geometry and Coset Geometry}
\label{sec:back:subsec:incidence}
    A 4-tuple $\Gamma = (X,I,*,t)$ is called an \textit{incidence system} if
    \begin{enumerate}
        \item $X$ is a set whose elements are called the \textit{elements} of $\Gamma$,
        \item $I$ is the set of types of $\Gamma$, 
        \item $*$ is a symmetric and reflexive relation (called the \textit{incidence relation}) on $X$, and
        \item $t$ is a map from $X$ to $I$, called the \textit{type map} of $\Gamma$, such that distinct elements $x,y \in X$ with $x * y$ satisfy $t(x) \neq t(y)$. 
    \end{enumerate}

The \textit{rank} of $\Gamma$ is the cardinality of the type set $I$.
In an incidence system $\Gamma$, a \textit{flag} is a set of pairwise incident elements. The rank of a flag, \text{rank}(F), is the cardinality of $F$, that is $\textnormal{rank}(F)=|F|$. The type of a flag $F$, $t(F)$, is the set of types of the elements of $F$. 
A \textit{chamber} is a flag of type $I$. An incidence system $\Gamma$ is an \textit{incidence geometry} if all its maximal flags are chambers.

The \textit{incidence graph} of $\Gamma$ is a graph with vertex set $X$ and where two elements $x$ and $y$ are connected by an edge if and only if $x * y$.
We say $\Gamma$ is \textit{finite} if the incidence graph of $\Gamma$ is finite.

Let $F$ be a flag of $\Gamma$. An element $x\in X$ is {\em incident} to $F$ if $x*y$ for all $y\in F$. The \textit{residue} of $\Gamma$ with respect to $F$, denoted by $\Gamma_F$, is the incidence system formed by all the elements of $\Gamma$ incident to $F$ but not in $F$. Whenever $\Gamma$ is a geometry, the \textit{rank} of the residue $\Gamma_F$ is defined to be rank$(\Gamma) - |F|$. If $\Gamma$ is of rank $n$, then the \textit{corank} of a flag $F$ is the difference between the rank of $\Gamma$ and the rank of $F$.

An incidence geometry $\Gamma$ is \textit{connected} if its incidence graph is connected. It is \textit{residually connected} if all its residues of rank at least two are connected. Note that, by \cite[Lemma 1.6.3]{buekenhout2013diagram}, a residually connected geometry $\Gamma$ of rank greater or equal to $2$ is always connected. On the other hand, rank $1$ geometries are all trivially residually connected but are never connected. This fact is usually irrelevant as rank $1$ geometries have no structure and are thus intrinsically not very interesting. That being said, it will be relevant for this article, as we will allow to combine one or more rank $1$ geometries to build higher-rank geometries, and it will then be important to recall that rank $1$ geometries are technically residually connected.

An incidence geometry is \textit{firm}, \textit{thin} or \textit{thick} if all its residues of flags of corank one contain, respectively, at least two elements,  exactly two elements, more than two elements.

Let $\Gamma = \Gamma(X,I,*,t)$ be an incidence geometry. A {\em correlation} of $\Gamma$ is a bijection $\phi$ of $X$ respecting the incidence relation $*$ and such that, for every $x,y \in X$, if $t(x) = t(y)$ then $t(\phi(x)) = t(\phi(y))$. If, moreover, $\phi$ fixes the types of every element (i.e $t(\phi(x)) = t(x)$ for all $x \in X$), then $\phi$ is said to be an {\em automorphism} of $\Gamma$. The group of all correlations of $\Gamma$ is denoted by $\Cor(\Gamma)$ and the automorphism group of $\Gamma$ is denoted by $\Aut(\Gamma)$. Remark that $\Aut(\Gamma)$ is a normal subgroup of $\Cor(\Gamma)$ since it is the kernel of the action of $\Cor(\Gamma)$ on $I$.

If $\Aut(\Gamma)$ is transitive on the set of flags of $\Gamma$ then we say that $\Aut(\Gamma)$ is {\em flag-transitive} on $\Gamma$. We say $\Aut(\Gamma)$ is {\em chamber-transitive} on $\Gamma$ if $\Aut(\Gamma)$ is transitive on the set of chambers of $\Gamma$. Note that, if $\Gamma$ is a geometry, being chamber-transitive is equivalent to being flag-transitive~\cite[Proposition 2.2]{hypertopes}. If moreover, the stabilizer of a chamber in $\Aut(\Gamma)$ is reduced to the identity, we say that $\Gamma$ is {\em simply transitive} or {\em regular}. If $G$ is any group together with a homomorphism $\psi \colon G \to \Aut(\Gamma)$, we also say that $G$ is flag-transitive, simply transitive or regular on $\Gamma$ if that is true of $\psi(G) \leq \Aut(\Gamma)$.
A \textit{regular hypertope} is a thin, residually connected and flag-transitive incidence geometry~\cite{hypertopes}.

The main interest of this paper is incidence geometries that are obtained from a group $G$ together with a set $(G_i)_{i \in I}$ of subgroups of $G$ as described in~\cite{Tits1957}. 
    A \emph{coset incidence system} $\Gamma(G,(G_i)_{i \in I})$, denoted also as $(G,(G_i)_{i \in I})$, is the incidence system over the type set $I$ where:
    \begin{enumerate}
        \item The elements of type $i \in I$ are left cosets of the form $g G_i $, $g \in G$.
        \item The incidence relation is given by non-empty intersection. More precisely, the element $g G_i$ is incident to the element $k G_j $ if and only if $g G_i \cap k G_j \neq \emptyset$.
    \end{enumerate}
We usually call the subgroups $G_i$ as \emph{(standard) maximal parabolic subgroups}. For $J\subseteq I$, we define \emph{(standard) parabolic subgroups} as $G_J=\cap_{i\in J} G_i$, and define $G_\emptyset=G$.
When $J=I\setminus\{i\}$, we say that $G_J=G_{I\setminus\{i\}}$ are the \emph{minimal parabolic subgroups}, and we will denote them as $G^i$. Finally, the subgroup $G_I=\cap_{i\in I} G_i$ is called the \emph{Borel subgroup} of $\Gamma$. 
    
A coset incidence system that is an incidence geometry is called a \emph{coset geometry}.
Flag-transitive geometries can always be constructed as coset geometries.
The following theorem gives a group theoretical condition to check the flag-transitivity of $G$ on the coset incidence system $\Gamma(G,(G_i)_{i\in I})$. Notice that $G$ always naturally acts on $\Gamma(G,(G_i)_{i \in I})$ by left multiplication.

\begin{thm}\cite[Theorem 1.8.10]{buekenhout2013diagram}\label{thm:cosetFT}
Let $\Gamma = (G,(G_i)_{i\in I})$ be a coset incidence system. Then $G$ is flag-transitive on $\Gamma$ if and only if $G_JG_i = \cap_{j \in J}(G_jG_i)$ for each $J \subseteq I$ and each $i\in I\setminus J$.
Additionally, if $G$ is flag-transitive on $\Gamma$, then $\Gamma$ is a geometry.
\end{thm}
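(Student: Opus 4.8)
The plan is to translate the statement about flag-transitivity into the language of \emph{standard flags} and then match it against the stated algebraic identity. For $J\subseteq I$ write $F_J=\{G_j:j\in J\}$; this is a flag of type $J$ since every $G_j$ contains the identity. The first observation I would record is that a flag $\{g_jG_j:j\in J\}$ lies in the $G$-orbit of $F_J$ if and only if $\bigcap_{j\in J}g_jG_j\neq\emptyset$: any element $g$ of the intersection satisfies $g^{-1}g_jG_j=G_j$ for all $j$, so $g^{-1}$ carries the flag to $F_J$, and conversely an element carrying the flag to $F_J$ lies in every $g_jG_j$. Since the $G$-action preserves types, $G$ is flag-transitive precisely when every flag of type $J$ lies in the orbit of $F_J$ for every $J$; equivalently, $G$ is flag-transitive if and only if every flag $\{g_jG_j:j\in J\}$ satisfies $\bigcap_{j\in J}g_jG_j\neq\emptyset$. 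The second, purely bookkeeping, observation is that for $i\neq j$ one has $g\in G_jG_i \iff gG_i\cap G_j\neq\emptyset$, i.e. $g\in G_jG_i$ says exactly that the type-$i$ element $gG_i$ is incident to the standard type-$j$ element $G_j$.

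For the forward direction, assume $G$ is flag-transitive and fix $J$ and $i\in I\setminus J$. The inclusion $G_JG_i\subseteq\bigcap_{j\in J}G_jG_i$ is immediate from $G_J\subseteq G_j$. For the reverse inclusion, take $g\in\bigcap_{j\in J}G_jG_i$; by the bookkeeping observation the set $\{gG_i\}\cup\{G_j:j\in J\}$ is a flag of type $J\cup\{i\}$. Flag-transitivity provides $x\in G$ with $xgG_i=G_i$ and $xG_j=G_j$ for all $j\in J$, whence $xg\in G_i$ and $x\in G_J$, so $g=x^{-1}(xg)\in G_JG_i$.

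For the converse I would argue by induction on $|K|$ that, assuming the algebraic identities, every flag of type $K$ satisfies $\bigcap_{k\in K}g_kG_k\neq\emptyset$; by the first observation this yields flag-transitivity. Ranks $0$ and $1$ are trivial. For $|K|\geq2$, pick $i\in K$, set $J=K\setminus\{i\}$, and after translating assume the type-$i$ element is $G_i$; apply the inductive hypothesis to the subflag of type $J$ to obtain $h\in\bigcap_{j\in J}g_jG_j$. Translating by $h^{-1}$ replaces the subflag by $F_J$ and the type-$i$ element by $h^{-1}G_i$, still incident to every $G_j$, so $h^{-1}\in\bigcap_{j\in J}G_jG_i=G_JG_i$ by hypothesis. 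Writing $h^{-1}=bc$ with $b\in G_J$ and $c\in G_i$, the element $b$ lies in $h^{-1}G_i\cap\bigcap_{j\in J}G_j$; translating back, $hb$ lies in $G_i\cap\bigcap_{j\in J}g_jG_j$, i.e. in every coset of the original flag. The main delicate point is keeping the left-coset bookkeeping straight through the repeated translations and making sure the identity is invoked for exactly the pair $(J,i)$ that occurs; I expect this is where all the care is needed, though no step is deep.

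Finally, for the last assertion, suppose $G$ is flag-transitive and let $F$ be a flag of type $K\subsetneq I$; pick $i\in I\setminus K$. After moving $F$ to $F_K$ by an element of $G$, the set $F_K\cup\{G_i\}$ is a flag (all members contain the identity), and pulling back exhibits a flag strictly containing $F$. Hence every maximal flag has type $I$, i.e. $\Gamma$ is a geometry.
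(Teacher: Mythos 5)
The paper does not prove this statement: it is quoted verbatim from Buekenhout--Cohen \cite[Theorem 1.8.10]{buekenhout2013diagram}, so there is no in-paper argument to compare against. Your proposal is a correct, self-contained proof of the cited result, and it is essentially the standard one. Your two preliminary observations --- that a flag $\{g_jG_j : j\in J\}$ lies in the orbit of the base flag $F_J$ exactly when $\bigcap_{j\in J} g_jG_j\neq\emptyset$, and that $g\in G_jG_i$ encodes incidence of $gG_i$ with $G_j$ --- are precisely the dictionary the paper itself uses when it proves the equivalence $(a)\Leftrightarrow(e)$ in Proposition~\ref{prop:FTequivs}, so your route through ``every flag has nonempty total intersection'' is the same mechanism the authors rely on elsewhere, just pushed one level deeper to derive the product condition. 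The forward direction, the induction on $|K|$ for the converse (including the translation bookkeeping and the check that the identity is applied to the pair $(J,i)$ with $i\in I\setminus J$), and the final deduction that flag-transitivity forces every maximal flag to be a chamber are all sound. No gaps.
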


The group-theoretical condition for flag-transitivity can be stated in many different ways, as shown in the following proposition.

\begin{prop}\label{prop:FTequivs}
    Let $\Gamma = (G,(G_i)_{i\in I})$ be a coset incidence system. Then, the following statements are equivalent:
    \begin{enumerate}
        \item $G$ is flag-transitive on $\Gamma$;\label{prop:FTequivs:itm:IsFT}
        \item for each $J\subseteq I$ and each $i\in I\setminus J$, $G_JG_i=\cap_{j\in J}(G_jG_i)$;\label{prop:FTequivs:itm:FTBueken}
        \item for each $J\subseteq 
        I$, there is a natural isomorphism of incidence systems over $I\setminus J$
        $$\phi_J: \Gamma(G_J, (G_{J\cup \{i\}})_{i\in I\setminus J})\to \Gamma_{\{G_j\mid j\in J\}}$$
        given by $\phi_J(aG_{J\cup \{i\}})= a G_i$, where $a\in G_J$ and $i\in I\setminus J$;\label{prop:FTequivs:itm:Isomorphism}
        \item for each distinct types $i,k\in I$, each subset $J\subseteq I\setminus\{i,k\}$ and each element $g\in G_J$, if $G_i\cap gG_k\neq \emptyset$, then $G_J\cap G_i\cap gG_k \neq \emptyset$;\label{prop:FTequivs:itm:FTPassini}
        \item for each non-empty subset $J\subseteq I$, if there is a family of elements of $G$, $(g_j)_{j\in J}$,  such that, for every choice of $i,j\in J$, $g_iG_i \cap g_j G_j\neq \emptyset$, then $\cap _{j\in J}g_jG_j = g G_J$ for some $g\in G$;\label{prop:FTequivs:itm:FTPassiniProp}
        \item for every choice of subsets $J,H,K$ of $I$ and of elements $f,g,h$ of $G$, if the cosets $fG_J$, $gG_H$ and $hG_K$ have pairwise nonempty intersection, then $fG_J\cap gG_H\cap hG_K\neq \emptyset$;\label{prop:FTequivs:itm:FTIntersectionCosets}
        \item for every $J,H,K\subseteq I$ we have 
            $(G_J\cap G_H)(G_J\cap G_K)= G_J\cap (G_HG_K)$;\label{prop:FTequivs:itm:FTProductOfIntersections}
        \item for every $J,H,K\subseteq I$ we have 
        $(G_JG_H)\cap (G_JG_K) = G_J(G_H\cap G_K)$.\label{prop:FTequivs:itm:FTIntersectionOfProducts}
    \end{enumerate}
\end{prop}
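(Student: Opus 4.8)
The plan is to take the equivalence (a)~$\Leftrightarrow$~(b) as given, since it is precisely Theorem~\ref{thm:cosetFT}, and to connect the remaining six conditions to these by a web of implications that collapses everything into one equivalence class. Three elementary facts, valid in any group, will be used throughout: left translation commutes with intersection, so $fG_J=\bigcap_{j\in J}fG_j$; the intersection of two left cosets of subgroups is either empty or a single left coset of the intersection subgroup, so in particular $G_H\cap gG_K$ is empty or a coset of $G_H\cap G_K=G_{H\cup K}$; and $gG_i\cap G_j\neq\emptyset$ if and only if $g\in G_jG_i$. I also note that in each of (f), (g), (h) one inclusion is automatic --- for instance $G_J(G_H\cap G_K)$ lies in both $G_JG_H$ and $G_JG_K$ --- so that only the reverse inclusion (equivalently, only the direction producing a nonempty total intersection in (d), (e), (f)) carries content.

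The geometric part of the argument produces (a)~$\Rightarrow$~(e)~$\Rightarrow$~(f) and (e)~$\Rightarrow$~(d). For (a)~$\Rightarrow$~(e): a family $(g_jG_j)_{j\in J}$ with pairwise nonempty intersections is exactly a flag of type $J$, so flag-transitivity supplies $h\in G$ with $hg_jG_j=G_j$ for all $j\in J$ (types are preserved), hence $g_jG_j=h^{-1}G_j$ and $\bigcap_{j\in J}g_jG_j=h^{-1}G_J$. For (e)~$\Rightarrow$~(f): writing each of $fG_J$, $gG_H$, $hG_K$ as the intersection of its maximal-parabolic cosets, the three given nonempty pairwise intersections force every cross pair of these maximal-parabolic cosets to meet, and any two of the same type that occur necessarily coincide; the resulting family has at most one coset per type and is pairwise incident, hence is a flag, so (e) makes the intersection of the family --- which equals $fG_J\cap gG_H\cap hG_K$ --- a single coset, in particular nonempty. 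For (e)~$\Rightarrow$~(d): apply (e) to the family consisting of $G_i$, $gG_k$ and the $G_j$ for $j\in J$, whose pairwise intersections are nonempty precisely because $g\in G_J\subseteq G_j$ for each $j$.

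The algebraic part gives (f)~$\Rightarrow$~(h), (f)~$\Rightarrow$~(g)~$\Rightarrow$~(f) and (h)~$\Rightarrow$~(b). For (f)~$\Rightarrow$~(h): if $x\in(G_JG_H)\cap(G_JG_K)$, then the cosets $G_J$, $xG_H$, $xG_K$ meet pairwise (witnesses read off from factorizations of $x$), so (f) yields $y\in G_J\cap xG_H\cap xG_K=G_J\cap x(G_H\cap G_K)$, whence $x\in G_J(G_H\cap G_K)$. For (f)~$\Rightarrow$~(g): if $x\in G_J\cap(G_HG_K)$, then $G_H$, $xG_K$, $G_J$ meet pairwise, and a common point $y$ satisfies $y\in G_{J\cup H}$ and $y^{-1}x\in G_{J\cup K}$, so $x=y(y^{-1}x)\in G_{J\cup H}G_{J\cup K}=(G_J\cap G_H)(G_J\cap G_K)$. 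For (g)~$\Rightarrow$~(f): after a translation, (f) reduces to showing that a certain element lies in $G_{J\cup H}G_{J\cup K}$, and one first checks it lies in $G_J\cap(G_HG_K)$, which (g) identifies with $G_{J\cup H}G_{J\cup K}$. For (h)~$\Rightarrow$~(b): passing to the right-handed form $(G_HG_J)\cap(G_KG_J)=(G_H\cap G_K)G_J$ --- legitimate because all the factors are subgroups, so inverting turns products around --- and iterating over the elements of $J$ gives $\bigcap_{j\in J}(G_jG_i)=G_JG_i$.

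It remains to fold in the residue condition (c), via (d)~$\Rightarrow$~(c)~$\Rightarrow$~(b). The map $\phi_J$ is always a well-defined, type-preserving, injective morphism of incidence systems (well-definedness and injectivity use only $G_{J\cup\{i\}}\subseteq G_i$ and $G_{J\cup\{i\}}=G_J\cap G_i$), so being an isomorphism amounts to surjectivity together with reflection of incidence; reflection of incidence is, after a translation, exactly statement (d), and surjectivity follows from (d) by induction on $|J|$, replacing at each step a type-$i$ coset of the residue by a representative lying in a smaller parabolic. Thus (d)~$\Rightarrow$~(c). Finally (c)~$\Rightarrow$~(b): surjectivity of $\phi_J$ says exactly that $c\in\bigcap_{j\in J}(G_jG_i)$ implies $c\in G_JG_i$. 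Together with (a)~$\Leftrightarrow$~(b) this closes the web. The main obstacle is bookkeeping rather than depth: several of the conditions are ``morally'' flag-transitivity, so one must be careful not to invoke a not-yet-established equivalence, and the two genuine inductions --- the one over the elements of $J$ in (h)~$\Rightarrow$~(b), and the one on $|J|$ in the surjectivity of $\phi_J$ --- require attention to the order in which cosets are intersected and to which partial intersections are already known to be nonempty.
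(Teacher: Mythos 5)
Your proof is correct, and the web of implications you build does collapse all eight conditions into one equivalence class: each individual step checks out, including the inversion trick that converts (h) into its right-handed form for (h)$\Rightarrow$(b), the translation arguments in (f)$\Rightarrow$(g)$\Rightarrow$(f), and the induction on $|J|$ that extracts surjectivity of $\phi_J$ (equivalently condition (b)) from (d). The route is, however, genuinely different from the one in the paper. The paper's proof is almost entirely a matter of citation: it invokes \cite[Theorem 1.8.10]{buekenhout2013diagram} for (a)$\Leftrightarrow$(b)$\Leftrightarrow$(c) and \cite[pp.~306]{Pasini1994} for (d)$\Leftrightarrow$(e) and (d)$\Leftrightarrow$(f)$\Leftrightarrow$(g)$\Leftrightarrow$(h), and only writes out the bridge (a)$\Leftrightarrow$(e) --- an argument that coincides exactly with your (a)$\Rightarrow$(e) step plus its converse. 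You instead take only (a)$\Leftrightarrow$(b) for granted and reconstruct everything else from elementary coset manipulations. What the paper's approach buys is brevity; what yours buys is self-containedness and transparency --- in particular, your chain (e)$\Rightarrow$(f)$\Rightarrow$(h)$\Rightarrow$(b) makes visible why the ``three pairwise-intersecting cosets'' condition and the two product identities are really the same statement, which the paper leaves implicit in the reference to Pasini. Two very small points worth a sentence each in a written-up version: in (e)$\Rightarrow$(f) you should dispose of the degenerate case $J\cup H\cup K=\emptyset$ separately, since (e) is stated only for non-empty type sets; and in (a)$\Rightarrow$(e) it is worth remarking that the pairwise-incident family $(g_jG_j)_{j\in J}$ is a flag of type exactly $J$ because cosets of distinct types are distinct elements of the incidence system. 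Neither affects the validity of the argument.
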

\begin{proof}
    We have that $(\ref{prop:FTequivs:itm:IsFT})\Leftrightarrow (\ref{prop:FTequivs:itm:FTBueken})\Leftrightarrow (\ref{prop:FTequivs:itm:Isomorphism})$ by Theorem~\ref{thm:cosetFT} and~\cite[Theorem 1.8.10]{buekenhout2013diagram}.
    Moreover, $(\ref{prop:FTequivs:itm:FTPassini})\Leftrightarrow (\ref{prop:FTequivs:itm:FTPassiniProp})$ and $(\ref{prop:FTequivs:itm:FTPassini})\Leftrightarrow (\ref{prop:FTequivs:itm:FTIntersectionCosets}) \Leftrightarrow (\ref{prop:FTequivs:itm:FTProductOfIntersections}) \Leftrightarrow (\ref{prop:FTequivs:itm:FTIntersectionOfProducts})$ by~\cite[pp. 306]{Pasini1994}.
    It remains only to show that $(\ref{prop:FTequivs:itm:IsFT})\Leftrightarrow (\ref{prop:FTequivs:itm:FTPassiniProp})$

    $(\ref{prop:FTequivs:itm:IsFT})
    \Rightarrow (\ref{prop:FTequivs:itm:FTPassiniProp})$.
    Suppose that $G$ is flag-transitive in $\Gamma=(G,(G_i)_{i\in I})$. Hence, for $J\subseteq I$, consider the two distinct type $J$ flags $F=\{G_j:j\in J\}$ and $F'= \{g_j G_j:j\in J\}$. As $F'$ is a flag, we have a family of elements $(g_j)_{j\in J}$ of elements of $G$ such that $g_iG_i\cap g_jG_j\neq \emptyset$, for any choice of $i,j\in J$. By the flag-transitivity of $G$, we have an element $g\in G$ such that $F' = gF$, i.e. $\forall j\in J,\ g_j G_j = g G_j$. Hence $\cap_{j\in J} g_j G_j = \cap_{j\in J} g G_j = g \cap_{j\in J} G_j = g G_J$.

    $(\ref{prop:FTequivs:itm:FTPassiniProp})
    \Rightarrow (\ref{prop:FTequivs:itm:IsFT})$. 
    Suppose that condition $(e)$ holds. As $\Gamma$ is a coset incidence system, we have always a base chamber $C=\{G_i:i\in I\}$. Let $F$ be the flag of type $J\subseteq I$ in $C$, i.e. $F= \{G_j:j\in J\}$.
    Consider another flag of type $J$, $F'=\{g_jG_j:j\in J\}$, where $g_j\in G$. As $F'$ is a flag, $g_iG_i\cap g_jG_j\neq \emptyset$, for any $i,j\in J$. Hence, by $(d)$ we have that there is a $g\in G$ such that $\cap_{j\in J} g_j G_j = g G_J$. Hence, for each $j\in J$ we have that $g \in g_jG_j$, i.e. $g G_j = g_j G_j$. This implies that $F' = \{g_j G_j:j\in J\} = \{g G_j:j\in J\} = gF$.
    Hence, we have proved we can go from $F$ to any flag of type $J$, and therefore $G$ is flag-transitive.
\end{proof}

The most interesting coset geometries $\Gamma = (G,(G_i)_{i\in I})$ are those for which $G$ acts flag-transitively on $\Gamma$. In that case, properties like residual connectedness, firmness and thinness can also be checked via group-theoretical conditions.

\begin{thm}\cite[Corollary 1.8.13]{buekenhout2013diagram} \label{thm:CosetRC}
    Suppose that $\Gamma = (G,(G_i)_{i\in I})$ is a coset incidence system over
the finite set $I$ on which G acts flag-transitively. Then $\Gamma$ is a residually connected geometry if and only if $G_J = \langle G_{\{i\} \cup J} | i \in I \setminus J \rangle$ for all $J \subset I$ such that $|I \setminus J| \geq 2$.
\end{thm}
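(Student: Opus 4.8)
The plan is to reduce the theorem to one clean lemma about when the incidence graph of an \emph{arbitrary} coset incidence system is connected, and then to feed the residues of $\Gamma$ into that lemma, using flag-transitivity to cut down to residues of flags sitting inside the base chamber.

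First I would establish the following lemma: for any coset incidence system $\Delta=(H,(H_k)_{k\in K})$ with $K\neq\emptyset$, the incidence graph of $\Delta$ is connected if and only if $H=\langle H_k\mid k\in K\rangle$. For the implication ``$\Leftarrow$'', the cosets $H_k$ ($k\in K$) all contain the identity and hence span a connected subgraph, the base chamber; I would then argue by induction on the length $n$ of a word $g=s_1\cdots s_n$ with each $s_j$ lying in some $H_{k_j}$ — a legitimate normal form because each $H_k$ is a subgroup, hence closed under inversion — that $gH_k$ lies in the connected component of the base chamber for every $k\in K$. Indeed, writing $g=g's_n$ with $s_n\in H_{k_n}$, one has $gH_{k_n}=g'H_{k_n}$, which is in the component of the base chamber by the induction hypothesis applied to $g'$, and for any $k\in K$ the cosets $gH_k$ and $gH_{k_n}$ are incident because both contain $g$. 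For ``$\Rightarrow$'', set $H'=\langle H_k\mid k\in K\rangle$ and note that the collection of cosets having a representative in $H'$ is closed under the incidence relation: if $g\in H'$ and $gH_k\cap hH_l\neq\emptyset$, then $h$ lies in the product $gH_kH_l$, which is contained in $H'$. Since this collection contains the base element $H_{k_0}$ and $\Delta$ is connected, it must be the whole element set; hence every $gH_k$ meets $H'$, so $g\in H'$, and $H=H'$.

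Next I would use flag-transitivity to organise the residues of $\Gamma$. By Theorem~\ref{thm:cosetFT}, flag-transitivity already forces $\Gamma$ to be a geometry, so it makes sense to speak of the rank of a residue, and ``residually connected'' means exactly that every residue of rank at least $2$ is connected; the residue of a flag of type $J$ has rank $|I\setminus J|$. Moreover, flag-transitivity implies that each flag of type $J$ lies in the $G$-orbit of the base flag $\{G_j\mid j\in J\}$, and since $G$ acts by automorphisms all residues of flags of a fixed type are isomorphic, so it suffices to test connectivity of the residues $\Gamma_{\{G_j\mid j\in J\}}$ over all $J\subseteq I$ with $|I\setminus J|\geq 2$. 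By Proposition~\ref{prop:FTequivs}(\ref{prop:FTequivs:itm:Isomorphism}), each such residue is isomorphic, as an incidence system, to the coset incidence system $\Gamma(G_J,(G_{J\cup\{i\}})_{i\in I\setminus J})$. Applying the lemma above with $H=G_J$, $K=I\setminus J$ and $H_k=G_{J\cup\{k\}}$, this residue is connected precisely when $G_J=\langle G_{J\cup\{i\}}\mid i\in I\setminus J\rangle$; ranging over all such $J$ gives exactly the stated criterion.

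The step I expect to be the main obstacle is the ``$\Leftarrow$'' direction of the lemma. The induction is delicate in that passing from $gH_k$ to $gH_{k_n}$ does not shorten the word, so one has to arrange for that move to be immediately followed by the identity $gH_{k_n}=g'H_{k_n}$, which does; one also has to be a little careful to normalise an arbitrary element of $\langle H_k\mid k\in K\rangle$ as a word in the subgroups $H_k$ themselves rather than in formal inverses. Everything else — the reduction to base flags, the identification of the residues as coset systems, and the assembly of the final statement — is bookkeeping resting on the results already available in the excerpt.
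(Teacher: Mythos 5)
Your proof is correct. Note that the paper itself gives no proof of this statement: it is quoted directly from Buekenhout--Cohen (Corollary 1.8.13), so there is nothing internal to compare against. Your argument --- the connectivity lemma ``$\Delta=(H,(H_k)_{k\in K})$ is connected iff $H=\langle H_k\mid k\in K\rangle$'', combined with the identification of residues of base flags as the coset systems $\Gamma(G_J,(G_{J\cup\{i\}})_{i\in I\setminus J})$ via flag-transitivity --- is exactly the standard derivation used in that source, and both directions of your lemma (the induction on word length, and the closure of the set of cosets meeting $\langle H_k\mid k\in K\rangle$ under incidence) are carried out correctly.
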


A slight reformulation of the condition for residual-connectedness is presented in the following result.

\begin{lemma}\cite[Proposition 1.8.12]{buekenhout2013diagram} \label{prop:RCcondi}
    Let $\Gamma = (G,(G_i)_{i\in I})$ be a coset incidence system. Then the following statements are equivalent
    \begin{equation}\tag{RC1}
        G_J = \langle G_{\{i\} \cup J} | i \in I \setminus J \rangle \textnormal{ for all }J \subset I \textnormal{ such that }|I \setminus J| \geq 2.
    \end{equation}
    \begin{equation}\tag{RC2}
        G_J = \langle G_{\{i\} \cup J} , G_{\{k\} \cup J} \rangle \textnormal{ for all }J \subset I \textnormal{ and distinct }i,k\in I\setminus J.
    \end{equation}
\end{lemma}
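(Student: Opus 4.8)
The plan is to prove the two implications separately. The implication $\text{(RC2)}\Rightarrow\text{(RC1)}$ is immediate from monotonicity of the ``subgroup generated by'' operator: given $J\subset I$ with $|I\setminus J|\geq 2$, choose any distinct $i,k\in I\setminus J$. Since $G_{\{i\}\cup J}$ and $G_{\{k\}\cup J}$ both occur among the subgroups $\{G_{\{n\}\cup J}\mid n\in I\setminus J\}$, and each of these is contained in $G_J$, condition (RC2) gives
\[
G_J=\langle G_{\{i\}\cup J},G_{\{k\}\cup J}\rangle\leq \langle G_{\{n\}\cup J}\mid n\in I\setminus J\rangle\leq G_J ,
\]
so all three terms coincide, which is exactly (RC1).

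For $\text{(RC1)}\Rightarrow\text{(RC2)}$ I would argue by induction on $r:=|I\setminus J|$ (finite, as in the setting of Theorem~\ref{thm:CosetRC}). Fix distinct $i,k\in I\setminus J$ and put $H:=\langle G_{\{i\}\cup J},G_{\{k\}\cup J}\rangle$; the inclusion $H\leq G_J$ is clear, so only $G_J\leq H$ needs proof. If $r=2$ then $I\setminus J=\{i,k\}$ and the inclusion is literally (RC1) for this $J$. If $r\geq 3$, take any $n\in I\setminus J$ with $n\neq i,k$ and apply the induction hypothesis to $J\cup\{n\}$, whose complement has size $r-1\geq 2$ and still contains $i$ and $k$; this yields
\[
G_{J\cup\{n\}}=\langle G_{\{i,n\}\cup J},\,G_{\{k,n\}\cup J}\rangle .
\]
From $G_J=\bigcap_{j\in J}G_j$ we get $G_{\{i,n\}\cup J}=G_{\{i\}\cup J}\cap G_n\leq G_{\{i\}\cup J}$ and likewise $G_{\{k,n\}\cup J}\leq G_{\{k\}\cup J}$, so $G_{J\cup\{n\}}\leq H$ for every such $n$; and $G_{\{i\}\cup J},G_{\{k\}\cup J}\leq H$ trivially. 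Hence every generator appearing on the right-hand side of (RC1) for $J$ lies in $H$, so $G_J=\langle G_{\{n\}\cup J}\mid n\in I\setminus J\rangle\leq H$, giving $G_J=H$, which is (RC2) for $J$ and the chosen pair.

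The argument is essentially bookkeeping, and I do not expect a genuine obstacle. The only point requiring a little care is setting up the induction on the size of $I\setminus J$ correctly and checking that the chosen pair $\{i,k\}$ remains admissible after enlarging $J$ by one element — which it does precisely because we enlarge by an element distinct from both $i$ and $k$, keeping $|I\setminus(J\cup\{n\})|\geq 2$. (If one wishes to allow $I$ infinite, one reduces to the finite case by observing that any element of $\langle G_{\{n\}\cup J}\mid n\in I\setminus J\rangle$ is a finite word and hence lies in the subgroup generated by finitely many of the $G_{\{n\}\cup J}$; this refinement is not needed for the applications in this paper.)
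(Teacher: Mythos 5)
Your argument is correct. Note that the paper does not prove this lemma at all: it is quoted verbatim from Buekenhout--Cohen \cite[Proposition 1.8.12]{buekenhout2013diagram}, so there is no in-paper proof to compare against. Your two directions both check out in the finite-rank setting the paper works in: (RC2)$\Rightarrow$(RC1) is the sandwich $G_J=\langle G_{\{i\}\cup J},G_{\{k\}\cup J}\rangle\leq\langle G_{\{n\}\cup J}\mid n\in I\setminus J\rangle\leq G_J$, and for (RC1)$\Rightarrow$(RC2) the induction on $|I\setminus J|$ works because $G_{\{i,n\}\cup J}=G_{\{i\}\cup J}\cap G_n\leq G_{\{i\}\cup J}$ lets you absorb every generator $G_{\{n\}\cup J}$ of the (RC1) right-hand side into $H=\langle G_{\{i\}\cup J},G_{\{k\}\cup J}\rangle$, after which (RC1) for $J$ closes the loop. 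The only weak spot is the final parenthetical about infinite $I$: observing that a word involves only finitely many generators does not by itself reduce to the finite case, since your induction needs (RC1) to hold for the sets $J\cup\{n\}$ inside the \emph{original} system, and restricting the type set does not obviously preserve (RC1). Since the paper only ever invokes this lemma over a finite type set (cf.\ Theorem~\ref{thm:CosetRC}), and you explicitly flag the remark as unnecessary, this does not affect the validity of the proof; I would simply drop the parenthetical.
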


Recall that the minimal parabolic subgroups of $\Gamma = (G,(G_i)_{i\in I})$ are the groups $G^i :=G_{I\setminus\{i\}}$. For a subset $J\subseteq I$, we let $G^J=\langle G^i\mid i\in J\rangle$. In other words, $G^J$ is the subgroup of $G$ generated by the minimal parabolic subgroups $G^i$, for $i\in J$. Notice that $G^I$ does not need to be necessarily equal to $G$. With these notations, the following conditions are all equivalent to (RC1) and (RC2).

\begin{lemma}\label{lem:RCequivs}
    Let $\Gamma = (G,(G_i)_{i\in I})$ be a coset incidence system. Then the following statements are equivalent:
    \begin{enumerate}
        \item for all $J \subset I$ such that $|I \setminus J| \geq 2$, we have
        $G_J = \langle G_{\{i\} \cup J} | i \in I \setminus J \rangle$\label{lem:RCequivs:itm:RC1}
        \item for all $J \subset I$ and distinct $i,k\in I\setminus J$, we have 
        $G_J = \langle G_{\{i\} \cup J} , G_{\{k\} \cup J} \rangle$ \label{lem:RCequivs:itm:RC2}
        \item for every $J\subseteq I$, $G_{J} = G^{I\setminus J}$.\label{lem:RCequivs:itm:BottomUp}
        \item for every $J\subseteq I$, $G^J = G_{I\setminus J}$.\label{lem:RCequivs:itm:UpBottom}
        \item for every $J,K\subseteq I$, $G^J\cap G^K = G^{J\cap K}$.\label{lem:RCequivs:itm:Intersection}
        \item for all $J \subset I$ with $| I \setminus J| \geq 2$ and for any system $(G_{K_i})$ of parabolic subgroups such that $I \setminus J \subseteq \cup_i (I \setminus K_i)$, we have that $G_J \subseteq \langle G_{K_i} \rangle$. Note that the equation $I \setminus J \subseteq \cup_i (I \setminus K_i)$ can be rewritten as $\cap_i K_i \subseteq J$.\label{lem:RCequivs:itm:Parabolics}
    \end{enumerate}
    Moreover, if one (hence, all) of this statements is true, and $\Gamma$ is flag-transitive, then $\Gamma$ is a residually connected coset geometry.
\end{lemma}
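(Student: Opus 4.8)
The plan is to establish the chain of equivalences by a round-robin of implications among (\ref{lem:RCequivs:itm:RC1})--(\ref{lem:RCequivs:itm:Parabolics}), using Lemma~\ref{prop:RCcondi} as the known pivot between (\ref{lem:RCequivs:itm:RC1}) and (\ref{lem:RCequivs:itm:RC2}), and then translating everything into the ``upward'' and ``downward'' parabolic notation $G^J$ versus $G_{I\setminus J}$. The key observation that makes the bookkeeping manageable is that the defining inclusions $G^i = G_{I\setminus\{i\}}$ immediately give $G^J \subseteq G_{I\setminus J}$ for every $J\subseteq I$: each generator $G^i$ with $i\in J$ lies in $G_{I\setminus\{i\}}$, hence in $G_k$ for every $k\notin J$ (since $k \ne i$), so the whole generated subgroup lies in $\cap_{k\notin J}G_k = G_{I\setminus J}$. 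Thus the content of (\ref{lem:RCequivs:itm:UpBottom}) (equivalently (\ref{lem:RCequivs:itm:BottomUp}), which is (\ref{lem:RCequivs:itm:UpBottom}) with $J$ replaced by $I\setminus J$) is precisely the reverse inclusion $G_{I\setminus J} \subseteq G^J$, and this is exactly what (RC1)/(RC2) provide.

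First I would show (\ref{lem:RCequivs:itm:RC2}) $\Rightarrow$ (\ref{lem:RCequivs:itm:BottomUp}). Fix $J\subseteq I$ and set $m = |I\setminus J|$. If $m=0$ then $G_J = G_I = G^\emptyset$ trivially (empty join); if $m=1$, say $I\setminus J = \{i\}$, then $G_J = G_{I\setminus\{i\}} = G^i = G^{\{i\}}$ by definition. If $m\ge 2$, I induct downward on $|J|$: by (RC2) we may write, choosing any two distinct $i,k \in I\setminus J$, that $G_J = \langle G_{\{i\}\cup J},\, G_{\{k\}\cup J}\rangle$; but $\{i\}\cup J$ and $\{k\}\cup J$ each have strictly larger cardinality than $J$, so by induction $G_{\{i\}\cup J} = G^{I\setminus(J\cup\{i\})}$ and $G_{\{k\}\cup J} = G^{I\setminus(J\cup\{k\})}$, and the join of these two is generated by all $G^\ell$ with $\ell \in (I\setminus J)\setminus\{i\}$ together with all $G^\ell$ with $\ell\in(I\setminus J)\setminus\{k\}$, i.e.\ by all $G^\ell$, $\ell\in I\setminus J$ (here I use $m\ge 2$ so that the union of the two complements is all of $I\setminus J$). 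That is $G^{I\setminus J}$, as desired. The converse (\ref{lem:RCequivs:itm:BottomUp}) $\Rightarrow$ (\ref{lem:RCequivs:itm:RC2}) is a direct substitution: given $J$ and distinct $i,k\in I\setminus J$, apply (\ref{lem:RCequivs:itm:BottomUp}) to $J$, to $J\cup\{i\}$, and to $J\cup\{k\}$ to rewrite both sides. The equivalence (\ref{lem:RCequivs:itm:BottomUp}) $\Leftrightarrow$ (\ref{lem:RCequivs:itm:UpBottom}) is the substitution $J \leftrightarrow I\setminus J$.

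Next, (\ref{lem:RCequivs:itm:UpBottom}) $\Leftrightarrow$ (\ref{lem:RCequivs:itm:Intersection}): assuming (\ref{lem:RCequivs:itm:UpBottom}), for $J,K\subseteq I$ we get $G^J\cap G^K = G_{I\setminus J}\cap G_{I\setminus K} = \cap_{i\notin J}G_i \cap \cap_{i\notin K}G_i = \cap_{i\notin J\cap K}G_i = G_{I\setminus(J\cap K)} = G^{J\cap K}$, using that $G_{I\setminus J}\cap G_{I\setminus K} = G_{(I\setminus J)\cup(I\setminus K)} = G_{I\setminus(J\cap K)}$ directly from the definition of parabolic subgroups as intersections of maximal ones; conversely, taking $K = I$ in (\ref{lem:RCequivs:itm:Intersection}) gives $G^J = G^J\cap G^I \subseteq G^I \subseteq G$, which is not yet enough, so instead one argues: the inclusion $G^J\subseteq G_{I\setminus J}$ always holds (first paragraph), and writing $J$ as the intersection $\cap_{i\notin J}(I\setminus\{i\})$ when $J\ne I$, (\ref{lem:RCequivs:itm:Intersection}) iterated gives $G^J = \cap_{i\notin J}G^{I\setminus\{i\}} = \cap_{i\notin J}G_i = G_{I\setminus J}$; the case $J = I$ is handled separately since then the intersection is empty and one instead notes $G^I \supseteq G^i = G_{I\setminus\{i\}}$ forces, via (\ref{lem:RCequivs:itm:Intersection}) applied appropriately — actually here it is cleaner to route $J=I$ through the already-established (\ref{lem:RCequivs:itm:BottomUp}) once the other equivalences with (\ref{lem:RCequivs:itm:RC1})/(\ref{lem:RCequivs:itm:RC2}) are in place. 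Finally, (\ref{lem:RCequivs:itm:Parabolics}) is an ``omnibus'' restatement: it clearly implies (\ref{lem:RCequivs:itm:RC1}) by taking the system $(G_{K_i})$ to be $(G_{\{i\}\cup J})_{i\in I\setminus J}$, for which $\cap_i K_i = J$; conversely, given (\ref{lem:RCequivs:itm:RC1}) and hence all the reformulations including (\ref{lem:RCequivs:itm:Intersection}), if $\cap_i K_i \subseteq J$ then $\langle G_{K_i}\rangle \supseteq \langle G^{I\setminus K_i}\rangle = G^{\cup_i(I\setminus K_i)} = G^{I\setminus \cap_i K_i} \supseteq G^{I\setminus J} = G_J$, where the first step uses $G_{K_i} \supseteq G^{I\setminus K_i}$ — but wait, we need $G_{K_i} = G^{I\setminus K_i}$, which is (\ref{lem:RCequivs:itm:UpBottom})/(\ref{lem:RCequivs:itm:BottomUp}), already available — giving the inclusion $G_J\subseteq\langle G_{K_i}\rangle$. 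The last sentence of the lemma is then immediate from Lemma~\ref{prop:RCcondi} and Theorem~\ref{thm:CosetRC}: under flag-transitivity, $\Gamma$ is a geometry by Theorem~\ref{thm:cosetFT}, and (RC1) is exactly the hypothesis of Theorem~\ref{thm:CosetRC} guaranteeing residual connectedness.

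The step I expect to be the main obstacle is the careful handling of the degenerate cases $J = I$ and $|I\setminus J| \le 1$ throughout, together with the precise direction (\ref{lem:RCequivs:itm:Intersection}) $\Rightarrow$ (\ref{lem:RCequivs:itm:UpBottom}): the identity $G^J = \cap_{i\notin J}G^{I\setminus\{i\}}$ needs $J$ to be expressible as a nonempty intersection of the sets $I\setminus\{i\}$, which fails exactly when $J = I$, so the clean logical skeleton is to prove the cycle (\ref{lem:RCequivs:itm:RC1}) $\Leftrightarrow$ (\ref{lem:RCequivs:itm:RC2}) $\Rightarrow$ (\ref{lem:RCequivs:itm:BottomUp}) $\Leftrightarrow$ (\ref{lem:RCequivs:itm:UpBottom}) $\Rightarrow$ (\ref{lem:RCequivs:itm:RC2}) first (closing a loop that validates all of (\ref{lem:RCequivs:itm:RC1})--(\ref{lem:RCequivs:itm:UpBottom}) including all degenerate $J$), and only then bolt on (\ref{lem:RCequivs:itm:Intersection}) and (\ref{lem:RCequivs:itm:Parabolics}) as two-way consequences, freely using any of the first four as needed. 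Everything else is the elementary set algebra $G_A \cap G_B = G_{A\cup B}$ and $\langle G^A, G^B\rangle = G^{A\cup B}$ for the generated-subgroup side, plus downward induction on $|J|$.
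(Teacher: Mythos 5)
Your overall architecture coincides with the paper's: both pivot on Lemma~\ref{prop:RCcondi} for (\ref{lem:RCequivs:itm:RC1})$\Leftrightarrow$(\ref{lem:RCequivs:itm:RC2}); both prove (\ref{lem:RCequivs:itm:RC1})/(\ref{lem:RCequivs:itm:RC2})$\Rightarrow$(\ref{lem:RCequivs:itm:BottomUp})$\Leftrightarrow$(\ref{lem:RCequivs:itm:UpBottom}) by the same induction on the size of the complement and the same substitution $J\leftrightarrow I\setminus J$; the return implication to (\ref{lem:RCequivs:itm:RC2}) is the same rewriting; and (\ref{lem:RCequivs:itm:Parabolics}) is handled identically in both directions (specialize the system to $(G_{J\cup\{i\}})_{i\in I\setminus J}$ one way, use $G_{K_i}\supseteq G^{I\setminus K_i}$ together with $\langle G^A,G^B\rangle=G^{A\cup B}$ the other way). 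The single point of divergence is item (\ref{lem:RCequivs:itm:Intersection}): the paper disposes of (\ref{lem:RCequivs:itm:BottomUp})$\Leftrightarrow$(\ref{lem:RCequivs:itm:Intersection}) by citing Pasini, whereas you attempt a direct proof of both directions.

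That is where your proposal has a genuine gap. The direction (\ref{lem:RCequivs:itm:UpBottom})$\Rightarrow$(\ref{lem:RCequivs:itm:Intersection}) is fine, but your argument for (\ref{lem:RCequivs:itm:Intersection})$\Rightarrow$(\ref{lem:RCequivs:itm:UpBottom}) is circular: after writing $G^J=\cap_{i\notin J}G^{I\setminus\{i\}}$ by iterating (\ref{lem:RCequivs:itm:Intersection}), you substitute $G^{I\setminus\{i\}}=G_i$ --- but that equality is precisely statement (\ref{lem:RCequivs:itm:UpBottom}) applied to the set $I\setminus\{i\}$, i.e.\ an instance of what you are trying to prove. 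Condition (\ref{lem:RCequivs:itm:Intersection}) only constrains \emph{intersections} of the subgroups $G^J$; it carries no generation information, so nothing in it forces $\langle G^j\mid j\neq i\rangle$ to be as large as $G_i$. In fact a purely formal derivation cannot exist: if every $G_i$ is the trivial subgroup of a nontrivial $G$, then every $G^J$ is trivial, (\ref{lem:RCequivs:itm:Intersection}) holds vacuously, yet $G^{I}=\{1\}\neq G=G_\emptyset$. So this direction really does rest on the cited result of Pasini (with its attendant conventions), and your ``clean it up by routing $J=I$ through (\ref{lem:RCequivs:itm:BottomUp})'' escape hatch is not available, since when proving (\ref{lem:RCequivs:itm:Intersection})$\Rightarrow(\cdot)$ you may only assume (\ref{lem:RCequivs:itm:Intersection}). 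A smaller point of the same flavour: your base case $m=0$, ``$G_I=G^\emptyset$ by empty join,'' silently uses the convention $G^\emptyset=G_I$ rather than $G^\emptyset=\{1\}$; the paper's induction starts at $|J|=1$ and leaves this implicit as well, but if you rely on it you should state it. Everything else in your proposal is correct and is essentially the paper's argument.
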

\begin{proof}
    By Lemma~\ref{prop:RCcondi}, we have that $(\ref{lem:RCequivs:itm:RC1})\Leftrightarrow (\ref{lem:RCequivs:itm:RC2})$. Moreover, $(\ref{lem:RCequivs:itm:BottomUp})\Leftrightarrow (\ref{lem:RCequivs:itm:Intersection})$ by~\cite[pp.306]{Pasini1994}.
    Let us prove then that $(\ref{lem:RCequivs:itm:BottomUp})\Leftrightarrow (\ref{lem:RCequivs:itm:UpBottom})$,
    $(\ref{lem:RCequivs:itm:RC1}) \Rightarrow (\ref{lem:RCequivs:itm:UpBottom})$, $(\ref{lem:RCequivs:itm:UpBottom})\Rightarrow (\ref{lem:RCequivs:itm:RC2})$, $(\ref{lem:RCequivs:itm:UpBottom})\Rightarrow (\ref{lem:RCequivs:itm:Parabolics})$ and $(\ref{lem:RCequivs:itm:Parabolics})\Rightarrow (\ref{lem:RCequivs:itm:RC1})$.

    $(\ref{lem:RCequivs:itm:BottomUp})\Leftrightarrow (\ref{lem:RCequivs:itm:UpBottom})$: Consider that for every $J\subseteq I$, $G_{J} = G^{I\setminus J}$. Let $K = I\setminus J$ and notice that $J=I\setminus K$. Hence $G^K = G_{I\setminus K}$. The other direction is equally trivial.

    $(\ref{lem:RCequivs:itm:RC1}) \Rightarrow (\ref{lem:RCequivs:itm:UpBottom})$: 
    Assume $\Gamma$ respects (RC1).
    We will proceed by induction on the size of the set $J$.
    Suppose $|J|=1$. Hence $J=\{j\}$, for some $j\in I$.
    By definition we have that that $G_{I\setminus \{j\}} = G^j$.
    Suppose then that, for $n\geq 2$ such that $|J|<n$, our hypothesis works, i.e. $G_{I\setminus J}= G^J$. Then consider $G_{I\setminus J}$, with $|J|=n$.
    By $(\ref{lem:RCequivs:itm:RC1})$ we have that $G_{I\setminus J} = \langle G_{(I\setminus J) \cup \{k\}} \ : \ k\in J\rangle$. Notice that, as $k\in J$, $(I\setminus J)\cup \{k\} = I\setminus(J\setminus\{k\})$.
    As for all $k\in J$, $|J\setminus\{k\}|<n$, we have, by the induction hypothesis, that 
    $G_{I\setminus J} =\langle G^{J\setminus\{k\}}\ : \ k\in J\rangle$. By the definition of $G^{J\setminus\{k\}}$, we can simplify to  $G_{I\setminus J}= \langle G^k\ : k\in J\rangle$.
    Hence, we have proved that $G_{I\setminus J} = G^J$.

    $(\ref{lem:RCequivs:itm:UpBottom})\Rightarrow (\ref{lem:RCequivs:itm:RC2})$: 
    Assume now that, for any $J\subseteq I$, $\Gamma$ respects $G_{I\setminus J} = G^J$. 
    This is equivalent to saying that $G_{J} = G^{I\setminus J}$, for any $J\subseteq I$.
    Consider $J\subset I$, distinct $i,j\in I\setminus J$, and consider the group $\langle G_{J\cup \{i\}}, G_{J\cup\{j\}}\rangle$.
    Hence, we have that $\langle G_{J\cup \{i\}}, G_{J\cup\{j\}}\rangle = \langle G^{I\setminus(J\cup \{i\})}, G^{I\setminus(J\cup\{j\})}\rangle$.
    Finally, we get that $$\langle G^{I\setminus(J\cup \{i\})}, G^{I\setminus(J\cup\{j\})}\rangle = \langle G^k\ | \ k\in (I\setminus(J\cup \{i\}))\cup (I\setminus(J\cup \{j\}))\rangle = \langle G^k\ | \ k \in I\setminus J\rangle,$$
    which is $G^{I\setminus J} = G_J$.
    Hence $G_J = \langle G_{J\cup \{i\}}, G_{J\cup\{j\}}\rangle$, proving $(\ref{lem:RCequivs:itm:RC2})$.
    
    $(\ref{lem:RCequivs:itm:BottomUp})\Rightarrow (\ref{lem:RCequivs:itm:Parabolics})$: By $(\ref{lem:RCequivs:itm:BottomUp})$, we have that $G_J = \langle G^j, j \in I \setminus J \rangle$. Under the condition of $(\ref{lem:RCequivs:itm:Parabolics})$ on the system $(G_{K_i})$, we get that every $G^j$ with $j \in I\setminus J$ is contained in some $G_{K_i}$. Hence, $\langle G_{K_i} \rangle$ contains $\langle G^j, j \in I \setminus J \rangle = G_J$.
    
    $(\ref{lem:RCequivs:itm:Parabolics})\Rightarrow (\ref{lem:RCequivs:itm:RC1})$: It suffices to take $(G_{K_i}) = (G_{J \cup \{i\}}, i \in I \setminus J)$. We then obtain that $\langle G_{J \cup \{i\}}, i \in I \setminus J \rangle$ contains $G_J$. The other inclusion always holds.
    
    Lastly, suppose that one of the conditions above applies to $\Gamma$. If $\Gamma$ is flag-transitive, then $\Gamma$ is a coset geometry, and by Theorem~\ref{thm:CosetRC}, we get that $\Gamma$ is residually connected.

\end{proof}

Finally, we present here the group theoretical conditions for thinness, firmness and finiteness of flag-transitive coset geometries. The condition for thinness is not stated verbatim in \cite{buekenhout2013diagram}, but it is an almost trivial consequence of the result for firmness.

\begin{thm}\label{thm:CosetFIRM}\cite[Corollary 1.8.15]{buekenhout2013diagram}
     Suppose that $\Gamma = (G,(G_i)_{i\in I})$ is a coset incidence system over
the finite set $I$ on which G acts flag-transitively. Then $\Gamma$ is a firm geometry if and only if
\begin{equation}\tag{FIRM}
    G_{I\setminus \{j\} } \neq G_I \textnormal{ for each }j \in I.
\end{equation}
\end{thm}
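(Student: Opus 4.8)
\textit{Proof proposal.} Since $G$ acts flag-transitively, Theorem~\ref{thm:cosetFT} guarantees that $\Gamma$ is a geometry, so ``corank'' is well defined and every corank-one flag has type $I\setminus\{j\}$ for a unique $j\in I$. The plan is to reduce the (global) firmness condition to a finite check, one index $j$ at a time, by exploiting flag-transitivity: left multiplication by an element of $G$ is an automorphism of $\Gamma$ carrying a flag $F$ to $gF$ and its residue $\Gamma_F$ isomorphically onto $\Gamma_{gF}$, and $G$ is transitive on the flags of each fixed type. Hence all residues of corank-one flags of cotype $j$ are isomorphic to the residue of the base flag $F_j=\{G_i\mid i\in I\setminus\{j\}\}$, and $\Gamma$ is firm if and only if $\Gamma_{F_j}$ has at least two elements for every $j\in I$.

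Next I would identify $\Gamma_{F_j}$ explicitly. Applying Proposition~\ref{prop:FTequivs}(\ref{prop:FTequivs:itm:Isomorphism}) with $J=I\setminus\{j\}$ (so that $I\setminus J=\{j\}$ and $G_{J\cup\{j\}}=G_I$), we obtain a natural isomorphism
$$\Gamma_{F_j}\;\cong\;\Gamma\bigl(G_{I\setminus\{j\}},(G_I)\bigr),$$
a rank-one coset incidence system whose elements are precisely the left cosets $aG_I$ with $a\in G_{I\setminus\{j\}}$. Consequently $\Gamma_{F_j}$ has exactly $[\,G_{I\setminus\{j\}}:G_I\,]$ elements, and this number is at least two if and only if $G_{I\setminus\{j\}}\neq G_I$.

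Combining the two steps gives the equivalence: $\Gamma$ is firm $\iff$ $\Gamma_{F_j}$ has at least two elements for all $j\in I$ $\iff$ $[\,G_{I\setminus\{j\}}:G_I\,]\ge 2$ for all $j\in I$ $\iff$ $G_{I\setminus\{j\}}\neq G_I$ for all $j\in I$, which is exactly (FIRM). The very same computation of $|\Gamma_{F_j}|$ as an index yields the thinness criterion the authors allude to: $\Gamma$ is thin if and only if $[\,G_{I\setminus\{j\}}:G_I\,]=2$ for each $j\in I$ (replace ``$\ge 2$'' by ``$=2$'' throughout).

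I do not expect a genuine obstacle here — this is essentially the argument behind \cite[Corollary~1.8.15]{buekenhout2013diagram}. The only points requiring a little care are making the flag-transitivity reduction precise (that translations by $G$ act as automorphisms of $\Gamma$ sending $\Gamma_F$ onto $\Gamma_{gF}$, and that $G$ is transitive on the flags of each fixed type, so it suffices to test the $|I|$ base flags $F_j$), and matching the indices correctly in Proposition~\ref{prop:FTequivs}(\ref{prop:FTequivs:itm:Isomorphism}) so that the residue really is the rank-one system over the single parabolic $G_I$; once these are in place the equivalence with (FIRM) is immediate.
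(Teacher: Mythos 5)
Your proposal is correct and follows essentially the same route the paper takes: the statement is cited from \cite[Corollary 1.8.15]{buekenhout2013diagram}, and the paper's own adaptation of that argument (given for the thinness variant in Theorem~\ref{thm:CosetThin}) likewise uses flag-transitivity together with Proposition~\ref{prop:FTequivs}(\ref{prop:FTequivs:itm:Isomorphism}) to identify every corank-one residue of cotype $j$ with $\Gamma(G_{I\setminus\{j\}},(G_I))$ and then reads off the element count as the index $[G_{I\setminus\{j\}}:G_I]$. No gaps.
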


\begin{thm}\label{thm:CosetThin}\cite[Corollary 1.8.15]{buekenhout2013diagram}
     Suppose that $\Gamma = (G,(G_i)_{i\in I})$ is a coset incidence system over
the finite set $I$ on which $G$ acts flag-transitively. Then $\Gamma$ is a thin geometry if and only if
\begin{equation}\tag{THIN}
    [G_{I\setminus \{j\} } : G_I] = 2 \textnormal{ for each }j \in I.
\end{equation}
\end{thm}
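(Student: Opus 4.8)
The plan is to reduce the statement directly to a count of the elements in a corank-one residue, exactly mirroring the proof of Theorem~\ref{thm:CosetFIRM}. Write $n = |I|$, which is finite. A flag of corank one has $n-1$ elements, so its type is $I \setminus \{j\}$ for a unique $j \in I$. Since $G$ acts flag-transitively on $\Gamma$, which is then a geometry by Theorem~\ref{thm:cosetFT}, every flag of type $I \setminus \{j\}$ lies in the $G$-orbit of the standard flag $F_j := \{G_i \mid i \in I \setminus \{j\}\}$; as $G$ acts by automorphisms, residues of flags in a single orbit are isomorphic incidence systems. Hence $\Gamma$ is thin if and only if the residue $\Gamma_{F_j}$ has exactly two elements for every $j \in I$.

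The next step is to identify $\Gamma_{F_j}$. An element incident to $G_i$ and of type $i$ must equal $G_i$, hence lies in $F_j$ and is excluded from the residue; so every element of $\Gamma_{F_j}$ has type $j$, and $\Gamma_{F_j}$ is a rank-one incidence system. Invoking the equivalence between flag-transitivity and the existence of the residue isomorphisms $\phi_J$ from Proposition~\ref{prop:FTequivs} in the case $J = I \setminus \{j\}$ (so that $I \setminus J = \{j\}$), we obtain an isomorphism $\Gamma(G_{I\setminus\{j\}}, (G_I)) \cong \Gamma_{F_j}$. The elements of the rank-one coset incidence system $\Gamma(G_{I\setminus\{j\}}, (G_I))$ are the left cosets $aG_I$ with $a \in G_{I\setminus\{j\}}$, so their number is precisely the index $[G_{I\setminus\{j\}} : G_I]$.

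Combining the two observations yields the claim: $\Gamma$ is thin if and only if $[G_{I\setminus\{j\}} : G_I] = 2$ for every $j \in I$. Equivalently, one may present it as the firmness argument sharpened: thinness implies firmness, so Theorem~\ref{thm:CosetFIRM} already gives $G_{I\setminus\{j\}} \neq G_I$, i.e.\ the index is at least $2$, and the residue count above forces it to equal $2$; conversely, an index of $2$ makes every corank-one residue consist of exactly two elements, which is thinness. I do not expect any genuine obstacle here — the statement really is an almost immediate corollary. The only points requiring a little care are the reduction to the standard corank-one flag via flag-transitivity, and the correct bookkeeping of the index set ($I \setminus \{j\}$ versus its singleton complement $\{j\}$) when applying the residue isomorphism of Proposition~\ref{prop:FTequivs}.
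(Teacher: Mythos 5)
Your argument is correct and is essentially the paper's proof: both reduce, via flag-transitivity, to the standard corank-one residue and identify it with the rank-one coset incidence system $(G_{I\setminus\{j\}},(G_I))$ using Proposition~\ref{prop:FTequivs}(\ref{prop:FTequivs:itm:Isomorphism}), whose element count is the index $[G_{I\setminus\{j\}}:G_I]$. The paper states this more tersely, but the substance is the same.
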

\begin{proof}
    The proof of \cite[Corollary 1.8.15]{buekenhout2013diagram} can be adapted easily to this case. Since $G$ acts flag-transitively on $\Gamma = (G,(G_i)_{i\in I})$, we have that the residues of co-type $j$ in $\Gamma$ are isomorphic to $(G_{I\setminus \{j\} }, (G_I))$ (see Proposition~\ref{prop:FTequivs}(\ref{prop:FTequivs:itm:Isomorphism})). Therefore, these residues contain exactly two elements if and only if $[G_{I\setminus \{j\} } : G_I] = 2$.
\end{proof}

\begin{prop}\label{prop:FiniteCosetGeom}
    Let $\Gamma=(G,(G_i)_{i\in I})$ be a coset incidence system.
    Suppose $G$ is flag-transitive on $\Gamma$.
    Then $\Gamma$ is finite if and only if $[G:G_I]$ is finite.
\end{prop}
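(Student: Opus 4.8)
The plan is to show that the incidence graph of $\Gamma$ has finitely many vertices if and only if $[G:G_I]$ is finite, using flag-transitivity to control the number of elements of each type. First I would recall that by definition $\Gamma$ is finite precisely when its incidence graph is finite, and since $I$ is (implicitly) finite, the vertex set $X$ is a finite union $X = \bigcup_{i\in I} X_i$, where $X_i$ is the set of elements of type $i$, i.e.\ the set of left cosets $gG_i$. Thus $\Gamma$ is finite if and only if each $X_i$ is finite, i.e.\ if and only if $[G:G_i]<\infty$ for every $i\in I$.

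Next I would reduce everything to the single index $[G:G_I]$. The easy direction is that if $[G:G_I]$ is finite, then since $G_I = G_i \cap \bigcap_{j\neq i} G_j \leq G_i \leq G$, we have $[G:G_i] \leq [G:G_I] < \infty$ for each $i\in I$, so every $X_i$ is finite and hence $\Gamma$ is finite; note this direction does not even need flag-transitivity. For the converse, suppose $\Gamma$ is finite, so $[G:G_i]<\infty$ for all $i\in I$. Since $I$ is finite and $G_I = \bigcap_{i\in I} G_i$ is a finite intersection of finite-index subgroups, Poincaré's theorem on intersections of finite-index subgroups gives $[G:G_I] = [G : \bigcap_{i\in I} G_i] < \infty$. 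This already closes the argument, and again flag-transitivity is not strictly needed.

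I expect there is essentially no obstacle here: the statement is a direct consequence of the elementary facts that a finite intersection of finite-index subgroups has finite index, and that the index $[G:H]$ is exactly the number of left cosets of $H$, which is the number of elements of the corresponding type in the coset incidence system. The only point that requires a word of care is the standing assumption that the type set $I$ is finite (otherwise $X$ could be an infinite union of finite sets, or $G_I$ an infinite intersection); this is consistent with the conventions already in force in the excerpt, where Theorems~\ref{thm:CosetRC} and~\ref{thm:CosetThin} are stated over a finite set $I$. If one wanted to lean on flag-transitivity, one could alternatively invoke Proposition~\ref{prop:FTequivs}(\ref{prop:FTequivs:itm:Isomorphism}) to identify residues with smaller coset geometries and induct on $|I|$, peeling off one type at a time, but the direct counting argument via Poincaré's theorem is shorter and I would present that.
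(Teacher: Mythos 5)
Your proof is correct, but it takes a genuinely different route from the paper. The paper argues via chambers: flag-transitivity gives transitivity on the set $\mathcal{C}$ of chambers, the stabilizer of the base chamber is $G_I$, so the orbit-stabilizer theorem identifies $\mathcal{C}$ with $G/G_I$, and one then observes that $\Gamma$ is finite if and only if $\mathcal{C}$ is finite (the nontrivial half of that observation uses that $\Gamma$ is a geometry, so every element extends to a chamber and $|X|\leq |I|\cdot|\mathcal{C}|$). You instead count elements type by type, noting that $\Gamma$ is finite iff $[G:G_i]<\infty$ for every $i$, and then pass between the $G_i$ and $G_I=\bigcap_i G_i$ using $G_I\leq G_i$ in one direction and Poincar\'e's theorem on finite intersections of finite-index subgroups in the other. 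Your argument is more elementary and, as you point out, does not use flag-transitivity at all, so it proves the statement for an arbitrary coset incidence system over a finite type set; the paper's argument buys the sharper quantitative fact that the number of chambers is exactly $[G:G_I]$, which is what the hypothesis of flag-transitivity is really being used for. You are also right to flag finiteness of $I$ as the one essential standing assumption: the statement fails for infinite $I$ (e.g.\ $G_i=G$ for all $i$), and both proofs need it.
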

\begin{proof}
    Since $G$ acts flag-transitively on $\Gamma$, $G$ also acts transitively on the set $\mathcal{C}$ of chambers of $\Gamma$. Moreover, as $G_I$ is the stabilizer of a chamber under this action, we can always identify the set $\mathcal{C}$ with the set $G/G_I$ of cosets of $G_I$ in $G$. 
    Notice also that $\Gamma$ is finite if and only if $\mathcal{C}$ is finite. 
    Indeed, suppose first that $\Gamma$ is finite.
    Therefore, by the orbit-stabilizer theorem, we get that $|\mathcal{C}| = |G / G_I| = [G : G_I]$. Conversely, if $[G : G_I]$ is finite, so is $\mathcal{C}$ as we can identify this set with the cosets $G/G_I$.
\end{proof}

From now on, we will say that a coset incidence system $\Gamma$ is $(X)$, with $X\in\{$RC1,RC2,FIRM,THIN$\}$, whenever its parabolic subgroups satisfy the group theoretical condition $(X)$. Beware that satisfying the property $(X)$ does not in general guarantee that the geometry $\Gamma$ is residually-connected, firm or thin, respectively. This only holds for coset geometries on which $G$ acts flag-transitively. Indeed, if the group $G$ does not encode all symmetries of $\Gamma$, it would be rather optimistic to expect $G$ to describe $\Gamma$ accurately.

\subsection{Coset complex}

Here, we will follow the notation given by Godelle and Paris in~\cite{ParisGodelle2012} for simplicial complexes.
Let $\KK$ be an (abstract) simplicial complex, and let $V$ be its set of vertices. A subset $F$ of $V$ is called a \emph{presimplex} if $\{u,v\}$ is an edge (i.e. a 1-simplex) for all $u,v \in F$, with $u\neq v$. We
say that $\KK$ is a \emph{flag complex} if every presimplex is a simplex.

It is a very common mantra to say that groups are best studied by understanding their actions on some spaces. In the context of coset incidence systems, there is a natural space that we can use. 
Consider a coset incidence system $\Gamma=(G,(G_i)_{i\in I})$, and consider the (abstract) simplicial complex $\KK=\KK(\Gamma)$, where its vertex set is the set of elements of $\Gamma$, i.e. $V(\KK)= \{gG_i\mid g\in G, i\in I\}$, the union of all the cosets $G/G_i$, for all $i\in I$.
A subset $\{g_1 G_{i_1}, \cdots, g_n G_{i_n} \}$ of vertices forms a simplex of $\mathcal{K}$ if $g_1 G_{i_1} \cap \cdots \cap g_n G_{i_n} \neq \emptyset$.
Notice that the set of all presimplices of $\KK(\Gamma)$ corresponds to the set of flags of $\Gamma$. In fact, checking if a subset of vertices is a presimplex turns out to be simply checking pairwise incidence inside this subset of cosets. Moreover, we have the following result.

\begin{prop}\label{prop:FComp:equiv:FT}
    Let $\Gamma=(G,(G_i)_{i\in I})$ be a coset incidence system. The simplicial complex $\KK(\Gamma)$ is a flag complex if and only if $G$ is flag-transitive on $\Gamma$.
\end{prop}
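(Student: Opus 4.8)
The plan is to prove both implications by relating the defining condition of a flag complex (every presimplex is a simplex) directly to the combinatorial characterization of flag-transitivity already available, namely Proposition~\ref{prop:FTequivs}. Recall that, by the discussion preceding the statement, the presimplices of $\KK(\Gamma)$ are exactly the flags of $\Gamma$: a set $\{g_1G_{i_1},\dots,g_nG_{i_n}\}$ is a presimplex iff $g_aG_{i_a}\cap g_bG_{i_b}\neq\emptyset$ for all $a,b$, which is precisely the condition for this set of cosets to be a flag (note the types $i_a$ need not be distinct a priori, but pairwise incidence forces them to be, since incident elements have distinct types). On the other hand, $\{g_1G_{i_1},\dots,g_nG_{i_n}\}$ is a simplex of $\KK(\Gamma)$ iff $g_1G_{i_1}\cap\cdots\cap g_nG_{i_n}\neq\emptyset$. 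So $\KK(\Gamma)$ is a flag complex iff: whenever a finite family of cosets has pairwise nonempty intersection, the whole family has nonempty common intersection.

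For the direction ``$G$ flag-transitive $\Rightarrow$ $\KK(\Gamma)$ is a flag complex'': take a presimplex $F=\{g_1G_{i_1},\dots,g_nG_{i_n}\}$, so the $g_jG_{i_j}$ pairwise intersect. Without loss of generality the types $i_1,\dots,i_n$ are distinct (pairwise incidence forces this), so set $J=\{i_1,\dots,i_n\}$. Then condition~(\ref{prop:FTequivs:itm:FTPassiniProp}) of Proposition~\ref{prop:FTequivs} applies verbatim: there is $g\in G$ with $\bigcap_{j} g_jG_{i_j}=gG_J$, which is in particular nonempty, so $F$ is a simplex. (Alternatively, one could iterate condition~(\ref{prop:FTequivs:itm:FTIntersectionCosets}) for three cosets, but quoting~(\ref{prop:FTequivs:itm:FTPassiniProp}) is cleanest.)

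For the converse ``$\KK(\Gamma)$ is a flag complex $\Rightarrow$ $G$ flag-transitive'': I will verify condition~(\ref{prop:FTequivs:itm:FTPassiniProp}). Let $J\subseteq I$ be nonempty and $(g_j)_{j\in J}$ a family with $g_iG_i\cap g_jG_j\neq\emptyset$ for all $i,j\in J$. Then $\{g_jG_j\mid j\in J\}$ is a presimplex, hence a simplex, so $\bigcap_{j\in J}g_jG_j\neq\emptyset$; pick $g$ in this intersection. Then $g\in g_jG_j$ for each $j$, so $gG_j=g_jG_j$, hence $\bigcap_{j\in J}g_jG_j=\bigcap_{j\in J}gG_j=g\bigcap_{j\in J}G_j=gG_J$, which is exactly what~(\ref{prop:FTequivs:itm:FTPassiniProp}) demands. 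By Proposition~\ref{prop:FTequivs}, $G$ is flag-transitive on $\Gamma$. The only mild subtlety worth spelling out is the reduction to distinct types in the presimplex (so that $J$ is an honest subset of $I$ and $G_J$ makes sense as written); this is immediate from condition~(4) in the definition of an incidence system, since incident distinct elements have distinct types, and if two of the cosets have the same type and intersect they must be equal. I do not anticipate a serious obstacle here—the content is entirely in Proposition~\ref{prop:FTequivs}, and this proof is essentially a translation of its condition~(\ref{prop:FTequivs:itm:FTPassiniProp}) into the language of flag complexes.
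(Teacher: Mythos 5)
Your proof is correct and takes essentially the same approach as the paper: both directions translate the flag-complex condition into the coset-intersection characterizations of Proposition~\ref{prop:FTequivs}, with the forward direction quoting condition~(\ref{prop:FTequivs:itm:FTPassiniProp}) exactly as the paper does. The only difference is in the converse, where you verify condition~(\ref{prop:FTequivs:itm:FTPassiniProp}) directly while the paper verifies condition~(\ref{prop:FTequivs:itm:FTIntersectionCosets}); your choice is in fact slightly cleaner, since it avoids the paper's reduction to the case of pairwise disjoint type sets.
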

\begin{proof}
    Consider first that $G$ is flag-transitive in $\Gamma$.
    Let $F=\{g_1 G_{i_1}, \cdots, g_n G_{i_n} \}$ be an arbitrary flag of $\Gamma$. Being a flag, we have that this determines a presimplex of $\KK(\Gamma)$.
    By Proposition~\ref{prop:FTequivs}(\ref{prop:FTequivs:itm:FTPassiniProp}), we have that 
    $g_1 G_{i_1} \cap \cdots \cap g_n G_{i_n} \neq \emptyset$. Hence every presimplex of $\KK(\Gamma)$ is a simplex.
    
    Suppose now that $\KK(\Gamma)$ is a flag complex. Hence, for every flag $\{g_1 G_{i_1}, \cdots, g_n G_{i_n} \}$ of $\Gamma$, we have that $g_1 G_{i_1} \cap \cdots \cap g_n G_{i_n} \neq \emptyset$.
    We will prove that this implies the condition $(\ref{prop:FTequivs:itm:FTIntersectionCosets})$ of Proposition~\ref{prop:FTequivs}. In Proposition~\ref{prop:FTequivs}$(\ref{prop:FTequivs:itm:FTIntersectionCosets})$, we can consider, without loss of generality, that the sets $J,H,K$ are disjoint. Indeed, suppose that $i \in J \cap H$ for some $i \in I$. Then, either $fG_i = gG_i$ or $fG_i$ is disjoint from $gG_i$. In the first case, we can choose to remove $i$ from $H$ (or $J$) without altering any of the intersections appearing in the statement. In the second case, we will have that $fG_J \cap gG_H = \emptyset$, which would make $fG_J$ and $gG_H$ non-incident, a contradiction. Hence, since we can consider $J,H,K$ to be disjoint, we can look at the set $\{f G_j : j \in J\} \cup \{g G_h : h \in H\} \cup\{h G_k : k \in K \} $ of vertices of $\mathcal{K}$. The assumption of Proposition~\ref{prop:FTequivs}$(\ref{prop:FTequivs:itm:FTIntersectionCosets})$ implies then that this set will form a presimplex. Hence, since $\mathcal{K}$ is a flag complex, we have that the desired intersection is non-empty.
\end{proof}

\subsection{Coxeter and Artin-Tits groups}\label{sec:back:subsec:CoxArtGroupComplex}

Our main source of examples in this article will come from groups known as Coxeter and Artin-Tits groups. We give their definition here and explain the standard way to obtain coset incidence systems from Coxeter and Artin-Tits systems.

A group $W$ is said to be a \emph{Coxeter group} if $W$ admits a presentation of the form
$$\langle r_1,r_2,\ldots,r_n\mid (r_ir_j)^{m_{i,j}}=1 \rangle,$$
where, for $i\neq j$, $m_{i,j}=m_{j,i}\geq 2$ is either an integer or $\infty$, and $m_{i,i}=1$ for all $i =1,\cdots, n$. Such a presentation will be called a \textit{Coxeter presentation} for $W$.
Let $W$ be a Coxeter group and let $S= \{r_1,\cdots, r_n\}$ be a set of generators in a Coxeter presentation for $W$. The pair $(W,S)$ is called a \emph{Coxeter system}. For $J\subseteq \{1,\ldots,n\}=I$, let $W^{(J)}=\langle r_i\mid i\in J\rangle$ be the subgroup of $W$ generated by the generators with index in $J$.
\begin{prop}\cite[Chapter 4, Section §1,Theorem 2; Chapter 4, Exercise §1.1.b)]{Bourbaki2006}\label{prop:CoxeterProperties}
    Let $(W,S)$ be a Coxeter system.
    Then
    \begin{enumerate}
            \item For $J\subseteq \{1,\ldots,n\}$, the subgroup $W^{(J)}$ together with the set $S^J=\{r_j\mid j\in J \}$
            forms a Coxeter system $(W^{(J)},S^J)$. 
            \item For $J,K\subseteq \{1,\ldots,n\}$, we have that $W^{(J)} \cap W^{(K)} =W^{(J\cap K)}$;
            \item For $J,K,L\subseteq \{1,\ldots,n\}$, we have that $(W^{(J)}W^{(K)})\cap (W^{(J)}W^{(L)})=W^{(J)}(W^{(K)}\cap W^{(L)})$.
    \end{enumerate}
\end{prop}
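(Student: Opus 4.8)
The plan is to prove Proposition~\ref{prop:CoxeterProperties} by reducing parts (b) and (c) to standard facts about parabolic subgroups of Coxeter groups, and then invoking the general machinery already set up in this section. Part (a) is the classical theorem that standard parabolic subgroups of a Coxeter system are themselves Coxeter systems; I would simply cite Bourbaki for it, since reproving it from scratch (via the deletion condition or via the geometric representation) would be a substantial detour. The real content to organize is parts (b) and (c), and the cleanest route is to observe that they are exactly the residual-connectedness and flag-transitivity conditions for the coset incidence system $\Gamma(W,(W^{(I\setminus\{i\})})_{i\in I})$ built from the minimal parabolics — so the work is to translate Bourbaki's statements into the language of Lemmas~\ref{lem:RCequivs} and Proposition~\ref{prop:FTequivs}.

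First I would establish part (b), $W^{(J)}\cap W^{(K)}=W^{(J\cap K)}$. The inclusion $\supseteq$ is immediate since $J\cap K\subseteq J$ and $J\cap K\subseteq K$. For $\subseteq$, the key fact is the standard intersection lemma for parabolic subgroups of Coxeter groups: if $w\in W^{(J)}\cap W^{(K)}$, then writing $w$ as a reduced word and using the fact that the set of generators appearing in any reduced expression for $w$ depends only on $w$ (a consequence of Tits' solution to the word problem, or of the exchange/deletion condition), this set of generators must lie in both $J$ and $K$, hence in $J\cap K$. I would cite Bourbaki Chapter 4 for this and spell out only the one-line deduction. This is precisely condition~(\ref{lem:RCequivs:itm:Intersection}) of Lemma~\ref{lem:RCequivs} with $G^i$ replaced by $W^{(I\setminus\{i\})}$ — more precisely, setting $W^i := W^{(I\setminus\{i\})}$, one has $W^{(J)}=\langle W^i \mid i\notin J\rangle$ is false in general, so I should be a bit careful: the honest statement is just that (b) is the assertion that the lattice of standard parabolics embeds in the Boolean lattice of subsets of $I$, which is the cited Bourbaki exercise.

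Then for part (c), the identity $(W^{(J)}W^{(K)})\cap(W^{(J)}W^{(L)})=W^{(J)}(W^{(K)}\cap W^{(L)})$, I would deduce it from part (b) together with the flag-transitivity of $W$ on its Coxeter complex. Concretely: the coset incidence system $\Gamma=(W,(W^{(I\setminus\{i\})})_{i\in I})$ is flag-transitive — this is the classical fact that $W$ acts flag-transitively on its Coxeter complex, provable group-theoretically by verifying condition~(\ref{prop:FTequivs:itm:FTBueken}) of Proposition~\ref{prop:FTequivs} via the deletion condition, or citable from Bourbaki/Tits. Given flag-transitivity, condition~(\ref{prop:FTequivs:itm:FTIntersectionOfProducts}) of Proposition~\ref{prop:FTequivs} states exactly $(G_JG_H)\cap(G_JG_K)=G_J(G_H\cap G_K)$ for parabolic subgroups $G_J$; translating back through the dictionary $G_J \leftrightarrow W^{(I\setminus J)}$ and using part (b) to identify intersections of standard parabolics with standard parabolics gives precisely (c). So the logical skeleton is: cite (a) and the intersection lemma from Bourbaki; cite (or re-derive via the deletion condition) flag-transitivity of $W$ on its Coxeter complex; then (b) $=$ the intersection lemma, and (c) $=$ Proposition~\ref{prop:FTequivs}(\ref{prop:FTequivs:itm:FTIntersectionOfProducts}) applied to this flag-transitive system.

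The main obstacle is a bookkeeping one rather than a deep one: matching the indexing conventions. Our coset-geometry framework labels maximal parabolics $G_i$ and builds $G_J=\bigcap_{i\in J}G_i$, whereas the Coxeter-group convention here labels $W^{(J)}=\langle r_i\mid i\in J\rangle$, so the dictionary is $G_J \leftrightarrow W^{(I\setminus J)}$, i.e. $G_i = W^{(I\setminus\{i\})}$ and $G_J = W^{(I\setminus J)}$. Under this correspondence $G_J\cap G_K = W^{(I\setminus J)}\cap W^{(I\setminus K)}$, which by the intersection lemma equals $W^{(I\setminus(J\cup K))} = G_{J\cup K}$ — consistent with $G_J\cap G_K = G_{J\cup K}$ as required. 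I would state this translation once, explicitly, before invoking Proposition~\ref{prop:FTequivs}, so that the reader is not left to untangle which "$J$" is which. Everything else is then a direct citation plus a short verification that the Bourbaki statements are literally the hypotheses of the lemmas quoted above.
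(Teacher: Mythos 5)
The paper gives no proof of this proposition at all: all three parts are imported wholesale from Bourbaki (Ch.~IV, \S 1, Theorem~2 for (a)--(b) and the accompanying exercise for (c)). Your proposal is therefore not wrong so much as it is a different allocation of labor: you cite (a) and (b) but re-derive (c) internally, from the classical flag-transitivity of $W$ on its Coxeter complex together with the equivalence (\ref{prop:FTequivs:itm:IsFT})$\Leftrightarrow$(\ref{prop:FTequivs:itm:FTIntersectionOfProducts}) of Proposition~\ref{prop:FTequivs} and the dictionary $W_J=\bigcap_{j\in J}W_j=W^{(I\setminus J)}$ supplied by (b). That translation is correct, and it does buy something: it makes visible that (b) and (c) are exactly the residual-connectedness and flag-transitivity certificates for the standard coset geometry of $(W,S)$, which is the whole reason the paper records them.

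Two cautions. First, within this paper's own architecture the implication runs the other way: Theorem~\ref{thm:Artin_Coxeter_CG}(a) \emph{deduces} flag-transitivity of the Coxeter coset geometry \emph{from} item (c) of this proposition. So your derivation of (c) is only non-circular if the flag-transitivity you feed into Proposition~\ref{prop:FTequivs} comes from an external, independent source (the geometric/building-theoretic fact that $W$ acts strongly transitively on its Coxeter complex, as in Bourbaki or Tits), and not from anything downstream in this paper; you should say so explicitly and drop the alternative of ``verifying condition (\ref{prop:FTequivs:itm:FTBueken}) via the deletion condition,'' since that verification is essentially the same double-coset computation as proving (c) directly and therefore saves nothing. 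Second, your aside in the treatment of (b) --- the false start about $W^{(J)}=\langle W^i\mid i\notin J\rangle$ --- should be cleaned up: the correct identities, both consequences of the intersection lemma, are $W^{(J)}=\langle W^j\mid j\in J\rangle$ with $W^j=\langle r_j\rangle$ and $W_J=W^{(I\setminus J)}$, exactly as the paper records in the paragraph following the proposition. With those two repairs the argument is sound, though for a result this standard the paper's choice to cite rather than re-derive is the more economical one.
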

Notice that, although the definition of $W^{(J)}=\langle r_i\mid i\in J\rangle$ does not match our usual $W^J=\langle W^j\mid j\in J\rangle$, by Proposition~\ref{prop:CoxeterProperties}$(b)$, we can easily prove that these are equivalent.
Let the maximal parabolic subgroups be $W_i = \langle r_j\mid j\in I\setminus\{i\}\rangle=W^{(I\setminus\{i\})}$. For $j\in I$, the minimal parabolic subgroup $W^j = \cap_{i\in I\setminus\{j\}} W^{(I\setminus\{i\})} = \langle r_j\rangle$, hence the definition of $W^J$ as $\langle W^j\mid j\in J\rangle$ is equivalent to $\langle r_j\mid j\in J\rangle$, so that, in this case $W^J=W^{(J)}$. 

A group $A$ is said to be an \emph{Artin-Tits group} if $A$ admits a presentation of the form
$$\langle a_1,a_2,\ldots,a_n\mid (a_ia_j)_{m_{i,j}}=(a_ja_i)_{m_{j,i}} \rangle,$$
where, for $i\neq j$, $m_{i,j}=m_{j,i}\geq 2$ is either an integer or $\infty$, and 
 \begin{equation*}
       (a_ia_j)_{m_{i,j}} =
        \begin{cases}
            \underbrace{a_ia_ja_i\ldots a_j }_{m_{i,j}\textnormal{ factors}}, \textnormal{ if } m_{i,j} \textnormal{ is even } \\
            \underbrace{a_ia_ja_i\ldots a_i }_{m_{i,j}\textnormal{ factors}}, \textnormal{ if } m_{i,j} \textnormal{ is odd }
        \end{cases}
    \end{equation*}

Notice that, contrary to the case of Coxeter groups, the generators $a_i$ have infinite order. A presentation of this form is called an \textit{Artin-Tits} presentation.
Let $A$ be an Artin-Tits group and $S$ be a set of generators of $A$ from an Artin-Tits presentation for $A$. The pair $(A,S)$ is called an \emph{Artin system}. For $J\subseteq \{1,\ldots,n\}$, let $A^{(J)}=\langle a_i\mid i\in J\rangle$ be the subgroup of $A$ generated by the generators whose index is in $J$.

    \begin{thm}\cite[Chapter II, Theorem 4.13]{van1983homotopy}\cite[Proposition 4.5]{ParisGodelle2012}\label{thm:ArtinProperties}
        Let $(A,S)$ be an Artin system, where $A$ is the Artin-Tits group, and $S=\{a_1,\ldots,a_n\}$ is the set of generators.
        Then
        \begin{enumerate}
            \item For $J\subseteq \{1,\ldots,n\}$, the subgroup $A^{(J)}$ together with the set $S^J=\{s_j\mid j\in J \}$
            forms an Artin system $(A^{(J)},S^J)$. 
            \item For $J,K\subseteq \{1,\ldots,n\}$, we have that $A^{(J)} \cap A^{(K)} =A^{(J\cap K)}$;
            \item For $J,K,L\subseteq \{1,\ldots,n\}$, we have that $(A^{(J)}A^{(K)})\cap (A^{(J)}A^{(L)})=A^{(J)}(A^{(K)}\cap A^{(L)})$.
        \end{enumerate}
    \end{thm}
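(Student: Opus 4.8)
The plan is to mirror Proposition~\ref{prop:CoxeterProperties}, transferring as much as possible from the Coxeter group to the Artin--Tits group via the canonical surjection $\pi\colon A\to W$ onto the Coxeter group $W$ with the same Coxeter matrix (defined by $a_i\mapsto r_i$), which restricts to a surjection $A^{(J)}\to W^{(J)}$ for every $J$. The one step that has no elementary combinatorial analogue is (a), and I expect it to be the main obstacle; I would take it essentially from van der Lek and build (b) and (c) on top of it by induction on $|I|$ together with the corresponding Coxeter facts.

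\emph{Part (a).} Write $A_J$ for the abstract Artin--Tits group on the generating set $S^J$ with Coxeter data $(m_{i,j})_{i,j\in J}$, and $\iota_J\colon A_J\to A$ for the homomorphism induced by $S^J\hookrightarrow S$. Since $\iota_J(A_J)=A^{(J)}$ by construction, (a) is exactly the injectivity of $\iota_J$. The naive retraction $A\to A_J$ killing the generators outside $J$ is a homomorphism only when every $m_{i,j}$ with $i\in J$, $j\notin J$ is even, so it does not suffice in general. Instead I would use the topological model: realize $A$ as the fundamental group of the quotient by $W$ of the complement of the reflection hyperplane arrangement of $(W,S)$ (equivalently, of the associated Salvetti-type complex), so that $\iota_J$ becomes the map on $\pi_1$ induced by including the sub-object cut out by the mirrors indexed by $J$; that inclusion deformation-retracts onto the corresponding stratum, hence is $\pi_1$-injective. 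This is \cite[Chapter~II, Theorem~4.13]{van1983homotopy}, and I am not aware of a purely combinatorial proof in full generality. (For intuition: the monoid analogue $A^{+}_{(J)}=A^{+}\cap A_J^{+}$ is easy, since the set of generators occurring in a positive word is invariant under the braid relations; but the passage from the Artin monoid to the Artin group needs precisely the control the topological model provides.)

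\emph{Part (b).} The inclusion $A^{(J\cap K)}\subseteq A^{(J)}\cap A^{(K)}$ is clear; I would prove the reverse by induction on $|I|$. If $J\cup K\subsetneq I$ then, by (a), the groups $A^{(J)}$, $A^{(K)}$ and $A^{(J\cap K)}$ all lie inside the strictly smaller Artin--Tits group $A^{(J\cup K)}$, and we are done by induction; so assume $J\cup K=I$ (and $J,K\subsetneq I$, the claim being trivial otherwise). Given $g\in A^{(J)}\cap A^{(K)}$, Proposition~\ref{prop:CoxeterProperties}(b) gives $\pi(g)\in W^{(J)}\cap W^{(K)}=W^{(J\cap K)}$; choosing $h\in A^{(J\cap K)}$ with $\pi(h)=\pi(g)$ and replacing $g$ by $gh^{-1}$ reduces us to the case $g\in\ker\pi$. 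It then remains to show that a colored (pure) element lying in both $A^{(J)}$ and $A^{(K)}$ lies in $A^{(J\cap K)}$ --- the genuinely Artin-theoretic point --- which I would obtain from van der Lek's description of $\ker\pi$ as the $\pi_1$ of the full arrangement complement, where the subcomplexes for $J$ and $K$ meet precisely in the subcomplex for $J\cap K$ and a covering-space argument identifies $\pi_1$ of the intersection with the intersection of the $\pi_1$'s; equivalently, one can invoke the Godelle--Paris ``ribbon'' groupoid relating the conjugates $wA^{(J)}w^{-1}$.

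\emph{Part (c).} The inclusion $\supseteq$ is immediate. For $\subseteq$, write an element of the left-hand side as $g=xk=yl$ with $x,y\in A^{(J)}$, $k\in A^{(K)}$, $l\in A^{(L)}$; then $y^{-1}x=lk^{-1}\in A^{(J)}\cap A^{(K\cup L)}$, which equals $A^{(J\cap(K\cup L))}$ by (a) and (b). Substituting back only recovers $g\in A^{(J)}A^{(K)}$, so one more step is needed: projecting $g=xk=yl$ under $\pi$ and applying Proposition~\ref{prop:CoxeterProperties}(c) produces a factorization $\pi(g)=\pi(x_0)\,\pi(m_0)$ with $\pi(x_0)\in W^{(J)}$ and $\pi(m_0)\in W^{(K)}\cap W^{(L)}=W^{(K\cap L)}$; lifting $m_0$ to $A^{(K\cap L)}$, absorbing the correction term into $\ker\pi$, and feeding the difference through the colored-element argument of part~(b) yields $g\in A^{(J)}A^{(K\cap L)}=A^{(J)}\bigl(A^{(K)}\cap A^{(L)}\bigr)$. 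This last reduction is the most delicate bookkeeping of the three, but it is routine once the kernel control from (b) is in place, and it is precisely what is done in \cite[Proposition~4.5]{ParisGodelle2012}. (Incidentally, (b) and (c) are nothing but the group-theoretic conditions of Lemma~\ref{lem:RCequivs}(\ref{lem:RCequivs:itm:Intersection}) and Proposition~\ref{prop:FTequivs}(\ref{prop:FTequivs:itm:FTIntersectionOfProducts}), so they will later yield residual connectedness and flag-transitivity of the Artin--Tits coset incidence systems.)
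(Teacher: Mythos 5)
The paper offers no proof of this statement: it is imported verbatim from the literature, with (a) and (b) attributed to \cite[Chapter II, Theorem 4.13]{van1983homotopy} and (c) to \cite[Proposition 4.5]{ParisGodelle2012} (as the acknowledgments confirm for the flag-transitivity condition (c)). Your proposal correctly identifies exactly these two sources as the load-bearing ingredients and gives a faithful sketch of how they are proved there — the $\pi_1$-injectivity of the subarrangement complement for (a), the reduction modulo the Coxeter quotient for (b), and the Godelle--Paris bookkeeping for (c) — so it is, in substance, the same approach the paper takes, just with the citations unpacked.
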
 

As for Coxeter groups, the subgroups $A^J = \langle A^j\mid j\in J\rangle$ and $A^{(J)}$ match due to condition $(b)$ of Theorem~\ref{thm:ArtinProperties}.
Let $\mathcal{A} = (A,S)$ be either a Coxeter system or an Artin-Tits system with $S = \{  a_1, \cdots, a_n \}$. We can naturally define a system $(A_i)_{i \in I}$ of subgroups of $A$ over the type set $I = \{1,\cdots,n\}$ by setting $A_i$ to be the subgroup of $A$ generated by all the generators except for $a_i$. We can then always form the coset incidence system $\alpha =(A,(A_i)_{i \in I})$. 
Moreover, due to the above results, we have that $A_J = \langle r_i \mid i\in I\setminus J \rangle$ and $A^J = \langle r_i\mid i\in J \rangle$, for $J\subseteq I$.
It turns out that $A$ always acts flag-transitively on $\alpha$, that $\alpha$ is residually connected and that $\alpha$ is thin if $A$ is a Coxeter group and $\alpha$ is thick if $A$ is an Artin-Tits group.

\begin{thm}\label{thm:Artin_Coxeter_CG}
    Let $\mathcal{A}=(A,S)$ be either a Coxeter or an Artin-Tits system, where $S=\{a_i\mid i\in I\}$, with $I=\{1,\ldots,n\}$.
    If $\alpha=(A,(A_i)_{i\in I})$ is the coset incidence system, with $A_i=\langle a_j \mid j\in I\setminus\{i\}\rangle$, then
    \begin{enumerate}
        \item $A$ acts flag-transitively on $\alpha$;
        \item $\alpha$ is a geometry;
        \item $\alpha$ is residually connected;
        \item $\alpha$ is thin if $\mathcal{A}$ is a Coxeter system and $\alpha$ is thick if $\mathcal{A}$ is an Artin-Tits system;
        \item $\KK(\alpha)$ is a flag-complex.
    \end{enumerate}

\end{thm}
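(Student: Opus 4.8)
The plan is to deduce all five items from the structural results on Coxeter and Artin-Tits systems already recorded, namely Proposition~\ref{prop:CoxeterProperties} and Theorem~\ref{thm:ArtinProperties}, together with the translation machinery of Proposition~\ref{prop:FTequivs} and Lemma~\ref{lem:RCequivs}. The unifying observation is that both propositions give \emph{exactly} the same three facts about the subgroups $A^{(J)}$: that each $A^{(J)}$ with its restricted generating set is again a system of the same kind, that $A^{(J)} \cap A^{(K)} = A^{(J\cap K)}$, and that $(A^{(J)}A^{(K)}) \cap (A^{(J)}A^{(L)}) = A^{(J)}(A^{(K)} \cap A^{(L)})$. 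Since $A_i = A^{(I\setminus\{i\})}$ by construction, the parabolic subgroups of $\alpha$ are $A_J = \cap_{i\in J} A^{(I\setminus\{i\})} = A^{(I\setminus J)}$ (using the intersection property repeatedly), so that every parabolic of $\alpha$ is one of the $A^{(K)}$'s, and conversely. This is the bookkeeping step that makes everything else a direct citation.

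First I would establish item (a): flag-transitivity. By Proposition~\ref{prop:FTequivs}, it suffices to verify condition~(\ref{prop:FTequivs:itm:FTIntersectionOfProducts}), that $(G_JG_H)\cap(G_JG_K) = G_J(G_H\cap G_K)$ for all $J,H,K\subseteq I$. Translating through $A_J = A^{(I\setminus J)}$, this is precisely the third item of Proposition~\ref{prop:CoxeterProperties} (resp.\ Theorem~\ref{thm:ArtinProperties}), with the roles of the index sets complemented; the intersection $A_H \cap A_K = A^{(I\setminus H)}\cap A^{(I\setminus K)} = A^{(I\setminus(H\cup K))}$ is handled by item~(b) of those results. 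Item (b), that $\alpha$ is a geometry, is then immediate from the last sentence of Theorem~\ref{thm:cosetFT}. For item (c), residual connectedness, I would use Lemma~\ref{lem:RCequivs}, checking condition~(\ref{lem:RCequivs:itm:Intersection}): $A^J \cap A^K = A^{J\cap K}$. Since $A^J = A^{(J)}$ (as noted in the text, using item~(b) to identify $\langle A^j \mid j\in J\rangle$ with $\langle a_j\mid j\in J\rangle$), this is again exactly item~(b) of Proposition~\ref{prop:CoxeterProperties} / Theorem~\ref{thm:ArtinProperties}. Flag-transitivity plus this condition gives residual connectedness by the final clause of Lemma~\ref{lem:RCequivs}.

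For item (d), I would invoke Theorem~\ref{thm:CosetThin} (resp.\ its thick analogue) and compute the index $[A_{I\setminus\{j\}} : A_I]$. Here $A_{I\setminus\{j\}} = A^{(\{j\})} = \langle a_j\rangle$ and $A_I = A^{(\emptyset)} = \{1\}$, so the index equals the order of $a_j$: this is $2$ in the Coxeter case (generators are involutions, $m_{j,j}=1$) and infinite in the Artin-Tits case (generators have infinite order), giving thinness resp.\ thickness — for thickness one notes that a residue of corank one has $[A_{I\setminus\{j\}}:A_I] = \infty > 2$ elements, so in particular more than two. Finally, item (e) is immediate from item (a) combined with Proposition~\ref{prop:FComp:equiv:FT}, which states that $\KK(\alpha)$ is a flag complex precisely when $A$ is flag-transitive on $\alpha$.

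The only genuine subtlety — and the step I would be most careful about — is the index-complementation dictionary between the $A^{(J)}$-notation native to Proposition~\ref{prop:CoxeterProperties}/Theorem~\ref{thm:ArtinProperties} and the $A_J$, $A^J$ parabolic notation of the coset-geometry formalism. One must consistently track that $A_J$ corresponds to the generators \emph{outside} $J$ while $A^J$ corresponds to the generators \emph{inside} $J$, and that intersections of the $A_J$ translate to unions of index sets. Once that correspondence is pinned down, none of the five items requires any argument beyond citing an already-proved equivalence; the main obstacle is purely notational rather than mathematical.
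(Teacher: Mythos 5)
Your proposal is correct and follows essentially the same route as the paper: each item is reduced, via Proposition~\ref{prop:FTequivs}, Lemma~\ref{lem:RCequivs}, Theorem~\ref{thm:CosetThin} and Proposition~\ref{prop:FComp:equiv:FT}, to the intersection and product properties of the standard parabolic subgroups recorded in Proposition~\ref{prop:CoxeterProperties} and Theorem~\ref{thm:ArtinProperties}. The only cosmetic difference is which of the equivalent conditions in Lemma~\ref{lem:RCequivs} you verify (you use the intersection form $A^J\cap A^K=A^{J\cap K}$ where the paper uses $A_J=A^{I\setminus J}$), and your care with the index-complementation dictionary is exactly the point the paper also flags.
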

\begin{proof}
    (a) This is a direct consequence of the definition of the parabolic subgroups of $\alpha$, the item (c) in both Proposition~\ref{prop:CoxeterProperties} and Theorem~\ref{thm:ArtinProperties}, and Proposition~\ref{prop:FTequivs}.
    
    (b) A flag-transitive coset incidence system is always a geometry.
    
    (c) In both cases, it is known that $A_J = \langle a_i \mid i \in I \setminus J \rangle$. We can then immediately conclude by item (a), Lemma \ref{lem:RCequivs} (\ref{lem:RCequivs:itm:BottomUp}) and Theorem~\ref{thm:ArtinProperties}, for example.
    
    (d) Since $\alpha$ is flag-transitive, we have that the size of the residues of rank $1$ are the indices of $A^i$ in $A_I = \{1_A\}$. When $A$ is a Coxeter group, we have that $A^i = \langle a_i \rangle$ is of size two, while it is infinite when $A$ is an Artin-Tits group.

    (e) This is a direct consequence of item (a) and Proposition~\ref{prop:FComp:equiv:FT}.
\end{proof}

\subsection{Permutation of Maximal Parabolic Subgroups}\label{sec:pre:subsec:PermMaxPara}

We conclude this section by an elementary result that will be used in various places in the remainder of the article. Let $(G_i)_{i\in I}$ be a family of subgroups of $G$ and let $G_J = \cap_{i\in J}G_i$.
Consider $\phi : (G_i)_{i\in I} \rightarrow (G_i)_{i\in I}$ to be an automorphism of $G$ such that it sends subgroups in the family $(G_i)_{i\in I}$ to subgroups in the same family.
This extends to a permutation on the set of types $I$, $\phi_I: I\rightarrow I$, where $\phi(G_i)=G_{\phi_I(i)}$, for $i\in I$. 

\begin{lemma}\label{lem:lattice}
    $\phi(G_J)=\cap_{j\in J} \phi(G_j)$
\end{lemma}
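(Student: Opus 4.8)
The statement is that an automorphism $\phi$ of $G$ which permutes the family $(G_i)_{i \in I}$ also commutes with the intersection operation defining the parabolic subgroups, i.e. $\phi(G_J) = \bigcap_{j \in J}\phi(G_j)$. The key observation is that this is just the general fact that a bijection respects intersections, together with the fact that $\phi$ is an isomorphism (hence injective), so no genuine group theory is needed beyond bookkeeping. I would prove the two inclusions separately.

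\begin{proof}
Let $J \subseteq I$. For the inclusion $\phi(G_J) \subseteq \bigcap_{j \in J}\phi(G_j)$, take $x \in G_J = \bigcap_{i \in J} G_i$. Then $x \in G_j$ for every $j \in J$, so $\phi(x) \in \phi(G_j)$ for every $j \in J$, whence $\phi(x) \in \bigcap_{j \in J}\phi(G_j)$. This gives $\phi(G_J) \subseteq \bigcap_{j \in J}\phi(G_j)$, and here only the fact that $\phi$ is a function is used.

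For the reverse inclusion, let $y \in \bigcap_{j \in J}\phi(G_j)$. For each $j \in J$ there is an element $x_j \in G_j$ with $\phi(x_j) = y$. Since $\phi$ is an automorphism of $G$, it is injective, so all the $x_j$ coincide: there is a single element $x \in G$ with $\phi(x) = y$ and $x = x_j \in G_j$ for all $j \in J$. Hence $x \in \bigcap_{j \in J} G_j = G_J$, and therefore $y = \phi(x) \in \phi(G_J)$. Combining the two inclusions yields $\phi(G_J) = \bigcap_{j \in J}\phi(G_j)$.
\end{proof}

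The main (and only) subtlety worth flagging is that the nontrivial inclusion genuinely requires $\phi$ to be injective, not merely a homomorphism: if $\phi$ merely preserved the family set-theoretically without being injective, the preimages $x_j$ need not agree and the argument collapses. Since $\phi$ is assumed to be an automorphism this is automatic, so the proof is essentially immediate; in particular, note that combined with $\phi(G_i) = G_{\phi_I(i)}$ this yields the more usable form $\phi(G_J) = G_{\phi_I(J)}$, where $\phi_I(J) = \{\phi_I(j) : j \in J\}$.
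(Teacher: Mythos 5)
Your proof is correct and follows essentially the same route as the paper: the easy inclusion uses only that $\phi$ is a map, and the reverse inclusion uses bijectivity of $\phi$ to pull $y$ back to a single well-defined preimage lying in every $G_j$ (the paper phrases this as $\phi^{-1}(y)\in G_j$ for all $j$, which is the same argument as your observation that the $x_j$ coincide). No gaps.
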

\begin{proof}
    Let $J\subseteq I$ and let $G_J=\cap_{j\in J} G_j$.
    Consider $x\in G_J= \cap_{j\in J} G_j $. Then any element of $\phi(G_J)$ is of the form $\phi(x)$, for $x\in G_J$. Since $x\in G_j$, for each $j\in J$, then $\phi(x)\in \phi(G_j)$, for each $j\in J$. Hence $\phi(x)\in \cap_{j\in J} \phi(G_j)$.

    Consider now $y\in \cap_{j\in J} \phi(G_j)$. Then, for each $j\in J$, we have that $y\in \phi(G_j)$. Hence $\phi^{-1}(y)\in G_j$, for each $j\in J$, so that $\phi^{-1}(y)\in G_J$. This implies that $\phi(\phi^{-1}(y)) = y\in \phi(G_J)$.
\end{proof}

\section{Free amalgamated products and HNN-extensions of coset incidence systems}\label{sec:Amalgams+HNN}

In this section, we show how to extend the concepts of free products, free amalgamated products and HNN-extensions to coset incidence systems.
We begin by establishing some notations and definitions that we will use throughout the whole section. Let $G$ be a group. A \textit{word} $w$ in $G$ is a succession $w = g_1 \cdots g_n$ of elements of $G$. We say that the \textit{length} of $w = g_1 \cdots g_n $ is $n$. Given a word $w = g_1 \cdots g_n$, a \textit{prefix} of $w$ is any word $w'= g_1 \cdots g_j$ with $ 1 \leq j \leq n$ obtained by taking only the $j$ first elements of the word $w$. Similarly, a \textit{suffix} of $w$ is any word $w'= g_j \cdots g_n$ with $n \geq j \geq 1$ obtained by taking only the $n-(j-1)$ last elements of the word $w$. 

We begin with the case of free products, as it is the simplest of the three cases. Since free products are a special case of free amalgamated products, almost all results contained in this section are implied by the ones in the free amalgamated product section. Nonetheless, the core ideas in both sections are very similar, and free products are simpler to handle notation-wise. Moreover, when the starting coset incidence systems $\alpha$ and $\beta$ have non-trivial Borel subgroups, the free product $\alpha * \beta$ is not a special case of our construction for free amalgamated products. Indeed, the Borel subgroup of $\alpha *\beta$ will always be the free product of the Borel subgroups of $\alpha$ and $\beta$. In the case of free amalgamated products, we will always require that at least the Borel subgroups of $\alpha$ and $\beta$ are isomorphic and identified through the amalgamating isomorphism $\varphi$, so that the two constructions do not coincide entirely in this case.

For more detailed notations and definitions regarding words, normal forms and reduced sequences for free products (with amalgamation) and HNN-extensions, please refer to~\cite{magnus2004combinatorial,Lyndon2001}.

\subsection{Free Products of Coset Incidence Systems}

The goal of this section is to show how to build a coset incidence system for the free product $G = A * B$ from two existing coset incidence systems $\alpha = (A, (A_i)_{i \in I_\alpha})$ and $ \beta =(B, (B_i)_{i \in I_\beta})$.

Let $A$ and $B$ be two groups and $A * B$ be their free product. Any element $g  \in A * B$ can be represented by a word $w = g_1 \cdots g_n$ where $g_i \in A$ or $B$ for each $i = 1,\cdots, n$. We say that a sequence $g_1, \cdots, g_n$ is \textit{reduced} if, for each $i = 1, \cdots, n$, we have that $g_i \in A$ or $B$, $g_i \neq 1_G$ and any two consecutive $i,i+1$, $g_i$ and $g_{i+1}$ are never in the same factor $A$ or $B$. We allow the empty sequence as a reduced sequence, and it will represent the identity.

\begin{thm}[Normal form for free products]\cite[Theorem 1.2 -- Chapter IV]{Lyndon2001}\label{thm:normal form Free}
    Let $G = A * B$ be the free product of two groups $A$ and $B$. Then, the following are equivalent:
    \begin{itemize}
        \item If $w = g_1\cdots g_n$ with $n >1$ and $g_1, \cdots, g_n$ a reduced sequence, then $w \neq 1_G$ in $G$.
        \item Each element of $G$ can be written uniquely as $w = g_1 \cdots g_n$ where $g_1, \cdots g_n$ is a reduced sequence.
    \end{itemize}
\end{thm}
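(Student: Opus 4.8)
The plan is to dispatch the equivalence in two moves. The implication ``second bullet $\Rightarrow$ first bullet'' is immediate: the identity $1_G$ is represented by the empty reduced sequence, so if reduced representations are unique then no reduced sequence $g_1\cdots g_n$ with $n\geq 1$ (in particular with $n>1$) can equal $1_G$. For the remaining content I would use the classical van der Waerden permutation trick, which in fact establishes both statements at once by realizing $G=A*B$ faithfully enough on the set of reduced sequences.

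Let $W$ denote the set of all reduced sequences (including the empty one, written $\varepsilon$). For each $a\in A$ I define a map $\pi_a\colon W\to W$ as follows: $\pi_{1_A}=\mathrm{id}_W$; if $a\neq 1_A$ and $\mathbf{g}=g_1\cdots g_n\in W$ has $g_1\notin A$ (this includes the case $n=0$), then $\pi_a(\mathbf{g})=a\,g_1\cdots g_n$; if $a\neq 1_A$ and $g_1\in A$ with $ag_1\neq 1_A$, then $\pi_a(\mathbf{g})=(ag_1)\,g_2\cdots g_n$; and if $a\neq 1_A$ and $g_1\in A$ with $ag_1=1_A$, then $\pi_a(\mathbf{g})=g_2\cdots g_n$. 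The maps $\pi_b$ for $b\in B$ are defined symmetrically. One checks that each $\pi_a$ is a bijection of $W$ with inverse $\pi_{a^{-1}}$, that $a\mapsto \pi_a$ is a homomorphism $A\to\mathrm{Sym}(W)$, and likewise $b\mapsto\pi_b$ is a homomorphism $B\to\mathrm{Sym}(W)$. By the universal property of the free product these combine into a homomorphism $\Phi\colon A*B\to\mathrm{Sym}(W)$.

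The key computation is then that for any reduced sequence $\mathbf{g}=g_1\cdots g_n$ one has $\Phi(g_1\cdots g_n)(\varepsilon)=\mathbf{g}$, proved by a straightforward induction on $n$ from the definition of the $\pi$'s. Consequently the natural map $s\colon W\to G$, $s(g_1\cdots g_n)=g_1\cdots g_n$, admits a left inverse and is therefore injective; it is also surjective, since any word over $A\cup B$ can be turned into a reduced sequence by deleting letters equal to $1_G$ and merging adjacent letters that lie in the same factor. Hence $s$ is a bijection, which is precisely the second bullet; and then the first bullet follows (or one reads it off directly: if $n>1$ then $\Phi(g_1\cdots g_n)$ does not fix $\varepsilon$, so $g_1\cdots g_n\neq 1_G$).

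The main obstacle --- really the only place any work is needed --- is verifying that $\pi_a\pi_{a'}=\pi_{aa'}$ on $W$ (and the $B$-analogue). This is a finite case analysis on whether the leading letter $g_1$ of a given word lies in $A$ and on whether the successive products $a'g_1$ and $a(a'g_1)$ collapse to $1_A$; each individual case is routine, but all of them must be checked, and one must also confirm $\pi_{1_A}=\mathrm{id}_W$ so that $\Phi$ is genuinely defined on the group and not merely on the free monoid. Everything after that is bookkeeping.
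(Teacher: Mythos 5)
Your proof is correct. The paper does not prove this statement at all --- it is quoted directly from the cited reference \cite[Theorem 1.2, Chapter IV]{Lyndon2001} --- and your van der Waerden permutation argument is precisely the classical proof given there, so the approaches coincide. Your logical handling of the equivalence is also sound: establishing the second bullet outright (via $\Phi(g_1\cdots g_n)(\varepsilon)=g_1\cdots g_n$, injectivity of $s$, and surjectivity by merging/deleting letters) makes both bullets unconditionally true, and the only suppressed work, the case check that $\pi_a\pi_{a'}=\pi_{aa'}$, is routine and correctly flagged by you as the sole place where verification is needed.
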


We begin by a technical lemma about intersections and product of some particular type of subgroups of free products.
We say that a subgroup $H$ of $G$ is a \textit{special subgroup} if $H = H_A * H_B$ for some subgroups $H_A \leq A$ and $H_B \leq B$.

\begin{lemma}\label{lem:freeProductIntersection}
    Let $C,D,E$ be three special subgroups of $G = A * B$ such that $C = C_A * C_B, D = D_A * D_B$ and $E = E_A * E_B$. Then,
    \begin{enumerate}
        \item $C \cap D = (C_A \cap D_A) * (C_B \cap D_B)$;
        \item If, moreover, we have that $C_AD_A \cap C_AE_A = C_A(D_A \cap E_A)$ and $C_BD_B \cap C_BE_B = C_B(D_B \cap E_B)$, then $CD \cap CE = C(D \cap E)$.
    \end{enumerate}
\end{lemma}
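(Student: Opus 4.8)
The plan is to work throughout with the normal form theorem for free products (Theorem~\ref{thm:normal form Free}) together with the following elementary observation: if $H=H_A*H_B$ is a special subgroup of $G=A*B$ and $g\in H$, then the normal form of $g$ in the free product $H_A*H_B$, read inside $A*B$, is already a reduced word there, hence \emph{is} the reduced form of $g$ in $G$; consequently each syllable of $g$ lying in $A$ lies in $H_A$, and each syllable lying in $B$ lies in $H_B$ (using that syllables are nontrivial and $A\cap B=\{1\}$ in $A*B$). Given this, part~(a) is immediate: the inclusion $\supseteq$ is clear, and if $g\in C\cap D$ then each syllable of $g$ lies in $C_A$ or $C_B$ and also in $D_A$ or $D_B$, hence in $C_A\cap D_A$ or $C_B\cap D_B$, so $g\in(C_A\cap D_A)*(C_B\cap D_B)$.

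For part~(b) the inclusion $C(D\cap E)\subseteq CD\cap CE$ is trivial, so the content is the reverse one. Writing an element of $CD\cap CE$ as $c_1d=c_2e$ with $c_i\in C$, $d\in D$, $e\in E$, and setting $c=c_1^{-1}c_2\in C$, one gets $d=ce$; hence it suffices to prove the inclusion
\[
 D\cap CE\subseteq(C\cap D)(D\cap E),
\]
since then a factorisation $d=d'f$ with $d'\in C\cap D$, $f\in D\cap E$ yields $c_1d=(c_1d')f\in C(D\cap E)$. To establish this inclusion, fix $x\in D\cap CE$ and, among all ways of writing $x=ce$ with $c\in C$ and $e\in E$, choose one minimising the total syllable length $\ell(c)+\ell(e)$; then argue by induction on this minimal value. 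If $c=1_G$ or $e=1_G$ we are done at once. Otherwise write $c=c_1\cdots c_m$ and $e=e_1\cdots e_k$ in reduced form; minimality forces $c_me_1\neq1_G$, since a cancellation at the junction would exhibit a strictly shorter representation of $x$. If $c_m$ and $e_1$ lie in different free factors, or in the same factor with $m\geq2$, then $c_1$ is literally the leading syllable of the reduced form of $x$; applying the observation to both $C$ and $D$ gives $c_1\in C\cap D$, and $c_1^{-1}x=(c_2\cdots c_m)e$ lies in $D\cap CE$ with a strictly shorter representation, so the induction closes. There remains the case $m=1$ with $c_1$ and $e_1$ in the same factor, say $A$ (the case $B$ being symmetric): here the leading syllable of $x$ is $p:=c_1e_1\in C_AE_A$, and $x\in D$ forces $p\in D_A$; since also $p\in C_AD_A$, the hypothesis gives $p\in C_AD_A\cap C_AE_A=C_A(D_A\cap E_A)$, and writing $p=\gamma\delta$ with $\gamma\in C_A$ and $\delta\in D_A\cap E_A$ one checks $\gamma=p\delta^{-1}\in D_A$, so $\gamma\in C\cap D$ while $\gamma^{-1}x=\delta e_2\cdots e_k\in D\cap E$, whence $x=\gamma\cdot(\gamma^{-1}x)\in(C\cap D)(D\cap E)$.

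The main obstacle is controlling cancellation in the free product: a direct induction that peels syllables off $c$ fails, because cancellation at the $c$--$e$ junction can cascade toward the front of the word and destroy $c_1$. Passing to a length-minimal representation of $x$ is exactly what rules out this cascade and confines the analysis to the few cases above; note that the hypotheses of~(b) are invoked only in the last case, where $c$ has collapsed to a single syllable meeting $e$ inside one free factor — precisely the situation the assumed identities for $A$ and for $B$ are designed to handle.
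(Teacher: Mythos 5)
Your proof is correct. Part (a) is essentially the paper's own argument. For part (b), both proofs run on the same engine --- the normal form theorem, the observation that an element of a special subgroup has all its syllables in the corresponding factors $C_A$, $C_B$ (resp. $D_A$, $D_B$), and the hypothesis invoked at exactly one point, namely when a syllable of $C$ merges with a syllable of the second factor inside a single free factor of $G$ --- but your organization is genuinely different. The paper argues directly on an element $g = w\bar w \in CD\cap CE$ with $w$ arranged to be the longest prefix of $g$ lying in $C$, and then splits into three cases according to whether the junction syllables $k_m$ and $\bar h_1$ are trivial. You instead first reduce the claim to the stronger inclusion $D\cap CE\subseteq (C\cap D)(D\cap E)$ (a valid reduction, via $d=c_1^{-1}c_2e$ and reassembling $x=(c_1d')f$), and then prove that inclusion by induction on the minimal total syllable length of a representation $x=ce$. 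Minimality plays the role of the paper's ``longest prefix'' device and automatically eliminates the trivial-syllable bookkeeping, so only two cases survive: peel off the leading syllable $c_1$ when it is untouched by cancellation, or $m=1$ with $c_1,e_1$ in the same factor, which is precisely where the hypothesis $C_AD_A\cap C_AE_A=C_A(D_A\cap E_A)$ is consumed. Your route buys a cleaner induction and a slightly stronger intermediate statement; the paper's stays entirely inside $CD\cap CE$ without the preliminary algebraic reduction. Both are complete.
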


\begin{proof}
    (a) This is more or less a direct consequence of the normal form theorem for free products. Indeed, any $g \in G$ is uniquely represented by a reduced sequence $g_1, \cdots g_n$. If $g$ is in $C$ (in $D$, respectively), it follows that the elements of this sequence are in $C_A$ and $C_B$ (in $D_A$ and $D_B$, respectively).
    Hence, if $g \in C \cap D$, all elements of the reduced sequence $g_1, \cdots g_n$ should be in $C_A \cap D_A$ or $C_B \cap D_B$, implying that $g \in (C_A \cap D_A) * (C_B \cap D_B)$. This shows that $C \cap D \subseteq (C_A \cap D_A) * (C_B \cap D_B)$. The reverse inclusion holds trivially.

    (b) The inclusion $C(D \cap E) \subseteq CD \cap CE$ always holds. We thus only need to prove the reverse inclusion. Suppose now that $g \in CD$, meaning that $g$ is the product of some reduced word $w = h_1k_1h_2k_2 \cdots h_mk_m$ of $C$ and some reduced word $\Bar{w} =\Bar{h}_1\Bar{k}_1 \cdots \Bar{h}_n\Bar{k}_n$ of $D$. Then $$g = w \Bar{w} = h_1k_1h_2k_2 \cdots h_mk_m \Bar{h}_1\Bar{k}_1 \cdots \Bar{h}_n\Bar{k}_n$$ with $h_i \in C_A, k_i \in C_B, \Bar{h}_i \in D_A$ and $\Bar{k}_i \in D_B$, and all of the elements are non trivial, except perhaps $h_1,k_m,\Bar{h}_1$ and $\Bar{k}_n$. We will divide the proof in different cases, according to whether $k_m$ and $\Bar{h}_1$ are trivial or not.
    
    Suppose first that neither $k_m$ nor $\Bar{h}_1$ are trivial. We can then always assume that $\Bar{h}_1$ is not in $C_A$ so that $w = h_1k_1h_2k_2 \cdots h_mk_m$ is the longest prefix of $g$ belonging to $C$. Indeed, if $\Bar{h}_1$ was in $C_A$, we would just add it to $w$ and continue to add the elements of $\Bar{w}$ to $w$ as long as it is possible and rename $w$ and $\Bar{w}$ accordingly. Suppose now that $g \in CE$. Then, we must be able to express $g =w\Bar{w}$ as a product of a reduced word in $C$ followed by a reduced word in $E$. Since $w$ is the longest prefix of $g$ belonging to $C$, this means that there exists a suffix $h =x_j \cdots h_mk_m \Bar{w}$ of $g$ that belongs to $E$, where $x_j$ is either $k_j$ or $h_jk_j$, which is in $E$. As $h$ is a reduced word in $E$ if and only if all its sub-words are either in $E_A$ or $E_B$, then $\Bar{w}$ is in $E$. Hence, $g = w \Bar{w}$ with $w \in C$ and $\Bar{w} \in D \cap E$.

    Suppose now that $k_m$ is trivial so that $$g = h_1k_1h_2k_2 \cdots h_{m-1}k_{m-1} h  \Bar{k}_1 \cdots \Bar{h}_n\Bar{k}_n,$$ where $h = h_m \Bar{h}_1$. If $h$ is in $D_A$ or $C_A$, then we can use the previous case by including $h$ in either $w$ or $\Bar{w}$. We can thus suppose that $h$ is in neither $C_A$ nor $D_A$ so that $h_1k_1h_2k_2 \cdots h_{m-1}k_{m-1}$ is the largest prefix of $g= h_1k_1h_2k_2 \cdots h_{m-1}k_{m-1} h  \Bar{k}_1 \cdots \Bar{h}_n\Bar{k}_n$ contained in $C$ and $\Bar{k}_1 \cdots \Bar{h}_n\Bar{k}_n$ is the largest suffix of $g$ contained in $D$. Suppose that $g \in CE$. As in the previous case, this implies that $\Bar{k}_1 \cdots \Bar{h}_n\Bar{k}_n \in E$. We thus have that $$g = (h_1k_1h_2k_2 \cdots h_{m-1}k_{m-1} h_m)\Bar{h}_1 (\Bar{k}_1 \cdots \Bar{h}_n\Bar{k}_n)$$ with $h_1k_1h_2k_2 \cdots h_{m-1}k_{m-1} h_m \in C$ and $\Bar{k}_1 \cdots \Bar{h}_n\Bar{k}_n \in E$. This implies that $\Bar{h}_1 \in E$ so that $h \in C_AE_A$. But $h$ was in $C_AD_A$, so, by hypothesis, we have $h \in C_A (D_A \cap E_A)$. Therefore, $$g =  h_1k_1h_2k_2 \cdots h_{m-1}k_{m-1} h_m'\Bar{h}_1'  \Bar{k}_1 \cdots \Bar{h}_n\Bar{k}_n$$ with $h_i,h_m' \in C_A, k_i \in C_B, \Bar{h}_1',\Bar{h}_i \in D_A \cap E_A$ and $\Bar{k}_i \in D_B \cap E_B$. Hence, $g \in C(D\cap E)$.

    Finally, suppose that $\Bar{h}_1$ is trivial. We then have that $$g =  h_1k_1h_2k_2 \cdots h_{m-1}k_{m-1} h_m k  \cdots \Bar{h}_n\Bar{k}_n,$$ where $k = k_m \Bar{k}_1$. This case is then identical to the previous case, where $k_m$ was trivial.

\end{proof}

We now have the tools to discuss free products of coset incidence systems. Let $\alpha = (A, (A_i)_{i \in I_\alpha})$ and $ \beta =(B, (B_i)_{i \in I_\beta})$ be two coset incidence systems of finite ranks such that $I_\alpha \cap I_\beta = \emptyset$. 

\begin{definition}[Free Product of $\alpha$ and $\beta$]\label{def:freeProdCosetInc}
    The \textit{free product} of $\alpha$ and $\beta$ is the coset incidence system $\Gamma = \alpha * \beta = (G,(G_i)_{i \in I})$ where $G = A * B$, with $G_i = A_{\{i\} \cap I_\alpha} * B_{ \{i\} \cap I_\beta}$ and $I=I_\alpha \sqcup I_\beta$. 
\end{definition}

We use the convention that $A_\emptyset = A$ and $B_\emptyset = B$, so that $G_i = A_i *B$ or $A *B_i$ depending on whether $i$ is in $I_\alpha$ or $I_\beta$. The parabolical subgroups of $\alpha *\beta$ are very easily understood.

\begin{lemma}
    The parabolic subgroups of the coset incidence system $\Gamma=\alpha*\beta=(G,(G_i)_{i\in I_\alpha})$ are $G_J= A_{J_\alpha}*B_{J_\beta}$, with  $J_\alpha=I_\alpha\cap J$ and $J_\beta = I_\beta\cap J$.
\end{lemma}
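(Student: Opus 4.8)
The plan is to prove the formula $G_J = A_{J_\alpha} * B_{J_\beta}$ by a direct computation, reducing the intersection of the $G_i$'s over $i \in J$ to an iterated application of Lemma~\ref{lem:freeProductIntersection}(a). First I would recall that, by definition, $G_i = A_{\{i\}\cap I_\alpha} * B_{\{i\}\cap I_\beta}$, so each $G_i$ is a special subgroup of $G = A * B$: for $i \in I_\alpha$ it equals $A_i * B$, and for $i \in I_\beta$ it equals $A * B_i$. Then $G_J = \bigcap_{i \in J} G_i$. Writing $J = J_\alpha \sqcup J_\beta$ with $J_\alpha = J \cap I_\alpha$ and $J_\beta = J \cap I_\beta$, I would split the intersection as
$$
G_J = \Bigl(\bigcap_{i \in J_\alpha} (A_i * B)\Bigr) \cap \Bigl(\bigcap_{i \in J_\beta} (A * B_i)\Bigr).
$$

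Next, I would observe that all the subgroups appearing here are special, so Lemma~\ref{lem:freeProductIntersection}(a) applies repeatedly: the intersection of special subgroups $C = C_A * C_B$ and $D = D_A * D_B$ is $(C_A \cap D_A) * (C_B \cap D_B)$, and in particular the result is again special, so the lemma can be iterated over a finite index set (here $J$ is finite since $\alpha$ and $\beta$ have finite rank). Applying this to the $A$-factors, $\bigcap_{i \in J_\alpha}(A_i * B)$ has $A$-component $\bigcap_{i \in J_\alpha} A_i = A_{J_\alpha}$ and $B$-component $\bigcap_{i \in J_\alpha} B = B$ (using the convention that an empty intersection over $J_\alpha = \emptyset$ gives the whole group $A$, consistent with $A_\emptyset = A$). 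Symmetrically $\bigcap_{i \in J_\beta}(A * B_i) = A * B_{J_\beta}$. One more application of Lemma~\ref{lem:freeProductIntersection}(a) to these two special subgroups yields
$$
G_J = (A_{J_\alpha} \cap A) * (B \cap B_{J_\beta}) = A_{J_\alpha} * B_{J_\beta},
$$
as desired.

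I expect no real obstacle here; the statement is essentially a bookkeeping corollary of the normal form theorem packaged inside Lemma~\ref{lem:freeProductIntersection}(a). The only points requiring a modicum of care are the degenerate cases $J_\alpha = \emptyset$ or $J_\beta = \emptyset$ (where one factor of the answer is the full group $A$ or $B$, matching the convention $A_\emptyset = A$, $B_\emptyset = B$ introduced just after Definition~\ref{def:freeProdCosetInc}), and making explicit that iterating a binary identity over a finite set is legitimate because the class of special subgroups is closed under intersection. Neither of these demands more than a sentence. One could alternatively argue directly from the normal form: an element $g$ lies in $G_J$ iff its reduced-word syllables all lie in every $A_i$ (for $i \in J_\alpha$) or every $B_i$ (for $i \in J_\beta$), i.e. the $A$-syllables lie in $A_{J_\alpha}$ and the $B$-syllables in $B_{J_\beta}$; but routing through Lemma~\ref{lem:freeProductIntersection}(a) is cleaner and avoids repeating the normal-form argument.
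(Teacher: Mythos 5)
Your proposal is correct and follows the same route as the paper, which simply observes that $G_J=\cap_{i\in J}G_i$ and invokes Lemma~\ref{lem:freeProductIntersection}(a) together with the definition of the $G_i$. Your version just spells out the iteration over the finite index set and the degenerate cases $J_\alpha=\emptyset$ or $J_\beta=\emptyset$, which the paper leaves implicit.
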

\begin{proof}
    Since $G_J=\cap_{i\in J} G_i$, 
    this is a direct consequence of the definition of the maximal parabolic subgroups of $\Gamma$ and Lemma~\ref{lem:freeProductIntersection}(a).
\end{proof}

We now show that this free product of incidence systems preserves natural properties like flag-transitivity, residual connectedness and firmness.
We start with residual connectedness. The main idea is that residues of $\alpha * \beta$ that are ``mixed", meaning that they contain types in both $I_\alpha$ and $I_\beta$, are automatically connected. Therefore, the residual connectedness of $\alpha * \beta$ will only depend on the residual connectedness of $\alpha$ and $\beta$. Recall that rank $1$ incidence systems are trivially residually connected, so we allow the factors $\alpha$ and $\beta$ to be of rank $1$.

\begin{prop}\label{prop:RCfree}
    The free product $\Gamma = \alpha * \beta$ satisfies (RC1) if and only if both $\alpha$ and $\beta$ satisfy (RC1).
\end{prop}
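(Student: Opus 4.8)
The plan is to use the characterization of (RC1) given in Lemma~\ref{lem:RCequivs}, translating the statement into an identity about parabolic subgroups that can be checked factor by factor using the structure of special subgroups in $G = A*B$. By Lemma~\ref{lem:RCequivs}(\ref{lem:RCequivs:itm:BottomUp}), condition (RC1) for $\Gamma$ is equivalent to $G_J = G^{I\setminus J}$ for every $J\subseteq I$, where $G^{K} = \langle G^k \mid k\in K\rangle$ and $G^k = G_{I\setminus\{k\}}$ is the minimal parabolic of type $k$. So first I would compute the minimal parabolic subgroups of $\Gamma$: using the formula $G_J = A_{J_\alpha}*B_{J_\beta}$ from the lemma just proved, for $k\in I_\alpha$ we get $G^k = G_{I\setminus\{k\}} = A_{I_\alpha\setminus\{k\}}*B = A^{(k)}_{\alpha}*B$ where $A^{(k)}_\alpha$ denotes the minimal parabolic of $\alpha$ of type $k$ (here using $B_{I_\beta} * \cdots$; note $B_{I_\beta}$ is the Borel of $\beta$, so more precisely $G^k = A^k * B$ only when we are careful — actually $G_{I\setminus\{k\}} = A_{I_\alpha\setminus\{k\}} * B_{I_\beta}$, and since in $\alpha$ we have $A^k_\alpha = A_{I_\alpha\setminus\{k\}}$, this is $A^k_\alpha * B$; wait, $B_{I_\beta}$ is not all of $B$, so it is $A^k_\alpha * B_{I_\beta}$). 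I should be careful here and just carry the exact expressions. The upshot is that each $G^k$ is a special subgroup, and so is each finite product $G^{K}$.

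The key computation is then: for $K = I\setminus J$, I want $G^{K} = \langle G^k \mid k\in K\rangle$. Splitting $K = K_\alpha \sqcup K_\beta$, the generators with $k\in K_\alpha$ all have the form (special subgroup of $A$) $*$ $B_{I_\beta}$, and those with $k\in K_\beta$ have the form $A_{I_\alpha} * $ (special subgroup of $B$). I claim the subgroup they generate is $\langle A^{k}_\alpha \mid k\in K_\alpha\rangle * \langle B^{k}_\beta \mid k\in K_\beta \rangle$ together with the free factors coming from $B_{I_\beta}$ and $A_{I_\alpha}$ — more precisely, since one family contains the full $B$-side generator pieces and vice versa, the generated subgroup is $\bigl(\langle A^k_\alpha \mid k\in K_\alpha\rangle \vee A_{I_\alpha}^{\text{pieces}}\bigr) * \bigl(\cdots\bigr)$. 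The cleanest way to organize this is: the subgroup of $A$ generated by the $A$-components of all the generators $G^k$, $k\in K$, is exactly $A^{K_\alpha}_\alpha := \langle A^k_\alpha \mid k\in K_\alpha\rangle$ joined with $A_{I_\alpha}$ (the Borel), but $A_{I_\alpha}\subseteq A^k_\alpha$ for any $k$... so actually it is just $A^{K_\alpha}_\alpha$ when $K_\alpha\neq\emptyset$, and $A_{I_\alpha}$ when $K_\alpha=\emptyset$; symmetrically on the $B$-side. Then $G^K = A^{K_\alpha}_\alpha * B^{K_\beta}_\beta$ (with the Borel conventions in the edge cases), and meanwhile $G_J = A_{J_\alpha}*B_{J_\beta} = A_{I_\alpha\setminus K_\alpha} * B_{I_\beta\setminus K_\beta}$. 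So $G_J = G^{I\setminus J}$ for all $J$ iff $A_{I_\alpha\setminus K_\alpha} = A^{K_\alpha}_\alpha$ for all $K_\alpha\subseteq I_\alpha$ and $B_{I_\beta\setminus K_\beta} = B^{K_\beta}_\beta$ for all $K_\beta\subseteq I_\beta$, which by Lemma~\ref{lem:RCequivs}(\ref{lem:RCequivs:itm:BottomUp}) applied to $\alpha$ and $\beta$ separately is exactly (RC1) for $\alpha$ and (RC1) for $\beta$. Here I am using crucially that the free product respects the lattice of special subgroups — generation and intersection on each side are independent — which is precisely Lemma~\ref{lem:freeProductIntersection}(a) for intersections and an elementary normal-form argument for the generated subgroup.

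For the forward direction ($\Gamma$ (RC1) $\Rightarrow$ $\alpha,\beta$ (RC1)) I would just run the same identification backwards: specializing $K_\beta = I_\beta$ (so that the $\beta$-side is the full Borel and contributes nothing new) forces $A_{I_\alpha\setminus K_\alpha} = A^{K_\alpha}_\alpha$, and symmetrically. The main obstacle I anticipate is purely bookkeeping: handling the degenerate cases $K_\alpha = \emptyset$ or $K_\beta = \emptyset$ correctly (where the relevant generated subgroup collapses to a Borel subgroup rather than a join of minimal parabolics), and making sure the conventions $A_\emptyset = A$, $B_\emptyset = B$ interact properly with the fact that $G^k$ involves $B_{I_\beta}$ (the Borel of $\beta$) and not $B$ itself. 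This is exactly why the paper is careful to note that rank $1$ geometries are "technically residually connected": if $\alpha$ has rank $1$, then there are no $K_\alpha$ with $|I_\alpha\setminus K_\alpha|\geq 1$ and $K_\alpha\neq\emptyset$ simultaneously constraining things, and (RC1) is vacuous for $\alpha$, consistent with the claim. A secondary small point to verify is that when $|I\setminus J|\geq 2$ but the two "new" types lie one in $I_\alpha$ and one in $I_\beta$, the relation $G_J = \langle G_{J\cup\{i\}}, G_{J\cup\{k\}}\rangle$ (the (RC2) form) is automatic — this is the "mixed residues are connected" phenomenon mentioned before the statement — and indeed it falls out of the same special-subgroup computation since one factor supplies the full $A$-side and the other the full $B$-side of $G_J$. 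Once the dictionary between special subgroups of $A*B$ and pairs of subgroups of $A$ and $B$ is set up cleanly, the proof is a short verification.
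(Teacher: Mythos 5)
Your proposal is correct, but it is organized differently from the paper's proof, so a comparison is in order. The paper works directly with the generation condition $G_J=\langle G_{J\cup\{i\}}\mid i\in I\setminus J\rangle$: for the reverse implication it splits into the cases $I_\alpha\subseteq J$, $I_\beta\subseteq J$, and neither, writes $G_J=\langle A_{J_\alpha},B_{J_\beta}\rangle$, and feeds in (RC1) of each factor; for the forward implication it argues by contraposition with $J=J_\alpha\cup I_\beta$. You instead route everything through Lemma~\ref{lem:RCequivs}(\ref{lem:RCequivs:itm:BottomUp}), compute the minimal parabolics $G^k=A^k*B_{I_\beta}$ (resp.\ $A_{I_\alpha}*B^k$), show $G^{K}=A^{K_\alpha}*B^{K_\beta}$ (with the Borel conventions in the degenerate cases), and compare componentwise with $G_{I\setminus K}=A_{I_\alpha\setminus K_\alpha}*B_{I_\beta\setminus K_\beta}$, using that a special subgroup $H_A*H_B$ of $A*B$ determines its components (e.g.\ $H_A=(H_A*H_B)\cap A$). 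This buys both implications in a single componentwise identification and makes the ``mixed residues are automatic'' phenomenon transparent, at the price of invoking the equivalence lemma and the dictionary between special subgroups and pairs of subgroups; the paper's version is longer but uses only the fact that a free product is generated by its factors. Both arguments rest on the same two structural facts, so the difference is one of packaging rather than substance.

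One small slip to fix: in your forward direction, the specialization that makes the $\beta$-side ``the full Borel and contribute nothing new'' is $K_\beta=\emptyset$ (i.e.\ $I_\beta\subseteq J$, which is what the paper uses), not $K_\beta=I_\beta$; with $K_\beta=I_\beta$ the $\beta$-components of $G_J$ and $G^{K}$ are $B$ and $B^{I_\beta}$ respectively, so that choice additionally forces $B=B^{I_\beta}$. Either specialization still yields $A_{I_\alpha\setminus K_\alpha}=A^{K_\alpha}$ by the componentwise identification, so the argument survives, but the parenthetical justification is attached to the wrong case.
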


\begin{proof}

    We first show that $\alpha * \beta$ satisfies (RC1) as long as both $\alpha$ and $\beta$ also satisfy (RC1). By Theorem \ref{thm:CosetRC}, we need to show that $G_J = \langle G_{\{i\} \cup J} | i \in I \setminus J \rangle$ for all $J \subset I$ such that $|I \setminus J| \geq 2$, where $G_J = \cap_{j \in J} G_j$. The reverse inclusion is trivial, hence we will prove that $G_J \subseteq \langle G_{\{i\} \cup J} | i \in I \setminus J \rangle$. Let $J_\alpha = I_\alpha \cap J$ and $J_\beta = I_\beta \cap J$.
    
    First of all, let us consider the cases where either $I_\alpha$ or $I_\beta$ is contained in $J$. 
    Suppose first that $I_\alpha \subseteq J$. Then $G_J=A_{I_\alpha}*B_{J_\beta}$, which can be expressed as $G_J=\langle A_{I_\alpha},B_{J_\beta}\rangle$, as it is always true that a free product is generated by its two factors.
    Since $\beta$ is (RC1), we have that $B_{J_\beta}=\langle B_{J_\beta\cup\{i\}}\mid i\in I_\beta\setminus J_\beta\rangle$. Hence $G_J = \langle A_{I_\alpha},B_{J_\beta}\rangle = \langle A_{I_\alpha}, B_{J_\beta\cup\{i\}}\mid i\in I_\beta\setminus J_\beta\rangle$.
    Since $I_\alpha \subseteq J$, we can write the above as 
    $\langle A_{I_\alpha}, B_{J_\beta\cup\{i\}}\mid i\in I\setminus J\rangle$.
    Notice that, for each $i\in I\setminus J$ in this case, we have that $G_{J\cup\{i\}}=\langle A_{I_\alpha}, B_{J_\beta\cup\{i\}}\rangle$.
    Hence, $$G_J=\langle A_{I_\alpha}, B_{J_\beta\cup\{i\}}\mid i\in I\setminus J\rangle \subseteq \langle G_{J\cup\{i\}} \mid i \in I\setminus J\rangle  ,$$
    as wanted.
    The case where $I_\beta \subseteq J$ is equivalent.
    
    Suppose now that $J$ contains neither $I_\alpha$ nor $I_\beta$. Then $G_J = A_{J_\alpha} * B_{J_\beta} = \langle A_{J_\alpha},B_{J_\beta}\rangle$.
    Since $\alpha$ and $\beta$ are (RC1), we have that $A_{J_\alpha} = \langle A_{J_\alpha \cup \{j\}} | j \in I_\alpha \setminus J_\alpha \rangle$ and $B_{J_\beta} = \langle B_{J_\beta \cup \{j\}} | j \in I_\beta \setminus J_\beta \rangle$.
    Consider the group $\langle G_{J\cup\{i\}}\mid i\in I\setminus J\rangle$. Notice that, depending on whether $i\in I_\alpha$ or $i\in I_\beta$, we have that $G_{J\cup\{i\}}= A_{J_\alpha\cup\{i\}}* B_{J_\beta}=\langle A_{J_\alpha\cup\{i\}}, B_{J_\beta} \rangle$ or $G_{J\cup\{i\}}= A_{J_\alpha}* B_{J_\beta\cup \{i\}}=\langle A_{J_\alpha}, B_{J_\beta\cup\{i\}} \rangle$.
    Therefore $$\langle G_{J\cup\{i\}}\mid i\in I\setminus J\rangle = \langle A_{J_\alpha},B_{J_\beta}, A_{J_\alpha\cup \{i\}}, B_{J_\beta\cup \{j\}}\mid i\in I_\alpha\setminus J_\alpha \wedge j\in I_\beta\setminus J_\beta\rangle. $$
    It is direct then that in this case $G_J \subseteq \langle G_{J\cup\{i\}}\mid i\in I\setminus J\rangle$.

Finally, to prove the direct implication, suppose that $\alpha$ is not (RC1). Then we have that $A_{J_\alpha} \neq \langle A_{J_\alpha \cup \{i\}} | i \in I_\alpha \setminus J_\alpha \rangle$ for some choice of $J_\alpha \subset I_\alpha$. But then, $\alpha * \beta$ cannot be (RC1).
Indeed, we have that $G_{J_\alpha \cup I_\beta} = A_{J_\alpha}*B_{I_\beta} = \langle A_{J_\alpha},B_{I_\beta}\rangle$ and $$\langle G_{J_\alpha \cup I_\beta\cup\{i\}} \mid i \in I\setminus J_\alpha\rangle = \langle G_{J_\alpha \cup I_\beta\cup\{i\}} \mid i \in I_\alpha\setminus J_\alpha\rangle = \langle A_{J_\alpha\cup\{i\}}* B_{I_\beta}\mid i \in I_\alpha\setminus J_\alpha\rangle,$$
which can be written as $\langle A_{J_\alpha\cup\{i\}}\mid i \in I_\alpha\setminus J_\alpha\rangle*B_{I_\beta}$.
As $A_{J_\alpha} \neq \langle A_{J_\alpha \cup \{i\}} | i \in I_\alpha \setminus J_\alpha \rangle$, we have that $$G_{J_\alpha \cup I_\beta}=A_{J_\alpha}*B_{I_\beta}\neq \langle A_{J_\alpha\cup\{i\}}\mid i \in I_\alpha\setminus J_\alpha\rangle*B_{I_\beta} = \langle G_{J_\alpha \cup I_\beta\cup\{i\}} \mid i \in I\setminus J_\alpha\rangle.$$

\end{proof}

\begin{prop}\label{prop:FTfree}
        If both $A$ and $B$ act flag-transitively on $\alpha$ and $\beta$, respectively, then the group $G = A * B$ also acts flag-transitively on the free product $\Gamma =\alpha * \beta$. Additionally, if $A_{I_\alpha}=\{1_A\}$ and $B_{I_\beta}=\{1_B\}$, then the converse also holds. 

\end{prop}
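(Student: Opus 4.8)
The plan is to verify the flag-transitivity criterion of Proposition~\ref{prop:FTequivs}, specifically condition~(\ref{prop:FTequivs:itm:FTIntersectionOfProducts}): for all $J,H,K\subseteq I$, $(G_JG_H)\cap(G_JG_K)=G_J(G_H\cap G_K)$. Using the explicit description of the parabolic subgroups of $\Gamma=\alpha*\beta$ as special subgroups $G_J=A_{J_\alpha}*B_{J_\beta}$ (where $J_\alpha=I_\alpha\cap J$, $J_\beta=I_\beta\cap J$), this is precisely the situation handled by Lemma~\ref{lem:freeProductIntersection}(b). Concretely, I would set $C=G_J$, $D=G_H$, $E=G_K$, so that $C_A=A_{J_\alpha}$, $D_A=A_{H_\alpha}$, $E_A=A_{K_\alpha}$, and similarly on the $B$-side. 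The hypotheses of Lemma~\ref{lem:freeProductIntersection}(b) then read $A_{J_\alpha}A_{H_\alpha}\cap A_{J_\alpha}A_{K_\alpha}=A_{J_\alpha}(A_{H_\alpha}\cap A_{K_\alpha})$ and the analogous equation in $B$; since $A$ acts flag-transitively on $\alpha$, Proposition~\ref{prop:FTequivs}(\ref{prop:FTequivs:itm:FTIntersectionOfProducts}) applied to $\alpha$ gives exactly the first equation, and likewise flag-transitivity of $B$ on $\beta$ gives the second. Combining $G_H\cap G_K=(A_{H_\alpha}\cap A_{K_\alpha})*(B_{H_\beta}\cap B_{K_\beta})$ from Lemma~\ref{lem:freeProductIntersection}(a) with the conclusion of part~(b) yields $(G_JG_H)\cap(G_JG_K)=G_J(G_H\cap G_K)$, which is condition~(\ref{prop:FTequivs:itm:FTIntersectionOfProducts}), so $G$ is flag-transitive on $\Gamma$.

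For the converse, assume $A_{I_\alpha}=\{1_A\}$, $B_{I_\beta}=\{1_B\}$, and that $G$ is flag-transitive on $\Gamma$; I want to deduce that $A$ is flag-transitive on $\alpha$ (and symmetrically $B$ on $\beta$). The idea is that $\alpha$ sits inside $\Gamma$ as a residue: taking $J=I_\beta$, we have $G_{I_\beta}=A*B_{I_\beta}=A*\{1_B\}=A$, and for $i\in I_\alpha$ the parabolic $G_{I_\beta\cup\{i\}}=A_i*\{1_B\}=A_i$. By Proposition~\ref{prop:FTequivs}(\ref{prop:FTequivs:itm:Isomorphism}) applied to $\Gamma$ (valid since $G$ is flag-transitive), the residue $\Gamma_{\{G_j\mid j\in I_\beta\}}$ is isomorphic, as an incidence system over $I_\alpha$, to $\Gamma(G_{I_\beta},(G_{I_\beta\cup\{i\}})_{i\in I_\alpha})=\Gamma(A,(A_i)_{i\in I_\alpha})=\alpha$, and moreover this residue is itself flag-transitive under $G_{I_\beta}=A$ (restricting the action, or directly: condition (\ref{prop:FTequivs:itm:FTIntersectionOfProducts}) for $\Gamma$ with all three subsets containing $I_\beta$ descends to condition (\ref{prop:FTequivs:itm:FTIntersectionOfProducts}) for $\alpha$). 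Hence $A$ acts flag-transitively on $\alpha$; by symmetry the same holds for $B$ on $\beta$.

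The main obstacle is almost entirely bookkeeping rather than genuine difficulty: one must be careful that the hypotheses $C_AD_A\cap C_AE_A=C_A(D_A\cap E_A)$ in Lemma~\ref{lem:freeProductIntersection}(b) really do follow from flag-transitivity of $A$ on $\alpha$ for \emph{every} triple of parabolics $A_{J_\alpha},A_{H_\alpha},A_{K_\alpha}$ — this is condition~(\ref{prop:FTequivs:itm:FTIntersectionOfProducts}) of Proposition~\ref{prop:FTequivs} for $\alpha$, which holds precisely because $A$ is flag-transitive, so there is no gap. For the converse direction, the one point needing slight care is the justification that the sub-incidence-system $\alpha$, realized as a residue of $\Gamma$, inherits flag-transitivity; the cleanest route is to note that all parabolics $G_{J}$ with $I_\beta\subseteq J$ equal the corresponding parabolics of $\alpha$ (using $B_{I_\beta}=\{1_B\}$), so the flag-transitivity equations of Proposition~\ref{prop:FTequivs}(\ref{prop:FTequivs:itm:FTIntersectionOfProducts}) for $\Gamma$, specialized to subsets of $I$ containing $I_\beta$, are verbatim those for $\alpha$.
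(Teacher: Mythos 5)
Your proposal is correct and follows essentially the same route as the paper: the forward direction reduces the flag-transitivity criterion of Proposition~\ref{prop:FTequivs} to Lemma~\ref{lem:freeProductIntersection}(b), whose hypotheses are supplied by flag-transitivity of $A$ and $B$, and the converse observes that when the Borel subgroups are trivial, the parabolic subgroups $G_J$ with $I_\beta\subseteq J$ are exactly the parabolic subgroups of $\alpha$, so the flag-transitivity equations for $\Gamma$ specialize verbatim to those for $\alpha$ (and symmetrically for $\beta$). The only cosmetic difference is that you verify the pairwise condition (\ref{prop:FTequivs:itm:FTIntersectionOfProducts}) while the paper verifies the equivalent condition of Theorem~\ref{thm:cosetFT}.
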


\begin{proof}
    We first show that $\Gamma = \alpha*\beta$ is flag-transitive if both $A$ and $B$ act flag-transitively on $\alpha$ and $\beta$, respectively. By Theorem \ref{thm:cosetFT}, we need to show that $G_JG_i = \cap_{j\in J} G_jG_i$ or, equivalently, that $G_iG_J = \cap_{j \in J} G_iG_j$. 
    First, notice that all the $G_J$ are special subgroups of $G$. Also, if $\alpha$ and $\beta$ are flag-transitive, the extra hypothesis of (b) in Lemma~\ref{lem:freeProductIntersection} holds. Therefore, by that same lemma, we have that $\cap_{j \in J} G_iG_j = G_i (\cap_{j \in J} G_j) = G_iG_J$. Hence, the forward implication is proved.

    Suppose now that $G=A*B$ is flag-transitive on the free product $\Gamma=\alpha*\beta$ and, additionally, suppose that $A_{I_\alpha}=\{1_A\}$ and $B_{I_\beta}=\{1_B\}$. 
    We have that, for $J\subseteq I$ such that $I_\beta \subseteq J$, $G_{J}= A_{J_\alpha}*B_{I_\beta} = A_{J_\alpha}$. As $\Gamma$ is flag transitive, by Proposition~\ref{prop:FTequivs}(\ref{prop:FTequivs:itm:FTProductOfIntersections}), for every $J,H,K\subseteq I$ such that $I_\beta\subseteq J,H,K$, we have that $(G_J\cap G_H)(G_J\cap G_K)= G_J\cap (G_H G_K)$, which is equivalent to $(A_{J_\alpha}\cap A_{H_\alpha})(A_{J_\alpha}\cap A_{K_\alpha})= A_{J_\alpha}\cap (A_{H_\alpha} A_{K_\alpha})$. As this covers all possible parabolic subgroups of $\alpha$, this implies that $A$ is flag-transitive in $\alpha$. Under a similar setting, we can also prove that $B$ is flag-transitive in $\beta$. 
\end{proof}

\begin{prop}\label{prop:FIRM_free}
The free product $\alpha * \beta$ is (FIRM) if and only if both $\alpha$ and $\beta$ are (FIRM).
\end{prop}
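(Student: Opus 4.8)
The plan is to reduce everything to one elementary observation: a special subgroup of $G=A*B$ determines its two factors. Regarding $A$ and $B$ as the special subgroups $A*\{1_B\}$ and $\{1_A\}*B$ of $G$, Lemma~\ref{lem:freeProductIntersection}(a) applied to $H=H_A*H_B$ and $A=A*\{1_B\}$ gives $H\cap A=H_A$, and likewise $H\cap B=H_B$. Consequently, if $H_A*H_B=K_A*K_B$ as subgroups of $G$, then $H_A=K_A$ and $H_B=K_B$. This is the only genuinely non-formal input of the argument, and it is immediate from the lemma already proved.

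Next I would unwind the (FIRM) condition of Theorem~\ref{thm:CosetFIRM} for $\Gamma=\alpha*\beta=(G,(G_i)_{i\in I})$. Using the description of the parabolic subgroups of $\alpha*\beta$ (the lemma following Definition~\ref{def:freeProdCosetInc}), we have $G_I=A_{I_\alpha}*B_{I_\beta}$. For $j\in I_\alpha$, since $I\setminus\{j\}=(I_\alpha\setminus\{j\})\sqcup I_\beta$, we get $G_{I\setminus\{j\}}=A_{I_\alpha\setminus\{j\}}*B_{I_\beta}$. By the observation above, $G_{I\setminus\{j\}}\neq G_I$ holds if and only if $A_{I_\alpha\setminus\{j\}}\neq A_{I_\alpha}$, which is precisely the (FIRM) condition for $\alpha$ at the type $j$. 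Symmetrically, for $j\in I_\beta$ the condition $G_{I\setminus\{j\}}\neq G_I$ is equivalent to $B_{I_\beta\setminus\{j\}}\neq B_{I_\beta}$, the (FIRM) condition for $\beta$ at $j$.

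Finally I would assemble the equivalences: since $I=I_\alpha\sqcup I_\beta$, the statement ``$G_{I\setminus\{j\}}\neq G_I$ for every $j\in I$'' holds if and only if ``$A_{I_\alpha\setminus\{j\}}\neq A_{I_\alpha}$ for every $j\in I_\alpha$'' and ``$B_{I_\beta\setminus\{j\}}\neq B_{I_\beta}$ for every $j\in I_\beta$'' both hold, i.e. if and only if $\alpha$ is (FIRM) and $\beta$ is (FIRM). This proves the proposition. I do not expect any real obstacle: the step requiring a little care is the uniqueness of the factors of a special subgroup, and that follows directly from Lemma~\ref{lem:freeProductIntersection}(a); the remainder is bookkeeping with the partition $I=I_\alpha\sqcup I_\beta$ and the explicit form of $G_I$ and $G_{I\setminus\{j\}}$.
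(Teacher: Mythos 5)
Your proof is correct and follows essentially the same route as the paper's: identify $G_I=A_{I_\alpha}*B_{I_\beta}$ and $G_{I\setminus\{j\}}$ as $A_{I_\alpha\setminus\{j\}}*B_{I_\beta}$ or $A_{I_\alpha}*B_{I_\beta\setminus\{j\}}$, then compare. The only difference is cosmetic: where the paper asserts ``clearly $G_I\neq G_{I\setminus\{j\}}$'', you justify the uniqueness of the factors of a special subgroup via Lemma~\ref{lem:freeProductIntersection}(a), which is a welcome bit of extra rigor but not a different argument.
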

\begin{proof}
        We will show the reverse implication first. To prove that $\alpha * \beta$ is (FIRM) we need to show that $G_I\neq G_{I\setminus\{i\}}$, for all $i\in I$. Notice that, for each $i\in I$, either $I_\alpha\subseteq I\setminus\{i\}$ or $I_\beta\subseteq I\setminus\{i\}$, meaning that $G_{I\setminus\{i\}}$ is either $A_{I_\alpha\setminus\{i\}}*B_{I_\beta}$ or $A_{I_\alpha} * B_{I_\beta\setminus\{i\}}$. Note also that $G_I = A_{I_\alpha} * B_{I_\beta}$. As both $\alpha$ and $\beta$ are (FIRM), we have that $A_{I_\alpha\setminus\{i\}}\neq A_{I_\alpha}$ and $B_{I_\beta\setminus\{i\}}\neq B_{I_\beta}$. Hence, as $G_{I\setminus\{i\}}$ is either $\langle A_{I_\alpha\setminus\{i\}},B_{I_\beta}\rangle$ or 
        $\langle A_{I_\alpha}, B_{I_\beta\setminus\{i\}}\rangle$, then clearly
        $G_I\neq G_{I\setminus\{i\}}$.
    Suppose now that $\alpha$ is not (FIRM). For some $i\in I_{\alpha}$, we have $A_{I_\alpha\setminus\{i\}} = A_{I_\alpha}$. Hence, for some $i\in I$ we have 
    $G_{I\setminus\{i\}} = A_{I_\alpha\setminus\{i\}}* B_{I_\beta} = A_{I_\alpha} * B_{I_\beta}  = G_I$, hence $\alpha * \beta$ is not (FIRM).
\end{proof}

We can combine the results obtained so far. In particular, this shows that the free product of regular hypertopes is a regular hypertope.

\begin{thm}\label{thm:FP_a)FT_b)Geo_c)_RC_b)_FIRM}
    Let $\alpha=(A,(A_i)_{i\in I_\alpha})$ and $\beta=(B,(B_i)_{i\in I_\beta})$ be two coset incidence systems and suppose that $A$ and $B$ are flag-transitive in $\alpha$ and $\beta$, respectively. Then,
\begin{enumerate}
    \item the group $G = A * B$ is flag-transitive in $\alpha*\beta$;
    \item $\alpha*\beta$ is a coset geometry;
    \item $\alpha*\beta$ is residually connected if and only if both $\alpha$ and $\beta$ are residually connected;
    \item $\alpha*\beta$ is firm if and only if $\alpha$ and $\beta$ are firm;
\end{enumerate}
\end{thm}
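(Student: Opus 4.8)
The plan is to assemble this theorem from the three propositions just proved, together with the translations between group-theoretic conditions and geometric properties recorded in Section~\ref{sec:prelims}. Since the factors $\alpha$ and $\beta$ are of finite rank and $A$, $B$ act flag-transitively on them, the type set $I = I_\alpha \sqcup I_\beta$ is finite, so Theorems~\ref{thm:cosetFT}, \ref{thm:CosetRC} and \ref{thm:CosetFIRM} are all applicable throughout.

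For (a) I would simply invoke the forward direction of Proposition~\ref{prop:FTfree}: the hypothesis that $A$ and $B$ act flag-transitively on $\alpha$ and $\beta$ is exactly what is needed to conclude that $G = A*B$ acts flag-transitively on $\Gamma = \alpha*\beta$. For (b), by the last assertion of Theorem~\ref{thm:cosetFT}, a coset incidence system on which the group acts flag-transitively is automatically a geometry, so (a) immediately gives (b).

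For (c) I would chain three equivalences. Because $A$ (resp.\ $B$) is flag-transitive on $\alpha$ (resp.\ $\beta$), Theorem~\ref{thm:CosetRC} gives ``$\alpha$ is residually connected'' $\iff$ ``$\alpha$ is (RC1)'', and likewise for $\beta$; because $G$ is flag-transitive on $\Gamma$ by (a), the same theorem gives ``$\Gamma$ is residually connected'' $\iff$ ``$\Gamma$ is (RC1)''; and Proposition~\ref{prop:RCfree} gives ``$\Gamma$ is (RC1)'' $\iff$ ``both $\alpha$ and $\beta$ are (RC1)''. Composing yields (c); rank-$1$ factors cause no trouble since they are trivially (RC1) and Proposition~\ref{prop:RCfree} was stated so as to permit them. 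Statement (d) is obtained the same way, with Theorem~\ref{thm:CosetFIRM} in place of Theorem~\ref{thm:CosetRC} and Proposition~\ref{prop:FIRM_free} in place of Proposition~\ref{prop:RCfree}.

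There is no genuinely hard step: all the substantive work lives in Propositions~\ref{prop:FTfree}, \ref{prop:RCfree} and \ref{prop:FIRM_free}. The only point requiring care is the bookkeeping around flag-transitivity — one must note that flag-transitivity is in force on \emph{both} sides of each ``group condition $\iff$ geometric property'' equivalence (on $\alpha$ and $\beta$ by hypothesis, on $\Gamma$ by part (a)), so that Theorems~\ref{thm:CosetRC} and \ref{thm:CosetFIRM} may be applied legitimately in both directions, and to observe that flag-transitivity of the factors is genuinely used in the ``only if'' halves of (c) and (d), since without it Proposition~\ref{prop:RCfree} (resp.\ Proposition~\ref{prop:FIRM_free}) only controls the group-theoretic condition and not the geometric property itself.
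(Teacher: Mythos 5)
Your proposal is correct and follows essentially the same route as the paper: parts (a) and (b) from Proposition~\ref{prop:FTfree} and Theorem~\ref{thm:cosetFT}, and parts (c) and (d) by combining Propositions~\ref{prop:RCfree} and \ref{prop:FIRM_free} with Theorems~\ref{thm:CosetRC} and \ref{thm:CosetFIRM} under the flag-transitivity established in (a). Your explicit remark that flag-transitivity of the factors (and not just of $\Gamma$) is needed to convert the group-theoretic conditions into geometric properties in both directions of (c) and (d) is a point the paper's proof leaves implicit, but the argument is the same.
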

\begin{proof}
    $a)$ This is a direct consequence of Proposition~\ref{prop:FTfree};

    $b)$ From item $(a)$, this is a consequence of Theorem~\ref{thm:cosetFT};

    $c)$ Putting together item $(a)$ and Proposition~\ref{prop:RCfree}, with Theorem~\ref{thm:CosetRC}, we get that $\Gamma$ is residually connected if and only if the same is true for $\alpha$ and $\beta$. 

    $d)$ Since $G$ is flag-transitive in $\Gamma$, then by Proposition~\ref{prop:FIRM_free}, item (a) and Theorem~\ref{thm:CosetFIRM}, we have that $\Gamma$ is firm if and only if $\alpha$ and $\beta$ are firm.
    
\end{proof}

\begin{coro}\label{coro:FreeProductHypertopes}
    The free product $\Gamma  = \alpha * \beta$ is a regular hypertope if and only if both $\alpha$ and $\beta$ are regular hypertopes.
\end{coro}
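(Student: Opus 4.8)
The plan is to unfold ``regular hypertope'' into its definition — a thin, residually connected, flag-transitive incidence geometry — and to note that firmness follows from thinness, so nothing about firmness needs to be checked. Flag-transitivity, being a geometry, and the equivalence ``$\Gamma = \alpha * \beta$ is residually connected $\iff$ $\alpha$ and $\beta$ are residually connected'' are then immediate from Theorem~\ref{thm:FP_a)FT_b)Geo_c)_RC_b)_FIRM} (using, for the converse direction, the second half of Proposition~\ref{prop:FTfree}). So the only genuinely new ingredient is the analogue of Proposition~\ref{prop:FIRM_free} for the property (THIN): I want to show $\Gamma$ satisfies (THIN) if and only if $\alpha$ and $\beta$ do. Since $G = A * B$ is flag-transitive on $\Gamma$, Theorem~\ref{thm:CosetThin} reduces this to computing, for each $j \in I$, the index $[G_{I \setminus \{j\}} : G_I]$; and the parabolic formulas of the previous subsection give $G_I = A_{I_\alpha} * B_{I_\beta}$ and, for $j \in I_\alpha$, $G_{I \setminus \{j\}} = A_{I_\alpha \setminus \{j\}} * B_{I_\beta}$ (symmetrically for $j \in I_\beta$).

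The technical heart is an elementary index lemma for free products, which I would state and prove first: if $H' \leq H$ are subgroups of a group $A$ and $K$ is any group, then inside $H * K$ the index $[H * K : H' * K]$ equals $1$ if $H = H'$, equals $[H : H']$ if $K$ is trivial, and is infinite in all remaining cases. The proof is a short normal-form argument: choosing $h \in H \setminus H'$ and $k \in K \setminus \{1_K\}$, the reduced syllable decomposition of $(hk)^m$ has every $H$-syllable equal to $h \notin H'$, so $(hk)^m \notin H' * K$ for $m \geq 1$ and hence the cosets $(hk)^n (H' * K)$, $n \geq 0$, are pairwise distinct. In particular $[H * K : H' * K] = 2$ forces $K$ to be trivial and $[H : H'] = 2$.

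Granting this lemma, both implications fall out. For the forward direction, assume $\Gamma = \alpha * \beta$ is a regular hypertope; then $\Gamma$ is thin and flag-transitive, so for $j \in I_\alpha$ the lemma applied to $[A_{I_\alpha \setminus \{j\}} * B_{I_\beta} : A_{I_\alpha} * B_{I_\beta}] = 2$ forces $B_{I_\beta} = \{1_B\}$ and $[A_{I_\alpha \setminus \{j\}} : A_{I_\alpha}] = 2$; symmetrically $A_{I_\alpha} = \{1_A\}$ and $\beta$ satisfies (THIN). With both Borel subgroups trivial, the second half of Proposition~\ref{prop:FTfree} yields flag-transitivity of $\alpha$ and $\beta$, whence residual connectedness by Theorem~\ref{thm:FP_a)FT_b)Geo_c)_RC_b)_FIRM}(c); so $\alpha$ and $\beta$ are thin, residually connected and flag-transitive, i.e. regular hypertopes. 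For the reverse direction, I take the factors in their canonical coset presentation, for which the Borel subgroup is trivial (the chamber stabiliser of a thin, residually connected, flag-transitive geometry being trivial), so $A_{I_\alpha} = \{1_A\}$, $B_{I_\beta} = \{1_B\}$, hence $G_I = \{1\}$ and, for $j \in I_\alpha$, $[G_{I \setminus \{j\}} : G_I] = |A_{I_\alpha \setminus \{j\}}| = [A_{I_\alpha \setminus \{j\}} : A_{I_\alpha}] = 2$ by thinness of $\alpha$ (symmetrically for $j \in I_\beta$); thus $\Gamma$ satisfies (THIN), and Theorem~\ref{thm:FP_a)FT_b)Geo_c)_RC_b)_FIRM} supplies flag-transitivity, being a geometry and residual connectedness.

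The step I expect to be the main obstacle is the thinness bookkeeping around Borel subgroups: unlike (RC1) and (FIRM), the condition (THIN) for a free product is rigid — the index lemma shows it is incompatible with a non-trivial Borel subgroup in either factor — so one must be careful that ``regular hypertope'' is used in the sense for which the factors' Borel subgroups vanish (automatic for the standard presentations, e.g. those arising from Coxeter systems) and recover this triviality from the hypotheses in the forward direction. Getting the degenerate cases $H = H'$ and $K = \{1\}$ of the index lemma right is the only place a genuine computation is needed; everything else is reassembly of results already in hand.
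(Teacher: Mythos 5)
Your proof is correct and follows essentially the same route as the paper: everything except thinness is delegated to Theorem~\ref{thm:FP_a)FT_b)Geo_c)_RC_b)_FIRM} and the converse half of Proposition~\ref{prop:FTfree}, and (THIN) is handled by forcing both Borel subgroups to be trivial and then comparing $[G_{I\setminus\{j\}}:G_I]$ with $[A_{I_\alpha\setminus\{j\}}:A_{I_\alpha}]$ and $[B_{I_\beta\setminus\{j\}}:B_{I_\beta}]$. The only divergence is in how triviality of the Borel subgroups is obtained in the forward direction: you extract it from your index lemma for $[H*K:H'*K]$, while the paper gets it more directly from $G_I=A_{I_\alpha}*B_{I_\beta}=\{1_G\}$ (a free product is trivial iff both factors are), using that a regular hypertope has trivial chamber stabiliser.
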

\begin{proof}
    Recall that a regular hypertope is a thin flag-transitive and residually connected geometry. Proposition~\ref{prop:FTfree} and Theorem~\ref{thm:FP_a)FT_b)Geo_c)_RC_b)_FIRM} handle the flag-transitivity, residual connectedness and geometry. The only thing left to take care of is thinness.

    Suppose first that $\alpha$ and $\beta$ are regular hypertopes. Then, by \cite[Lemma~3.4]{hypertopes}, we have that $A_{I_\alpha}= B_{I_\beta}=\{1_G\}$.
    We have previously seen that $G_{I\setminus\{i\}}$ is either $A_{I_\alpha\setminus\{i\}}*B_{I_\beta}$ or $A_{I_\alpha} * B_{I_\beta\setminus\{i\}}$. Therefore, in this case, we get that $G_{I\setminus\{i\}}$ is either $B_{I_\beta\setminus\{i\}}$ or $A_{I_\alpha\setminus\{i\}}$, and $G_I = \{1_G\}$. 
    
    As both $\alpha$ and $\beta$ are flag-transitive, by Proposition~\ref{prop:FTequivs}(\ref{prop:FTequivs:itm:Isomorphism}) we have that the residues of flags of corank $1$ are, respectively, isomorphic to $A_{I_\alpha\setminus\{i\}}/A_{I_\alpha}$ and $B_{I_\beta\setminus\{i\}}/B_{I_\beta}$. Thinness implies than $[A_{I_\alpha\setminus\{i\}}:A_{I_\alpha}]= |A_{I_\alpha\setminus\{i\}}|= 2$ and $[B_{I_\beta\setminus\{i\}} :B_{I_\beta}] =|B_{I_\beta\setminus\{i\}}| = 2$.
    Hence, we can easily conclude that $[G_{I\setminus\{i\}} : G_I ] = |G_{I\setminus\{i\}}| = 2$, and thus that $\Gamma$ is thin.

    Alternatively, if $\Gamma$ is thin, as $\Gamma$ is residually connected and $G$ is flag-transitive in $\Gamma$, we get that $G_I=\{1_G\}$ and $|G_{I\setminus\{i\}}|=2$. Since $G_I = A_{I_\alpha}*B_{I_\beta}$, we get that $A_{I_\alpha}=\{1_G\}$ and $B_{I_\beta}=\{1_G\}$. Moreover, as $G_{I\setminus\{i\}}$ is either $A_{I_\alpha\setminus\{i\}}$ or $B_{I_\beta\setminus\{i\}}$, we get that $|A_{I_\alpha\setminus\{i\}}| = 2$ for $i\in I_\alpha$, and $|B_{I_\beta\setminus\{i\}}| = 2$ for $i\in I_\beta$. Hence both $\alpha$ and $\beta$ are thin.
\end{proof}

We conclude this section with a few examples.

\begin{example}
    Let $\alpha$ be any coset incidence system and let $\beta = (\{1_B\},(\{1_B\}))$ be the trivial coset incidence system. Then $\alpha * \beta \cong \alpha$.
\end{example}
\begin{example}
    Let $\alpha = (C_2, (\{1\})) = \beta$. Then $\alpha * \beta = (C_2 * C_2, (C_2 * \{1_B\}, \{1_A\} * C_2))$ is the geometry of an infinite bivalent tree.
\end{example}
\begin{example}
    Let $\alpha = (D_n, (\langle a_1\rangle, \langle a_2 \rangle))$ be the coset incidence geometry of an $n$-sided polygon and let $\beta =  (C_2, (\{1_B\}))$. Then, we have that both $a_1$ and $a_2$ are involutions such that $a_1a_2$ has order $n$. The geometry $\alpha * \beta = (D_n *C_2, (\langle a_2 \rangle * C_2, \langle a_1 \rangle * C_2, D_n))$ is the geometry of a tessellation of the hyperbolic plane $\mathcal{H}^2$ of type $(n, \infty, \infty)$. Indeed $D_n * C_2 = \langle a_1,a_2,a_3 \rangle$ is the triangle group $\Delta(n,\infty,\infty)$.

\end{example}

\subsection{Free Products with Amalgamation of Coset Incidence Systems}

Let $A$ and $B$ be two groups with subgroups $C_A \leq A$ and $C_B \leq B$ together with an isomorphism $\varphi \colon C_A \to C_B$. Then, we can form the free amalgamated product

$$ 
G = A *_\varphi B = \langle A * B \mid c = \varphi(c), c \in C_A \rangle.
$$

As $A$ and $B$ are always embedded in $G$, we can consider their intersection $A \cap B$ in $G$. By abuse of notation, we will write $C = C_A = C_B = A \cap B$. Then, we can omit the isomorphism $\varphi$ and write $G = A *_C B$. This will ease the notations. 

Any element $g \in A *_C B$ can be obtained by multiplying in an alternating way a sequence of elements of $A$ and elements of $B$. There are different ways to select such sequences, each with its own advantages and disadvantages. We will use the so-called reduced sequences. These are easy to handle and work well enough in our cases.
\begin{definition}(Reduced sequence for free products with amalgamation)

A sequence $c,k_1,k_2, \cdots , k_n, n\geq 0$ of elements of $A*_CB$ will be called \textit{reduced} if 
\begin{enumerate}
    \item $c \in C$,
    \item each $k_i$ is either in $A \setminus C$ or $B \setminus C$,
    \item the sequence is alternating, meaning that successive $k_i, k_{i+1}$ are in different factors $A$ or $B$.
\end{enumerate}
\end{definition}

Note that the elements of $C$ correspond to sequences where $n =0$, and the elements of $A$ or $B$, that are not in $C$, correspond to sequences where $n=1$.
The main result about reduced sequences is the following.

\begin{thm}\cite[Theorem 2.6 -- Chapter IV]{Lyndon2001}
If $c,k_1,\cdots,k_n$ is a reduced sequence with $n \geq 1$, then the product $ck_1 \cdots k_n \neq 1_G$ in $G$. In particular, both $A$ and $B$ are embedded in $G = A *_C B$.
\end{thm}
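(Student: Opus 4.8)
The plan is to prove the statement by van der Waerden's permutation trick: I build a representation $\Phi\colon G = A*_C B \to \operatorname{Sym}(\Omega)$ on a suitable set $\Omega$ of \emph{formal normal forms}, arranged so that a reduced product visibly moves a chosen base point; the nontriviality of $ck_1\cdots k_n$ for $n\geq 1$, and the injectivity of the natural maps $A\to G$ and $B\to G$, then follow immediately. First I fix a set $X\subseteq A$ of representatives for the right cosets $C\backslash A$ with $1\in X$, and a set $Y\subseteq B$ of representatives for $C\backslash B$ with $1\in Y$, so that every $a\in A$ is uniquely $a = c\,x$ with $c\in C$, $x\in X$, and similarly in $B$. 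Let $\Omega$ be the set of sequences $(c;z_1,\dots,z_m)$, $m\geq 0$, with $c\in C$, each $z_j\in (X\cup Y)\setminus\{1\}$, and the $z_j$ alternating between $X$ and $Y$; write $\epsilon=(1;)$ for the empty one. Each $\omega=(c;z_1,\dots,z_m)$ represents the element $cz_1\cdots z_m\in G$.

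Next I define, for $a\in A$, a permutation $\sigma_a$ of $\Omega$: on input $\omega=(c;z_1,\dots,z_m)$, form the element $ac\in A$ (or $acz_1\in A$ when $z_1\in X$), write it uniquely as (element of $C$)$\,\cdot\,$(element of $X$), and read off the new sequence; the parities make the output again lie in $\Omega$, the only subtlety being the subcase where the $X$-part comes out trivial, in which the sequence loses one letter. One defines $\sigma_b$ for $b\in B$ symmetrically, exchanging the roles of $X$ and $Y$. The points to verify are: (i) $a\mapsto\sigma_a$ is a homomorphism $A\to\operatorname{Sym}(\Omega)$ and $b\mapsto\sigma_b$ a homomorphism $B\to\operatorname{Sym}(\Omega)$; and (ii) for $c\in C$ the two permutations $\sigma_c$ obtained from these coincide — which is clear, since in either reading $\sigma_c$ just replaces the leading entry $c'$ of a normal form by $cc'$ and fixes the $z_j$. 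Granting (i) and (ii), the universal property of the amalgamated product yields $\Phi\colon G\to\operatorname{Sym}(\Omega)$. To finish, evaluate at $\epsilon$: because each $k_i\notin C$ and consecutive $k_i$ lie in different factors, applying $\sigma_{k_i}$ to a normal form whose leading letter lies in the factor other than that of $k_i$ (or to $\epsilon$) lengthens it by exactly one, while the leading $\sigma_c$ changes only the $C$-entry; hence $\Phi(ck_1\cdots k_n)(\epsilon)$ has length $n$ and so differs from $\epsilon$ once $n\geq 1$, giving $ck_1\cdots k_n\neq 1_G$. For the embeddings, apply this to the length-one reduced sequences $1_C, a$ with $a\in A\setminus C$ (and $1_C, b$ with $b\in B\setminus C$), and note that a nontrivial $c\in C$ moves $(1;x)$ for any $x\in X\setminus\{1\}$ (if no such $x$ exists then $C=A$ and $A=C\hookrightarrow G$ trivially).

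The main obstacle is (i): verifying $\sigma_{a_1a_2}=\sigma_{a_1}\circ\sigma_{a_2}$ (and its $B$-analogue) requires a finite case analysis according to whether the leading letter of the input normal form lies in $X$, lies in $Y$, or is absent, and according to whether the $X$-part appearing in the renormalization is trivial. None of the cases is deep, but the bookkeeping is unavoidable; by contrast (ii) and the base-point computation are transparent from the definitions. I would also stress that one should not try to shortcut this via the action of $G$ on its Bass--Serre tree, as that tree is itself built using the normal form theorem; the argument above is the natural one, being precisely the $C=\{1\}$ proof of Theorem~\ref{thm:normal form Free} carried out with $C$ in place of the trivial subgroup.
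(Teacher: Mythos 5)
Your argument is the classical Artin--van der Waerden permutation representation on the set of formal normal forms, which is exactly how the result is established in the source the paper cites (\cite{Lyndon2001}, Ch.~IV); the paper itself supplies no proof, only the citation, so your proposal coincides in approach with the intended one. The plan is correct as written: the only deferred work is the routine case analysis showing $a\mapsto\sigma_a$ and $b\mapsto\sigma_b$ are homomorphisms agreeing on $C$, and your base-point computation that $\Phi(ck_1\cdots k_n)(\epsilon)$ has length $n\geq 1$ is the right way to conclude (for the embedding of $C$ one can even more directly note $\sigma_c(\epsilon)=(c;)\neq\epsilon$ when $c\neq 1$).
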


We now need to prove a few technical results about some subgroups of free amalgamated products.
Following H. Neumann's terminology~\cite{neumann1948generalized}, a subgroup $D$ of $G$ is said to be \textit{special} if it is generated in $G$ by a subgroup $D_A$ of $A$ and a subgroup $D_B$ of $B$, such that $C\cap D_A=C\cap D_B=:C_D$. It follows from \cite[Corollary 8.11]{Neumann1948} that $D \cong D_A *_{C_D} D_B$. Therefore, elements of $D$ also admit reduced sequences.

\begin{lemma}\label{lem:freeIntersectionAmalgam}
    Let $D,E,F$ be three special subgroups of $G = A *_C B$ such that $D = D_A *_{C_D} D_B, E = E_A *_{C_E} E_B$ and $F = F_A *_{C_F} F_B$. Then,
    \begin{enumerate}
        \item $D \cap E = (D_A \cap E_A) *_{C_{D\cap E}} (D_B \cap E_B)$, where $C_{D\cap E}=C_D\cap C_E$;
        \item If, moreover, we have that $D_A E_A \cap D_A F_A = D_A(E_A \cap F_A)$ and $D_B E_B \cap D_B F_B = D_B(E_B \cap F_B)$, then $DE \cap DF = D(E \cap F)$.
    \end{enumerate}
\end{lemma}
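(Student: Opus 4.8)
The plan is to mirror the proof of Lemma~\ref{lem:freeProductIntersection}, adapting each step to handle the amalgamated subgroup $C$. For part (a), the argument is essentially the same: any $g \in D$ has a reduced sequence $c, k_1, \ldots, k_n$ with $c \in C_D$ and each $k_i \in (D_A \setminus C) \cup (D_B \setminus C)$, and similarly for $E$. If $g \in D \cap E$, one uses the uniqueness of reduced sequences in $G = A *_C B$ (more precisely, the fact that the expression as an alternating product is unique up to the usual ``shuffling'' of elements of $C$) to conclude that $g$ admits a reduced sequence whose pieces lie in $(D_A \cap E_A)$, $(D_B \cap E_B)$, with the initial $C$-element in $C_D \cap C_E$. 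This gives $D \cap E \subseteq (D_A \cap E_A) *_{C_D \cap C_E} (D_B \cap E_B)$; the reverse inclusion is immediate. One small point to verify is that $(D_A \cap E_A)$ and $(D_B \cap E_B)$ do generate a special subgroup, i.e. that $C \cap (D_A \cap E_A) = C \cap (D_B \cap E_B) = C_D \cap C_E$, which follows since $C \cap D_A = C \cap D_B = C_D$ and $C \cap E_A = C \cap E_B = C_E$.

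For part (b), I would follow the prefix/suffix argument of Lemma~\ref{lem:freeProductIntersection}(b) almost verbatim. The inclusion $D(E \cap F) \subseteq DE \cap DF$ is trivial. For the reverse, take $g \in DE \cap DF$. Write $g = w\bar w$ with $w$ a reduced word in $D$ and $\bar w$ a reduced word in $E$, arranging (as in the free product case) that $w$ is the longest prefix of $g$ lying in $D$. Writing also $g = v\bar v$ with $v \in D$, $\bar v \in F$, the maximality of $w$ forces a suffix of $g$ containing all of $\bar w$ (possibly together with a terminal syllable of $w$) to lie in $F$, whence $\bar w$ itself lies in $F$, and so $\bar w \in E \cap F$, giving $g \in D(E \cap F)$. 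The case analysis splits according to whether the ``junction'' syllables between $w$ and $\bar w$ (in the two factorizations) collapse: when a product of a tail syllable of $w$ with a head syllable of $\bar w$ falls into $D_A \cap F_A$-type situations, one invokes the hypotheses $D_A E_A \cap D_A F_A = D_A(E_A \cap F_A)$ and $D_B E_B \cap D_B F_B = D_B(E_B \cap F_B)$ exactly as in the free-product proof, to push the ambiguous syllable into $E \cap F$.

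The main obstacle, and the genuine difference from the free-product case, is that reduced sequences in $A *_C B$ are \emph{not} unique: one can slide elements of $C$ back and forth between adjacent syllables, and a syllable of $w$ or $\bar w$ that a priori lies only in $A$ (or $B$) may actually be absorbable into the amalgamated part. So the notion of ``the longest prefix of $g$ lying in $D$'' and the claim that the junction syllable is ``in neither $D_A$ nor $F_A$'' must be handled carefully — one should work with a fixed reduced form and track how $C$-elements can be redistributed, or equivalently appeal to the normal form theorem for amalgamated products (coset-representative normal form) rather than to bare reduced sequences. Concretely, I would pin down the syllables using coset representatives for $C$ in $A$ and in $B$ so that the decomposition becomes unique, carry out the prefix/suffix bookkeeping in that rigid form, and only at the very end translate the conclusion back into the statement about products of subgroups. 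Once this subtlety is absorbed, the four-case analysis ($k_m$ and $\bar h_1$ trivial or not, in the notation of Lemma~\ref{lem:freeProductIntersection}) goes through with $C$-elements playing the role previously played by $1_G$.
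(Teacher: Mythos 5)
Your proposal is correct and follows essentially the same route as the paper: part (a) via reduced sequences whose syllables can be chosen inside each factor simultaneously, and part (b) via the longest-prefix-in-$D$ argument with a case split on whether the junction between the $D$-word and the $E$-word collapses, invoking the factorwise hypotheses exactly where the paper does. The subtlety you flag about non-uniqueness of reduced sequences is real; the paper handles it informally by tracking how $C$-elements may be redistributed among adjacent syllables rather than by fixing coset representatives, but your normal-form refinement is an equally valid (arguably cleaner) way to make the same bookkeeping rigorous.
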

\begin{proof}

    (a)  Any element $g \in G$ can be written as $g = c k_1 \cdots k_n$ with $c \in C$ and $k_i$ in $A$ or $B$, in an alternating fashion. This word for $g$ is not unique but all words representing $g$ have a similar structure. Indeed, all words for $g$ can be obtained from $ck_1 \cdots k_n$ by replacing some $k_i$ by any element of the corresponding coset $C k_i$ and then changing the value of $c \in C$ in the word of $g$ accordingly.
    As $D$ is at the same time a subgroup of $G$ and an amalgamated product itself, if $g \in D$, it means that we can choose the $k_i$ so that they are in $D$ and that in that case $c \in C_D$. 
    In other words, we can choose $k_i$ such that $C k_i \cap D \neq \emptyset$ for each $i = 1, \cdots, n$. Similarly, if $g \in E$, it means that we can choose the $k_i$ to be in $E$ and that if we do so, we will have $c \in C_E$. Hence, it is possible to to write $g = c k_1 \cdots k_n$ with $c \in C_{D \cap E}$ and $k_i \in D_A \cap E_A$ or $k_i \in D_B \cap E_B$, alternatively. This shows that $D \cap E \subseteq (D_A \cap E_A) *_{C_{D\cap E}} (D_B \cap E_B)$. The reverse inclusion is trivial.

(b) The inclusion $D(E \cap F) \subseteq DE \cap DF$ always holds. We thus only need to prove the reverse inclusion. Suppose thus that $g \in DE$. Then, $g$ can be represented by a word $w = cd_1 \cdots d_m c'e_{1}e_2 \cdots e_n$ such that each $d_i$ is in $D_A$ or $D_B$ but not in $C_D$, each $e_i$ is in $E_A$ or $E_B$ but not in $C_E$, $c\in C_D$ and $c'\in C_E$. Let $w_D = cd_1 \cdots d_m$ and $w_E = e'_{1}e_2 \cdots e_n$, where $e'_1=c'e_1$. We can assume that both $w_D$ and $w_E$ are reduced words. We can also suppose that $e'_1$ is not in $D_A$ nor $D_B$, so that 
$w_D$ is the longest prefix of $w$ contained in $D$. If this was not the case, just keep adding $e'_1, e_2, \cdots$ to $w_D$ until you reach some element that is not in $D_A$ nor $D_B$ and redefine $w_E$ to be what is leftover at the end of $w$.

Suppose that $w$ is reduced in $G$, meaning that $d_m$ and $e'_1$ are in different factors of $G$ and that $e'_1$ is not in $C$. Since $g \in DF$, we must have that $w$ is a concatenation of a word in $D$ with a word in $F$. We claim that this implies that $w_E$ is in $F$. Indeed, $w_D$ is the longest prefix of $w$ belonging to $D$. Therefore, no matter how we divided $w$ into two parts, the first belonging to $D$ and the second to $F$, the second must contain $w_E$ as a suffix. However, this must imply that $w_E \in F$. Hence, $w = w_D w_E$ with $w_D \in D$ and $w_E \in E \cap F$. Therefore, $g \in D(E \cap F)$.

    Suppose instead that $w$ is not reduced. Then, either both $d_m$ and $e'_1$ are in the same factor of $G$, or $e'_1$ is in $C$. In the first case, let us suppose, without loss of generality, that $d_m \in D_A$, and thus that $e'_1 \in E_A$. Then, a reduced form of $w$ is then $w = d_1 \cdots d_{m-1} x e_{2} \cdots e_n$ where $x = d_m e'_1 \in D_A E_A$. 
    Since $g \in DF$, the exact same argument as in the previous case shows that $e_2 \cdots e_n$ must be in $F$ and $x\in D_AF_A$. Then, we get that $x = d_m e'_1 \in D_A E_A \cap D_A F_A = D_A(E_A \cap F_A)$. Hence, $x = d'_m e''_1$ with $d'_m \in D_A$ and $e''_1 \in E_A \cap F_A$ and $w =  (d_1 \cdots d_{m-1} d'_m) (e''_1 e_{2} \cdots e_n)$ is indeed in $D(E \cap F)$. Suppose instead that $e_1 \in C$. 
    As $e'_1=c'e_1$, that would imply that $e_1\in C$, a contradiction with the fact $w_E$ is reduced. Hence, this can only happen if $w_E=c'$.
    In this case, 
     we have $w = c d_1 \cdots d_m c'$. Since $w \in DF$, it must then be, as in the previous cases, that $c' \in F$. As $c'\in C$, we have that $c' \in C_F$, meaning $c'\in C_E\cap C_F= C_{E\cap F}$.
    Therefore, $g \in D(E\cap F)$.
\end{proof}

We can now turn our attention to coset incidence systems once more. Let $\alpha = (A, (A_i)_{i \in I_\alpha})$ and $ \beta =(B, (B_i)_{i \in I_\beta})$ be two coset incidence systems of finite rank. Let $L = I_\alpha \cap I_\beta$ and suppose that $A_{I_\alpha \setminus L} \cong B_{I_\beta \setminus L}$. This means both $A$ and $B$ share an isomorphic parabolic subgroup  $A_{I_\alpha \setminus L} \cong B_{I_\beta \setminus L}$. We will denote that isomorphic parabolic subgroup by $C$. Let $G = A *_{C} B$, and define $G_i := \langle A_{\{i\} \cap I_\alpha} , B_{ \{i\} \cap I_\beta}\rangle$, for $i\in I = I_\alpha\cup I_\beta$.  

\begin{definition}
    Let $\alpha = (A, (A_i)_{i \in I_\alpha})$ and $ \beta =(B, (B_i)_{i \in I_\beta})$ be two coset incidence systems of finite rank. Let $L = I_\alpha \cap I_\beta$.
    We say $\alpha$ and $\beta$ are \emph{compatible on $L$}
    if, for all $J\subseteq L$, $A_{I_\alpha\setminus (L\setminus J)} \cong B_{I_\beta\setminus (L\setminus J)}$.
\end{definition}

\begin{lemma}\label{lem:equivSpecialSubAmalg}
    The following statements are equivalent:
    \begin{enumerate}
        \item The maximal parabolic subgroups $G_i$, $i \in I$, are special subgroups.
        \item $\alpha$ and $\beta$ are compatible on $L=I_\alpha\cap I_\beta$.
    \end{enumerate}
\end{lemma}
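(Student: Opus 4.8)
The plan is to chase, index by index, the precise obstruction to a maximal parabolic $G_i$ being special, and to match it against the isomorphism conditions that define compatibility. Throughout I read ``$G_i$ is a special subgroup'' via its defining generators, i.e.\ with $D_A = A_{\{i\}\cap I_\alpha}$ and $D_B = B_{\{i\}\cap I_\beta}$: since $G_i = \langle D_A, D_B\rangle$ by definition, specialness of $G_i$ is, by the very definition recalled just before the statement, equivalent to the equality $C\cap D_A = C\cap D_B$ of subgroups of the amalgamated copy $C = C_A = C_B$ inside $G$ (and then $G_i\cong D_A *_{C\cap D_A} D_B$ by Neumann).

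First I would dispatch the indices outside $L$. If $i\in I_\alpha\setminus L$, then $\{i\}\cap I_\beta = \emptyset$, so $D_B = B$ and $C\cap D_B = C$; moreover $A_i$ is one of the factors whose intersection defines $C = A_{I_\alpha\setminus L}$, so $C\subseteq A_i$ and hence $C\cap D_A = C$ as well. Thus $G_i$ is automatically special, with no condition on $\alpha$ or $\beta$; the case $i\in I_\beta\setminus L$ is symmetric. So both (a) and the part of (b) indexed by subsets $J\not\subseteq L$ carry no information, and the whole equivalence is concentrated on the indices of $L$.

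For $i\in L$ I would use the set identity $(I_\alpha\setminus L)\cup\{i\} = I_\alpha\setminus(L\setminus\{i\})$ to rewrite $C\cap A_i = A_{I_\alpha\setminus L}\cap A_i = A_{I_\alpha\setminus(L\setminus\{i\})}$, and likewise $C\cap B_i = B_{I_\beta\setminus(L\setminus\{i\})}$. Thus $G_i$ is special exactly when $\varphi\bigl(A_{I_\alpha\setminus(L\setminus\{i\})}\bigr) = B_{I_\beta\setminus(L\setminus\{i\})}$, which is nothing but the compatibility condition for the singleton $J = \{i\}$. Finally I would bootstrap from singletons to arbitrary $J = \{i_1,\dots,i_k\}\subseteq L$: since $\varphi\colon C_A\to C_B$ is a bijection it commutes with finite intersections (as in Lemma~\ref{lem:lattice}), so $\varphi(C_A\cap A_J) = \bigcap_t\varphi(C_A\cap A_{i_t}) = \bigcap_t(C_B\cap B_{i_t}) = C_B\cap B_J$, and the same bookkeeping identity gives $C_A\cap A_J = A_{I_\alpha\setminus(L\setminus J)}$ and $C_B\cap B_J = B_{I_\beta\setminus(L\setminus J)}$. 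Hence the family of singleton conditions is equivalent to the full family of compatibility conditions, and combining the three cases yields (a)$\Leftrightarrow$(b).

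The only genuinely delicate point, and the one I would be most careful about, is the interface between the abstract ``$\cong$'' appearing in the definition of compatibility and the honest equality of subgroups of $G$ forced by specialness: one must exploit that the isomorphism witnessing compatibility is (the restriction of) the amalgamating isomorphism $\varphi$, so that ``$A_{I_\alpha\setminus(L\setminus J)}\cong B_{I_\beta\setminus(L\setminus J)}$'' literally says these two subgroups are identified inside $G = A *_\varphi B$. Granting that, there is essentially no computation left: everything rests on the set identity $(I_\alpha\setminus L)\cup J = I_\alpha\setminus(L\setminus J)$ and on $\varphi$ being an isomorphism.
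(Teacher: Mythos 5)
Your proposal is correct and follows essentially the same route as the paper: both directions reduce to the set identity $(I_\alpha\setminus L)\cup J = I_\alpha\setminus(L\setminus J)$, with indices outside $L$ being automatic and indices in $L$ matching the singleton compatibility conditions. The only cosmetic difference is that the paper derives the general-$J$ compatibility by applying its intersection lemma for special subgroups to all $G_J$ at once, whereas you bootstrap from singletons using that the injective $\varphi$ commutes with intersections — both hinge on the same identification of $C\cap A_J$ with $A_{I_\alpha\setminus(L\setminus J)}$, and your closing remark about reading ``$\cong$'' as identification via $\varphi$ inside $G$ is exactly the convention the paper uses.
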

\begin{proof}
    $(a)\Rightarrow (b)$: If $G_i$ are special subgroups, then we have that $G_i = A_{\{i\} \cap I_\alpha} *_{C_{G_i}} B_{ \{i\} \cap I_\beta}$, amalgamating with respect to the subgroup $C_{G_i} = C\cap A_{\{i\} \cap I_\alpha} = C \cap   B_{ \{i\} \cap I_\beta}$.
    For $J\subseteq I$, let $G_J = \cap_{i\in J} G_j$.
    As each $G_j$ is a special subgroup, using Lemma~\ref{lem:freeIntersectionAmalgam}(a), we have that $G_J = (\cap_{j \in J\cap I_\alpha}A_j) *_{C'} (\cap_{j \in J\cap I_\beta}B_j) = A_{J\cap I_\alpha} *_{C'} B_{J\cap I_\beta}$, with amalgamation subgroup $C'= C \cap A_{J\cap I_\alpha} = C \cap B_{J\cap I_\beta} = \cap_{j\in J} C_{G_i}$, which we will denote as $C_{G_J}$.
    Notice that since $C = A_{I_\alpha\setminus L} = B_{I_\beta\setminus L}$, we have that $C_{G_J} = A_{(I_\alpha\setminus L) \cup (J\cap I_\alpha)} = B_{(I_\beta\setminus L) \cup (J\cap I_\beta)}$, which can be rewritten as 
    $C_{G_J} = A_{I_\alpha\setminus (L\setminus J)} = B_{I_\beta\setminus (L\setminus J)}$. This means that for every $J \subseteq  L$, the parabolic subgroups $A_{I_\alpha\setminus (L\setminus J)}$ and $B_{I_\beta\setminus (L\setminus J)}$ are isomorphic. 

    $(b)\Rightarrow (a)$: We have that $G_i$ is the subgroup generated by $\langle A_{\{i\}\cap I_\alpha}, B_{\{i\}\cap I_\beta}\rangle$. If $i\notin L$, then we have $C = C \cap A_{\{i\}\cap I_\alpha} = C \cap B_{\{i\}\cap I_\beta}$, implying that $G_i$ is special. If $i\in L$, then by the hypothesis $(b)$ we have that $A_{I_\alpha\setminus(L\setminus\{i\})}= B_{I_\beta\setminus(L\setminus\{i\})}$. This again implies that $C \cap A_{\{i\}\cap I_\alpha} = C \cap B_{\{i\}\cap I_\beta}$. Hence, in both cases we get that $G_i$ are special subgroups.
\end{proof}

It can evidently happen that $\alpha$ and $\beta$ have an isomorphic parabolical subgroup but are not compatible, as shown in the following example.

\begin{example}
    Let $A$ and $B$ be the groups $$A=\langle a_0,a_1,a_2,a_3\mid a_i^2=(a_0a_1)^4=(a_1a_2)^3=(a_2a_3)^4=(a_0a_2)^2 =$$ 
    $$=(a_0a_3)^2 = (a_1a_3)^2=(a_0a_1a_2)^3=1\rangle$$
    and 
    $$B=\langle b_0,b_1,b_2,b_4\mid b_i^2=(b_0b_1)^3=(b_1b_2)^3=(b_2b_4)^4=(b_0b_2)^2=$$ $$=(b_0b_4)^2=(b_1b_4)^2=1\rangle.$$
   Let $I_\alpha=\{0,1,2,3\}$ and $I_\beta=\{0,1,2,4\}$, and consider $\alpha=(A,(A_i)_{i\in I_\alpha})$, with $A_i=\langle a_j\mid j\in I_{\alpha}\setminus\{i\}\rangle$ and $\beta=(B,(B_i)_{i\in I_\beta})$, with $B_i=\langle b_j\mid j\in I_{\beta}\setminus\{i\}\rangle$.
   The group $B$ is a Coxeter group, hence its parabolic subgroups are $B_J = \langle b_i\mid i\in I_\beta\setminus J\rangle$, for $J\subseteq I_\beta$. The same can be said about $A$, as it is the automorphism group of the regular polytope $\{\{4,3\}_3,\{3,4\}\}$\cite{Hartley2006}, respecting an intersection condition like Proposition~\ref{prop:CoxeterProperties}(b), meaning that $A_J = \langle a_i\mid i\in I_\alpha\setminus J\rangle$.
   It is known that the automorphism group of $A_3\cong B_4 \cong S_4$. Taking $L=I_\alpha\cap I_\beta = \{0,1,2\}$, we have that $A_{I_\alpha\setminus L}= A_3$ and $B_{I_\beta\setminus L}= B_4$. As $A_3\cong B_4$, we have found isomorphic parabolical subgroups in $\alpha$ and $\beta$. However, using Lemma~\ref{lem:equivSpecialSubAmalg}, we can see that $A_{I_\alpha\setminus(L\setminus\{2\})}=A_{2,3}\ncong B_{2,4} = B_{I_\beta\setminus(L\setminus\{2\})}$. Thus, $\alpha$ and $\beta$ are not compatible.
\end{example}

From now on, we will suppose that all the maximal parabolic subgroups $G_i$ are special subgroups. Lemma \ref{lem:equivSpecialSubAmalg} shows that it is a natural assumption to make, since it is equivalent to asking that the parabolic subgroups of both $\alpha$ and $\beta$ match as long as they are inside of $C$. It would not make much sense to attempt to amalgamate both geometries when that does not hold.

\begin{definition}[Free product with amalgamation of $\alpha$ and $\beta$]
    The free product with amalgamation of two compatible coset incidence systems $\alpha = (A, (A_i)_{i \in I_\alpha})$ and $ \beta =(B, (B_i)_{i \in I_\beta})$ is the coset incidence system $\alpha *_L \beta = (G,(G_i)_{i \in I})$ where $G = A *_{C} B$, and $G_i = \langle A_{\{i\} \cap I_\alpha} , B_{ \{i\} \cap I_\beta}\rangle = A_{\{i\} \cap I_\alpha} *_{C_{G_i}} B_{ \{i\} \cap I_\beta}$.
\end{definition}
  In this case, by Lemma~\ref{lem:equivSpecialSubAmalg}, the maximal parabolic subgroup $G_i$ are special subgroups of $G$, with $C_{G_i}=C\cap A_{\{i\} \cap I_\alpha} = C \cap B_{ \{i\} \cap I_\beta}$. From the proof of Lemma~\ref{lem:equivSpecialSubAmalg}, we know that in this case, for $J\subseteq I$, we have 
$G_J = A_{J\cap I_\alpha} *_{C_{G_J}} B_{J\cap I_\beta}$, with $C_{G_J}= C \cap A_{J\cap I_\alpha} = C \cap B_{J\cap I_\beta}$.
Let $L_\alpha = I_\alpha \setminus L$ and $L_\beta = I_\beta \setminus L$. Then $G_{L_\alpha} = B$ and $G_{L_\beta} = A$. Indeed, we have that $G_{L_\alpha} = A_{(I_\alpha\setminus L)\cap I_\alpha} *_{C_{G_{L_\alpha}}} B_{(I_\alpha\setminus L)\cap I_\beta}$. Since $I_\alpha\cap I_\beta=L$, $A_{(I_\alpha\setminus L)\cap I_\alpha}= A_{I_\alpha\setminus L}=C$ and $C_{G_{L_\alpha}}= C\cap A_{I_\alpha\setminus L}=C$, we also have that  $G_{L_\alpha} = C *_{C} B_{\emptyset} = B$. The same holds for $G_{L_\beta} = A$. Similarly, we obtain that, for any $J \subseteq I$ such that $L_\alpha \subseteq J$, we have that $G_J = B_{J \cap I_\beta}$. Also, for any $J \subseteq I$ such that $L_\beta \subseteq J$, we have that $G_J = A_{J \cap I_\alpha}$. Therefore, the parabolic subgroups of $\alpha$ and $\beta$ are also parabolic subgroups of $\alpha *_L \beta$.

We now investigate the relation between residual connectedness, flag-transitivity and firmness of the coset geometries $\alpha$, $\beta$ and $\alpha *_L \beta$. All cases here assume the compatibility of $\alpha$ and $\beta$ as we have only defined this incidence system in those terms.
The following proofs work in essentially the same way as the ones in the case of free products.

\begin{prop}\label{prop:RC_FPAmalg}
    The free amalgamated product $\alpha *_L \beta$ is (RC1) if and only if both $\alpha$ and $\beta$ are (RC1).
\end{prop}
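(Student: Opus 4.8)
The plan is to follow the proof of Proposition~\ref{prop:RCfree} almost verbatim, the only new ingredient being a case split organised around the three disjoint pieces $I = L_\alpha \sqcup L \sqcup L_\beta$, where $L_\alpha = I_\alpha\setminus L$ and $L_\beta = I_\beta\setminus L$. Unwinding the definition of (RC1), I must show that $G_J = \langle G_{\{i\}\cup J} \mid i\in I\setminus J\rangle$ for every $J\subset I$ with $|I\setminus J|\geq 2$, and since the inclusion $\supseteq$ is automatic only $\subseteq$ is at stake. Throughout I use the description of parabolics obtained right after Lemma~\ref{lem:equivSpecialSubAmalg}: writing $J_\alpha = J\cap I_\alpha$ and $J_\beta = J\cap I_\beta$, one has $G_J = A_{J_\alpha} *_{C_{G_J}} B_{J_\beta}$ with $C_{G_J} = C\cap A_{J_\alpha} = C\cap B_{J_\beta}$, and moreover $G_J = B_{J_\beta}$ whenever $L_\alpha\subseteq J$ and $G_J = A_{J_\alpha}$ whenever $L_\beta\subseteq J$.

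For the forward implication I assume $\alpha$ and $\beta$ are (RC1) and distinguish three (overlapping) cases. If $L_\alpha\subseteq J$, then $G_J = B_{J_\beta}$; moreover $I\setminus J = (L\sqcup L_\beta)\setminus J = I_\beta\setminus J_\beta$, so in particular $|I_\beta\setminus J_\beta|\geq 2$, and for every $i\in I\setminus J$ we still have $L_\alpha\subseteq J\cup\{i\}$, whence $G_{J\cup\{i\}} = B_{J_\beta\cup\{i\}}$. Applying (RC1) for $\beta$ to the subset $J_\beta\subset I_\beta$ then gives $G_J = B_{J_\beta} = \langle B_{J_\beta\cup\{i\}}\mid i\in I_\beta\setminus J_\beta\rangle = \langle G_{J\cup\{i\}}\mid i\in I\setminus J\rangle$. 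The case $L_\beta\subseteq J$ is entirely symmetric, using (RC1) for $\alpha$. Finally, if $J$ contains neither $L_\alpha$ nor $L_\beta$, I pick $i_0\in L_\alpha\setminus J$ and $j_0\in L_\beta\setminus J$; since $j_0\notin I_\alpha$ we get $G_{J\cup\{j_0\}} = A_{J_\alpha} *_{C'} B_{J_\beta\cup\{j_0\}}\supseteq A_{J_\alpha}$, and likewise $G_{J\cup\{i_0\}}\supseteq B_{J_\beta}$, so $\langle G_{J\cup\{i\}}\mid i\in I\setminus J\rangle$ contains $\langle A_{J_\alpha},B_{J_\beta}\rangle = G_J$, because an amalgamated free product is generated by its two factors. (This last case will require no hypothesis on $\alpha$ or $\beta$.)

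For the converse I argue by contraposition, treating the failure of (RC1) for $\alpha$ (the case of $\beta$ being symmetric). I choose $J_\alpha\subset I_\alpha$ with $|I_\alpha\setminus J_\alpha|\geq 2$ and $A_{J_\alpha}\neq \langle A_{J_\alpha\cup\{i\}}\mid i\in I_\alpha\setminus J_\alpha\rangle$, and set $J = J_\alpha\cup L_\beta\subset I$. Then $L_\beta\subseteq J$ forces $G_J = A_{J_\alpha}$, while $I\setminus J = (L_\alpha\sqcup L)\setminus J_\alpha = I_\alpha\setminus J_\alpha$ still has at least two elements; and for $i\in I\setminus J\subseteq I_\alpha$ we have $L_\beta\subseteq J\cup\{i\}$, hence $G_{J\cup\{i\}} = A_{J_\alpha\cup\{i\}}$. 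Therefore $\langle G_{J\cup\{i\}}\mid i\in I\setminus J\rangle = \langle A_{J_\alpha\cup\{i\}}\mid i\in I_\alpha\setminus J_\alpha\rangle \neq A_{J_\alpha} = G_J$, so $\alpha*_L\beta$ is not (RC1).

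The computations are routine once the parabolic description is in hand; the only genuine subtlety — and the reason the case split is phrased in terms of $L_\alpha,L_\beta$ rather than $I_\alpha,I_\beta$ as in Proposition~\ref{prop:RCfree} — is that it is exactly membership of $L_\alpha$ (resp. $L_\beta$) in $J$ that makes $G_J$ collapse to a parabolic of $\beta$ (resp. $\alpha$), and that in the mixed case one must enlarge $J$ by a type taken from $L_\alpha\setminus J$ and one from $L_\beta\setminus J$ — not merely from $I_\alpha\setminus J$ and $I_\beta\setminus J$ — so that an entire factor $A_{J_\alpha}$ (resp. $B_{J_\beta}$) survives inside the larger parabolic. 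I do not expect any obstacle beyond this bookkeeping, together with checking the degenerate possibilities $L_\alpha=\emptyset$ or $L_\beta=\emptyset$, which the same argument covers automatically.
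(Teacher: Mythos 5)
Your proof is correct and follows essentially the same route as the paper's: the same case split according to whether $J$ contains $L_\alpha$, $L_\beta$, or neither, and the same contrapositive for the converse using that parabolics of $\alpha$ reappear as parabolics of $\alpha *_L \beta$. The one (harmless, and in fact slightly cleaner) deviation is in the mixed case, where you observe that the whole factors $A_{J_\alpha}$ and $B_{J_\beta}$ already sit inside $G_{J\cup\{j_0\}}$ and $G_{J\cup\{i_0\}}$ for $j_0\in L_\beta\setminus J$ and $i_0\in L_\alpha\setminus J$, so no appeal to (RC1) is needed there, whereas the paper instead decomposes $A_{J_\alpha}$ and $B_{J_\beta}$ via (RC1) of the factors.
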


\begin{proof}
    We first show that $\alpha *_L \beta$ is (RC1) as long as both $\alpha$ and $\beta$ are (RC1). In other words, we need to show that $G_J = \langle G_{\{i\} \cup J} | i \in I \setminus J \rangle$ for all $J \subset I$ such that $|I \setminus J| \geq 2$, where $G_J = \cap_{j \in J} G_j$.
    Notice that $\langle G_{\{i\} \cup J} | i \in I \setminus J \rangle \subseteq G_J$ is trivial. We will then prove the reverse inclusion.
    First of all, if $L_\alpha$ or $L_\beta$ is contained in $J$, then $G_J = \langle G_{\{i\} \cup J} | i \in I \setminus J \rangle$ holds since both $\alpha$ and $\beta$ are (RC1) and that the parabolics subgroups under considerations are parabolic subgroups of either $\alpha$ or $\beta$. Suppose thus that $J$ contains neither $L_\alpha$ nor $L_\beta$. 
    Then $G_J = A_{J_\alpha} *_{C_{G_J}} B_{J_\beta}$ with $J_\alpha = I_\alpha \cap J$ and $J_\beta = I_\beta \cap J$. It is always true that a free amalgamated product is generated by its two factors. Hence, we have that $G_J = \langle A_{J_\alpha} , B_{J_\beta} \rangle$. Since $\alpha$ and $\beta$ are (RC1), we have that $A_{J_\alpha} = \langle A_{J_\alpha \cup \{j\}} | j \in I_\alpha \setminus J_\alpha \rangle$ and $B_{J_\beta} = \langle B_{J_\beta \cup \{j\}} | j \in I_\beta \setminus J_\beta \rangle$. But it is clear that $\langle A_{J_\alpha \cup \{j\}} | j \in I_\alpha \setminus J_\alpha \rangle < \langle  G_{\{i\} \cup J} | i \in I \setminus J \rangle$ and $\langle B_{J_\beta \cup \{j\}} | j \in I_\beta \setminus J_\beta \rangle < \langle  G_{\{i\} \cup J} | i \in I \setminus J \rangle.$ This concludes the proof of the forward implication.

    Suppose that $\alpha$ is not (RC1). Then $\alpha *_L \beta$ cannot be (RC1) since some parabolic subgroups of $\alpha$ must fail the (RC1) condition, and these parabolic subgroups are also parabolic subgroups of $\alpha *_L \beta$.
    
\end{proof}

\begin{prop}\label{prop:FT_FPAmalg}
        The group $G$ is flag-transitive on the free amalgamated product $\alpha *_L \beta$ if and only if both $A$ and $B$ are, respectively, flag-transitive in $\alpha$ and $\beta$.
\end{prop}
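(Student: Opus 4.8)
The plan is to mirror, almost verbatim, the strategy used for free products in Proposition~\ref{prop:FTfree}, replacing Lemma~\ref{lem:freeProductIntersection} by its amalgamated counterpart Lemma~\ref{lem:freeIntersectionAmalgam}. Concretely, I would use the flag-transitivity criterion from Theorem~\ref{thm:cosetFT} (or equivalently Proposition~\ref{prop:FTequivs}): $G$ is flag-transitive on $\alpha *_L \beta$ if and only if $G_i G_J = \cap_{j\in J} G_i G_J$ for each $J\subseteq I$ and each $i\in I$. The key structural facts already established before the statement are that every parabolic $G_J$ of $\alpha*_L\beta$ is a \emph{special subgroup} of $G=A*_C B$, namely $G_J = A_{J\cap I_\alpha} *_{C_{G_J}} B_{J\cap I_\beta}$, and that for $J$ containing $L_\alpha$ (resp. $L_\beta$) the subgroup $G_J$ is simply a parabolic of $\beta$ (resp. of $\alpha$).

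First I would prove the forward direction. Assume $A$ and $B$ are flag-transitive on $\alpha$ and $\beta$. By Proposition~\ref{prop:FTequivs}(\ref{prop:FTequivs:itm:FTIntersectionOfProducts}), flag-transitivity of $A$ means $(A_HA_K)\cap(A_HA_L) = A_H(A_K\cap A_L)$ for all parabolics, and similarly for $B$. Given $i\in I$ and $J\subseteq I$, I want $\cap_{j\in J} G_iG_j = G_i G_J$. The inclusion $\supseteq$ is automatic. For $\subseteq$, I would induct on $|J|$ (or just invoke the binary version and bootstrap): it suffices to show $G_iG_j \cap G_iG_k = G_i(G_j\cap G_k)$, i.e. $DE\cap DF = D(E\cap F)$ with $D=G_i$, $E=G_j$, $F=G_k$. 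All three are special subgroups, and the hypothesis of Lemma~\ref{lem:freeIntersectionAmalgam}(b) — namely $D_AE_A\cap D_AF_A = D_A(E_A\cap F_A)$ and the $B$-analogue — holds precisely because the $A$-factors of $G_i,G_j,G_k$ are parabolics of $\alpha$, which satisfies the product-of-intersections identity by flag-transitivity, and likewise on the $B$-side. Then Lemma~\ref{lem:freeIntersectionAmalgam}(b) gives exactly $G_iG_j\cap G_iG_k = G_i(G_j\cap G_k)$, and the general case follows by the equivalence in Theorem~\ref{thm:cosetFT}. Hence $G$ is flag-transitive on $\alpha*_L\beta$.

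For the converse, suppose $G$ is flag-transitive on $\alpha*_L\beta$. Here I exploit that the parabolics of $\alpha$ are literally parabolics of $\alpha*_L\beta$: indeed for $J\subseteq I$ with $L_\beta\subseteq J$ we have $G_J = A_{J\cap I_\alpha}$, and every parabolic $A_K$ of $\alpha$ arises this way (take $J = L_\beta \cup K$, noting $L_\beta \subseteq I_\alpha$ so $J\cap I_\alpha = K\cup(L_\beta\cap I_\alpha)$ — one should check the bookkeeping that $L_\beta \subseteq I_\alpha$ indeed holds since $L_\beta = I_\beta\setminus L$ and... wait, that is not in $I_\alpha$). Let me instead argue as in Proposition~\ref{prop:FTfree}: restrict attention to subsets $J,H,K$ of $I$ all containing $L_\beta$. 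For such subsets $G_J = A_{J\cap I_\alpha}$ etc., and the flag-transitivity identity of Proposition~\ref{prop:FTequivs}(\ref{prop:FTequivs:itm:FTProductOfIntersections}) for $G$, $(G_J\cap G_H)(G_J\cap G_K) = G_J\cap(G_HG_K)$, translates directly into $(A_{J_\alpha}\cap A_{H_\alpha})(A_{J_\alpha}\cap A_{K_\alpha}) = A_{J_\alpha}\cap(A_{H_\alpha}A_{K_\alpha})$. As $J,H,K$ range over all subsets of $I$ containing $L_\beta$, the sets $J_\alpha = J\cap I_\alpha$ range over all subsets of $I_\alpha$, so this covers all parabolics of $\alpha$; by Proposition~\ref{prop:FTequivs} again, $A$ is flag-transitive on $\alpha$. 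The symmetric argument with $L_\alpha$ in place of $L_\beta$ shows $B$ is flag-transitive on $\beta$.

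The main obstacle is the converse direction's bookkeeping — making sure that the family of parabolics of $\alpha$ obtained by intersecting with $\alpha*_L\beta$-parabolics $G_J$ (for $J$ ranging over an appropriate class of subsets of $I$) really exhausts \emph{all} parabolics of $\alpha$, and that the product/intersection identities transported from $G$ down to $A$ are exactly the ones needed for Proposition~\ref{prop:FTequivs}. The forward direction is essentially mechanical once Lemma~\ref{lem:freeIntersectionAmalgam}(b) is invoked; the only subtlety there is promoting the binary identity $G_iG_j\cap G_iG_k = G_i(G_j\cap G_k)$ to the full condition of Theorem~\ref{thm:cosetFT}, which is handled by the equivalences already recorded in Proposition~\ref{prop:FTequivs}.
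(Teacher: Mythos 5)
Your proposal is correct and follows essentially the same route as the paper: the forward direction reduces to the product--intersection identity for special subgroups via Lemma~\ref{lem:freeIntersectionAmalgam}(b) (whose hypothesis is supplied by flag-transitivity of $A$ and $B$ through Proposition~\ref{prop:FTequivs}), and the converse uses the fact that every parabolic of $\alpha$ (resp.\ $\beta$) occurs as $G_J$ for $J\supseteq L_\beta$ (resp.\ $J\supseteq L_\alpha$), so a failure of flag-transitivity downstairs transports directly to $G$. Your self-correction on the bookkeeping ($L_\beta$ is disjoint from $I_\alpha$, yet $J\cap I_\alpha$ still sweeps out all subsets of $I_\alpha$ as $J$ ranges over supersets of $L_\beta$) lands on exactly the argument the paper uses, and your explicit note about bootstrapping the binary identity to general $J$ is, if anything, slightly more careful than the paper's phrasing.
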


\begin{proof}
    We first show the reverse implication. By Theorem \ref{thm:cosetFT}, we need to show that, for each $J\subseteq I$ and each $i\in I\setminus J$, $G_JG_i = \cap_{j\in J} G_jG_i$ or, equivalently, that $G_iG_J = \cap_{j \in J} G_iG_j$.  
    First, notice that all the $G_J$ are also special subgroups of $G$. Also, since $A$ is flag-transitive in $\alpha$ and $B$ is flag-transitive in $\beta$, then the hypothesis of Lemma~\ref{lem:freeIntersectionAmalgam}(b) hold. Therefore, by that same lemma, we have that $\cap_{j \in J} G_iG_j = G_i (\cap_{j \in J} G_j) = G_iG_J$. Hence, the reverse implication is proved.

    Suppose now that $A$ is not flag-transitive in  $\alpha$. 
    As stated previously, we have that for $J\subseteq I$ such that $L_\beta\subseteq J$, $G_J = A_{J_\alpha}$ with $J_\alpha=J\cap I_\alpha$. Notice that we write all parabolics of $\alpha$ like this. Hence, this directly implies that $G$ cannot be flag-transitive in $\alpha *_L \beta$, as the parabolic subgroups of $\alpha$ fail the condition of Theorem \ref{thm:cosetFT}, and these parabolic subgroups are also parabolic subgroups of $\alpha *_L \beta$.
\end{proof}

\begin{prop}\label{prop:FIRM_FPAmalg}
The free amalgamated product $\alpha *_L \beta$ is (FIRM) if and only if both $\alpha$ and $\beta$ are (FIRM).
\end{prop}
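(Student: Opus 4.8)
The plan is to argue exactly as in Proposition~\ref{prop:FIRM_free}, but using the explicit description of the parabolic subgroups of $\alpha *_L \beta$ obtained just after Lemma~\ref{lem:equivSpecialSubAmalg}. By the very definition of the condition (FIRM) (see Theorem~\ref{thm:CosetFIRM}), the coset incidence system $\alpha *_L \beta = (G,(G_i)_{i\in I})$ is (FIRM) if and only if $G_{I\setminus\{j\}}\neq G_I$ for every $j\in I = I_\alpha\cup I_\beta$, so the whole proof reduces to comparing these two parabolic subgroups, type by type.

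First I would record two consequences of the identity $G_J = A_{J\cap I_\alpha} *_{C_{G_J}} B_{J\cap I_\beta}$ with $C_{G_J}=C\cap A_{J\cap I_\alpha}=C\cap B_{J\cap I_\beta}$. Since $A_{I_\alpha}\leq A_{I_\alpha\setminus L}=C$ and $B_{I_\beta}\leq B_{I_\beta\setminus L}=C$, we get $C_{G_I}=A_{I_\alpha}=B_{I_\beta}$, and hence $G_I=A_{I_\alpha}*_{A_{I_\alpha}}B_{I_\beta}=A_{I_\alpha}=B_{I_\beta}$ inside $G$. Moreover, as already noted in the text, whenever $L_\beta\subseteq J$ one has $G_J=A_{J\cap I_\alpha}$, and whenever $L_\alpha\subseteq J$ one has $G_J=B_{J\cap I_\beta}$. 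I would then split into three cases. If $j\in L_\alpha=I_\alpha\setminus L$, then $I_\beta\subseteq I\setminus\{j\}$, so $L_\beta\subseteq I\setminus\{j\}$ and $G_{I\setminus\{j\}}=A_{(I\setminus\{j\})\cap I_\alpha}=A_{I_\alpha\setminus\{j\}}$; thus $G_{I\setminus\{j\}}\neq G_I$ iff $A_{I_\alpha\setminus\{j\}}\neq A_{I_\alpha}$. If $j\in L_\beta$, symmetrically $G_{I\setminus\{j\}}=B_{I_\beta\setminus\{j\}}$ and $G_{I\setminus\{j\}}\neq G_I$ iff $B_{I_\beta\setminus\{j\}}\neq B_{I_\beta}$. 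If $j\in L$, then both $L_\alpha$ and $L_\beta$ lie in $I\setminus\{j\}$, so $G_{I\setminus\{j\}}=A_{I_\alpha\setminus\{j\}}=B_{I_\beta\setminus\{j\}}$, and $G_{I\setminus\{j\}}\neq G_I$ iff $A_{I_\alpha\setminus\{j\}}\neq A_{I_\alpha}$ (equivalently $B_{I_\beta\setminus\{j\}}\neq B_{I_\beta}$). Since $A$ and $B$ embed in $G$, each of these inequalities may be read off inside $A$ or $B$.

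Assembling the three cases, and using $I_\alpha=L_\alpha\sqcup L$ and $I_\beta=L_\beta\sqcup L$, one concludes that $\alpha *_L \beta$ is (FIRM) if and only if $A_{I_\alpha\setminus\{j\}}\neq A_{I_\alpha}$ for all $j\in I_\alpha$ and $B_{I_\beta\setminus\{j\}}\neq B_{I_\beta}$ for all $j\in I_\beta$, that is, if and only if both $\alpha$ and $\beta$ are (FIRM); this gives both implications at once. The only point that needs a little care is the behaviour at types $j\in L$: there $G_{I\setminus\{j\}}$ is simultaneously a maximal parabolic subgroup of $\alpha$ and one of $\beta$, so its failing (FIRM) is controlled at the same time by $\alpha$ and by $\beta$ --- consistently, since compatibility on $L$ forces $A_{I_\alpha\setminus\{j\}}=A_{I_\alpha}$ to be equivalent to $B_{I_\beta\setminus\{j\}}=B_{I_\beta}$ for $j\in L$. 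I expect this bookkeeping over $L_\alpha$, $L_\beta$ and $L$ to be the only mild obstacle; everything else is immediate from the structural description of the parabolic subgroups of $\alpha *_L \beta$.
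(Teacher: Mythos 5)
Your proof is correct and follows essentially the same route as the paper's: identify $G_I = A_{I_\alpha} = B_{I_\beta}$ and each $G_{I\setminus\{j\}}$ as a maximal parabolic subgroup of $\alpha$ or $\beta$ (using $L_\alpha\subseteq I\setminus\{j\}$ or $L_\beta\subseteq I\setminus\{j\}$), then read off the (FIRM) condition type by type. The only difference is presentational — you make the case $j\in L$ explicit and obtain both implications from one case analysis, whereas the paper treats the two implications separately — but the underlying argument is identical.
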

\begin{proof}
    We will show the reverse implication first. To prove that $\alpha *_L \beta$ is (FIRM) we need to show that $G_I\neq G_{I\setminus\{i\}}$, for all $i\in I$. Notice that, for each $i\in I$, either $L_\alpha\subseteq I\setminus\{i\}$ or $L_\beta\subseteq I\setminus\{i\}$, meaning that $G_{I\setminus\{i\}}$ is either $A_{I_\alpha\setminus\{i\}}$ or $B_{I_\beta\setminus\{i\}}$. Note also that, by Lemma~\ref{lem:equivSpecialSubAmalg}(b), $G_I = A_{I_\alpha} = B_{I_\beta}$. As both $\alpha$ and $\beta$ are (FIRM), we have that $A_{I_\alpha\setminus\{i\}}\neq A_{I_\alpha}$ and $B_{I_\beta\setminus\{i\}}\neq B_{I_\beta}$. Hence $G_I\neq G_{I\setminus\{i\}}$.
    Suppose now that $\alpha$ is not (FIRM). For $i\in I\setminus L_{\beta}$, we have $G_{I\setminus\{i\}} = A_{I_\alpha\setminus\{i\}} = A_{I_\alpha} = G_I$, hence $\alpha *_L \beta$ is not (FIRM).
\end{proof}

\begin{thm}\label{thm:Amalg_a)FT_b)Geo_c)_RC_b)_FIRM}
    Let $\alpha=(A,(A_i)_{i\in I_\alpha})$ and $\beta=(B,(B_i)_{i\in I_\beta})$ be two coset incidence systems compatible on $L=I_\alpha\cap I_\beta$.     
    The group $G = A *_C B$ is flag-transitive in $\alpha*_L\beta$, with $C=A_{I_\alpha \setminus L}\cong B_{I_\beta\setminus L}$, if and only if $A$ and $B$ are flag-transitive in $\alpha$ and $\beta$, respectively. Additionally, if $G = A *_C B$ is flag-transitive in $\alpha*_L\beta$, then
\begin{enumerate}
    \item $\alpha*_L\beta$ is a coset geometry;
    \item $\alpha*_L\beta$ is residually connected if and only if both $\alpha$ and $\beta$ are residually connected;
    \item $\alpha*_L\beta$ is firm (resp. thin/thick) if and only if $\alpha$ and $\beta$ are firm (resp. thin/thick);
\end{enumerate}
\end{thm}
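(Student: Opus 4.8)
The plan is to deduce parts (a)--(c) by combining the three preceding propositions with the group-theoretic criteria of Section~\ref{sec:prelims}; the only point that is not an immediate citation is the thin/thick dichotomy in part (c), which requires a short computation of the corank-one residues of $\alpha *_L \beta$.

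First, the equivalence ``$G = A *_C B$ is flag-transitive on $\alpha *_L \beta$ if and only if $A$ and $B$ are flag-transitive on $\alpha$ and $\beta$'' is exactly Proposition~\ref{prop:FT_FPAmalg}, so I would simply cite it and assume henceforth that this holds. Then (a) is immediate: by Theorem~\ref{thm:cosetFT}, a coset incidence system on which the group acts flag-transitively is automatically a geometry, so $\alpha *_L \beta$ is a coset geometry. For (b), Theorem~\ref{thm:CosetRC} (applicable since $G$ is flag-transitive) gives that $\alpha *_L \beta$ is residually connected if and only if it is (RC1); by Proposition~\ref{prop:RC_FPAmalg} this holds if and only if both $\alpha$ and $\beta$ are (RC1); and since $A$ and $B$ are flag-transitive on $\alpha$ and $\beta$, a second application of Theorem~\ref{thm:CosetRC} turns this into the statement that $\alpha$ and $\beta$ are both residually connected. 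The firmness assertion in (c) is handled in the same way, using Proposition~\ref{prop:FIRM_FPAmalg} and Theorem~\ref{thm:CosetFIRM} together with the flag-transitivity of $G$, $A$ and $B$.

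It remains to treat thinness and thickness in (c). Recall from the discussion preceding the theorem that, for $J \subseteq I$, one has $G_J = A_{J \cap I_\alpha} *_{C_{G_J}} B_{J \cap I_\beta}$ with amalgamating subgroup $C_{G_J} = A_{I_\alpha \setminus (L \setminus J)} = B_{I_\beta \setminus (L \setminus J)}$, so in particular $G_I = A_{I_\alpha} = B_{I_\beta}$. Writing $I = L_\alpha \sqcup L \sqcup L_\beta$ with $L_\alpha = I_\alpha \setminus L$ and $L_\beta = I_\beta \setminus L$, I would compute $G_{I \setminus \{j\}}$ for each $j \in I$: if $j \in L_\alpha$ then $L_\beta \subseteq I \setminus \{j\}$ and $G_{I \setminus \{j\}} = A_{I_\alpha \setminus \{j\}}$; if $j \in L_\beta$, symmetrically $G_{I \setminus \{j\}} = B_{I_\beta \setminus \{j\}}$; and if $j \in L$ then $L \setminus (I \setminus \{j\}) = \{j\}$, so the amalgamating subgroup of $G_{I \setminus \{j\}}$ coincides with both of its free factors and $G_{I \setminus \{j\}} = A_{I_\alpha \setminus \{j\}} = B_{I_\beta \setminus \{j\}}$. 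In every case $[G_{I \setminus \{j\}} : G_I]$ equals $[A_{I_\alpha \setminus \{j\}} : A_{I_\alpha}]$ when $j \in I_\alpha$ and $[B_{I_\beta \setminus \{j\}} : B_{I_\beta}]$ when $j \in I_\beta$, the two values coinciding when $j \in L$. By Proposition~\ref{prop:FTequivs}(\ref{prop:FTequivs:itm:Isomorphism}), applied to each of the three flag-transitive systems, these indices are precisely the cardinalities of the corresponding corank-one residues of $\alpha *_L \beta$, of $\alpha$, and of $\beta$. Hence every corank-one residue of $\alpha *_L \beta$ has the same size as a corank-one residue of $\alpha$ or of $\beta$, and every corank-one residue of $\alpha$ or of $\beta$ occurs this way; therefore all corank-one residues of $\alpha *_L \beta$ have exactly two elements (resp.\ more than two) if and only if the same holds for $\alpha$ and $\beta$, which by Theorem~\ref{thm:CosetThin} (and the analogous reading for thickness) is the desired equivalence.

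I do not expect a genuine obstacle: the statement is essentially an assembly of Propositions~\ref{prop:FT_FPAmalg}, \ref{prop:RC_FPAmalg} and \ref{prop:FIRM_FPAmalg} with Theorems~\ref{thm:cosetFT}, \ref{thm:CosetRC}, \ref{thm:CosetFIRM} and \ref{thm:CosetThin}. The only place where a little care is needed is the case $j \in L$ in the computation above, where it is the compatibility of $\alpha$ and $\beta$ on $L$ (Lemma~\ref{lem:equivSpecialSubAmalg}) that guarantees $A_{I_\alpha \setminus \{j\}}$ and $B_{I_\beta \setminus \{j\}}$ are identified in $G$, so that the common index $[G_{I \setminus \{j\}} : G_I]$ is genuinely the residue size on both sides.
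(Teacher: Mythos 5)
Your proposal is correct and follows essentially the same route as the paper: the flag-transitivity equivalence, parts (a), (b), and firmness are assembled from Propositions~\ref{prop:FT_FPAmalg}, \ref{prop:RC_FPAmalg}, \ref{prop:FIRM_FPAmalg} together with Theorems~\ref{thm:cosetFT}, \ref{thm:CosetRC}, \ref{thm:CosetFIRM}, and thin/thick is settled by identifying each $G_{I\setminus\{j\}}$ with $A_{I_\alpha\setminus\{j\}}$ or $B_{I_\beta\setminus\{j\}}$ and comparing indices over the common Borel subgroup $G_I = A_{I_\alpha} = B_{I_\beta}$. Your explicit treatment of the case $j \in L$ (where the amalgamating subgroup absorbs both factors) is slightly more careful than the paper's, but it is the same argument.
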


\begin{proof}
    The first part of the theorem is a direct consequence of Proposition~\ref{prop:FT_FPAmalg}.
    
    Suppose now that $G = A *_C B$ is flag-transitive in $\alpha*_L\beta$.
    
    $(a)$ This is a direct consequence of Theorem~\ref{thm:cosetFT}.

    $(b)$ From Proposition~\ref{prop:RC_FPAmalg}, Proposition~\ref{prop:FT_FPAmalg} and Theorem~\ref{thm:CosetRC}, we get that $\Gamma$ is residually connected if and only if the same is true for $\alpha$ and $\beta$. 

    $(c)$ Since $G$ is flag-transitive in $\Gamma$, then by Proposition~\ref{prop:FIRM_FPAmalg}, Proposition~\ref{prop:FT_FPAmalg} and Theorem~\ref{thm:CosetFIRM}, we have that $\Gamma$ is firm if and only if $\alpha$ and $\beta$ are firm.
    Additionally, suppose now that $\alpha$ and $\beta$ are also thin (resp. thick). Hence, both are firm and, by Proposition~\ref{prop:FTequivs}(\ref{prop:FTequivs:itm:Isomorphism}), we know that $|A_{I_\alpha\setminus\{i\}}:A_{I_\alpha}|=2$ (resp. $\geq 3$) and $|B_{I_\beta\setminus\{j\}}:B_{I_\beta}|=2$ (resp. $\geq 3$), for each $i\in I_\alpha$ and $j\in I_\beta$.
    Therefore, as in the proof of Proposition~\ref{prop:FIRM_FPAmalg}, we have either $G_{I\setminus\{i\}} =B_{I_\beta\setminus\{i\}}$ or $G_{I\setminus\{i\}} = A_{I_\alpha\setminus\{i\}}$,
    and $G_{I}=A_{I_\alpha}=B_{I_\beta}$.
    Hence, for all cases, $|G_{I\setminus\{i\}}:G_{I}|=2$ (resp. $\geq 3$), i.e. $\alpha *_L \beta$ is thin (resp. thick). 
    Suppose now that $\alpha$ is not thin (resp. not thick). Then, by the definition of the parabolics, we can easily see that, for some $i\in I$, we have $|G_{I\setminus\{i\}}:G_{I}|\neq 2$ (resp. $|G_{I\setminus\{i\}}:G_{I}|\ngeq 3$). Hence $\alpha *_L \beta$ is not thin (resp. not thick).
\end{proof}

We now once again investigate the case of regular hypertopes.

\begin{coro}
    $\alpha *_L \beta$ is a regular hypertope if and only if $\alpha$ and $\beta$ are regular hypertopes.
\end{coro}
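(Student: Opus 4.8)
The plan is to read the corollary off directly from Theorem~\ref{thm:Amalg_a)FT_b)Geo_c)_RC_b)_FIRM}, which already records that flag-transitivity transfers in both directions and that, once $G = A *_C B$ acts flag-transitively, residual connectedness and thinness of $\alpha *_L \beta$ are each equivalent to the corresponding property holding for both $\alpha$ and $\beta$. The only preliminary point is to unwind the definition of a regular hypertope as a thin, residually connected, flag-transitive incidence \emph{geometry}, and to agree that ``$\alpha *_L \beta$ is a regular hypertope'' is meant with respect to the natural action of $G$ (and likewise for $\alpha$ and $\beta$), exactly as in the free-product case treated in Corollary~\ref{coro:FreeProductHypertopes}; recall also that a coset incidence system on which the ambient group acts flag-transitively is automatically a geometry by Theorem~\ref{thm:cosetFT}, so the ``geometry'' clause is free once flag-transitivity is in hand.

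For the backward implication I would start from $\alpha$ and $\beta$ being regular hypertopes. Then $A$ and $B$ act flag-transitively on $\alpha$ and $\beta$, so by Theorem~\ref{thm:Amalg_a)FT_b)Geo_c)_RC_b)_FIRM} the group $G$ acts flag-transitively on $\alpha *_L \beta$ and the latter is a coset geometry; part (b) of the same theorem, applied with both factors residually connected, gives that $\alpha *_L \beta$ is residually connected, and part (c) (the thin case), applied with both factors thin, gives that $\alpha *_L \beta$ is thin. Hence $\alpha *_L \beta$ is a regular hypertope. For the forward implication, assuming $\alpha *_L \beta$ is a regular hypertope, flag-transitivity of $G$ on it together with the biconditional in the first sentence of Theorem~\ref{thm:Amalg_a)FT_b)Geo_c)_RC_b)_FIRM} forces $A$ and $B$ to act flag-transitively on $\alpha$ and $\beta$, so both factors are coset geometries; residual connectedness of $\alpha *_L \beta$ together with part (b) forces both $\alpha$ and $\beta$ to be residually connected, and thinness together with part (c) forces both to be thin. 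Thus $\alpha$ and $\beta$ are regular hypertopes.

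I do not expect a real obstacle: the statement is essentially a repackaging of the preceding theorem. The two subtleties worth stating explicitly are that the flag-transitivity in ``regular hypertope'' must be taken with respect to $G$ (so that the ``if and only if'' for flag-transitivity in Theorem~\ref{thm:Amalg_a)FT_b)Geo_c)_RC_b)_FIRM} is usable in the forward direction), and that being flag-transitive is precisely what promotes the coset incidence systems involved to genuine geometries. Had the bookkeeping for thinness not already been folded into Theorem~\ref{thm:Amalg_a)FT_b)Geo_c)_RC_b)_FIRM}(c), the one place requiring work would be identifying $G_I$ and the $G_{I\setminus\{i\}}$ through the compatibility isomorphism and Lemma~\ref{lem:equivSpecialSubAmalg}, just as in the analogous passage of the proof of Corollary~\ref{coro:FreeProductHypertopes}; but here that computation is not needed.
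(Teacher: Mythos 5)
Your proposal is correct and matches the paper's proof, which simply cites Theorem~\ref{thm:Amalg_a)FT_b)Geo_c)_RC_b)_FIRM} and unwinds the definition of a regular hypertope exactly as you do. Your explicit remark that flag-transitivity must be read with respect to the natural action of $G$ (so that the biconditional in the theorem is usable in the forward direction) is a convention the paper leaves implicit, but it does not change the argument.
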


\begin{proof}
    The proof follows directly from Theorem~\ref{thm:Amalg_a)FT_b)Geo_c)_RC_b)_FIRM}
\end{proof}

\begin{example}
    Taking $L = \emptyset$, we get that $\alpha *_L \beta$ is obtained from amalgamating $\alpha$ and $\beta$ along their isomorphic Borel subgroup $A_{I_\alpha} \cong B_{I_\beta}$. In the case where the common Borel subgroup is trivial, as it  always happen for flag-transitive thin geometries, for example, we get that $ \alpha *_L \beta = \alpha * \beta$, the free product of $\alpha$ and $\beta$.
    On the other hand, taking $\alpha = \beta$ so that $L = I_\alpha = I_\beta$, we get that $\alpha *_L \beta = \alpha = \beta$.
\end{example}

\begin{example}
    Let $\alpha = (D_n, (\langle a_1\rangle, \langle a_2 \rangle))$ be the coset incidence geometry of an $n$-sided polygon over the type set $I_\alpha = \{1,2\}$ and let $\beta =  (D_m, (\langle b_2\rangle, \langle b_3 \rangle))$ be the coset incidence geometry of an $m$-sided polygon over the type set $I_\beta = \{2,3\}$. We then have that $L = \{2\}$ and $\langle a_2 \rangle \cong \langle b_2 \rangle$ since both $a_2$ and $b_2$ are involutions. The geometry $\alpha *_L \beta = (D_n *_{\langle a_2 \rangle = \langle b_2 \rangle} D_m, (D_n, \langle a_1 \rangle * \langle b_3 \rangle , D_m))$ is the geometry of a tessellation of the hyperbolic plane $\mathcal{H}^2$ of type $(n, \infty, m)$. Indeed, $D_n *_{C_2} D_m = \langle a_1,a_2,b_3 \rangle$ is the triangle group $\Delta(n,m,\infty)$.
\end{example}

\subsection{HNN-Extensions of Coset Incidence Systems}

Let $A$ be a group and $\varphi \colon A_1 \to A_2$ be an isomorphism between two subgroups of $A$. We can then form the HNN-extension $G = A *_\varphi$. Recall that if $A = \langle S \mid R \rangle$, then the group $G = A *_\varphi$ admits $\langle S,t \mid R, t^{-1}at = \varphi(a), \forall a \in A_1 \rangle$ as a presentation. 

Once again, we begin by some preliminary results on intersections and products of some special subgroups of HNN-extensions.
A subgroup $H$ of $G$ is \textit{special} if $H = \langle B, t \rangle$ for some subgroup $B$ of $A$ such that $\varphi(B\cap A_1)=B\cap A_2$. In this case, we can consider the restriction $\varphi_B \colon B \cap A_1 \to B \cap A_2$ of $\varphi$ to $B$. This restriction $\varphi_B$ is an isomorphism, and $H \cong B *_{\varphi_B}$.

We recall here the concept of reduced sequences for HNN-extensions.

\begin{definition}[Reduced sequences and reduced words for HNN-extensions]

    Let $G=A*_\varphi$ be the HNN-extension of $A$, where $\varphi: A_1\to A_2$ is an isomorphism, with $A_1,A_2\leq A$.
    Then, for $n\geq 0$, every element of $G$ can be represented by a \textit{reduced word}  $w=a_0t^{\epsilon_1}a_1t^{\epsilon_2}a_2 t^{\epsilon_3}\ldots t^{\epsilon_n}a_n$, where $\epsilon_i=\pm 1$ and
    \begin{enumerate}
        \item $a_i$ is an arbitrary element of $A$;
        \item for $a_i\in A_1$, there is no subword $t^{-1}a_it$ in $w$; and
        \item for $a_i \in A_2$, there is no subword $t a_it^{-1}$ in $w$.
    \end{enumerate}
    A sequence $a_0, t^{\epsilon_1},a_1,t^{\epsilon_2},a_2, t^{\epsilon_3},\ldots, t^{\epsilon_n},a_n$ satisfying the above conditions is called a \textit{reduced sequence}.
\end{definition}

\begin{lemma}\cite[Britton's Lemma -- Chapter IV]{Lyndon2001}\label{lem:BrittonLemma}
    If the sequence $a_0$, $t^{\epsilon_1}$, $a_1$, $t^{\epsilon_2}$, $a_2$, $t^{\epsilon_3},\ldots, t^{\epsilon_n},a_n$ is reduced and $n\geq 1$, then we have that $a_0t^{\epsilon_1}$$a_1t^{\epsilon_2}a_2 t^{\epsilon_3}$$\ldots t^{\epsilon_n}$$a_n\neq 1_G$ in $G$.
\end{lemma}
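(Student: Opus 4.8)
The plan is to deduce Britton's Lemma from the Normal Form Theorem for HNN-extensions, in the same way that the free-product analogue (Theorem~\ref{thm:normal form Free}) is used above; since this is classical one may simply cite~\cite{Lyndon2001}, but here is the argument I would give. Fix right transversals $T_1 \ni 1_A$ for $A_1$ in $A$ and $T_2 \ni 1_A$ for $A_2$ in $A$, so that $1_A$ represents the coset of the subgroup itself in each. Call a sequence $\sigma = (a_0;\, \epsilon_1, u_1;\, \dots;\, \epsilon_n, u_n)$, $n \ge 0$, \emph{normal} if $a_0 \in A$, each $\epsilon_i = \pm 1$, $u_i \in T_1$ when $\epsilon_i = -1$, $u_i \in T_2$ when $\epsilon_i = +1$, and $\sigma$ contains no block $(\epsilon_i, u_i;\, \epsilon_{i+1})$ with $u_i = 1_A$ and $\epsilon_{i+1} = -\epsilon_i$ (a \emph{formal pinch}). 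Let $\Omega$ be the set of such sequences; the goal is to show that $\pi\colon \Omega \to G$, $\sigma \mapsto a_0 t^{\epsilon_1} u_1 \cdots t^{\epsilon_n} u_n$, is a bijection. Granting this, Britton's Lemma follows at once: given a reduced word $w = a_0 t^{\epsilon_1} a_1 \cdots t^{\epsilon_n} a_n$ with $n \ge 1$, one normalises it from the right end leftwards — replace $a_i$ by the representative $u_i$ of its coset and push the discrepancy ($\in A_1$ or $\in A_2$) past $t^{\epsilon_i}$ via $t^{-1}c = \varphi(c)t^{-1}$ or $tc = \varphi^{-1}(c)t$, absorbing the result into $a_{i-1}$, with the final leftover absorbed into $a_0$. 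This leaves $n$ and the $\epsilon_i$ unchanged, and a short induction shows that conditions (2) and (3) in the definition of a reduced word are exactly what forbids the appearance of a formal pinch: when $\epsilon_{i-1} = -\epsilon_i$, the element pushed leftward into $a_{i-1}$ lies in the very subgroup that $a_{i-1}$ avoids, so the new representative $u_{i-1}$ is not $1_A$. Thus $w = \pi(\sigma)$ for a normal sequence $\sigma$ with $n \ge 1$, while the only normal sequence mapping to $1_G$ is $(1_A)$; hence $w \ne 1_G$.

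To prove that $\pi$ is a bijection, surjectivity is clear: any word over the generators reduces to a normal sequence by the procedure above, after first cancelling genuine pinches (which lowers $n$). For injectivity I would use van der Waerden's trick: define a right action of $G$ on $\Omega$ by giving permutations $\rho(a)$ for $a \in A$ and $\rho(t^{\pm 1})$, where $\rho(a)$ right-multiplies the last group entry by $a$ and re-normalises leftwards as above, while $\rho(t^{\epsilon})$ appends the block $(\epsilon, 1_A)$ and then cancels it against the preceding block if this creates a formal pinch. One checks that these are well-defined permutations of $\Omega$ and that they satisfy the defining relations of $G$ — the relations of $A$, together with $t^{-1}at = \varphi(a)$ for $a \in A_1$. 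Then $\Omega$ becomes a $G$-set with $(1_A)\cdot \pi(\sigma) = \sigma$, forcing $\pi$ to be injective.

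The main obstacle is precisely the verification that $\rho$ respects the relation $t^{-1}at = \varphi(a)$, together with the well-definedness of $\rho(t^{\pm 1})$: this is a finite but fiddly case analysis, tracing the re-normalisation cascades triggered on each side and confirming that they terminate at the same element of $\Omega$, with cases according to the leading block of $\sigma$ and according to whether intermediate entries collapse to $1_A$. An alternative route that sidesteps this bookkeeping is Bass--Serre theory: realise $G = A *_\varphi$ as the fundamental group of the graph of groups with a single vertex (group $A$) and one loop edge (group $A_1$), let $G$ act on the associated Bass--Serre tree, and observe that a reduced word of $t$-length $n \ge 1$ labels a non-backtracking edge-path of length $n$ issuing from the base vertex, so it cannot fix that vertex and is therefore nontrivial in $G$. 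Either way the statement is standard, and in the text we simply cite~\cite{Lyndon2001}.
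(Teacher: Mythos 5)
The paper does not actually prove this statement: it is Britton's Lemma, quoted directly from \cite{Lyndon2001} (Chapter IV), so there is no internal proof to compare against. Your outline is the standard argument --- normalise a reduced word from the right against fixed right transversals of $A_1$ and $A_2$, observe that the element pushed leftward past $t^{\epsilon_i}$ lands in exactly the subgroup that conditions (b) and (c) of reducedness force $a_{i-1}$ to avoid (so no formal pinch is created and the $t$-length $n$ is preserved), and then invoke uniqueness of normal forms, established via van der Waerden's permutation trick or, alternatively, via the action on the Bass--Serre tree --- and it is correct in its essentials, with the coset conventions ($u_i\in T_1$ for $\epsilon_i=-1$, $u_i\in T_2$ for $\epsilon_i=+1$) matching the relation $t^{-1}at=\varphi(a)$. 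The only incomplete step is the one you flag yourself, namely the verification that $\rho(a)$ and $\rho(t^{\pm 1})$ are well defined and satisfy the defining relations of $G$; that is indeed the tedious core of the Normal Form Theorem and is carried out in \cite{Lyndon2001}, so simply citing it, as the paper does, is the appropriate resolution.
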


We now explore how special subgroups of $G$ behave under intersections and products.

\begin{lemma}\label{lem:HNNIntersection}
    Let $C,D,E$ be three special subgroups of $G$ such that $C=C_A*_{\varphi_{C_A}}$, $D=D_A*_{\varphi_{D_A}}$ and $E=E_A*_{\varphi_{E_A}}$, for $C_A,D_A,E_A\leq A$. Then,
    \begin{enumerate}
        \item $C \cap D = (C_A \cap D_A)*_{\varphi_{C_A\cap D_A}}$, and it is a special subgroup, meaning in particular that $\varphi_{C_A\cap D_A}: C_A\cap D_A\cap A_1 \to C_A \cap D_A\cap A_2$ is an isomorphism.
        \item If, moreover, we have that $C_AD_A \cap C_AE_A = C_A(D_A \cap E_A)$, then $C D \cap CE= C(D \cap E)$.
    \end{enumerate}
\end{lemma}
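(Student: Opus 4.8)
The plan is to mirror, as closely as possible, the proof of Lemma~\ref{lem:freeIntersectionAmalgam} (the free amalgamated product case), replacing reduced words for $A *_C B$ by reduced words for the HNN-extension $G = A *_\varphi$. The essential point is that a special subgroup of an HNN-extension is itself an HNN-extension (over $A$ restricted suitably), so its elements again admit reduced words of the form $a_0 t^{\epsilon_1} a_1 \cdots t^{\epsilon_n} a_n$, and Britton's Lemma (Lemma~\ref{lem:BrittonLemma}) plays exactly the role that the Normal Form Theorem played before.

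For part~(a), I would argue as in Lemma~\ref{lem:freeIntersectionAmalgam}(a). The inclusion $(C_A \cap D_A)*_{\varphi_{C_A\cap D_A}} \subseteq C \cap D$ is trivial once one checks that $\varphi_{C_A \cap D_A}$ is a well-defined isomorphism: indeed, since $C$ and $D$ are special, $\varphi(C_A \cap A_1) = C_A \cap A_2$ and $\varphi(D_A \cap A_1) = D_A \cap A_2$, and intersecting these gives $\varphi(C_A \cap D_A \cap A_1) = C_A \cap D_A \cap A_2$, so $C_A \cap D_A$ is again admissible. For the reverse inclusion, take $g \in C \cap D$. Writing $g$ as a reduced word in $G$, the subtlety is that $g$ does not have a unique reduced word; but any two reduced words for $g$ differ only by shuffling elements of $A_1$ (resp.\ $A_2$) across $t$ (resp.\ $t^{-1}$), i.e.\ by the pinch moves controlled by $\varphi$. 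Since $g \in C$, it has a reduced word in which every $a_i$ lies in $C_A$ and every pinch used lies in $C_A \cap A_1$; similarly for $D$. Using Britton's Lemma to pin down the ``syllable structure'' (the sequence of exponents $\epsilon_i$ and the cosets $A_1 a_i$ or $A_2 a_i$) as an invariant of $g$, one concludes that $g$ admits a reduced word with each $a_i \in C_A \cap D_A$, hence $g \in (C_A \cap D_A)*_{\varphi_{C_A \cap D_A}}$. This last step — reconciling the two reduced-word representations of $g$ into a single one with syllables in the intersection — is where I expect the real work to be, and it is the analogue of the coset-shifting remark opening the proof of Lemma~\ref{lem:freeIntersectionAmalgam}(a).

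For part~(b), the inclusion $C(D \cap E) \subseteq CD \cap CE$ is automatic; I would prove the reverse as in Lemma~\ref{lem:freeIntersectionAmalgam}(b). Take $g \in CD$, write $g = w_C \cdot w_D$ with $w_C$ a reduced word in $C$ and $w_D$ a reduced word in $D$, and arrange (by absorbing letters of $w_D$ into $w_C$) that $w_C$ is the longest prefix of $g$ lying in $C$. If the concatenation $w_C w_D$ is already reduced as a word in $G$, then from $g \in CE$ one gets a decomposition of $g$ as a $C$-prefix followed by an $E$-suffix; maximality of $w_C$ forces the $E$-suffix to contain $w_D$, hence $w_D \in E$, and since $w_D \in D$ we get $w_D \in D \cap E$, giving $g \in C(D\cap E)$. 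If $w_C w_D$ is not reduced, the only obstruction is a pinch at the junction: the last letter of $w_C$ and the first letter of $w_D$ are separated by a $t^{\pm 1}$ that can be cancelled, which ``fuses'' a boundary element into a product living in $C_A D_A$. Then from $g \in CE$ and the same maximality argument the fused element lands in $C_A E_A$ as well, so by the hypothesis $C_A D_A \cap C_A E_A = C_A(D_A \cap E_A)$ it lies in $C_A(D_A \cap E_A)$; rewriting the junction accordingly exhibits $g$ as an element of $C(D \cap E)$. One should also treat the degenerate case where $w_D$ collapses to a single element of $A$ (so $g \in C \cdot D_A$), exactly as the $w_E = c'$ sub-case was handled in Lemma~\ref{lem:freeIntersectionAmalgam}(b); there the fused/leftover element is forced into $C_A \cap A_i$ and then into the admissible intersection, which is why the statement only needs the single product hypothesis $C_A D_A \cap C_A E_A = C_A(D_A \cap E_A)$ rather than two of them. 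The main obstacle throughout is bookkeeping the non-uniqueness of reduced words in the HNN setting; Britton's Lemma, together with the fact that all the relevant subgroups are special (so the pinch data stays inside $C_A, D_A, E_A$), is exactly what keeps this under control.
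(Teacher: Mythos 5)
Your proposal is correct and follows essentially the same route as the paper's proof: part (a) via the flexibility of reduced words (syllables only determined up to $A_1$- or $A_2$-cosets, pinned down by Britton's Lemma) to land every syllable in $C_A\cap D_A$, and part (b) via the longest-$C$-prefix decomposition, splitting into the reduced case and the case where cancellation at the junction produces an element of $A\cap CD=C_AD_A$ that is then forced into $C_AE_A$ and hence into $C_A(D_A\cap E_A)$ by hypothesis. The only place the paper is slightly more careful is in the non-reduced case of (b), where it isolates the \emph{maximal} collapsing subword $y$ (since pinches can cascade beyond the single junction letter), but this is a refinement of the same idea rather than a different argument.
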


\begin{proof}
    $a)$ Any element $g \in G$ can be written as a reduced word $g=a_0t^{\epsilon_1}a_1t^{\epsilon_2}a_2 t^{\epsilon_3}\ldots t^{\epsilon_n}a_n$
    with $a_i\in A$, $\epsilon_i=\pm1$ and no subwords $t^{-1}a_it$, if $a_i\in A_1$, and $ta_it^{-1}$, if $a_i\in A_2$.
    Notice that, for $i\geq 1$ we can always replace each $a_i$  by an element of $A_1a_i$, if $\epsilon_i=-1$, or of $A_2a_i$, if $\epsilon_i=+1$, as long as we choose $a_0$ at the end so that we indeed are equal to $g$ and not another element of $At^{\epsilon_1}a_1t^{\epsilon_2}a_2 t^{\epsilon_3}\ldots t^{\epsilon_n}a_n$.
Since $C$ is a subgroup of $G$ and, at the same time, an HNN-extension, if $g\in C$, then we can choose all $a_i\in C_A$.
In other words, for $i \geq 1$, we have that, when $\epsilon_i=-1$, $A_1 a_i \cap C_A \neq \emptyset$, and when $\epsilon_i=+1$, $A_2 a_i \cap C_A \neq \emptyset$.
Similarly, if $g \in D$, it means that we can choose all $a_i$ to be in $D_A$. Hence, it is possible to write $g=a_0t^{\epsilon_1}a_1t^{\epsilon_2}a_2 t^{\epsilon_3}\ldots t^{\epsilon_n}a_n$ with $a_i \in C_A\cap D_A$.
This shows that $C \cap D \subseteq \langle C_A\cap D_A,t\rangle$. The reverse inclusion is trivial.

     Finally, notice that, as both $C$ and $D$ are special subgroups, and $\varphi$ is bijective, we have that 
     
     $$\varphi(A_1\cap C_A\cap D_A) = \varphi(A_1\cap C_A\cap A_1\cap D_A) = \varphi(A_1\cap C_A)\cap 
     \varphi(A_1\cap D_A)=A_2\cap C_A\cap D_A.$$
     
     Hence, we verified that $C\cap D = \langle C_A\cap D_A,t\rangle = (C_A\cap D_A)*_{\varphi_{C_A\cap D_A}}$ is a special subgroup, with $\varphi_{C_A\cap D_A}$ being the isomorphism equal to the action of $\varphi$, sending $A_1\cap C_A\cap D_A$ to $A_2\cap C_A\cap D_A$. 

    $b)$ The inclusion $C(D\cap E)\subseteq CD\cap CE$ always holds. Consider $w\in CD\cap CE$. Then, as $w\in CD$, we can write it as $w = cd$ where $c =c_0 t^{\epsilon_1} c_1 t^{\epsilon_2} c_2\ldots c_{n-1}t^{\epsilon_n}c_n$ and $d=d_0 t^{\epsilon'_1} d_1 t^{\epsilon'_2} d_2\ldots d_{m-1}t^{\epsilon'_m}d_m$, with $c_i\in C_A$, $d_i\in D_A$ and $\epsilon_i,\epsilon'_i\in\{-1,+1\}$. We will assume that both $c$ and $d$ are reduced, so that there is no subword in $c$ such that $t^{-1}c_it$ with $c_i\in A_1\cap C_A$, or $tc_it^{-1}$ with $c_i\in A_2\cap C_A$, and there is no subword in $d$ such that $t^{-1}d_it$ with $d_i\in A_1\cap D_A$, or $td_it^{-1}$ with $d_i\in A_2\cap D_A$. Additionally, we will assume that $c$ is the longest prefix of $w$ that belongs to $C$ (i.e. $d_0\notin C_A$), as otherwise we could extend the word $c$ until we reach a $d_i$ of $d$ that does not belong to $C_A$.

    Suppose that $w$ is reduced in $G$. Then we have that $t^{\epsilon_n} c_n d_0 t^{\epsilon'_1}$ is neither in $A_1$ nor in $A_2$. 
    Since $w \in DF$, we must have that $w$ is a concatenation of a word in $c'\in C$ with a word in $e \in E$, so that $w=c'e$. We claim that this implies that $d$ is in $E$. Indeed, $c$ is the longest prefix of $w$ belonging to $C$. Therefore, no matter how we divided $w$ into two parts, the first belonging to $C$ and the second to $E$, the second must contain $d$ as a suffix. This implies that $d \in E$. Hence, $w = cd$ with $c \in C$ and $d \in D \cap E$. Therefore, $w \in C(D \cap E)$.
    
    Suppose now that $w$ is not reduced in $G$. Hence, we have that $t^{\epsilon_n} c_n d_0 t^{\epsilon'_1}$ is in $A_1$ or in $A_2$. 
    Considering $x = t^{-1} c_n d_0 t$, $w$ contains the subword $t^{\epsilon_{n-1}}c_{n-1} x d_1 t^{\epsilon_2'}$, which might or might not be in $A_1$ or $A_2$, so that $w$ might still not be reduced.
    Therefore, for some $i\leq \min(n,m)$, let $y$ be the biggest subword of $w$ of the form 
    $$y = c_{n-i}t^{\epsilon_{n-i+1}}c_{n-i+1}\ldots t^{\epsilon_n} c_n d_0 t^{\epsilon'_1}d_1\ldots d_{i-1}t^{\epsilon'_i}d_i$$
    such that $ w' = c_0t^{\epsilon_1}c_1\ldots t^{\epsilon_{n-i}}yt^{\epsilon'_{i+1}} d_{i+1}\ldots t^{\epsilon'_m} d_m$ is a reduced word of $w$.
    Note that, as $w'$ is reduced, we have both that $y\in A$ and $y\in CD$. We will now prove that $A\cap CD = C_A D_A$. If $y\in A$, it can be represented as a reduced word of length at most $1$, so that $y=a_0$, with $a_0\in A$. If $y\in CD$, then the only way for this word to be reduced is if $y = c_0 d_0$, with $c_0\in C_A$ and $d_0\in D_A$. Hence $y\in A\cap CD\subseteq C_AD_A$. The reverse inclusion is trivial. Hence $y \in C_AD_A$.
    
    Since $w'=c_0t^{\epsilon_1}c_1\ldots t^{\epsilon_{n-i}}yt^{\epsilon'_{i+1}} d_{i+1}\ldots t^{\epsilon'_m} d_m \in CE$, and the element $c$ is the longest prefix of $w$ in $C$, we have, as in the reduced case, that $t^{\epsilon'_{i+1}} d_{i+1}\ldots t^{\epsilon'_m} d_m\in E$. Hence, this means that $y\in CE$. As we also have that $y\in A$, this means that $y\in C_AE_A$. Therefore, as $y\in C_AD_A\cap C_AE_A$, we have, by hypothesis, that $y\in C_A(D_A\cap E_A)$.
    As $c_0t^{\epsilon_1}c_1\ldots t^{\epsilon_{n-i}}\in C$, $y\in C_A(D_A\cap E_A)$ and $t^{\epsilon'_{i+1}} d_{i+1}\ldots t^{\epsilon'_m} d_m\in D\cap E$, then
    $w' = w \in C(D\cap E)$.
\end{proof}

We now construct the HNN-extension of a coset incidence system. Let $\alpha = (A, (A_i)_{i \in I})$ be a coset incidence system. 

\begin{definition}
    Let $C,D\leq A$ such that $\varphi: C\to D$ is an isomorphism. We say that $\varphi$ is \emph{admissible} if 
    \begin{enumerate}
        \item $C$ and $D$ are parabolic subgroups of $\alpha$, so that $C=A_J$ and $D=A_K$, for $J,K\subseteq I$;
        \item The isomorphism $\varphi$ induces an isomorphism between the lattice of parabolic subgroups contained in $C$ and the lattice of parabolical subgroups contained in $D$. More precisely, this means that $\varphi$ sends each parabolic subgroup of $\alpha$ contained in $C$ to a unique parabolic subgroup of $\beta$ contained in $D$ in a way that preserves intersections.
    \end{enumerate} 
\end{definition}

Let $\varphi: C\to D$ be an admissible isomorphism between two parabolics of the coset incidence system $\alpha$, with $C=A_J$ and $D=A_K$, for some $J,K\subseteq I$. We define an equivalence relation on $I$ under the action of $\varphi$ as follows: 
\begin{enumerate}
    \item for each $i \in I$, we have that $i \sim_\varphi i$,
    \item for $i_1\in I\setminus J$ and $i_2\in I\setminus K$, we say $i_1 \sim_\varphi i_2$ if $\varphi(A_{J\cup\{i_1\}}) = A_{K\cup\{i_2\}}$ or $\varphi^{-1}(A_{K\cup\{i_2\}}) = A_{J\cup\{i_1\}}$.
\end{enumerate}

Let $\widetilde{I}$ be the set of equivalence classes of $I$ under the action of $\varphi$. 
The set $\widetilde{I}$ is thus a set of disjoint subsets of $I$. A few technicalities and cumbersome notations arise from this fact. We will often be quite relaxed regarding these, whenever we believe no possible confusion can arise. Nonetheless, we introduce some precise notations to be used when clarity is needed.

Let $S\subseteq \widetilde{I}$. We have that $A_S = \cap_{s\in S} A_s$, where each $s\in S$ is a subset of $I$. Hence we have that $A_S = \cap_{s\in S} A_s = \cap_{s\in S}(\cap_{i\in s} A_i)$. Consider the following notation for the union of sets inside a set, i.e. $\widehat{S}=\cup_{s\in S} s$. This is simply the set of elements that appear in each subset of $S$. Notice that $\widehat{S}\subseteq I$ and that $\widehat{\widetilde{I}}=I$.
We thus have that $A_S = A_{\widehat{S}}$.
Similarly, $A^S = \langle A^s\mid s\in S\rangle = \langle A^i \mid s\in S\wedge i\in s\rangle = A^{\widehat{S}}$. In many cases, we will treat $S$ as $\widehat{S}$ and vice-versa, when it eases notations without too much possible confusion.

We now define a coset incidence system of the HNN-extension of $\alpha =(A,(A_i)_{i\in I})$ by $\varphi$.
\begin{definition}[HNN-extension of $\alpha$]
    The HNN-extension of $\alpha$ by a $\varphi$-admissible isomorphism between two parabolics of $\alpha$ is the coset incidence system $\alpha*_\varphi = (G,(G_s)_{s\in \II})$, where $G = A*_\varphi$, $\II = \widetilde{I}\cup \{t\}$, such that, for each equivalence class $s=\{i_1,i_2,\ldots,i_k\}\in \widetilde{I}$, $G_s=\langle t, A_s\rangle$, and $G_t=A_F$, where $F=(I\setminus K)\cap J = J\setminus K$.
\end{definition}

Note that $G_s=\langle t, A_s\rangle$ is the subgroup of $G = A *_\varphi$ generated by $t$ and by the parabolic subgroup  $A_s=\cap_{i\in s} A_i$. 
Therefore, all parabolic subgroups $G_s$, with $s\in \widetilde{I}$, contain the cyclic subgroup $\langle t \rangle$ of $G= A *_\varphi$ and thus all the parabolic subgroups $G_{\widetilde{I}}$ of $\alpha *_\varphi$ also contains $\langle t \rangle$.

\begin{lemma}
If the isomorphism $\varphi$ is admissible, then the maximal parabolic subgroups $G_s$ are special subgroups of $A *_\varphi$(for $s\in \widetilde{I}$).
\end{lemma}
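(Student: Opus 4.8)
The plan is to check the definition of a special subgroup directly, with $A_s$ as the candidate subgroup of $A$. For the HNN-extension $G = A *_\varphi$ the associated subgroups are $A_1 = C = A_J$ and $A_2 = D = A_K$, and $G_s = \langle t, A_s\rangle$ with $A_s = \bigcap_{i\in s}A_i$ (writing the class $s\in\widetilde{I}$ as the subset of $I$ it is). Since $\langle t, B\rangle$ is special as soon as $\varphi(B\cap A_1) = B\cap A_2$, the whole statement reduces to the single identity
$$\varphi(A_s\cap A_J) \;=\; A_s\cap A_K \qquad\text{for every } s\in\widetilde{I}.$$
Because intersections of parabolic subgroups of $\alpha$ are again parabolic, $A_s\cap A_J = A_{s\cup J}$ is a parabolic subgroup contained in $C$ and $A_s\cap A_K = A_{s\cup K}$ is one contained in $D$; so admissibility, condition (b), tells us that $\varphi$ sends $A_{s\cup J}$ to a parabolic subgroup contained in $D$, and does so compatibly with intersections.

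The core of the argument is then a computation. Writing $A_{s\cup J} = \bigcap_{i\in s}A_{J\cup\{i\}}$ and applying $\varphi$, which preserves intersections of parabolics below $C$, gives $\varphi(A_{s\cup J}) = \bigcap_{i\in s}\varphi(A_{J\cup\{i\}})$. For $i\in s\cap J$ the factor $A_{J\cup\{i\}}$ equals $C$, and contributes $\varphi(C) = D = A_K$. For $i\in s\setminus J$ the subgroup $A_{J\cup\{i\}}$ is a maximal proper sub-parabolic of $C$ (outside degenerate situations), so the lattice isomorphism induced by $\varphi$ sends it to one of the form $A_{K\cup\{i'\}}$ with $i'\in I\setminus K$; but this equality is precisely the defining relation $i\sim_\varphi i'$, so $i'$ lies in the same class, $i'\in s$. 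Collecting factors, $\varphi(A_{s\cup J}) = A_K\cap\bigcap_{i\in s\setminus J}A_{K\cup\{i'\}} = A_{K\cup N'}$ with $N' = \{i' : i\in s\setminus J\}\subseteq s\setminus K$.

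It remains to identify $A_{K\cup N'}$ with $A_{s\cup K}$. The inclusion $A_{s\cup K}\subseteq A_{K\cup N'}$ is immediate from $N'\subseteq s$, and for the reverse I would repeat the computation with the admissible isomorphism $\varphi^{-1}\colon D\to C$, which shows symmetrically that every element of $s\setminus K$ occurs as $i'$ for some $i\in s\setminus J$; hence $N' = s\setminus K$ and $K\cup N' = K\cup s$. This gives $\varphi(A_s\cap A_J) = A_{s\cup K} = A_s\cap A_K$, so $G_s = \langle t, A_s\rangle$ is indeed special. The step I expect to require the most care is this bookkeeping together with the ``outside degenerate situations'' above: one must treat separately the singleton classes $\{i\}$ with $i\in J$ — where the computation collapses to $\varphi(C) = D$ — and the types that are redundant for $J$ or for $K$, and one must justify cleanly that the lattice isomorphism $\varphi$ genuinely matches the maximal proper sub-parabolics $A_{J\cup\{i\}}$ of $C$ with the $A_{K\cup\{i'\}}$ of $D$, since this is exactly what makes the relation $\sim_\varphi$ well-behaved and is the real content of admissibility. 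By contrast, the purely group-theoretic reduction — that $\langle t, A_s\rangle$ is special the moment $\varphi(A_s\cap A_1) = A_s\cap A_2$ — is immediate from the definition.
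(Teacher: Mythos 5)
Your proposal is correct and follows essentially the same route as the paper: reduce to the single identity $\varphi(A_s\cap A_J)=A_s\cap A_K$, write $A_s\cap A_J$ as an intersection of the subgroups $A_{J\cup\{i\}}$, and use admissibility together with the definition of $\sim_\varphi$ to match these factors with the $A_{K\cup\{i'\}}$ for $i'$ in the same class $s$. Your explicit use of $\varphi^{-1}$ to see that every element of $s\setminus K$ is hit (so that the collected intersection is exactly $A_{s\cup K}$) is a slightly more careful rendering of the paper's assertion that $|s_J|=|s_K|$, but it is the same argument.
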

\begin{proof}
Suppose $\varphi$ is admissible. Then $C=A_J$ and $D=A_K$, for $J,K\subseteq I$, and $\varphi$ sends parabolic subgroups of $C(=A_J)$ to parabolic subgroups of $D(=A_K)$.

Let $s\in \widetilde{I}$ be an equivalence class of $I$ under the action of $\varphi$ and let $G_s=\langle t,A_s\rangle$ be the corresponding maximal parabolical subgroup.
To prove that this group is a special subgroup, we have to show that $\varphi(A_s\cap C) = A_s\cap D$.
As $C=A_J$ and $D=A_K$, this turns out to be equivalent to $\varphi(A_s\cap A_J)=A_s\cap A_K$. Let us define $s_J = s \cap (I\setminus J)$ and $s_K = s\cap (I\setminus K)$.
Notice that for each $i_1\in s_J$ there exists $i_2\in s_K$ such that $i_1\sim_\varphi i_2$.
Hence $A_s\cap A_J = A_{J\cup s_J}$ and $A_s\cap A_K=A_{K\cup s_K}$. Moreover, $A_{J\cup s_J} = A_{J\cup \{i_1\}}\cap A_{J\cup \{i_2\}}\cap \ldots\cap A_{J\cup \{i_p\}}$, for $s_J=\{i_1,i_2,\ldots,i_p\}$ and $A_{K\cup s_K} = A_{K\cup \{j_1\}}\cap A_{K\cup \{j_2\}}\cap \ldots\cap A_{J\cup \{j_l\}}$, for $s_K=\{j_1,j_2,\ldots,j_l\}$.
Also, we know that for each $i\in s_J$, there is a $j\in s_K$ such that $i\sim_\varphi j$, so that $\varphi(A_{J\cup\{i\}})=A_{K\cup \{j\}}$. Hence, $|s_J|=|s_K|$ and $p=l$. Since $\varphi$ is bijective, we then get that
$$\varphi(A_s\cap A_J) = \varphi(A_{J\cup \{i_1\}}\cap A_{J\cup \{i_2\}}\cap \ldots\cap A_{J\cup \{i_p\}}) = \varphi(A_{J\cup \{i_1\}})\cap \varphi(A_{J\cup \{i_2\}})\cap \ldots \cap \varphi(A_{J\cup \{i_p\}}) = $$ 
$$= A_{K\cup \{j_1\}}\cap A_{K\cup \{j_2\}}\cap \ldots\cap A_{K\cup \{j_p\}} = A_s\cap A_K,$$
as desired.
Hence, $G_s=A_s *_{\varphi_{A_s}}$ is a special subgroup, where $\varphi_{A_s}$ is an isomorphism $\varphi_{A_s}: A_s\cap C\to A_s\cap D$.
\end{proof}

As long as $\varphi$ is admissible, we can thus use Lemma \ref{lem:HNNIntersection} to study the maximal parabolic subgroups $G_s$, for $s\in \widetilde{I}$, are all special subgroups. The case of $G_t = A_F$, can be handled separately without much trouble. Indeed, for any $s\in \widetilde{I}$, $G_t\cap G_s = A_F\cap G_s = A_{F\cup s}$, since $A_F$ is a subgroup of $A$ and $G_s$ is a special subgroup of $G$ with $A_{s}\leq G_s$. In general, for $S\subseteq \II$ such that $t\in S$, we have that $G_{S}=G_{S_{\widetilde{I}}}\cap G_t = A_{F\cup S_{\widetilde{I}}} = A_{F\cup \widehat{S_{\widetilde{I}}}}$, where $S_{\widetilde{I}} = S\cap \widetilde{I}$.
Since for each $s\in \widetilde{I} $, we have that $t\in \II\setminus \{s\}$, we have that the minimal parabolic subgroups of $\alpha *_\varphi$ are $G^s = \cap_{i \in (\II\setminus \{s\})} G_i = \cap_{i\in (\widetilde{I}\setminus \{s\})} A_{F\cup i} = A_{F\cup (\widetilde{I}\setminus \{s\})} = A_{F\cup (I\setminus s)}=A_{I\setminus (s\setminus F)}$. 

We are now ready to investigate the usual properties of $\alpha *_\varphi$.

\begin{prop}\label{prop:HNNRC1}
    If $\alpha$ is (RC1) then  the HNN-extension $\alpha *_\varphi$ is (RC1).
\end{prop}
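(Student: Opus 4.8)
The plan is to verify the group-theoretic condition (RC1) for $\alpha *_\varphi = (G,(G_s)_{s\in\II})$ directly, using that $\alpha$ satisfies (RC1). Throughout I will use the description of the parabolic subgroups given just before the statement — for $S\subseteq\widetilde{I}$, $G_S=\langle t, A_{\widehat{S}}\rangle$, and for $S\ni t$, $G_S = A_{F\cup\widehat{S\cap\widetilde{I}}}$ is an honest parabolic subgroup of $\alpha$ — together with the identity $\langle A_{J'},A_{K'}\rangle = A_{J'\cap K'}$, valid for $J',K'\subseteq I$ with none of $J'$, $K'$, $J'\cap K'$ equal to $I$, which follows from Lemma~\ref{lem:RCequivs}(\ref{lem:RCequivs:itm:BottomUp}) and the definition of $A^J$. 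As in Propositions~\ref{prop:RCfree} and~\ref{prop:RC_FPAmalg}, the inclusion $\langle G_{S\cup\{i\}}\mid i\in\II\setminus S\rangle\subseteq G_S$ is automatic, so I fix $S\subsetneq\II$ with $|\II\setminus S|\geq 2$ and prove the reverse inclusion, distinguishing $t\in S$ from $t\notin S$.

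If $t\in S$, put $J'=F\cup\widehat{S\cap\widetilde{I}}$; then $G_S=A_{J'}$, $\II\setminus S\subseteq\widetilde{I}$, and $G_{S\cup\{s\}}=A_{J'\cup s}$ for $s\in\II\setminus S$, so the claim reduces to $A_{J'}\subseteq\langle A_{J'\cup s}\mid s\in\II\setminus S\rangle$, a statement about $\alpha$ alone. As $\widetilde{I}$ partitions $I$ and at least two classes occur, $\bigcap_{s}(J'\cup s)=J'$, so for $|I\setminus J'|\geq 2$ this follows from Lemma~\ref{lem:RCequivs}(\ref{lem:RCequivs:itm:Parabolics}) applied to $\alpha$; the cases $|I\setminus J'|\in\{0,1\}$ (where $A_{J'}$ is the Borel subgroup of $\alpha$ or a minimal parabolic, and at most one $A_{J'\cup s}$ differs from $A_{J'}$) are settled by inspection.

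If $t\notin S$, then $G_S=\langle t,A_{\widehat{S}}\rangle$, and since $G_{S\cup\{t\}}=A_{F\cup\widehat{S}}$ while each $G_{S\cup\{s\}}=\langle t,A_{\widehat{S}\cup s}\rangle$ contains $t$, it suffices to show $A_{\widehat{S}}\subseteq H:=\langle t,\,A_{F\cup\widehat{S}},\,A_{\widehat{S}\cup s}\ (s\in\widetilde{I}\setminus S)\rangle$. If at least two classes $s_1\neq s_2$ lie in $\widetilde{I}\setminus S$, then $A_{\widehat{S}\cup s_1},A_{\widehat{S}\cup s_2}\subseteq H$ and $\langle A_{\widehat{S}\cup s_1},A_{\widehat{S}\cup s_2}\rangle = A_{\widehat{S}}$ since $s_1\cap s_2=\emptyset$, so $A_{\widehat{S}}\subseteq H$. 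If only one class $s_0$ lies in $\widetilde{I}\setminus S$ (so $\widehat{S}=I\setminus s_0$), this is the case where the stable letter is genuinely needed: from $F\subseteq J$ we get $A_{J\cup\widehat{S}}\subseteq A_{F\cup\widehat{S}}\subseteq H$ and $A_{J\cup\widehat{S}}\subseteq C$, so — using that $G_S$ is a special subgroup, so $\varphi(A_{\widehat{S}}\cap C)=A_{\widehat{S}}\cap D$ — we have $t^{-1}A_{J\cup\widehat{S}}t=\varphi(A_{J\cup\widehat{S}})=A_{K\cup\widehat{S}}\subseteq H$, whence $\langle A_{J\cup\widehat{S}},A_{K\cup\widehat{S}}\rangle = A_{(J\cap K)\cup\widehat{S}}=A_{\widehat{S}}\subseteq H$; the last equality uses that each element of $J\cap K$ forms a singleton $\sim_\varphi$-class and so lies in $\widehat{S}=I\setminus s_0$ — except in the degenerate situation $s_0\subseteq J\cap K$, in which $A_{F\cup\widehat{S}}=A_{\widehat{S}}$ already.

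The heart of the proof, and the only place admissibility of $\varphi$ is truly needed, is this last sub-case: once $t$ and the parabolic $A_{F\cup\widehat{S}}$ are available, conjugating by $t$ must return precisely the ``partner'' parabolic $A_{K\cup\widehat{S}}$ of $\alpha$ inside $H$, which requires that $\varphi$ carry the parabolic lattice of $C$ onto that of $D$ — so that $G_S$ is special and $\varphi(A_{\widehat{S}}\cap C)=A_{\widehat{S}}\cap D$. The rest is bookkeeping: controlling how the blocks of $\widetilde{I}$ sit relative to $J$, $K$ and $F=J\setminus K$, and disposing of a few degenerate configurations (such as $s_0=\{i_0\}$ with $i_0\in F$), which are harmless because admissibility then forces $A_{\widehat{S}}$ to collapse onto the Borel subgroup of $\alpha$.
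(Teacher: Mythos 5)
Your argument is correct in its main line but takes a genuinely different route from the paper's. The paper never verifies (RC1) directly: it passes to the equivalent formulation of Lemma~\ref{lem:RCequivs}(\ref{lem:RCequivs:itm:UpBottom}) and proves $G^S=G_{\II\setminus S}$ for every $S\subseteq\II$, so its hard case is $t\in S$, where it must show $\langle A^i,t\rangle=\langle A^s,t\rangle$ by conjugating the \emph{atoms} $A^i\leq C$ by $t$ to reach their partner atoms in $D$. You instead verify the generation condition $G_S=\langle G_{S\cup\{i\}}\mid i\in\II\setminus S\rangle$ head-on, which inverts the case split: $t\in S$ becomes a pure statement about $\alpha$, dispatched by Lemma~\ref{lem:RCequivs}(\ref{lem:RCequivs:itm:Parabolics}), and all the difficulty is concentrated in $t\notin S$ with a single class $s_0$ remaining, where you conjugate the \emph{coatom-level} parabolic $A_{J\cup\widehat{S}}=C\cap A_{\widehat{S}}$ by $t$ to obtain $A_{K\cup\widehat{S}}$ and take the join. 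The mechanism -- specialness of $G_S$ gives $\varphi(C\cap A_{\widehat{S}})=D\cap A_{\widehat{S}}$, and the join of the two images is computed inside $\alpha$ using residual connectedness -- is the same as the paper's, but applied at the opposite end of the parabolic lattice; your version has the advantage of never needing the identity $G^t=\langle t,A_I\rangle$ or the bookkeeping distinguishing $A^{\widehat{S}\setminus F}$ from $A^{\widehat{S}}$.

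The one soft spot is your last sentence. The configuration you flag -- a singleton class $s_0=\{i_0\}$ with $i_0\in F=J\setminus K$ -- is dismissed with the unproved assertion that admissibility forces $A_{\widehat{S}}=A_I$. You should spell this out: since $i_0\notin K$ one has $A_{\widehat{S}}=A_{I\setminus\{i_0\}}\subseteq A_K$, and the only way an $i_0\in I\setminus K$ can fail to be $\sim_\varphi$-paired with some $i_1\in I\setminus J$ under a lattice isomorphism (which carries coatoms below $D$ to coatoms below $C$, and every coatom below $C$ has the form $A_{J\cup\{i_1\}}$) is that $A_{K\cup\{i_0\}}=A_K$, i.e.\ $A_K\subseteq A_{i_0}$; combining, $A_{\widehat{S}}\subseteq A_K\subseteq A_{i_0}$, so $A_{\widehat{S}}=A_{\widehat{S}}\cap A_{i_0}=A_I$, which is what you claim. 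This still tacitly assumes that a proper $A_{K\cup\{i_0\}}$ is genuinely a coatom (or at least that its $\varphi$-preimage is of the form $A_{J\cup\{i_1\}}$), which can fail for degenerate systems of subgroups; but the paper's own proof makes exactly the same silent assumption when it asserts that every equivalence class $s$ meets $I\setminus F$, so on this point you are no worse off than the source.
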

\begin{proof}
    Since $\alpha$ is (RC1), by Proposition~\ref{lem:RCequivs}(\ref{lem:RCequivs:itm:BottomUp}--\ref{lem:RCequivs:itm:UpBottom}), for any $L\subseteq I$, we have that $A_L = A^{I\setminus L}$ and $A^L = A_{I\setminus L}$.
    
    We need to show that a similar statement holds for $\alpha *_\varphi$. Let $S\subseteq \II$ be any subset of $\II$. 
    First, suppose that $t\notin S$. 
    Hence, $S\subseteq \widetilde{I}$ and therefore $S = \{s_1,s_2,\ldots,s_k\}$, with each $s_i$ being a equivalence class. By the discussion preceding this proof, we then have that $G^S = \langle G^{s_1}, G^{s_2},\ldots,G^{s_k}\rangle = \langle A_{I\setminus (s_1\setminus F)}, A_{I\setminus (s_2\setminus F)},\ldots,A_{I\setminus (s_k\setminus F)}\rangle$. As $\alpha$ is (RC1), we have that $G^S= \langle A^{s_1\setminus F}, A^{s_2\setminus F}, \ldots, A^{s_k\setminus F}\rangle = A^{\widehat{S}\setminus F}$. 
    Now, as $t\notin S$, it must be that $t\in \II\setminus S$, so that $$G_{\II\setminus S}=\cap_{i\in \II\setminus S} G_i = \cap_{i\in \widetilde{I}\setminus S} A_{F\cup i} =A_{I\setminus(\widehat{S}\setminus F)}.$$
    Finally, since $A^{\widehat{S}\setminus F} = A_{I\setminus (\widehat{S}\setminus F)}$, we get that $G^S = G_{\II\setminus S}$, as desired.

    Suppose now that $t\in S$. Then $S=\{s_1,\ldots, s_k, t\}$, with $s_i\in \widetilde{I}$. First, notice that $G^t = \cap_{s\in \widetilde{I}}G_s$. By Lemma~\ref{lem:HNNIntersection}(a), we thus have that the $G^t = \langle t, \cap_{s\in \widetilde{I}}A_s\rangle = \langle t, A_I\rangle$ is a special subgroup.
    Moreover, $$G^S = \langle G^{s_1}, \ldots, G^{s_k}, G^t\rangle = \langle A_{I\setminus (s_1\setminus F)},\ldots A_{I\setminus (s_k\setminus F)}, t, A_I \rangle.$$
   As $\alpha$ is (RC1), we have that $\langle A_{I\setminus (s_1\setminus F)},\ldots, A_{I\setminus (s_k\setminus F)} \rangle = \langle A^{s_1\setminus F},\ldots, A^{s_k\setminus F}\rangle = A^{\widehat{S_{\widetilde{I}}}\setminus F}$, where $S_{\widetilde{I}} = \{s_1,s_2,\ldots,s_k\}$.
   Let $i\in \widehat{S_{\widetilde{I}}}\setminus F$ and consider the  subgroup $\langle A^{i},t\rangle$ of $G$. Since $i\notin F= J\setminus K$, we must have either $i\notin J$ or $ i\in K$. If $i\notin J$, we have $i\in I\setminus J$ and $A^i\leq A_J$. Hence, as $\varphi$ is admissible and $A^i$ is a subgroup of $A_J$, there exists a parabolic subgroup of $A_K$ such that $\varphi(A^i)=A^j$, with $j\in I\setminus K$. Therefore, $i\sim_\varphi j$ and there exists an equivalence class $s\in S$ such that $\{i,j\}\subseteq s$. This means that $\langle A^i, t\rangle = \langle A^s, t\rangle$.
   Suppose now that $i\in K$ and $i\in J$ so that $i \in K \cap J$. We thus have that $i\notin I\setminus K$ and $i\notin I\setminus J$. Hence, by the definition of the equivalence class, $i$ is only equivalent to itself and
    $\langle A^i, t\rangle = \langle A^s, t\rangle$.
   Additionally, since $A_I\leq A_{I\setminus (s\setminus F)}$ for all $s\in \widetilde{I}$, we have that $$G^S = \langle A^{\widehat{S_{ \widetilde{I}}}\setminus F}, t\rangle = \langle A^{\widehat{S_{ \widetilde{I}}}}, t\rangle.$$
    Now, since $\II\setminus S\subseteq \widetilde{I}$, we have that $G_{\II\setminus S}= \cap_{s\in \II\setminus S} G_s$. As the $G_s$ are all special subgroups, by Lemma~\ref{lem:HNNIntersection}(a), we have that $G_{\II\setminus S} = \langle t, \cap_{s\in \II\setminus S} A_s\rangle$.
    Note that we can write $\II\setminus S$ as $\widetilde{I}\setminus S_{\widetilde{I}}$. Hence, $\cap_{s\in \II\setminus S} A_s = A_{\II\setminus S}=A_{\widetilde{I}\setminus S_{\widetilde{I}}}=A_{I\setminus \widehat{S_{\widetilde{I}}}}$ and $G_{\II\setminus S}=\langle t, A_{I\setminus \widehat{S_{\widetilde{I}}}}\rangle$. As $\alpha$ is (RC1), we conclude that $A^{\widehat{S_{ \widetilde{I}}}}=A_{I\setminus \widehat{S_{\widetilde{I}}}}$ and $G^S=G_{\II\setminus S}$, as desired. 
\end{proof}

\begin{prop}\label{prop:HNNFIRM}
    If $\alpha$ is (FIRM) then the HNN-extension $\alpha*_\varphi$ is (FIRM).
\end{prop}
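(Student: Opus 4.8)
The plan is to check the group-theoretic property (FIRM) for $\alpha*_\varphi = (G,(G_s)_{s\in\II})$ directly, that is, to show $G_{\II\setminus\{j\}} \neq G_\II$ for every $j\in\II$, using the descriptions of the parabolic subgroups obtained just before the statement. Recall from there that the Borel subgroup is $G_\II = A_I$, that the minimal parabolic attached to the type $t$ is $G_{\II\setminus\{t\}} = \langle t, A_I\rangle$, and that for $s\in\widetilde I$ the minimal parabolic is $G_{\II\setminus\{s\}} = A_{I\setminus(s\setminus F)}$, where $F = J\setminus K$. So there are two things to verify: $\langle t,A_I\rangle\neq A_I$, and $A_{I\setminus(s\setminus F)}\neq A_I$ for each $s\in\widetilde I$.

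The type $t$ is disposed of at once: by Britton's Lemma (Lemma~\ref{lem:BrittonLemma}), the length-one reduced sequence $1, t$ shows $t\notin A$, so $t\in\langle t, A_I\rangle\setminus A_I$ and hence $\langle t, A_I\rangle\supsetneq A_I = G_\II$.

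For $s\in\widetilde I$ I would reduce the desired inequality to the (FIRM) hypothesis on $\alpha$. If we can find an index $i_0\in s\setminus F$, then $I\setminus(s\setminus F)\subseteq I\setminus\{i_0\}$, so $A_{I\setminus(s\setminus F)}\supseteq A_{I\setminus\{i_0\}}$; and since $\alpha$ is (FIRM) we have $A_I\subsetneq A_{I\setminus\{i_0\}}$, whence $A_{I\setminus(s\setminus F)}\supsetneq A_I = G_\II$, as wanted. Thus everything reduces to the combinatorial claim: \emph{no $\varphi$-equivalence class $s\in\widetilde I$ is contained in $F = J\setminus K$.}

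I expect this claim to be the only real obstacle, and it is precisely where admissibility of $\varphi$ is used. Suppose $s\subseteq F\subseteq J$ and pick $i\in s$; then $i\in J$ and $i\notin K$. Since $i\in J$, the index $i$ can never play the role of the ``$i_1\in I\setminus J$'' side of a $\sim_\varphi$-relation, so $s$ cannot contain any element of $I\setminus J$ (such an element would be $\sim_\varphi i$ and would lie outside $J$, contradicting $s\subseteq J$); in particular $s$ consists only of elements of $J$. On the other hand, $i\notin K$ means $i\in I\setminus K$, so $A_{K\cup\{i\}}$ is a maximal proper parabolic of $D = A_K = \varphi(C)$. Admissibility says $\varphi$ induces a lattice isomorphism between the parabolics contained in $C = A_J$ and those contained in $D = A_K$, so $\varphi^{-1}(A_{K\cup\{i\}})$ is a maximal proper parabolic of $A_J$; since parabolics are closed under intersection, such a subgroup must be of the form $A_{J\cup\{i_1\}}$ for some $i_1\in I\setminus J$. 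But then $\varphi(A_{J\cup\{i_1\}}) = A_{K\cup\{i\}}$, i.e.\ $i_1\sim_\varphi i$, so $i_1\in s$ with $i_1\notin J$ --- contradicting $s\subseteq J$. Hence $s\not\subseteq F$, which completes the proof. The one routine point still to be pinned down in the write-up is that $A_{K\cup\{i\}}$ is genuinely a \emph{proper} parabolic of $A_K$ (so that the lattice-isomorphism argument applies), which follows from firmness of $\alpha$ being inherited by its residues.
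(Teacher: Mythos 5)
Your proof is correct and follows essentially the same route as the paper's: both dispose of the type $t$ immediately and reduce the statement to the combinatorial claim that $s\setminus F\neq\emptyset$ for every class $s\in\widetilde{I}$, which is then settled using the admissibility of $\varphi$. If anything, you are slightly more explicit than the paper: you isolate the genuinely delicate case of an index $i\in J\setminus K$ (which the paper's case split handles only by deferring to its (RC1) argument) and you make visible exactly where the hypothesis (FIRM) on $\alpha$ enters, namely in passing from $s\setminus F\neq\emptyset$ to $A_{I\setminus(s\setminus F)}\supsetneq A_I$ and in guaranteeing that $A_{K\cup\{i\}}$ is a \emph{proper} parabolic of $A_K$.
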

\begin{proof}
    Suppose that $\alpha$ is (FIRM). Recall that this means that $A_{I\setminus\{i\}}\neq A_I$ for all $i\in I$.
    First of all, we have that $G_\II = A_I$. By Lemma~\ref{lem:HNNIntersection}, we also have that $G_{\II\setminus \{t\}} = \langle t, A_I\rangle$, which is clearly not equal to $G_\II$. Additionally, for any $s\in \widetilde{I}$, we have that $G_{\II\setminus\{s\}} = A_{I\setminus (s\setminus F)}$. We thus have to show that $s\setminus F\neq \emptyset$. We can rewrite the condition $i\in s\setminus F$ as $i\in s\wedge (i\notin J\vee i\in K)$.  If $J=\emptyset$, then $s\setminus F\neq \emptyset$. Let $J\subset I$ but $J\neq I$. As in the proof of the (RC1) condition above, if $i\notin J$ we will have that there exists $j\in I\setminus K$ such that $\{i,j\}\subseteq s$, with $i\sim_{\varphi}j$, or if $i\in K\cap J$, $s=\{i\} $, with $i\notin I\setminus J$ and $i\notin I\setminus K$. In either cases, we get that $i\in s\setminus F$ so that $s \setminus F$ is not empty. Finally, if $J= I$, by the admissibility of $\varphi$, the lattice of parabolic subgroups of $A_J$ must be isomorphic to the lattice of parabolic subgroups of $A_K$, meaning that $K=I$ as well. Hence $s\setminus F = s$ and thus it is also not empty. This shows that $s \setminus F$ is never empty, and concludes the proof.

\end{proof}

We now examine the last remaining property, the flag-transitivity, and tie it together with the previous results.

 \begin{thm}\label{thm:FT_HNN_RC_FRM}
     Let $A$ act flag-transitively on $\alpha$ and suppose that $\varphi$ is admissible. Then $\alpha*_\varphi$ is flag-transitive.
    Additionally, 
     \begin{enumerate}
         \item $\alpha*_\varphi$ is a geometry;
         \item if $\alpha$ is also residually-connected, then $\alpha*_\varphi$ is residually connected;
         \item if $\alpha$ is also firm, then $\alpha*_\varphi$ is firm.
     \end{enumerate}
 \end{thm}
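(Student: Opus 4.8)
The plan is to establish the flag-transitivity of $\alpha*_\varphi$ first; parts (a)--(c) then follow quickly. Once $G=A*_\varphi$ is known to be flag-transitive on $\alpha*_\varphi$, part (a) is the last sentence of Theorem~\ref{thm:cosetFT}. For (b), flag-transitivity of $A$ on $\alpha$ makes residual connectedness of $\alpha$ equivalent to $\alpha$ satisfying (RC1) by Theorem~\ref{thm:CosetRC}; Proposition~\ref{prop:HNNRC1} upgrades this to ``$\alpha*_\varphi$ is (RC1)'', and a second application of Theorem~\ref{thm:CosetRC}, now legitimate because $\alpha*_\varphi$ is flag-transitive, gives residual connectedness of $\alpha*_\varphi$. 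Part (c) is the same argument with Theorem~\ref{thm:CosetFIRM} and Proposition~\ref{prop:HNNFIRM} in the roles of Theorem~\ref{thm:CosetRC} and Proposition~\ref{prop:HNNRC1}. So everything reduces to flag-transitivity, which I would verify through the criterion of Theorem~\ref{thm:cosetFT} in the equivalent form $G_iG_S=\bigcap_{s\in S}G_iG_s$ for all $S\subseteq\II$ and all $i\in\II\setminus S$ (the inclusion $\subseteq$ being automatic), splitting into cases according to the location of the distinguished type $t$.

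When $t\notin S$ and $i\neq t$, all of $G_i$, the $G_s$ ($s\in S$) and the partial intersections $G_{S'}$ ($S'\subseteq S$) are special subgroups of $G$, by admissibility of $\varphi$ together with Lemma~\ref{lem:HNNIntersection}(a). Inducting on $|S|$ and writing $S=S'\sqcup\{s_0\}$, Lemma~\ref{lem:HNNIntersection}(b) applied with $C=G_i$, $D=G_{S'}$, $E=G_{s_0}$ gives $G_iG_{S'}\cap G_iG_{s_0}=G_i(G_{S'}\cap G_{s_0})=G_iG_S$, because its hypothesis $A_iA_{\widehat{S'}}\cap A_iA_{s_0}=A_i(A_{\widehat{S'}}\cap A_{s_0})$ is exactly the flag-transitivity of $A$ on $\alpha$ as expressed in Proposition~\ref{prop:FTequivs}(\ref{prop:FTequivs:itm:FTIntersectionOfProducts}); combined with the inductive hypothesis $\bigcap_{s\in S'}G_iG_s=G_iG_{S'}$, this settles the case. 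This step is the exact analogue of the amalgamated-product argument in Proposition~\ref{prop:FT_FPAmalg}.

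The remaining cases, $t\in S$ and $i=t$, are the genuinely new part, and the main obstacle lies precisely here: the parabolic $G_t=A_F$, with $F=J\setminus K$, is merely a subgroup of $A$ and not a special subgroup, so Lemma~\ref{lem:HNNIntersection} does not govern products or intersections involving it. Using the previous case to absorb all the ``$t$-free'' factors, both cases reduce to distributivity identities of the shape $A_F\bigl(\bigcap_s H_s\bigr)=\bigcap_s(A_FH_s)$ and $G_i\langle t,A_L\rangle\cap G_iA_F=G_iA_{F\cup L}$, with $H_s=\langle t,A_s\rangle$ and $G_i$ special. To prove these I would run a normal-form analysis based on Britton's Lemma (Lemma~\ref{lem:BrittonLemma}): the crucial observation is that a special subgroup $H=\langle t,A_L\rangle$ satisfies $H\cap A=A_L$, so multiplication by $A_F$ affects only the ``$A$-part'' of an element, while the rigidity of reduced words in $A*_\varphi$ forces each syllable of an element lying in every $A_FH_s$ to be pushed --- modulo the associated subgroups $A_1$, $A_2$ and modulo $A_F$ --- simultaneously into all the $A_s$; at that point flag-transitivity of $\alpha$, in the form of Proposition~\ref{prop:FTequivs}(\ref{prop:FTequivs:itm:FTBueken}), closes the argument. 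I expect this reduced-word bookkeeping around the non-special parabolic $A_F$ to be the hard part; it is also the one place where admissibility of $\varphi$ is used beyond making the $G_s$ special, namely to guarantee that $A_F$ interacts correctly with the equivalence classes of $\widetilde{I}$. Since the three cases exhaust all pairs $(S,i)$, Theorem~\ref{thm:cosetFT} yields flag-transitivity of $G$ on $\alpha*_\varphi$, and (a)--(c) follow as explained above.
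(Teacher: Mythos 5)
Your proposal follows essentially the same route as the paper: flag-transitivity is the whole content, established by the same case split (all subgroups special versus cases involving the non-special parabolic $G_t=A_F$), with Lemma~\ref{lem:HNNIntersection} handling the special case and Britton-style reduced-word arguments handling the $A_F$ cases, and parts (a)--(c) then follow from Theorem~\ref{thm:cosetFT}, Proposition~\ref{prop:HNNRC1} and Proposition~\ref{prop:HNNFIRM} exactly as you say. The only cosmetic difference is that the paper verifies the equivalent criterion $(G_SG_Q)\cap(G_SG_L)=G_S(G_Q\cap G_L)$ and organizes the hard part into four subcases according to which of $S,Q,L$ contains $t$, whereas you use the form $G_iG_S=\bigcap_{s\in S}G_iG_s$ with induction on $|S|$; the reduced-word bookkeeping you sketch (including the key fact $\langle t,A_L\rangle\cap A=A_L$) is the same mechanism the paper invokes.
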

 \begin{proof}
     Since $A$ acts flag-transitively on $\alpha$, we have, by Proposition~\ref{prop:FTequivs}(\ref{prop:FTequivs:itm:FTIntersectionOfProducts}), that  $A_S A_Q \cap A_S A_L = A_S(A_Q\cap A_L)$ for any $S,Q,L\subseteq I$.
     To show that $A*_\varphi$ acts flag-transitively on $\Gamma=\alpha*_\varphi$, we must prove that $G_SG_Q \cap G_SG_L = G_S(G_Q\cap G_L)$ for any $S,Q,L\subseteq \II$, where $\II=\widetilde{I}\cup\{t\}$.

    Suppose first that $S,Q,L\subseteq \widetilde{I}$. Then, by Lemma~\ref{lem:HNNIntersection}(a), we have that $G_S, G_Q$ and $G_L$ are special subgroups, where $G_S = \langle A_S, t\rangle$, $G_Q = \langle A_Q, t\rangle$ and $G_L = \langle A_L, t\rangle$.
    As $A$ is flag-transitive in $\alpha$ we have that $A_SA_Q \cap A_SA_L = A_S(A_Q\cap A_L)$. Therefore, by Lemma~\ref{lem:HNNIntersection}(b), we have $G_SG_Q \cap G_SG_L = G_S(G_Q\cap G_L)$.

    Suppose now that the type $t\in \II$ is in either $S$, $Q$ or $L$. If $t$ is in all $S$, $Q$ and $L$, then we have immediately that $G_SG_Q \cap G_SG_L = G_S(G_Q\cap G_L)$, since $G_S=A_{F\cup S_{\widetilde{I}}}$, $G_Q=A_{F\cup Q_{\widetilde{I}}}$ and $G_L=A_{F\cup L_{\widetilde{I}}}$, and $A$ is flag-transitive in $\alpha$. Let us separate the remaining cases as follows:
    \begin{itemize}
        \item[Case 1:] $t\in S$, $t\notin Q$ and $t\notin L$ so that $G_S=A_{F\cup S_{\widetilde{I}}}$. Let $g \in A_{F\cup S_{\widetilde{I}}}G_Q\cap A_{F\cup S_{\widetilde{I}}} G_L$. As $g\in A_{F\cup S_{\widetilde{I}}}G_Q$, we have that $g= a q$, where $a\in A_{F\cup S_{\widetilde{I}}}$, $q\in G_Q$ and $q=a_0 t^{\epsilon_1}a_1t^{\epsilon_2}a_2\ldots t^{\epsilon_m}a_m$ is reduced, with $a_i\in A_{Q}$. Notice that $g$ is also reduced. Following a similar proof to the one given in Lemma~\ref{lem:HNNIntersection}(b) for the reduced word $w$, we can conclude that $q\in G_L$. Hence, $g\in A_{F\cup S_{\widetilde{I}}}(G_Q\cap G_L)$.
    \item[Case 2:] $t\notin S$, $t\in Q$ and $t\notin L$ so that $G_Q=A_{F\cup Q_{\widetilde{I}}}$. Let $g\in G_S A_{F\cup Q_{\widetilde{I}}}\cap G_SG_L$. As $g\in G_S A_{F\cup Q_{\widetilde{I}}}$, we have that $g=sa$, where $a\in A_{F\cup Q_{\widetilde{I}}}$ and $s=a_0 t^{\epsilon_1}a_1t^{\epsilon_2}a_2\ldots t^{\epsilon_m}a_m$, with $a_i\in A_S$. First, let us assume that $s$ is the biggest prefix of $g$ that belongs to $G_S$. Notice that, as $g$ is reduced, following a similar proof to Lemma~\ref{lem:HNNIntersection}(b) for the reduced word case, we get that $a\in G_L$. As $a\in A_{F\cup Q_{\widetilde{I}}}\cap G_L$, we have that $g\in G_S(A_{F\cup Q_{\widetilde{I}}}\cap G_L)$. The case where $t$ is instead in $L$ and not in $Q$ is identical.
    \item[Case 3:] $t\in S$, $t\in Q$ and $t\notin L$ so that both  $G_S=A_{F\cup S_{\widetilde{I}}}$ and $G_Q=A_{F\cup Q_{\widetilde{I}}}$.
        Let $g \in A_{F\cup S_{\widetilde{I}}} A_{F\cup Q_{\widetilde{I}}}\cap A_{F\cup S_{\widetilde{I}}}G_L$.
        As $g\in A_{F\cup S_{\widetilde{I}}}G_L$, we have, similarly as in Case $1$, that $g=al$ is reduced, with $a\in A_{F\cup S_{\widetilde{I}}}$ and $l\in G_L$ is reduced.
        However, as $g\in A_{F\cup S_{\widetilde{I}}} A_{F\cup Q_{\widetilde{I}}}$, then $g\in A$, which implies that $l=a_0$, with $a_0\in A_L$. As $A$ is flag-transitive in $\alpha$, we have that $g\in A_{F\cup S_{\widetilde{I}}}(A_{F\cup Q_{\widetilde{I}}}\cap A_L)\subseteq A_{F\cup S_{\widetilde{I}}}(A_{F\cup Q_{\widetilde{I}}}\cap G_L)$. The case where $t$ is instead in $L$ and not in $Q$ is identical.
        
        \item[Case 4:] $t\notin S$, $t\in Q$ and $t\in L$ so that both  $G_Q=A_{F\cup Q_{\widetilde{I}}}$ and $G_L=A_{F\cup L_{\widetilde{I}}}$. Let $g\in G_SA_{F\cup Q_{\widetilde{I}}}\cap G_S A_{F\cup L_{\widetilde{I}}}$. As Case $2$, considering $g$ to have the longest prefix in $G_S$, allows us to prove that the suffix in $A_{F\cup Q_{\widetilde{I}}}$ belongs also to $A_{F\cup L_{\widetilde{I}}}$. Hence $g\in G_S(A_{F\cup Q_{\widetilde{I}}}\cap A_{F\cup L_{\widetilde{I}}})$.

    \end{itemize}
Putting all cases together, we have proved that $G$ acts flag-transitively on $\Gamma$. By Theorem~\ref{thm:cosetFT}, $\Gamma$ is then a coset geometry.
Additionally,  putting together Proposition~\ref{prop:HNNRC1} with Theorem~\ref{thm:CosetRC}, we get that $\Gamma$ is residually connected, if $\alpha$ is residually-connected.
Finally, by Proposition~\ref{prop:HNNFIRM} and Theorem~\ref{thm:CosetFIRM}, we get that $\Gamma$ is firm if $\alpha$ is firm.   
 \end{proof}

We conclude this section by some examples to illustrate the concept.

\begin{example}\label{example:HNN}
    Let $I=\{1,2,3,4,5\}$ and consider a group $A$ to be the automorphism group of a $5$--simplex, i.e. $$A=\langle a_1,a_2,a_3,a_4,a_5\mid (a_ia_j)^{m_{ij}}= 1\rangle,$$
    where, for $i\neq j$, $m_{ij}=3$ if $|i-j|=1$, and $m_{ij}=2$ otherwise, and, for $i=j$, $m_{ii}=1$.
    Consider $A_i = \langle a_j \mid j\in I\setminus\{i\}\rangle$ and, as $A$ is a Coxeter group, we have from Proposition~\ref{prop:CoxeterProperties} (b) that $A_J = \langle a_i\mid I\setminus J\rangle$. Hence, we can define the coset geometry $\alpha=(A,(A_i)_{i\in I})$, which is known to be a regular hypertope.
    
    Let $\varphi: A_{\{1,4,5\}}\to A_{\{1,2,5\}}$ be an isomorphism between $A_{\{1,4,5\}}=\langle a_2, a_3\rangle$ and $A_{\{1,2,5\}}=\langle a_3,a_4\rangle$ such that, generator wise, $\varphi(a_2)=a_3$, $\varphi(a_3)=a_4$. Notice that $\varphi$ is admissible, since $\varphi(A_{\{1,2,4,5\}})=\langle a_4\rangle = A_{\{1,2,3,5\}}$, and $\varphi(A_{\{1,3,4,5\}})=\langle a_3\rangle = A_{\{1,2,4,5\}}$.
    Hence, the set of equivalence classes $\widetilde{I}$ is $\{\{1\},\{2,3,4\},\{5\}\}$. In this case, $F = \{4\}$.

    Let $G=A*_\varphi$ and $\II=\widetilde{I}\cup\{t\}$, with $G_s=\langle A_s, t\rangle$ for each $s\in \widetilde{I}$, and $G_t = A_4$.
    Then, following Theorem~\ref{thm:FT_HNN_RC_FRM}, $G$ is flag-transitive on the coset incidence system $\Gamma=\alpha*_\varphi$, and $\Gamma$ is a firm, residually connected coset geometry ($\alpha$ being thin, since it is a regular hypertopes, implies $\alpha$ is firm).
    
\end{example}

\begin{example}\label{example:SemiDirectProduct}
Consider a coset incidence system $\alpha=(A,(A_i)_{i\in I})$ and let $\varphi: A\to A$ be an automorphism of $A$ which is admissible.
In this particular case, we have that $G = A*_\varphi = A\rtimes_\varphi \langle t\rangle\cong A\rtimes_\varphi \mathbb{Z}$.
As $\varphi$ is admissible,  the automorphism defines orbits of the maximal parabolics of $A_i$, for $i\in I$. Hence, the equivalence classes $\widetilde{I}$ can be considered to be the orbits of the type-set $I$ of the maximal parabolics $A_i$ under $\varphi$. 

The coset incidence system $\alpha*_\varphi = (G,(G_j)_{j\in \II})$ is the coset incidence system under $\II =\widetilde{I}\cup \{t\}$, where $G_j = \langle A_j,t\rangle$, for $j\in \widetilde{I}$, and $G_t = A$ (since $F = \emptyset$). This coset incidence system coincides with the one we will construct for semi-direct products in Section~\ref{sec:split:subsec:semidirect}.    

\end{example}

\section{Split Extension of Coset Incidence Systems}\label{sec:SplitExt}

In this section, we propose two ways to build a coset incidence system for the semi-direct product of two groups. The first construction presented is simpler and more natural from the point of view of algebra. However, as will become clear in Section \ref{sec:applications}, the second construction is needed in many applications.

Let $A$ and $B$ be any two groups and let $\varphi \colon B \to \Aut(A)$ be a group homomorphism. The semi-direct product $A \rtimes_\varphi B$ is defined as follows:

\begin{itemize}
    \item The underlying set for $A \rtimes_\varphi B$ is the Cartesian product $A \times B$;
    \item The group operation is given by $(a_1,b_1) (a_2,b_2) = (a_1 a_2^{b_1}, b_1b_2)$ where $a^b$ designates the action of $b$ on $a$ via $\varphi$, so that $a^b:=\varphi(b)(a)$.
\end{itemize}

Given two coset incidence systems $\alpha = (A,(A_i)_{i \in I_\alpha})$ and $\beta = (B,(B_i)_{i \in I_\beta})$, together with an action $\varphi \colon B \to \Aut(A)$ with some additional properties, we want to construct a coset incidence system $(G,(G_i)_{i \in I})$ for the semi-direct product $G = A \rtimes_\varphi B$.

We say that the action of $\varphi$ is \textit{admissible} if it preserves the lattice of maximal parabolical subgroups of $\alpha$. Note that, by Lemma~\ref{lem:lattice}, it is sufficient to ask that $\varphi$ preserves the set $\{A_i, i \in I_\alpha\}$ of maximal subgroups of $\alpha$. By abuse of notation, we will consider $\varphi$ to also be an action of $B$ on $I$. More precisely, for any $i,j \in I_\alpha$, we define $i^b = j$ if and only if $A_i^b = A_j$.

\subsection{The semi-direct geometry}\label{sec:split:subsec:semidirect}
We begin by constructing a natural geometry for the semi-direct product of two coset geometries.
\begin{definition}[Semi-direct product of $\alpha$ by $\beta$]
    Let $\alpha = (A,(A_i)_{i \in I_\alpha})$ and $\beta = (B,(B_i)_{i \in I_\beta})$ be two coset incidence systems and suppose that there is an admissible action $\varphi \colon B \to \Aut(A)$. Then, the semi-direct geometry $\alpha \rtimes_\varphi \beta$ is the coset incidence system $\alpha \rtimes_\varphi \beta := (G, (G_i)_{i \in I})$, where $G = A \rtimes_\varphi B$ and
    \begin{itemize}
        \item $I$ is the disjoint union of $I_\beta$ and $K$, where $K$ is the set of orbits of $I_\alpha$ under $\varphi$,
        \item The maximal parabolical subgroups are defined by $G_i = \langle A, B_i \rangle \cong A \rtimes B_i$, if $i \in I_\beta$, and $G_i = \langle A_i, B \rangle \cong A_i \rtimes B$ if $i \in K$.
    \end{itemize}
\end{definition}

Notice that the isomorphism $G_i = \langle A_i, B \rangle \cong A_i \rtimes B$ for $i \in K$ indeed holds, since $i$ is a subset of $I_\alpha$ closed under the action of $\varphi$.

As a set, the semi-direct product $G = A \rtimes_\varphi B$ is just the Cartesian product of $A$ and $B$. Hence, for any subgroup $C$ of $G$, we will say that the subset of $A \times B$ consisting of the elements of $C$ is the \emph{underlying set} for $C$. Let $X$ and $Y$ be two sets and let $U$ and $V$ be two subsets of $X \times Y$ and suppose that $U = U_X \times U_Y$ with $U_X \subseteq X$ and $U_Y \subseteq Y$ and $V = V_X \times V_Y$ with $V_X \subseteq X$ and $V_Y \subseteq Y$. We always have that $U \cap V \subseteq (U_X \cap V_X) \times (U_Y \cap V_Y)$, but in general, these two sets are not equal. Nonetheless, in our setting, if $C$ and $D$ are two subgroups of $A \rtimes_\varphi B$, whose underlying sets are $C_A \times C_B$ and $D_A \times D_B$, then the underlying set for $C \cap D$ will always be $(C_A \cap D_A) \times (C_B \cap D_B)$. Indeed, if $(x,y) \in C \cap D$ and $z \in C_B \cap D_B$, then $(x,z)$ is also in $C \cap D$ as $(x,z) = (x,y) (1, y^{-1}z)$. Similarly, if $w \in C_A \cap D_A$, then $(w,y) \in C \cap D$ as $(w,y) = (wx^{-1},1)(x,y) $.

With these remarks on subsets of Cartesian products in mind, we now determine the structure of all parabolic subgroups of $\alpha \rtimes_\varphi \beta$.

\begin{lemma}\label{lem:parabolicsNatural}
    Let $\alpha \rtimes_\varphi \beta = (G, (G_i)_{i \in I})$ be as above. Then, for any $J = J_\alpha \sqcup J_\beta$ where $J_\beta \subseteq I_\beta$ and $J_\alpha \subseteq K$, we have that $G_J \cong A_{J_\alpha} \rtimes B_{J_\beta}$.
\end{lemma}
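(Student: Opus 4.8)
The plan is to compute the parabolic subgroup $G_J = \bigcap_{i\in J} G_i$ directly at the level of underlying sets in $G = A\rtimes_\varphi B$, relying on the observation made just before the lemma: if two subgroups of $A\rtimes_\varphi B$ have underlying sets that are Cartesian products $C_A\times C_B$ and $D_A\times D_B$, then the underlying set of their intersection is exactly $(C_A\cap D_A)\times(C_B\cap D_B)$. Iterating this over a finite intersection will pin down $G_J$ as a set; it then remains to recognise its group structure.

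First I would split $J = J_\alpha\sqcup J_\beta$ with $J_\beta\subseteq I_\beta$, $J_\alpha\subseteq K$, and record the underlying sets of the relevant maximal parabolics: for $i\in J_\beta$ it is $A\times B_i$ (since $G_i\cong A\rtimes B_i$), and for $i\in J_\alpha\subseteq K$ it is $A_i\times B$ (since $G_i\cong A_i\rtimes B$), where $A_i=\bigcap_{j\in i}A_j$ is the parabolic of $\alpha$ attached to the $\varphi$-orbit $i$. Applying the Cartesian-product-of-intersections observation repeatedly, one factor at a time, gives that the underlying set of $G_J$ is $A_{J_\alpha}\times B_{J_\beta}$, where $A_{J_\alpha}=\bigcap_{i\in J_\alpha}A_i = A_{\widehat{J_\alpha}}$ is a standard parabolic subgroup of $\alpha$ and $B_{J_\beta}=\bigcap_{i\in J_\beta}B_i$ is a standard parabolic subgroup of $\beta$ (using the conventions $A_\emptyset=A$, $B_\emptyset=B$, which cover the cases $J_\alpha=\emptyset$ or $J_\beta=\emptyset$).

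Second I would identify the group structure on $G_J$. By admissibility of $\varphi$ the set $\widehat{J_\alpha}$ is a union of $\varphi$-orbits of $I_\alpha$, so (by Lemma~\ref{lem:lattice}) $A_{J_\alpha}$ is invariant under $\varphi(B)$, hence under $\varphi(B_{J_\beta})$, and $\varphi$ restricts to an action $\varphi'\colon B_{J_\beta}\to\Aut(A_{J_\alpha})$. Inside $G_J$ the subsets $A_{J_\alpha}\times\{1_B\}$ and $\{1_A\}\times B_{J_\beta}$ are subgroups isomorphic to $A_{J_\alpha}$ and $B_{J_\beta}$; the first is normal in $G_J$ by the one-line computation $(1_A,b)(a,1_B)(1_A,b)^{-1}=(a^b,1_B)$ for $b\in B_{J_\beta}$, $a\in A_{J_\alpha}$; they intersect trivially; and together they generate $G_J$ because $(a,b)=(a,1_B)(1_A,b)$. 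This exhibits $G_J$ as the internal semi-direct product $A_{J_\alpha}\rtimes_{\varphi'} B_{J_\beta}$, which is the claimed isomorphism.

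I do not anticipate a real obstacle. The only points deserving care are making sure the binary observation on underlying sets of intersections is legitimately iterated to a finite intersection, and checking that a subgroup of $A\rtimes_\varphi B$ whose underlying set happens to be a Cartesian product $A'\times B'$ is genuinely the internal semi-direct product (not merely contained in one) — which is exactly what the normality check and the decomposition $(a,b)=(a,1_B)(1_A,b)$ above supply.
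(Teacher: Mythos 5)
Your proposal is correct and follows essentially the same route as the paper: both compute the underlying set of $G_J$ by iterating the observation that intersections of subgroups with Cartesian-product underlying sets factor coordinatewise, arriving at $A_{J_\alpha}\times B_{J_\beta}$, and then conclude the semi-direct product structure. Your write-up is somewhat more careful at the final step (explicitly checking that $\widehat{J_\alpha}$ is a union of $\varphi$-orbits so the restricted action is well defined, and verifying the internal semi-direct product decomposition), which the paper leaves implicit.
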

\begin{proof}
We first note that the underlying sets of the maximal parabolic subgroups $G_i$, $i \in I$ are clearly $(A \times B_i)$ if $i\in I_\beta$ and $A_i \times B$ if $i \in K$. It follows from the discussion preceding this lemma that the underlying set for $G_J$ is $(\cap_{j\in J_\alpha} A_{j}) \times (\cap_{i \in J_\beta} B_i) = A_{J_\alpha} \times B_{J_\beta}$. Hence, the groups $G_J$ must be isomorphic to $A_{J_\alpha} \rtimes_\varphi B_{J_\beta}$. 
\end{proof}

We can now investigate the properties of $\alpha \rtimes_\varphi \beta$.
\begin{prop}\label{prop:RCSemiDirect}
    If both $\alpha = (A,(A_i)_{i \in I_\alpha})$ and $\beta = (B,(B_i)_{i \in I_\beta})$ satisfy (RC1), then $\alpha \rtimes_\varphi \beta$ also satisfies (RC1).
\end{prop}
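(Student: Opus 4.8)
The plan is to verify condition (RC1) for $\Gamma = \alpha \rtimes_\varphi \beta = (G, (G_i)_{i \in I})$ directly, using the description of the parabolic subgroups given by Lemma \ref{lem:parabolicsNatural}. Recall $I = I_\beta \sqcup K$, where $K$ is the set of $\varphi$-orbits on $I_\alpha$, and that for $J = J_\alpha \sqcup J_\beta$ with $J_\alpha \subseteq K$ and $J_\beta \subseteq I_\beta$ we have $G_J \cong A_{J_\alpha} \rtimes_\varphi B_{J_\beta}$ (here $A_{J_\alpha}$ denotes $\cap_{j \in J_\alpha} A_j$, an intersection of parabolic subgroups of $\alpha$ indexed by orbits, which is itself a $\varphi$-stable parabolic of $\alpha$). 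We must show $G_J = \langle G_{J \cup \{i\}} \mid i \in I \setminus J\rangle$ whenever $|I \setminus J| \geq 2$; the inclusion $\supseteq$ is automatic, so only $\subseteq$ needs work. Since a semidirect product $A_{J_\alpha} \rtimes_\varphi B_{J_\beta}$ is generated by its two factors $A_{J_\alpha} \times \{1\}$ and $\{1\} \times B_{J_\beta}$, it suffices to show that each of these two factors lies in $\langle G_{J \cup \{i\}} \mid i \in I \setminus J\rangle$.

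The argument then splits according to how $I \setminus J$ meets $I_\beta$ and $K$, exactly as in the proof of Proposition \ref{prop:RCfree}. First I would handle the ``pure'' cases. If $K \subseteq J$ (so $I \setminus J \subseteq I_\beta$), then the $A$-factor of $G_J$ is $A_K$, which already sits inside every $G_{J \cup \{i\}}$; and applying (RC1) for $\beta$ gives $B_{J_\beta} = \langle B_{J_\beta \cup \{i\}} \mid i \in I_\beta \setminus J_\beta\rangle$, each generator $B_{J_\beta \cup \{i\}}$ being a factor of $G_{J \cup \{i\}}$. The case $I_\beta \subseteq J$ is symmetric, using (RC1) for $\alpha$ restricted to the $\varphi$-stable parabolics (one needs that the orbit-indexed parabolics still satisfy the relevant generation statement — this follows since $A_{J_\alpha}$ for $J_\alpha \subseteq K$ is just $A_{\widehat{J_\alpha}}$ in the notation of the HNN section, and (RC1) for $\alpha$ gives $A_{\widehat{J_\alpha}} = \langle A_{\widehat{J_\alpha} \cup \{k\}} \mid k \in I_\alpha \setminus \widehat{J_\alpha}\rangle$, which can be regrouped by orbits). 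In the generic case where $J$ contains neither $K$ nor $I_\beta$, we simply combine: (RC1) for $\alpha$ expresses the $A$-factor $A_{J_\alpha}$ as generated by the $A_{J_\alpha \cup \{k\}}$ over orbits $k \in K \setminus J_\alpha$, and (RC1) for $\beta$ expresses the $B$-factor as generated by the $B_{J_\beta \cup \{i\}}$, and all of these subgroups are factors of the corresponding $G_{J \cup \{i\}}$, hence lie in $\langle G_{J \cup \{i\}} \mid i \in I \setminus J\rangle$.

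The main technical point to be careful about — and the one place the argument is slightly less routine than in the free-product case — is the bookkeeping between the index set $I_\alpha$ and the set of orbits $K$: one must check that ``(RC1) for $\alpha$'' supplies generation statements for intersections of the orbit-blocks $A_j$ ($j \in K$), not just for the original maximal parabolics $A_i$ ($i \in I_\alpha$). This is handled by the observation that $A_{J_\alpha} = \bigcap_{j \in J_\alpha} A_j$ is itself a standard parabolic $A_{\widehat{J_\alpha}}$ of $\alpha$ with $\widehat{J_\alpha} = \bigcup_{j \in J_\alpha} j \subseteq I_\alpha$, and that passing from an orbit $j \in K \setminus J_\alpha$ to the elements of $I_\alpha$ it contains only refines the generating set. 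Modulo this identification, the computation is the same pattern three times over, and no genuinely new obstacle arises; in particular one never needs a normal-form theorem here, in contrast to the free-product and HNN cases, because the parabolics of a semidirect product are again honest semidirect products and are generated by their factors.
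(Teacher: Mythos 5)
Your overall architecture is the paper's: reduce via Lemma~\ref{lem:parabolicsNatural} to showing that each factor $A_{J_\alpha}$ and $B_{J_\beta}$ of $G_J$ lies in $\langle G_{J\cup\{i\}}\mid i\in I\setminus J\rangle$, and handle the $B$-factor using (RC1) for $\beta$. The gap is in the step you yourself flag as the main technical point: deriving the orbit-level generation $A_{\widehat{J_\alpha}}=\langle A_{\widehat{J_\alpha}\cup j}\mid j\in K\setminus J_\alpha\rangle$ from (RC1) for $\alpha$ by ``regrouping'' the singleton-indexed generators into orbits. The inclusion goes the wrong way for that argument: for $k\in j$ we have $\widehat{J_\alpha}\cup\{k\}\subseteq\widehat{J_\alpha}\cup j$, hence $A_{\widehat{J_\alpha}\cup j}\subseteq A_{\widehat{J_\alpha}\cup\{k\}}$, so the orbit-block parabolics are \emph{smaller} than the singleton ones. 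Refining the generating set therefore only yields the trivial inclusion $\langle A_{\widehat{J_\alpha}\cup j}\mid j\in K\setminus J_\alpha\rangle\subseteq\langle A_{\widehat{J_\alpha}\cup\{k\}}\mid k\in I_\alpha\setminus\widehat{J_\alpha}\rangle=A_{\widehat{J_\alpha}}$, not the one you need.

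The statement you want is nevertheless true when $|K\setminus J_\alpha|\geq 2$, but it requires one of the stronger equivalent forms of (RC1) from Lemma~\ref{lem:RCequivs}, which is exactly what the paper's proof invokes: condition~(\ref{lem:RCequivs:itm:Parabolics}) applied to the system $(A_{\widehat{J_\alpha}\cup j})_{j\in K\setminus J_\alpha}$, whose index sets intersect in $\widehat{J_\alpha}$ because distinct orbits are disjoint; equivalently, condition~(\ref{lem:RCequivs:itm:BottomUp}) gives $A_{\widehat{J_\alpha}}=\langle A^k\mid k\in I_\alpha\setminus\widehat{J_\alpha}\rangle$, and each minimal parabolic $A^k$ sits inside $A_{\widehat{J_\alpha}\cup j}$ for any orbit $j\in K\setminus J_\alpha$ with $k\notin j$. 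Both arguments genuinely need at least two orbits outside $J_\alpha$; in the remaining configurations (where $I\setminus J$ meets $I_\beta$) you should instead observe that $A_{J_\alpha}$ is already a factor of $G_{J\cup\{i\}}$ for any $i\in I_\beta\setminus J_\beta$, so no generation statement for $\alpha$ is needed there at all. With that repair your case analysis closes and coincides with the paper's proof.
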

\begin{proof}
    We need to show that $G_J = \langle G_{J \cup \{i\}}, i \in I \setminus J \rangle$ for every $J \subset I$ such that $|I\setminus J|\geq 2$. 
    The reverse inclusion is trivial, hence we will prove that  $G_J \subseteq \langle G_{J \cup \{i\}}, i \in I \setminus J \rangle$.
    By Lemma~\ref{lem:parabolicsNatural}, we know that $G_J \cong A_{J_\alpha} \rtimes B_{J_\beta}$, where $J = J_\alpha \sqcup J_\beta$ with $J_\beta \subseteq I_\beta$ and $J_\alpha \subseteq K$. Hence, $G_J$ is generated by $A_{J_\alpha}$ and $B_{J_\beta}$, i.e. $G_J=\langle A_{J_\alpha}, B_{J_\beta}\rangle$. We clearly have that $B_{J_\beta} \subseteq \langle G_{J \cup \{i\}}, i \in I \setminus J \rangle$. It remains thus to check that $A_{J_\alpha} \subseteq \langle G_{J \cup \{i\}}, i \in I \setminus J \rangle$.
    By Lemma \ref{lem:RCequivs}(\ref{lem:RCequivs:itm:Parabolics}) we have that $A_{J_\alpha} = \langle A_{J_\alpha \cup \{j\}}, j \in K \setminus J_\alpha \rangle$. Hence, $A_{J_\alpha}\subseteq \langle G_{J \cup \{i\}}, i \in I \setminus J \rangle$. Therefore, $G_J = \langle A_{J_\alpha}, B_{J_\beta}\rangle \subseteq \langle G_{J \cup \{i\}}, i \in I \setminus J \rangle$.
\end{proof}
Notice that Proposition \ref{prop:RCSemiDirect} is not an ``if and only if" statement. Indeed, only the subgroups $A_{J}$, with $J \subseteq K$, have to satisfy (RC1) for $\alpha \rtimes_\varphi \beta$ to satisfy (RC1). The proposition can thus be turned into an if and only if statement, if necessary, under these slightly complicated hypotheses. A similar comment holds for the next proposition about flag-transitivity.
\begin{prop}\label{prop:semidir_FT}
If $A$ is flag-transitive on $\alpha = (A,(A_i)_{i \in I_\alpha})$ and $B$ is flag-transitive on $\beta = (B,(B_i)_{i \in I_\beta})$, then $G = A \rtimes_\varphi B$ is flag-transitive on $\alpha \rtimes_\varphi \beta$.
\end{prop}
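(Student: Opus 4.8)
The plan is to verify condition (h) of Proposition~\ref{prop:FTequivs} for $\Gamma = \alpha \rtimes_\varphi \beta$, namely that $(G_JG_H)\cap(G_JG_K) = G_J(G_H\cap G_K)$ for all $J,H,K\subseteq I$, and then invoke the equivalence (a)$\Leftrightarrow$(h) of that proposition to conclude that $G$ is flag-transitive. Throughout I would work with underlying sets inside the Cartesian product $A\times B$, writing $J = J_\alpha\sqcup J_\beta$ with $J_\beta\subseteq I_\beta$ and $J_\alpha\subseteq K$, and similarly for $H$ and $K$; by Lemma~\ref{lem:parabolicsNatural} the underlying set of $G_J$ is then $A_{J_\alpha}\times B_{J_\beta}$, where $A_{J_\alpha}$ is understood as the genuine parabolic $A_{\widehat{J_\alpha}}$ of $\alpha$ (and likewise $B_{J_\beta}$ for $\beta$).

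The key computational step is a product formula: if $C,D\le G$ have underlying sets $C_A\times C_B$ and $D_A\times D_B$ and $\varphi(b)(D_A) = D_A$ for every $b\in C_B$, then the underlying set of $CD$ is $C_AD_A\times C_BD_B$. The inclusion ``$\subseteq$'' is immediate from $(c_A,c_B)(d_A,d_B) = (c_A\cdot\varphi(c_B)(d_A),\, c_Bd_B)$ together with $\varphi(c_B)(d_A)\in D_A$; for ``$\supseteq$'', given $c_Ad_A\in C_AD_A$ and $c_Bd_B\in C_BD_B$ one writes the pair $(c_Ad_A, c_Bd_B)$ as $(c_A,c_B)\bigl(\varphi(c_B)^{-1}(d_A),\,d_B\bigr)$, noting that $\varphi(c_B)^{-1}(d_A) = \varphi(c_B^{-1})(d_A)\in D_A$ since $C_B$ is a subgroup. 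I would apply this with $D_A$ equal to $A_{H_\alpha}$, to $A_{K_\alpha}$, or to $A_{H_\alpha}\cap A_{K_\alpha}$: each of these is a parabolic of $\alpha$ indexed by a union of $\varphi$-orbits, hence globally invariant under $\varphi(B)$ by admissibility of $\varphi$, so the hypothesis of the formula is automatic in every instance.

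With this in hand, $G_JG_H$ has underlying set $A_{J_\alpha}A_{H_\alpha}\times B_{J_\beta}B_{H_\beta}$ and $G_JG_K$ has underlying set $A_{J_\alpha}A_{K_\alpha}\times B_{J_\beta}B_{K_\beta}$; intersecting these two Cartesian products coordinatewise shows that $(G_JG_H)\cap(G_JG_K)$ has underlying set $\bigl(A_{J_\alpha}A_{H_\alpha}\cap A_{J_\alpha}A_{K_\alpha}\bigr)\times\bigl(B_{J_\beta}B_{H_\beta}\cap B_{J_\beta}B_{K_\beta}\bigr)$. On the other hand, $G_H\cap G_K$ has underlying set $(A_{H_\alpha}\cap A_{K_\alpha})\times(B_{H_\beta}\cap B_{K_\beta})$ by the remark on intersections of subgroups of $A\times B$ preceding Lemma~\ref{lem:parabolicsNatural}, so the product formula gives that $G_J(G_H\cap G_K)$ has underlying set $A_{J_\alpha}(A_{H_\alpha}\cap A_{K_\alpha})\times B_{J_\beta}(B_{H_\beta}\cap B_{K_\beta})$. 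Now flag-transitivity of $A$ on $\alpha$, in the form of Proposition~\ref{prop:FTequivs}(h), yields $A_{J_\alpha}A_{H_\alpha}\cap A_{J_\alpha}A_{K_\alpha} = A_{J_\alpha}(A_{H_\alpha}\cap A_{K_\alpha})$, and flag-transitivity of $B$ on $\beta$ gives the analogous identity for the $B$-factors. Hence the two underlying sets coincide, so $(G_JG_H)\cap(G_JG_K) = G_J(G_H\cap G_K)$, and Proposition~\ref{prop:FTequivs} finishes the proof.

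I expect the only delicate point to be the product formula for $CD$ and, in particular, ensuring the $\varphi$-invariance hypothesis is available in each instance; this is precisely where admissibility of $\varphi$ (equivalently, the fact that $K$ consists of whole $\varphi$-orbits) enters. Everything else reduces to bookkeeping with Cartesian products and to quoting the already established group-theoretic characterisations of flag-transitivity for $\alpha$ and $\beta$.
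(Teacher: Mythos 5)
Your argument is correct. At its core it is the same mechanism as the paper's proof: work with underlying sets inside the Cartesian product $A\times B$, use the admissibility of $\varphi$ (the fact that the $K$-indexed parabolics of $\alpha$ are unions of $\varphi$-orbits, hence globally $\varphi(B)$-invariant) to make products and intersections split coordinatewise, and then quote flag-transitivity of $A$ and $B$ separately. The differences are in packaging: the paper verifies criterion (\ref{prop:FTequivs:itm:FTBueken}) of Proposition~\ref{prop:FTequivs} ($G_JG_i = \cap_{j\in J}G_jG_i$) and splits into cases according to whether $i\in I_\beta$ or $i\in K$, observing that in each case one coordinate of the equation is trivially satisfied because $G_i$ contains a full factor $A$ or $B$; you instead verify criterion (\ref{prop:FTequivs:itm:FTIntersectionOfProducts}) uniformly for all $J,H,K\subseteq I$, which removes the case analysis, and you state and prove explicitly the product formula for underlying sets of $CD$ (including where the $\varphi$-invariance hypothesis is needed), a point the paper uses implicitly when it asserts that its equation ``splits into'' two coordinate equations. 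Your version is slightly more self-contained on that score; the paper's is shorter because the trivial coordinate in each case lets it avoid invoking the intersection identity for one of the two factors.
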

\begin{proof}
    We want to show that $G_JG_i = \cap_{j\in J} G_j G_i$ for any $J \subseteq I$ and $i\in I\setminus J$. Let $J = J_\alpha \sqcup J_\beta$, with $J_\beta \subseteq I_\beta$ and $J_\alpha \subseteq K$.
    
    First suppose that $i \in I_\beta$. Then, the equation we need to verify becomes

    $$ (A_{J_\alpha} \rtimes B_{J_\beta}) (A \rtimes B_i) = (\cap_{j \in I_\beta \cap J} (A \rtimes B_j) (A \rtimes B_i)) \cap ( \cap_{k \in K \cap J} (A_j \rtimes B) (A \rtimes B_i)) $$

    We will once again work with the underlying sets of every subgroups appearing in the equation. In doing so, the equation splits into the following two equations.
    
    \begin{equation}\label{eq:SDFT1}
          A_{J_\alpha} A^{B_{J_\beta}} =  \cap_{k \in K \cap J} A_j A^{B_i}
    \end{equation}
    where $A^{B_{J_\beta}}$ denotes the set of all elements of the form $a^{b}$ with $a \in A$ and $b \in B_{J_\beta}$,
    and 
  \begin{equation}\label{eq:SDFT2}
      B_{J_\beta}B_i = \cap_{j\in I_\beta \cap J} B_j B_i.
  \end{equation}
    Equation \ref{eq:SDFT2} holds directly from the fact that $B$ acts flag-transitively on $\beta$. For Equation \ref{eq:SDFT1}, note that $A^{B_i} = A^{B_{J_\beta}} = A$, so the equation holds trivially.

    Suppose now that $i \in K$. The equation then becomes

        $$ (A_{J_\alpha} \rtimes B_{J_\beta}) (A_i \rtimes B) = (\cap_{j \in I_\beta \cap J} (A \rtimes B_j) (A_i \rtimes B) )\cap (\cap_{k \in K \cap J} (A_k \rtimes B) (A_i \rtimes B))$$

which then splits into

\begin{equation}\label{eq:SDFT3}
          A_{J_\alpha} A_i^{B_{J_\beta}} =  \cap_{k \in K \cap J} A_k A_i^{B}
    \end{equation}
    and 
  \begin{equation}\label{eq:SDFT4}
      B_{J_\beta}B = \cap_{j\in I_\beta \cap J} B_j B
  \end{equation}

  This time, Equation \ref{eq:SDFT4} holds trivially. For Equation \ref{eq:SDFT3}, note that since $i\in K$, $i$ is a orbit of types closed under the action $\varphi$. Hence, $A_i^{B_{J_\beta}} = A_i =A_i^B$, so that the equation holds by flag-transitivity of $A$ of $\alpha$.

\end{proof}

Although this construction is very natural and direct, it can be improved by the approach given in Section~\ref{subsec:twist_section}.
In particular, the coset geometries constructed via the above construction are never thin, even if the components $\alpha$ and $\beta$ were thin.
Suppose that $A$ and $B$ are flag-transitive on $\alpha$ and $\beta$, respectively, and suppose both $\alpha$ and $\beta$ are thin.
Given $J\subseteq I$, we have that $G_{J}=A_{J_\alpha}\rtimes_\varphi B_{J_\beta}$. 
Now, from the fact that $\beta$ is thin, one can prove that, for $i\in I_\beta$, $[G^i:G_I]=2$.
However, for $i\in K$, $G^{i} = A_{I_\alpha\setminus\{i\}}\rtimes_\varphi B_{I_\beta}$.
As $i$ is a set of subsets of $I_\alpha$, $A_{I_\alpha\setminus\{i\}}$ only coincides with a minimal parabolic of $\alpha$ if $i$ is a singleton or if $A_{I_\alpha\setminus\{i\}} = A^j$, for $j\in I_\alpha$. This is not in general the case.
Hence thinness is usually not inherited in this construction.

Nonetheless, this construction provides a good introduction to the next operation, the twisting. Moreover, it also has the advantage of showing that the semi-direct construction coincides with the HNN-extension construction, as expressed in Example~\ref{example:SemiDirectProduct}. As thinness is never achieved using the HNN-extension, the semi-direct construction is the most suitable choice for that comparison.

\subsection{The Twisting geometry}\label{subsec:twist_section}

We now construct another geometry for a semi-direct product of two coset incidence systems, under some additional conditions. This second construction might seem somewhat artificial at first, but it is in fact a generalization of the well known twisting operation on polytopes, and will turn out to be essential for most of the applications in Section \ref{sec:applications}.

Let $\alpha=(A,(A_i)_{i\in I_\alpha})$ and $\beta=(B,(B_i)_{i\in I_\beta})$ be two coset incidence systems, and as before, assume that we have an admissible action $\varphi$ of $B$ on $A$ by automorphisms.

\begin{definition}\label{def:GammaPhiAdmiss}
    We say $\alpha$ is $(\beta, \varphi)$-admissible 
    if
    \begin{enumerate}
        \item The action $\varphi$ of $B$ on $A$ permutes the maximal parabolic subgroups $(A_i)_{i\in I_\alpha}$;
        \item Consider that $\varphi$ also acts on the type set $I_\alpha$ in the natural way and let $K$ be the set of orbits of $B$ on $I_\alpha$. Then, there is exactly one orbit $L\in K$ such that $|L|>1$;
        \item Given a fixed $F_0\in L$, for any $J \subseteq I_\beta$, define $O_J$ to be the orbit of $F_0$ under the action of the parabolic subgroup $B_{I_\beta\setminus J}$. Then, for all $M,N\subseteq I_\beta$, we have that
        \begin{equation}\tag{IPO}\label{eq:IPO}
          O_M\cap O_N = O_{M\cap N} 
        \end{equation}
        
    \end{enumerate}
\end{definition}

Let $I = I_\beta \sqcup K$, where $K$ is the set of orbits of the action of $B$ on $I_\alpha$. We want to define a new geometry on the type set $I$ for the group $G = A \rtimes_\varphi B$. Let us define $O^J := O_{I_\beta \setminus J}$ for all $J \subseteq I_\beta$. Notice that then $O^J$ is the orbit of $F_0$ under the action of $B_J$.

\begin{definition}[Twisting of $\alpha$ by $\beta$]
    Let $\alpha$ and $\beta$ be two coset incidence systems such that $\alpha$ is $(\beta,\varphi)$-admissible
    The twisting of $\alpha$ by $\beta$ is the coset incidence system $\TT(\alpha,\beta) = (G,(G_i)_{i \in I})$, where
    the maximal parabolic subgroups $(G_i)_{i \in I}$ are:

\begin{equation}
 G_i= \begin{cases}
     A_{L \setminus O^i} \rtimes B_i, & \text{if $i \in I_\beta$}.\\
     A_i \rtimes B, & \text{if $i \in K$}.
  \end{cases}
\end{equation}
\end{definition}

Recall that $K$ is the disjoint union of $L$ and singleton sets. Hence, for $i \in K$, we have that $G_i = A_L \rtimes B$ if $i = L$, and $G_i = A_i \rtimes B$ if $i \notin L$. Note that all maximal parabolic subgroups can be described in a single expression. Indeed, for all $i \in I$, we have that $G_i = A_{(L\setminus O^{I_\beta\cap \{i\}})\cup (K\cap \{i\})} \rtimes B_{I_\beta \cap \{i\}}$. This will be useful to treat all cases simultaneously in some proofs. We begin by stating the following elementary result. Its proof is trivial, and thus omitted. Nonetheless, it will help navigate the heavy notation appearing in the rest of this section.

\begin{lemma}\label{lem:setIntersections}
The following two equalities always hold.
    \begin{enumerate}
        \item $O^M \cap O^N = O^{M \cup N}$
        \item $L \setminus O^M \cup L \setminus O^N = L \setminus O^{M \cup N}$
    \end{enumerate}
\end{lemma}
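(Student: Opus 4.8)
The plan is to reduce both identities to the orbit-intersection hypothesis \eqref{eq:IPO} together with elementary De Morgan bookkeeping, which is why the text flags the statement as trivial. First I would unwind the abbreviation $O^J := O_{I_\beta\setminus J}$, so that for $M,N\subseteq I_\beta$ one has
\[
O^M\cap O^N \;=\; O_{I_\beta\setminus M}\cap O_{I_\beta\setminus N}.
\]
Since $\alpha$ is $(\beta,\varphi)$-admissible, condition \eqref{eq:IPO} in Definition~\ref{def:GammaPhiAdmiss} applies verbatim to the two subsets $I_\beta\setminus M$ and $I_\beta\setminus N$ of $I_\beta$, yielding $O_{I_\beta\setminus M}\cap O_{I_\beta\setminus N}=O_{(I_\beta\setminus M)\cap(I_\beta\setminus N)}$. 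The only remaining point is the set identity $(I_\beta\setminus M)\cap(I_\beta\setminus N)=I_\beta\setminus(M\cup N)$, which is De Morgan's law inside $I_\beta$; combining gives $O^M\cap O^N = O_{I_\beta\setminus(M\cup N)}=O^{M\cup N}$, which is part (a).

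For part (b) I would first record that every $O^J$ sits inside $L$. Indeed $O^J$ is, by definition, the orbit of $F_0$ under a subgroup of $B$, namely $B_J=\cap_{j\in J}B_j$, and $L$ is a single $B$-orbit on $I_\alpha$ with $F_0\in L$ (Definition~\ref{def:GammaPhiAdmiss}(b)); hence $B_J\cdot F_0\subseteq B\cdot F_0=L$, so $O^J\subseteq L$. Consequently $L\setminus O^M$ and $L\setminus O^N$ are genuine complements inside $L$, and De Morgan's law inside $L$ gives $(L\setminus O^M)\cup(L\setminus O^N)=L\setminus(O^M\cap O^N)$. Substituting part (a) into the right-hand side produces $(L\setminus O^M)\cup(L\setminus O^N)=L\setminus O^{M\cup N}$, which is part (b).

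There is essentially no genuine obstacle here: the entire mathematical content is packed into \eqref{eq:IPO}, and everything else is De Morgan plus the observation that orbits of $F_0$ never leave $L$. The one place to stay careful is the direction of the complementation $J\mapsto I_\beta\setminus J$ relating $O^J$ to $O_J$, since the two families are indexed in opposite ways; translating every expression back into the $O_{(-)}$ notation before invoking \eqref{eq:IPO} avoids all confusion. I would also explicitly state the tiny lemma ``$(I_\beta\setminus M)\cap(I_\beta\setminus N)=I_\beta\setminus(M\cup N)$ and $O^J\subseteq L$'' so that the application of \eqref{eq:IPO} and of the in-$L$ De Morgan step is unambiguous, and then the proof is two lines.
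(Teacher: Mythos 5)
Your proof is correct and is exactly the intended argument: the paper omits the proof as ``trivial,'' and the content is precisely the re-indexing $O^J = O_{I_\beta\setminus J}$, De Morgan's laws, and condition (IPO) from the standing $(\beta,\varphi)$-admissibility hypothesis --- which you rightly identify as genuinely needed for part (a), since only the inclusion $O^{M\cup N}\subseteq O^M\cap O^N$ is automatic. The one superfluous step is the observation $O^J\subseteq L$: the identity $(L\setminus X)\cup(L\setminus Y)=L\setminus(X\cap Y)$ holds for arbitrary sets $X,Y$, so part (b) follows from part (a) without it (though recording $O^J\subseteq L$ is harmless and true).
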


We now determine the structure of all standard parabolical subgroups $G_J$ for any $J \subseteq I$.

\begin{lemma}\label{lem:TwistG_J}
    Let $J \subseteq I$ and let $J_\beta = J \cap I_\beta$ and $J_\alpha = J \cap K$. Then, $G_J = A_{(L\setminus O^{J_\beta}) \cup J_\alpha}\rtimes G_{J_\beta}$ 
\end{lemma}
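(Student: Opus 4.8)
\textit{Plan of proof.} The plan is to compute the intersection $G_J=\bigcap_{j\in J}G_j$ directly, exploiting the Cartesian-product structure of subgroups of a semidirect product. Recall from the discussion preceding Lemma~\ref{lem:parabolicsNatural} that if $C$ and $D$ are subgroups of $A\rtimes_\varphi B$ whose underlying sets are $C_A\times C_B$ and $D_A\times D_B$, then the underlying set of $C\cap D$ is exactly $(C_A\cap D_A)\times(C_B\cap D_B)$; and, exactly as in the proof of Lemma~\ref{lem:parabolicsNatural}, a subgroup of $A\rtimes_\varphi B$ whose underlying set has the form $A'\times B'$ with $A'\leq A$, $B'\leq B$ is necessarily $A'\rtimes_\varphi B'$. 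So it suffices to identify separately the $A$-part and the $B$-part of the underlying set of $G_J$, starting from the fact that the underlying set of $G_i$ is $A_{L\setminus O^i}\times B_i$ when $i\in I_\beta$ and $A_i\times B$ when $i\in K$. Writing $J_\beta=J\cap I_\beta$ and $J_\alpha=J\cap K$, this yields that the underlying set of $G_J$ is
$$\Big(\bigcap_{j\in J_\alpha}A_j\ \cap\ \bigcap_{j\in J_\beta}A_{L\setminus O^j}\Big)\times\Big(\bigcap_{j\in J_\beta}B_j\Big),$$
where the copies of $B$ contributed by the indices in $J_\alpha$ have been absorbed.

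The $B$-part is immediate: $\bigcap_{j\in J_\beta}B_j=B_{J_\beta}$ by definition of the standard parabolic subgroups of $\beta$, with the convention $B_\emptyset=B$. For the $A$-part I would use the elementary identity $A_{S_1}\cap A_{S_2}=A_{S_1\cup S_2}$ (valid because $A_S=\bigcap_{i\in S}A_i$) to rewrite $\bigcap_{j\in J_\beta}A_{L\setminus O^j}=A_{\bigcup_{j\in J_\beta}(L\setminus O^j)}$ and hence $\bigcap_{j\in J_\alpha}A_j\cap A_{\bigcup_{j\in J_\beta}(L\setminus O^j)}=A_{J_\alpha\cup\bigcup_{j\in J_\beta}(L\setminus O^j)}$. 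The substantive step is then the collapse $\bigcup_{j\in J_\beta}(L\setminus O^j)=L\setminus O^{J_\beta}$, which follows by an easy induction on $|J_\beta|$ from Lemma~\ref{lem:setIntersections}, which in turn is exactly where the hypothesis \eqref{eq:IPO} in the definition of $(\beta,\varphi)$-admissibility is used. When $J_\beta=\emptyset$ one checks directly that $O^\emptyset=L$ (the orbit of $F_0$ under $B_\emptyset=B$ is $L$ itself), so $L\setminus O^\emptyset=\emptyset$ and the formula still holds; the case $J_\alpha=\emptyset$ is handled by $A_\emptyset=A$ in the same way. Combining, the $A$-part of $G_J$ is $A_{(L\setminus O^{J_\beta})\cup J_\alpha}$, and therefore $G_J=A_{(L\setminus O^{J_\beta})\cup J_\alpha}\rtimes_\varphi B_{J_\beta}$, as claimed, with the usual convention that an orbit $i\in K$ appearing in $J_\alpha$ is silently identified with the subset $\widehat{\{i\}}$ of $I_\alpha$ it represents.

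I do not expect a genuine obstacle here: the argument is essentially bookkeeping. The two points that need care are keeping straight the two roles played by types of $\alpha$ inside a single subscript of $A_{(L\setminus O^{J_\beta})\cup J_\alpha}$ (elements of $L\subseteq I_\alpha$ versus orbits in $K$), and treating the degenerate cases $J_\beta=\emptyset$ and $J_\alpha=\emptyset$ consistently via the conventions $B_\emptyset=B$, $A_\emptyset=A$, and $O^\emptyset=L$. The only genuinely geometric input — that $\bigcup_{j\in J_\beta}(L\setminus O^j)$ simplifies to $L\setminus O^{J_\beta}$ — is already packaged in Lemma~\ref{lem:setIntersections}, so the write-up should be short.
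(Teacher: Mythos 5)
Your proposal is correct and follows essentially the same route as the paper's proof: reduce to the underlying Cartesian-product set, identify the $B$-factor as $B_{J_\beta}$, convert the intersection $\bigcap_{j\in J_\beta}A_{L\setminus O^{\{j\}}}$ into $A_{\bigcup_{j\in J_\beta}(L\setminus O^{\{j\}})}$, and collapse the union to $L\setminus O^{J_\beta}$ via Lemma~\ref{lem:setIntersections}. Your added remarks on the degenerate cases $J_\beta=\emptyset$, $J_\alpha=\emptyset$ and on where the hypothesis~\eqref{eq:IPO} actually enters are accurate but not a departure from the paper's argument.
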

\begin{proof}
    By definition, we have that $G_J = \cap_{j \in J} G_j$.
    Using the usual argument, we obtain that the underlying set for $G_J$ is $( (\cap_{j \in J_\alpha} A_j) \cap ( \cap_{j \in J_\beta} A_{L \setminus O^{\{j\}}}) , \cap_{j \in J_\beta} B_j)$. The second factor is equal to $B_{J_\beta}$ as desired. We therefore focus on the first factor. We clearly have that $\cap_{j \in J_\alpha} A_j = A_{J_\alpha}$ and that $\cap_{j \in J_\beta} A_{L \setminus O^{\{j\}}} = A_{(\cup_{j \in J_\beta} L \setminus O^{\{j\}})}$. By Lemma \ref{lem:setIntersections}, we get that $\cup_{j \in J_\beta} L \setminus O^{\{j\}} = L \setminus O^{(\cup_{j\in J_\beta} \{j\})} = L \setminus O^{J_\beta}$, concluding the proof.
\end{proof}

Finally, we explore the properties of $\TT(\alpha,\beta)$.

\begin{prop}\label{prop:TwistingFT}
    If $A$ is flag-transitive on $\alpha$ and $B$ is flag-transitive on $\beta$, then $G$ is flag-transitive on $\TT(\alpha,\beta)$.
\end{prop}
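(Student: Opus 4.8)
The plan is to verify the flag-transitivity criterion of Theorem~\ref{thm:cosetFT}, namely that $G_J G_i=\bigcap_{j\in J}\bigl(G_jG_i\bigr)$ for every $J\subseteq I$ and every $i\in I\setminus J$, following the same strategy as in the proof of Proposition~\ref{prop:semidir_FT}. Throughout I would work with the underlying sets of the subgroups involved inside $G=A\rtimes_\varphi B$ (which, as a set, is $A\times B$), so that each such equation splits into a \emph{$B$-component} equation and an \emph{$A$-component} equation, handled separately. Write $J_\beta=J\cap I_\beta$ and $J_\alpha=J\cap K$, and recall from Lemma~\ref{lem:TwistG_J} that $G_J=A_{(L\setminus O^{J_\beta})\cup J_\alpha}\rtimes B_{J_\beta}$; I would also use the uniform formula $G_i=A_{(L\setminus O^{I_\beta\cap\{i\}})\cup(K\cap\{i\})}\rtimes B_{I_\beta\cap\{i\}}$, valid for every $i\in I$, so that all maximal parabolics can be treated at once.

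The $B$-component equation is precisely the one already occurring in the proof of Proposition~\ref{prop:semidir_FT}. After noting that every factor $G_j$ with $j\in J_\alpha$ contributes all of $B$ to its $B$-component and therefore imposes no constraint, it reduces to $B_{J_\beta}B_{I_\beta\cap\{i\}}=\bigcap_{j\in J_\beta}\bigl(B_jB_{I_\beta\cap\{i\}}\bigr)$, and it is vacuous when $i\in K$. This holds by the flag-transitivity of $B$ on $\beta$, via Theorem~\ref{thm:cosetFT}.

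The $A$-component equation is where the real content lies, and it is the step I expect to be the main obstacle. Using Lemma~\ref{lem:TwistG_J} to describe $G_J$, and the admissibility of $\varphi$ to rewrite each conjugate $\bigl(A_{L\setminus O^{\{i\}}}\bigr)^{b}$ as $A_{L\setminus(b\cdot O^{\{i\}})}$ (here using that $L$ is $B$-invariant, so $b$ permutes $L$), the equation becomes an identity between a product of standard parabolic subgroups of $\alpha$, with index sets obtained as unions of the sets $L\setminus O^{M}$, and an intersection of analogous products. I would first collapse such unions into single sets of the form $L\setminus O^{M'}$ using Lemma~\ref{lem:setIntersections}, and then — this is the crucial point — invoke condition~\eqref{eq:IPO} (that orbits intersect in orbits; equivalently condition~(3) of Definition~\ref{def:GammaPhiAdmiss}) to match the orbit-index data on the two sides. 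Once both sides are expressed as products of genuine standard parabolic subgroups $A_{(\cdot)}$ of $\alpha$ with matching index sets, the desired equality is exactly an instance of the flag-transitivity of $A$ on $\alpha$, in the form of Proposition~\ref{prop:FTequivs}(\ref{prop:FTequivs:itm:FTIntersectionOfProducts}).

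Finally, I would organise the verification into the cases $i\in I_\beta$ and $i\in K$ (and, within the first case, keeping separate track of the indices of $J$ lying in $I_\beta$, in $L$, and in $K\setminus L$), paralleling the case analysis of Proposition~\ref{prop:semidir_FT}. The only genuinely new ingredient relative to that proof is that, for $i\in I_\beta$, the $A$-slot of the twisted parabolic $G_i$ is the proper parabolic $A_{L\setminus O^{\{i\}}}$ rather than all of $A$, which is exactly why condition~\eqref{eq:IPO} is needed precisely where Proposition~\ref{prop:semidir_FT} could simply invoke $A^{B}=A$. Assembling the two component equalities in each case yields $G_JG_i=\bigcap_{j\in J}\bigl(G_jG_i\bigr)$, and Theorem~\ref{thm:cosetFT} then gives flag-transitivity of $G$ on $\TT(\alpha,\beta)$.
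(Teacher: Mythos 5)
Your overall strategy---verify the product criterion of Theorem~\ref{thm:cosetFT} by separating the $A$- and $B$-coordinates, as in Proposition~\ref{prop:semidir_FT}---breaks down at exactly the point you flag as crucial, and the fix you propose does not repair it. The difficulty is that for $i\in I_\beta$ the $A$-slot of $G_i$ is the proper parabolic $A_{L\setminus O^{\{i\}}}$, which is $B_i$-invariant but not $B_{J_\beta}$-invariant. Consequently the underlying set of $G_JG_i$ is not a Cartesian product: writing $M_J=(L\setminus O^{J_\beta})\cup J_\alpha$, it is the union $\bigcup_{b\in B_{J_\beta}}\bigl(A_{M_J}\,A_{L\setminus b\cdot O^{\{i\}}}\bigr)\times\bigl(bB_i\bigr)$, with the $A$-factor depending on the $B$-coordinate $b$ (and similarly for each $G_jG_i$ with $j\in J\cap K$, where the union runs over all of $B$). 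There is therefore no well-defined ``$A$-component equation'' to verify separately. Moreover, the index sets $L\setminus b\cdot O^{\{i\}}$ that appear are translates of $O^{\{i\}}$ by arbitrary elements of $B_{J_\beta}$; these are not orbits of the base point $F_0$ under parabolic subgroups of $B$, so neither Lemma~\ref{lem:setIntersections} nor condition \eqref{eq:IPO} applies to ``collapse'' or ``match'' them as you suggest. (In the paper, \eqref{eq:IPO} enters only through Lemma~\ref{lem:TwistG_J}, i.e.\ in identifying $G_J$ itself, not through any orbit-matching inside the product equation.)

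The paper avoids this coupling by working with the elementwise criterion of Proposition~\ref{prop:FTequivs}(\ref{prop:FTequivs:itm:FTPassini}) instead of the product criterion: one fixes $x=(x_A,x_B)\in G_J$ with $G_i\cap xG_k\neq\emptyset$, so that the only conjugation occurring is by the single element $x_B$. By admissibility and Lemma~\ref{lem:lattice}, $(A_{M_k})^{x_B}$ is then one concrete standard parabolic subgroup of $\alpha$, and the triple-coset-intersection property (Proposition~\ref{prop:FTequivs}(\ref{prop:FTequivs:itm:FTIntersectionCosets})), applied separately in $\beta$ and in $\alpha$, produces an element of $G_J\cap G_i\cap xG_k$. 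If you want to keep your product-criterion formulation, you would have to fiber over the $B$-coordinate and show, for an arbitrary $(x,y)\in\bigcap_{j\in J}G_jG_i$, that some factorization $y=b_1b_2$ with $b_1\in B_{J_\beta}$ can be chosen so that simultaneously $x\in A_{M_J}(A_{M_i})^{b_1}$; this is essentially the pointwise argument the paper runs, and as written your plan does not supply it.
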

\begin{proof}
    Fix a pair $i,k\in I$ of distinct types, a subset $J \subseteq I\setminus\{i,k\}$, and an element $x\in G_J$. By Proposition~\ref{prop:FTequivs}$(\ref{prop:FTequivs:itm:FTPassini})$, we will prove that if $G_i\cap xG_k\neq \emptyset$, then $G_J\cap G_i\cap xG_k\neq \emptyset$.
    Let us set $M_i = (L\setminus O^{I_\beta\cap \{i\}})\cup (K\cap \{i\})$ and $M_k = (L\setminus O^{I_\beta\cap \{k\}})\cup (K\cap \{k\})$. Then, we have that $G_i = A_{M_i} \rtimes B_{I_\beta \cap \{i\}}$, $G_k = A_{M_k} \rtimes B_{I_\beta \cap \{k\}}$ and $G_J =  A_{(L\setminus O^{J_\beta}) \cup J_\alpha}\rtimes B_{J_\beta}$ where $J_\beta = J \cap I_\beta$ and $J_\alpha = J \cap K$.

    Consider $x\in G_J$ and suppose that $G_i\cap xG_k\neq \emptyset$.
    This means that we can find $y\in G_i$ and $z\in G_k$ such that $y = xz$.
    By the definition of $G_i$, $G_k$ and $G_J$, we have that $y=(y_A,y_B)$, $z = (z_A,z_B)$, and $x = (x_A,x_B)$, with $y_A \in A_{M_i},z_A \in A_{M_j},x_A\in A_{(L \setminus O^{J_\beta}) \cup J_\alpha}$ and $y_B \in B_{I_\beta \cap \{i\}},z_B \in B_{I_\beta \cap \{j\}},x_B\in B_{J_\beta}$.
    We then have that $$y=xz \Leftrightarrow (y_A,y_B)=(x_A,x_B)(z_A,z_B) = (x_A z_A^{x_B},x_B z_B),$$
    where we recall that $ z_A^{x_B} = \varphi(x_B)(z_A)$. 
    The equation $y = xz$ therefore holds if and only if we have that $y_A = x_A z_A^{x_B}$ and $y_B = x_B z_B$.
    
    First, note that $y_B = x_B z_B$ implies that $B_{I_\beta\cap\{i\}}\cap x_B B_{I_\beta\cap\{k\}}\neq \emptyset$ (recall that $B_\emptyset = B$).
    By the flag-transitivity of $B$ in $\beta$, we then have that 
    $B_{J_\beta}\cap B_{I_\beta\cap\{i\}}\cap x_B B_{I_\beta\cap\{k\}}\neq \emptyset$. Here, notice that $i$ or $k$ could be equal to an element of $K$, and therefore the fact that $B_{J_\beta}\cap B_{I_\beta\cap\{i\}}\cap x_B B_{I_\beta\cap\{k\}}\neq \emptyset$ is not a direct consequence of Proposition~\ref{prop:FTequivs}$(\ref{prop:FTequivs:itm:FTPassini})$ for $\beta$. Instead, this is a consequence of Proposition~\ref{prop:FTequivs}$(\ref{prop:FTequivs:itm:FTIntersectionCosets})$. Indeed, recall that $x_B \in B_{J_\beta}$ so that all the pairwise intersections are not empty.
    The fact that $B_{J_\beta}\cap B_{I_\beta\cap\{i\}}\cap x_B B_{I_\beta\cap\{k\}}\neq \emptyset$ in any of the cases then means that we can find
    $t_B\in B_{J_\beta}$, $y'_B\in B_{I_\beta\cap\{i\}}$ and $z'_B\in B_{I_\beta\cap\{k\}}$ such that $t_B = y'_B = x_B z_B'$. 

    Similarly, since $y_A = x_A z_A^{x_B}$, we have that 
    $$A_{M_i}\cap x_A (A_{M_k})^{x_B} \neq \emptyset.$$
    Note that, by the $(\beta,\varphi)$-admissibility and Lemma~\ref{lem:lattice}, $(A_{M_k})^{x_B}$ is a standard parabolic subgroup of $\alpha$.
    We therefore get that
    $$A_{(L\setminus O^{J_\beta}) \cup J_\alpha} \cap A_{M_i}\cap x_A ( A_{M_k} )^{x_B} \neq \emptyset,$$
    by Proposition~\ref{prop:FTequivs}$(\ref{prop:FTequivs:itm:FTIntersectionCosets})$. Once again, it is readily checked that all the pairwise intersections are non-empty.
    We thus find $t_A\in A_{(L\setminus O^{J_\beta}) \cup J_\alpha}$, $y'_A\in A_{M_i}$ and $z_A'\in A_{M_k}$ such that $t_A = y_A' = x_A (z_A')^{x_B}$.

    Putting both parts together, we have that $(t_A,t_B) = (y_A',y_B') = (x_A (z_A')^{x_B}, x_B z_B')$.
    Clearly $(t_A,t_B)\in G_J$, $(y_A',y_B') \in G_i$ and $(x_A (z_A')^{x_B}, x_B z_B') = (x_A,x_B)(z_A',z_B') \in x G_k$.
    Hence, we get that $G_J\cap G_i\cap x G_k\neq \emptyset$, concluding the proof.
\end{proof}

\begin{prop} \label{prop:TwistingRC1}
    If both $\alpha$ and $\beta$ satisfy (RC1), then $\TT(\alpha,\beta)$ also satisfies (RC1).
\end{prop}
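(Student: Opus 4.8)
The plan is to mimic the proofs of Propositions~\ref{prop:RCfree}, \ref{prop:RC_FPAmalg} and \ref{prop:RCSemiDirect}, verifying (RC1) in the form of Lemma~\ref{lem:RCequivs}. The inclusion $\langle G_{\{i\}\cup J}\mid i\in I\setminus J\rangle\subseteq G_J$ is automatic, so I would fix $J\subset I$ with $|I\setminus J|\geq 2$, write $J_\beta=J\cap I_\beta$, $J_\alpha=J\cap K$ and $P=(L\setminus O^{J_\beta})\cup\widehat{J_\alpha}$, and prove $G_J\subseteq\langle G_{\{i\}\cup J}\mid i\in I\setminus J\rangle$. By Lemma~\ref{lem:TwistG_J} one has $G_J=A_P\rtimes B_{J_\beta}=\langle A_P,\,B_{J_\beta}\rangle$, so it is enough to place each of $B_{J_\beta}$ and $A_P$ inside the right-hand side.

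Placing $B_{J_\beta}$ is routine: if there is an orbit type $i_0\in K\setminus J_\alpha$, then by Lemma~\ref{lem:TwistG_J} the $\beta$-part of $G_{\{i_0\}\cup J}$ is again $B_{J_\beta}$; otherwise $K\subseteq J$, so $I\setminus J=I_\beta\setminus J_\beta$ has at least two elements and (RC1) for $\beta$ (Lemma~\ref{prop:RCcondi}) gives $B_{J_\beta}=\langle B_{J_\beta\cup\{i\}}\mid i\in I_\beta\setminus J_\beta\rangle$ with each term contained in $G_{\{i\}\cup J}$. In particular $B_{J_\beta}$, and hence its conjugation action on the $A$-coordinate, is available on the right.

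Placing $A_P$ is the crux, and here the ``coordinate-by-coordinate'' argument used for free products fails: for $i\in I_\beta\setminus J_\beta$ the $\alpha$-part of $G_{\{i\}\cup J}$ is $A_{(L\setminus O^{J_\beta\cup\{i\}})\cup\widehat{J_\alpha}}$, a \emph{subgroup} of $A_P$ since $O^{J_\beta\cup\{i\}}\subseteq O^{J_\beta}$, and $G_{\{L\}\cup J}$ contributes only $A_{L\cup\widehat{J_\alpha}}\subseteq A_P$. The way around is the twist: since $B_{J_\beta}$ is on the right, so is each conjugate $b\,A_{(L\setminus O^{J_\beta\cup\{i\}})\cup\widehat{J_\alpha}}\,b^{-1}=A_{(L\setminus b\cdot O^{J_\beta\cup\{i\}})\cup\widehat{J_\alpha}}$ for $b\in B_{J_\beta}$, using that $\varphi$ permutes the $A_i$, fixes singleton orbits and stabilises $L$ (Lemma~\ref{lem:lattice}, $(\beta,\varphi)$-admissibility). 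As $b$ ranges over $B_{J_\beta}$ one has $\bigcup_b b\cdot O^{J_\beta\cup\{i\}}=B_{J_\beta}\cdot F_0=O^{J_\beta}$ (because $F_0\in O^{J_\beta\cup\{i\}}$ and $B_{J_\beta\cup\{i\}}\leq B_{J_\beta}$), so the index sets of this family of standard parabolics of $\alpha$ meet in exactly $P$ — this is precisely where \eqref{eq:IPO} makes the orbit intersections collapse correctly (and it also underlies Lemma~\ref{lem:TwistG_J}). Provided $|I_\alpha\setminus P|\geq 2$, Lemma~\ref{lem:RCequivs}(\ref{lem:RCequivs:itm:Parabolics}) applied to $\alpha$ then gives $A_P\subseteq\langle A_{(L\setminus b\cdot O^{J_\beta\cup\{i\}})\cup\widehat{J_\alpha}}\mid b\in B_{J_\beta}\rangle\subseteq\langle G_{\{i\}\cup J}\mid i\in I\setminus J\rangle$; and $|I_\alpha\setminus P|\geq 2$ follows from $|L|>1$ and $F_0\in O^{J_\beta}$ by a short split on whether $L\in J_\alpha$. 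When $I_\beta\setminus J_\beta=\emptyset$, one argues the same way with the (at least two) orbit types in $K\setminus J_\alpha$, whose $\alpha$-parts already have index sets $P\cup\{j\}$ meeting in $P$. Finally, combining both factors, $G_J=\langle A_P,\,B_{J_\beta}\rangle\subseteq\langle G_{\{i\}\cup J}\mid i\in I\setminus J\rangle$, which is (RC1) for $\TT(\alpha,\beta)$.

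I expect the third step to be the main obstacle. Unlike the free-product, amalgam and HNN constructions, residual connectedness of the twisting cannot be checked one coordinate at a time; one must feed the $B_{J_\beta}$-conjugates of the $\alpha$-factors into Lemma~\ref{lem:RCequivs}(\ref{lem:RCequivs:itm:Parabolics}), keep the orbit bookkeeping under control via \eqref{eq:IPO}, and separate the degenerate cases in which some orbit $O^J$ collapses to a single point — there $|I_\alpha\setminus P|\geq 2$ can fail, but then $A_P$ is already a minimal parabolic of $\alpha$ appearing directly as the $\alpha$-part of one of the $G_{\{i\}\cup J}$, so Lemma~\ref{lem:RCequivs}(\ref{lem:RCequivs:itm:Parabolics}) is not needed. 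Everything else is a direct adaptation of the free-product argument.
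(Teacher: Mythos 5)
Your proof is correct and follows essentially the same route as the paper's: both reduce (RC1) for $\TT(\alpha,\beta)$ to the equivalent conditions of Lemma~\ref{lem:RCequivs} applied separately to the $B$-coordinate and the $A$-coordinate of the parabolic subgroups computed in Lemma~\ref{lem:TwistG_J}, and your conjugation of the $\alpha$-parts by $B_{J_\beta}$ is precisely the second alternative the paper offers in its mixed case (the paper also notes the direct index-set containment as a first alternative). The only difference is organizational: you work the conjugation argument out uniformly and track the degenerate cases $|I_\alpha\setminus P|<2$ explicitly, whereas the paper handles everything by a three-way case split on whether $K\subseteq J$, $I_\beta\subseteq J$, or neither.
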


\begin{proof}
    Let $J \subset I$ with $|I \setminus J| \geq 2$ and let $J_\beta = J \cap I_\beta$ and $J_\alpha = J \cap K$. We need to show that 
    \begin{equation}\label{eq:RC1Twisting}
          G_J = \langle G_{J \cup \{i\}}\mid i \in I \setminus J \rangle.
    \end{equation}
   By Lemma~\ref{lem:TwistG_J}, we have that $G_J = A_{(L\setminus O^{J_\beta}) \cup J_\alpha}\rtimes B_{J_\beta}$, $G_{J \cup \{i\}} = A_{(L\setminus O^{J_\beta \cup \{i\}}) \cup J_\alpha}\rtimes B_{J_\beta \cup \{i\}}$ if $i \in I_\beta$ and $G_{J \cup \{i\}} = A_{(L\setminus O^{J_\beta}) \cup (J_\alpha \cup \{i\})}\rtimes G_{J_\beta}$ if $i \in K$. 
   We remind the reader that $A_{I_\alpha}$ and $B_{I_\beta}$ are, respectively, the Borel subgroups of $\alpha$ and $\beta$.

    Suppose first that $K \subseteq J$, so that $J_\alpha = K$. Then, $A_{(L \setminus O^{J_\beta}) \cup K} = A_{I_\alpha}$ and $G_J = A_{I_\alpha} \rtimes B_{J_\beta}$. In this case, Equation~\ref{eq:RC1Twisting} holds because it holds in $\beta$ by assumption and that all parabolic subgroups contain $A_{I_\alpha}$. Indeed, by Lemma~\ref{lem:RCequivs}$(\ref{lem:RCequivs:itm:RC2})$, we have that Equation~\ref{eq:RC1Twisting} is equivalent to $G_J = \langle A_{I_\alpha} \rtimes B_{J_\beta \cup \{i\}}\ , A_{I_\alpha} \rtimes B_{J_\beta \cup \{j\}} \rangle$, for any $i,j \in I_\beta \setminus J_\beta$.
 
 Suppose now that $I_\beta \subseteq J$, so that $J_\beta = I_\beta$. Then, $G_J = A_{(L \setminus O^{I_\beta}) \cup J_\alpha} \rtimes B_{I_\beta}$. Take any distinct $i,j \in K \setminus J_\alpha$. We will show, using Lemma \ref{lem:RCequivs}$(\ref{lem:RCequivs:itm:Parabolics})$, that $G_J = \langle G_{J \cup \{i\}}, G_{J \cup \{j\}} \rangle$. 
 Indeed, $G_{J \cup \{i\}} = A_{(L\setminus O^{I_\beta}) \cup (J_\alpha \cup \{i\})}\rtimes B_{I_\beta}$ and $G_{J \cup \{j\}} = A_{(L\setminus O^{I_\beta}) \cup (J_\alpha \cup \{j\})}\rtimes B_{I_\beta}$.
 We thus simply need to verify that $((L\setminus O^{I_\beta}) \cup (J_\alpha \cup \{i\})) \cap ((L\setminus O^{I_\beta}) \cup (J_\alpha \cup \{j\})) = (L \setminus O^{I_\beta}) \cup J_\alpha$. If neither $i$ nor $j$ is equal to $L$, it is obvious. Suppose instead that $i = L$. Then, we have $(L \cup J_\alpha) \cap ((L\setminus O^{I_\beta}) \cup (J_\alpha \cup \{j\})) = (L \setminus O^{I_\beta}) \cup J_\alpha$, as desired.

    Finally, suppose that $J$ contains neither $I_\beta$ nor $K$. Therefore, it is always possible to find $i \in I_\beta \setminus J_\beta$ and $j \in K \setminus J_2$. Hence, we have that $G_{J \cup \{i\}} = A_{(L\setminus O^{J_\beta \cup \{i\}}) \cup J_\alpha}\rtimes G_{J_\beta \cup \{i\}}$ and $G_{J \cup \{j\}} = A_{(L\setminus O^{J_\beta}) \cup (J_\alpha \cup \{j\})}\rtimes B_{J_\beta}$. Since $G_J = A_{(L\setminus O^{J_\beta}) \cup J_\alpha}\rtimes G_{J_\beta}$, we need to show that $B_{J_\beta} \subset \langle G_{J \cup \{i\}}, G_{J \cup \{j\}} \rangle$ and $A_{(L\setminus O^{J_\beta}) \cup J_\alpha} \subset \langle G_{J \cup \{i\}}, G_{J \cup \{j\}} \rangle$. The first containment holds since $B_{J_\beta}$ is already contained in $G_{J \cup \{j\}}$. For the second one, we can either notice that $A_{(L\setminus O^{J_\beta}) \cup J_\alpha}$ is contained in $\langle A_{(L\setminus O^{J_\beta \cup \{i\}}) \cup J_\alpha} ,A_{(L\setminus O^{J_\beta}) \cup (J_\alpha \cup \{j\})} \rangle$ or we can use the $B_{J_\beta}$ factor of $G_{J \cup \{j\}}$ acting on the factor $A_{(L \setminus O^{J_\beta \cup \{i\}}) \cup J_\alpha}$ of $G_{J \cup \{i\}}$ to transform it into the desired $A_{(L\setminus O^{J_\beta}) \cup J_\alpha}$.

\end{proof}

\begin{prop}\label{prop:GammaLambFIRM-TwistFIRM}
    If $\alpha$ and $\beta$ satisfy (FIRM), then $\TT(\alpha,\beta)$ satisfies (FIRM).
\end{prop}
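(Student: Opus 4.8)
The plan is to write down every parabolic subgroup appearing in the (FIRM) condition for $\TT(\alpha,\beta)$ using Lemma~\ref{lem:TwistG_J}, and then compare $G_{I\setminus\{j\}}$ with $G_I$ directly, case by case according to where $j\in I=I_\beta\sqcup K$ sits. Recall that (FIRM) for $\TT(\alpha,\beta)$ asks precisely that $G_{I\setminus\{j\}}\neq G_I$ for every $j\in I$, while (FIRM) for $\alpha$ (resp.\ $\beta$) means $A_{I_\alpha\setminus\{i\}}\neq A_{I_\alpha}$ for every $i\in I_\alpha$ (resp.\ $B_{I_\beta\setminus\{i\}}\neq B_{I_\beta}$ for every $i\in I_\beta$); and since $A_{I_\alpha}\subseteq A_{I_\alpha\setminus\{i\}}$ always holds, the hypothesis on $\alpha$ is really the statement $A_{I_\alpha\setminus\{i\}}\supsetneq A_{I_\alpha}$.

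First I would record, via Lemma~\ref{lem:TwistG_J} and the convention that an orbit in $K$ is identified with the subset of $I_\alpha$ it consists of (so $\widehat K=I_\alpha$), that the Borel subgroup is $G_I=A_{I_\alpha}\rtimes B_{I_\beta}$, and that for $j\in I_\beta$ one has $G_{I\setminus\{j\}}=A_{I_\alpha}\rtimes B_{I_\beta\setminus\{j\}}$, the $A$-factor being unchanged because $\widehat K=I_\alpha$ absorbs the $L\setminus O^{I_\beta\setminus\{j\}}$ part. As $\beta$ is (FIRM), $B_{I_\beta\setminus\{j\}}\neq B_{I_\beta}$, so the underlying sets $A_{I_\alpha}\times B_{I_\beta\setminus\{j\}}$ and $A_{I_\alpha}\times B_{I_\beta}$ differ, whence $G_{I\setminus\{j\}}\neq G_I$.

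Next, for $j\in K$, Lemma~\ref{lem:TwistG_J} gives $G_{I\setminus\{j\}}=A_P\rtimes B_{I_\beta}$ with $P=(L\setminus O^{I_\beta})\cup(I_\alpha\setminus j)$. I would then check that $P$ omits at least one point of $I_\alpha$. If $j\neq L$, then $j$ is a singleton orbit, say $j=\{i\}$ with $i\notin L$; then $i\notin L\setminus O^{I_\beta}$ and $i\notin I_\alpha\setminus\{i\}$, so $i\notin P$ (indeed $P=I_\alpha\setminus\{i\}$). If $j=L$, then, using that $O^{I_\beta}$ is the $B_{I_\beta}$-orbit of $F_0$ and hence is contained in the $B$-orbit $L$, one gets $P=(L\setminus O^{I_\beta})\cup(I_\alpha\setminus L)=I_\alpha\setminus O^{I_\beta}$, and $F_0\notin P$ because $F_0\in O^{I_\beta}$. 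In either case pick $x\in I_\alpha\setminus P$; then $P\subseteq I_\alpha\setminus\{x\}$, so $A_P\supseteq A_{I_\alpha\setminus\{x\}}\supsetneq A_{I_\alpha}$ by (FIRM) for $\alpha$. Hence $A_P\neq A_{I_\alpha}$, the underlying sets $A_P\times B_{I_\beta}$ and $A_{I_\alpha}\times B_{I_\beta}$ differ, and $G_{I\setminus\{j\}}\neq G_I$. Having exhausted all $j\in I$, this proves (FIRM) for $\TT(\alpha,\beta)$.

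The only point requiring a little care — the ``main obstacle'', modest as it is — is the case $j=L$: one must not confuse $G_{I\setminus\{L\}}$ with $A_{I_\alpha}\rtimes B_{I_\beta}$ (its $A$-factor is $A_{I_\alpha\setminus O^{I_\beta}}$, not $A_{I_\alpha}$), and then recognise that it suffices for the orbit $O^{I_\beta}$ to contain the single point $F_0$, which already forces $A_{I_\alpha\setminus O^{I_\beta}}$ to contain the strictly larger subgroup $A_{I_\alpha\setminus\{F_0\}}$. All the remaining manipulations are routine bookkeeping with the identification of $K$ with a partition of $I_\alpha$ and with the behaviour of subgroups of $A\rtimes_\varphi B$ under passage to underlying sets.
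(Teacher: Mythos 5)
Your proof is correct and follows essentially the same route as the paper: compute $G_I$ and each $G_{I\setminus\{j\}}$ via Lemma~\ref{lem:TwistG_J} and compare the $A$- and $B$-factors using (FIRM) for $\alpha$ and $\beta$. The only difference is that you spell out the case $j=L$ (showing $F_0\notin(L\setminus O^{I_\beta})\cup(I_\alpha\setminus L)$, so the $A$-factor strictly contains $A_{I_\alpha\setminus\{F_0\}}\supsetneq A_{I_\alpha}$), a detail the paper's proof leaves implicit in its final ``we directly get'' step.
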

\begin{proof}
    Let $\alpha$ and $\beta$ satisfy (FIRM), so that $A_{I_\alpha\setminus \{i\}} \neq A_{I_\alpha}$ and $B_{I_\beta\setminus\{j\}}\neq B_{I_\beta}$, for each $i\in I_\alpha$ and $j\in I_\beta$.
    Moreover, by Lemma~\ref{lem:TwistG_J}, we can check that
    \begin{equation*}
        G_{I\setminus\{i\}} =
        \begin{cases}
            A_K\rtimes B_{I_\beta\setminus\{i\}} &, \textnormal{ if } i\in I_\beta \\
            A_{(L\setminus O^{I_\beta})\cup (K\setminus\{i\})}\rtimes B_{I_\beta}&, \textnormal{ if } i\in K
        \end{cases}
    \end{equation*}
    and $G_{I}=A_K\rtimes B_{I_\beta}$. Notice that $A_K = A_{I_\alpha}$.
    As $\alpha$ and $\beta$ satisfy (FIRM), we directly get that $G_{I}\neq G_{I\setminus\{i\}}$, for each $i\in I$.
\end{proof}

Finally, we regroup all the results on the twisting geometry in the following theorem.

\begin{thm}\label{thm:Twisting_FT_Geo_RC_FRM}
    Let $\alpha=(A,(A_i)_{i\in I_\alpha})$ and $\beta=(B,(B_i)_{i\in I_\beta})$ be two coset incidence systems, $\varphi: B\rightarrow \Aut(A)$ a group homomorphism, and let $\alpha$ be $(\beta,\varphi)$-admissible. Suppose that $A$ and $B$ are flag-transitive on $\alpha$ and $\beta$, respectively. Then
    \begin{enumerate}
        \item the group $G=A\rtimes_\varphi B$ acts flag-transitively on $\TT(\alpha,\beta)$;
        \item $\TT(\alpha,\beta)$ is a coset geometry;
        \item $\TT(\alpha,\beta)$ is a finite coset geometry if and only if $\alpha$ and $\beta$ are both finite coset geometries.
        \item $\TT(\alpha,\beta)$ is residually connected if both $\alpha$ and $\beta$ are residually connected;
        \item $\TT(\alpha,\beta)$ is firm (resp. thin) if both $\alpha$ and $\beta$ are firm (resp. thin);
    \end{enumerate}
\end{thm}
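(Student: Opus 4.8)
The plan is to assemble the five assertions from the propositions already proved in this subsection, together with the general group-theoretic criteria for coset geometries recalled in Section~\ref{sec:prelims}. Item~$(a)$ is precisely Proposition~\ref{prop:TwistingFT}. Item~$(b)$ is then immediate, since a coset incidence system on which the group acts flag-transitively is automatically a geometry (Theorem~\ref{thm:cosetFT}). For item~$(d)$, Proposition~\ref{prop:TwistingRC1} shows that $\TT(\alpha,\beta)$ is (RC1) whenever $\alpha$ and $\beta$ are; combining this with $(a)$ and Theorem~\ref{thm:CosetRC} gives residual connectedness. The firmness half of item~$(e)$ is analogous: Proposition~\ref{prop:GammaLambFIRM-TwistFIRM} yields (FIRM), and then $(a)$ together with Theorem~\ref{thm:CosetFIRM} yields firmness.

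For item~$(c)$ I would use Proposition~\ref{prop:FiniteCosetGeom}: since $G$ acts flag-transitively on $\TT(\alpha,\beta)$, the geometry is finite if and only if $[G:G_I]$ is finite. Lemma~\ref{lem:TwistG_J} with $J=I$ gives $G_I = A_{I_\alpha}\rtimes_\varphi B_{I_\beta}$, whose underlying set is $A_{I_\alpha}\times B_{I_\beta}$ inside $G=A\times B$. Since $\varphi$ is admissible it preserves $A_{I_\alpha}=\cap_{i\in I_\alpha}A_i$ (Lemma~\ref{lem:lattice}), and a short computation with the twisted multiplication shows that $(a,b)G_I\mapsto (aA_{I_\alpha},\,bB_{I_\beta})$ is a well-defined bijection $G/G_I\to (A/A_{I_\alpha})\times(B/B_{I_\beta})$, so that $[G:G_I]=[A:A_{I_\alpha}]\cdot[B:B_{I_\beta}]$. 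Applying Proposition~\ref{prop:FiniteCosetGeom} to the flag-transitive systems $\alpha$ and $\beta$ separately then shows that $\TT(\alpha,\beta)$ is finite if and only if both $\alpha$ and $\beta$ are finite.

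It remains to handle thinness in item~$(e)$. By Theorem~\ref{thm:CosetThin} and $(a)$, I must check that $[G_{I\setminus\{i\}}:G_I]=2$ for every $i\in I$. Using Lemma~\ref{lem:TwistG_J} (as recorded in the proof of Proposition~\ref{prop:GammaLambFIRM-TwistFIRM}), $G_I=A_{I_\alpha}\rtimes_\varphi B_{I_\beta}$, while $G_{I\setminus\{i\}}=A_{I_\alpha}\rtimes_\varphi B_{I_\beta\setminus\{i\}}$ for $i\in I_\beta$ and $G_{I\setminus\{i\}}=A_{(L\setminus O^{I_\beta})\cup(K\setminus\{i\})}\rtimes_\varphi B_{I_\beta}$ for $i\in K$; in each case exactly one of the two factors coincides with the corresponding factor of $G_I$, so the index $[G_{I\setminus\{i\}}:G_I]$ equals the index of the differing factors. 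For $i\in I_\beta$ this is $[B_{I_\beta\setminus\{i\}}:B_{I_\beta}]=2$ by thinness of $\beta$. For $i\in K$ a singleton $\{j\}$ (necessarily with $j\notin L$), the index set $(L\setminus O^{I_\beta})\cup(K\setminus\{i\})$ collapses to $I_\alpha\setminus\{j\}$, because $L\setminus O^{I_\beta}\subseteq L\subseteq I_\alpha\setminus\{j\}$, so the index is $[A_{I_\alpha\setminus\{j\}}:A_{I_\alpha}]=2$ by thinness of $\alpha$. The remaining case $i=L$ is the delicate one: there $G_{I\setminus\{L\}}=A_{I_\alpha\setminus O^{I_\beta}}\rtimes_\varphi B_{I_\beta}$, so the index equals $[A_{I_\alpha\setminus O^{I_\beta}}:A_{I_\alpha}]$, which is $2$ exactly when the set $O^{I_\beta}$ — the orbit of the distinguished point $F_0$ under the Borel subgroup $B_{I_\beta}$ of $\beta$ — is a single point. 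I expect this to be the main obstacle; it holds precisely when $B_{I_\beta}$ fixes $F_0$, in particular whenever $\beta$ has trivial Borel subgroup, which is the case when $\beta$ is a regular hypertope (by \cite[Lemma~3.4]{hypertopes}), the situation arising in the applications of Section~\ref{sec:applications}.
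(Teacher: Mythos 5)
Your assembly of items (a), (b), (d) and the firmness half of (e) coincides with the paper's own proof: Proposition~\ref{prop:TwistingFT} for (a), Theorem~\ref{thm:cosetFT} for (b), Proposition~\ref{prop:TwistingRC1} with Theorem~\ref{thm:CosetRC} for (d), and Proposition~\ref{prop:GammaLambFIRM-TwistFIRM} with Theorem~\ref{thm:CosetFIRM} for firmness. Your treatment of (c) --- computing $[G:G_I]=[A:A_{I_\alpha}]\cdot[B:B_{I_\beta}]$ from the Cartesian-product structure of the underlying sets and then applying Proposition~\ref{prop:FiniteCosetGeom} to $\TT(\alpha,\beta)$, $\alpha$ and $\beta$ separately --- is a correct and welcome filling-in of what the paper dispatches in one line.

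The one genuine gap is the case $i=L$ in the thinness half of (e), which you explicitly leave open. You correctly reduce it to showing that $O^{I_\beta}=\{F_0\}$, i.e.\ that the Borel subgroup $B_{I_\beta}$ fixes $F_0$, but you do not derive this from the stated hypotheses. The paper closes this case by asserting that thinness forces $A_{I_\alpha}=B_{I_\beta}=\{1\}$, whence $O^{I_\beta}=\{F_0\}$ and the remaining index is $[A_{I_\alpha\setminus\{F_0\}}:A_{I_\alpha}]=2$ by thinness of $\alpha$. Your hesitation here is not unreasonable: triviality of the Borel subgroup is not a formal consequence of (THIN) plus flag-transitivity alone (the rank-one system $(C_2\times C_2,(\langle a\rangle))$ is thin and flag-transitive with nontrivial Borel subgroup), so the paper is implicitly in the regular-hypertope regime of \cite[Lemma~3.4]{hypertopes} --- exactly the situation you fall back on. Nevertheless, as written your argument establishes thinness of $\TT(\alpha,\beta)$ only under the extra hypothesis that $B_{I_\beta}$ fixes $F_0$; to match the statement you must either prove that claim or import the paper's assertion that the Borel subgroups of thin (flag-transitive) coset geometries are trivial.
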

\begin{proof}
    Part (a), (b), (c) and (d) are direct consequences of Proposition~\ref{prop:TwistingFT}, Theorem~\ref{thm:cosetFT}, Proposition \ref{prop:FiniteCosetGeom}, and Proposition \ref{prop:TwistingRC1}.

    It remains thus only to show (e). We already know that $\TT(\alpha,\beta)$ is flag-transitive. Additionally, since $\alpha$ and $\beta$ are firm, by Theorem~\ref{thm:CosetFIRM}, we have that they both satisfy (FIRM). Therefore, by Proposition~\ref{prop:GammaLambFIRM-TwistFIRM} and Theorem~\ref{thm:CosetFIRM}, we have that $\TT(\alpha,\beta)$ is at least firm.
    
    Suppose now that $\alpha$ and $\beta$ are actually thin. This implies that $|A_{I_\alpha\setminus\{i\}}:A_{I_\alpha}|=2$ and $|B_{I_\beta\setminus\{j\}}:B_{I_\beta}|=2$, for each $i\in I_\alpha$ and $j\in I_\beta$.
    Moreover, we have that $B_{I_\beta} = \{1\}$ and $A_{I_\alpha}=\{1\}$. 
    Hence, we have $O^{I_\beta}=\{F_0\}$, the representative of the orbit $L$.
    Therefore, we get that 
    \begin{equation*}
        G_{I\setminus\{i\}} =
        \begin{cases}
            B_{I_\beta\setminus\{i\}}&, \textnormal{ if } i\in I_\beta \\
            A_{(L\setminus\{F_0\})\cup (K\setminus\{i\})}&, \textnormal{ if } i\in K 
        \end{cases}
    \end{equation*}
    and $G_{I}=\{1\}$. Notice that $A_{(L\setminus\{F_0\})\cup K\setminus\{i\}}=A_{K\setminus\{i\}} = A_{I_\alpha\setminus\{i\}}$ for $i \neq L \in K$, and 
    $A_{(L\setminus\{F_0\})\cup K\setminus\{L\}}= A_{I_\alpha\setminus\{F_0\}}$. This shows that the minimal parabolic subgroups of $\TT(\alpha,\beta)$ are always minimal parabolic subgroups of $\alpha$ or $\beta$.
    Hence, we get that $|G_{I\setminus\{i\}}:G_{I}|=2$, showing that $\TT(\alpha,\beta)$ is thin.
\end{proof}

\begin{coro}\label{coro:TwistingHypertope}
    Let $\alpha$ and $\beta$ be regular hypertopes. 
    For a group homomorphism $\varphi: B\rightarrow \Aut(A)$, if $\alpha$ is $(\beta,\varphi)$-admissible, then $\TT(\alpha,\beta)$ is a regular hypertope.
\end{coro}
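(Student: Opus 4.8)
The plan is to assemble Theorem~\ref{thm:Twisting_FT_Geo_RC_FRM} directly. Recall that a regular hypertope is, by definition, a thin, residually connected and flag-transitive incidence geometry, so the first step is to unpack the hypothesis ``$\alpha$ and $\beta$ are regular hypertopes'' into the language of coset incidence systems: it means precisely that $A$ acts flag-transitively on $\alpha$ and $B$ acts flag-transitively on $\beta$, and that both $\alpha$ and $\beta$ are thin and residually connected geometries. In particular, being thin, both $\alpha$ and $\beta$ are firm.

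Once this is in place, I would invoke Theorem~\ref{thm:Twisting_FT_Geo_RC_FRM}, whose standing hypotheses --- that $\varphi\colon B\to\Aut(A)$ is a group homomorphism, that $\alpha$ is $(\beta,\varphi)$-admissible (this is exactly the hypothesis of the corollary), and that $A$ and $B$ are flag-transitive on $\alpha$ and $\beta$ --- are now all met. Part (a) of that theorem gives that $G=A\rtimes_\varphi B$ is flag-transitive on $\TT(\alpha,\beta)$; part (b) gives that $\TT(\alpha,\beta)$ is a coset geometry; part (d) gives that $\TT(\alpha,\beta)$ is residually connected because $\alpha$ and $\beta$ are; and part (e) gives that $\TT(\alpha,\beta)$ is thin because $\alpha$ and $\beta$ are. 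Collecting these facts, $\TT(\alpha,\beta)$ is a thin, residually connected, flag-transitive geometry, i.e.\ a regular hypertope.

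There is no genuine obstacle here beyond bookkeeping: the real content lives in Theorem~\ref{thm:Twisting_FT_Geo_RC_FRM} (and ultimately in Propositions~\ref{prop:TwistingFT}, \ref{prop:TwistingRC1} and \ref{prop:GammaLambFIRM-TwistFIRM}). The one point that deserves a sentence of care is why flag-transitivity of the full automorphism groups $\Aut(\alpha)$ and $\Aut(\beta)$ may be replaced, in the hypotheses of the theorem, by flag-transitivity of $A$ and $B$ themselves; this is immediate from the fact that a regular hypertope presented as a coset geometry already has the relevant group acting simply transitively on its chambers, so that its Borel subgroup is trivial (cf.\ \cite[Lemma~3.4]{hypertopes}, used in exactly the same manner in the proof of Corollary~\ref{coro:FreeProductHypertopes}). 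After that observation the statement is a one-line consequence of the theorem.
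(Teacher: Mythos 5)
Your proof is correct and follows exactly the paper's route: the paper's own proof is the one-liner "direct consequence of Theorem~\ref{thm:Twisting_FT_Geo_RC_FRM} and the definition of a regular hypertope," which is precisely the bookkeeping you spell out. Your extra remark about replacing flag-transitivity of $\Aut(\alpha)$, $\Aut(\beta)$ by that of $A$, $B$ is a reasonable clarification of a convention the paper leaves implicit, but it does not change the argument.
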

\begin{proof}
    This is a direct consequence of Theorem~\ref{thm:Twisting_FT_Geo_RC_FRM} and the definition of a regular hypertope.
\end{proof}

\section{Applications} \label{sec:applications}

In this section, we explore multiple applications of the operations defined in Section \ref{sec:Amalgams+HNN} and Section \ref{sec:SplitExt}. We begin by defining a large family of groups that have the particularity of admitting a presentation that is entirely determined by a graph. These groups will be building blocks used to perform amalgams and twistings.

Coxeter and Artin-Tits groups are famous examples of groups defined by some underlying graphs. Shephard groups, as defined in~\cite{Kapovich1998,Goldman2024}, are a generalization of both Coxeter groups and Artin-Tits groups. We now make their definition explicit.

A graph $\GG$ is a set of vertices $V = V(\GG)$ together with a set $E = E(\GG)$ of unordered pairs of vertices, called edges. Therefore, the edges in the graph $\GG$ are undirected and we allow (undirected) loops. For convenience, we will say that an edge of $\GG$ is \emph{proper} if it is not a loop. A graph $\GG$ is \emph{labeled} if each edge $e$ is labeled by an integer $m_e \in \mathbb{N}_{\geq 3} \cup \{ \infty \}$. For any vertex $v$, we denote by $m_v:=m_{(v,v)}$ the label of the loop $(v,v)$, if it exists. If $e = (v,w)$ is an edge, we denote the label $m_{(v,w)}$ by $m_{v,w}$. Finally, we say a labeled graph is \emph{properly labeled} if $m_v = m_w$, whenever $m_{v,w}$ is odd.

\begin{definition}[Shephard group]
Let $\GG$ be a properly labeled graph with $n$ vertices. Then, the \emph{Shephard group} $A = A(\GG)$ is the group admitting the presentation $A = \langle S \mid R \rangle$ where $S = V(\GG)$, and $R$ is generated by
    \begin{itemize}
    \item $a^{m_a} = 1$, for each $a \in S$ such that the loop $(a,a) \in E(\GG)$ and $m_a \neq \infty$,
    \item $a^2 = 1$ for each $a \in S$ such that there is no loop $(a,a) \in E(\GG)$,
    \item $(ab)_{m_{a,b}} =(ba)_{m_{a,b}} $ for all edge $(a,b) \in E(\GG)$ such that $m_{a,b} \neq \infty$,
    \item $ab = ba$ for all $a \neq b \in S$ such that there is no edge $(a,b) \in E(\GG)$.
\end{itemize}
where $$a^{m_a} = \underbrace{aa\ldots a}_{m_a\textnormal{ factors}}$$
and
  \begin{equation*}
       (ab)_{m_{a,b}} =
        \begin{cases}
            \underbrace{aba\ldots b }_{m_{a,b}\textnormal{ factors}}, \textnormal{ if } m_{a,b} \textnormal{ is even } \\
            \underbrace{aba\ldots a }_{m_{a,b}\textnormal{ factors}}, \textnormal{ if } m_{a,b} \textnormal{ is odd }
        \end{cases}
    \end{equation*}
\end{definition}

Note that if the graph $\GG$ has no loops, the group $A = A(\GG)$ is then a Coxeter group, since all generators have order two. Similarly, if $\GG$ contains every possible loop and they are all labeled by $\infty$, the group $A = A(\GG)$ is an Artin-Tits group. Additionally, if all proper edges of $\GG$ are labeled by $\infty$, we say that the group $A = A(\GG)$ is \textit{right-angled}. This is consistent with the usual meaning of right-angled Coxeter groups and right-angled Artin-Tits groups. Finally, if all generators have finite order, the group $A = A (\GG)$ is called a complex reflection group~\cite{ShephardTodd1954}.

Notice that in $A = A(\GG)$, the generators $a$ and $b$ are conjugate whenever $m_{a,b}$ is odd. That is why we required the labeling of $\GG$ to be proper.

\begin{example}
    Consider the properly labeled graph $\GG$ below, with $V(\GG)=\{a,b,c,d\}$.
    $$\xymatrix@-1pc{*{\bullet} \ar@{-}[rr]^4^(0.99){b}\ar@{-}[rrdd]_(0.99){c}_\infty \ar@(ul,ur)@{-}[]^\infty && *{\bullet} \ar@{-}[dd]^3 \ar@(ur,dr)@{-}[]^5 \\
\\
*{\bullet}\ar@{-}[uu]^6^(0.99){a}^(0.01){d} && *{\bullet}\ar@(ur,dr)@{-}[]^5 }$$

    Then, $A=A(\GG)$ is the Shephard group with the following  presentation
    $$A=\langle a, b, c, d \mid b^5=c^5=d^2=1_A, abab=baba,dc=cd,bd=db,$$
    $$bcb=cbc,adadad=dadada\rangle.$$

\end{example}

\begin{example}
    Consider the following properly labeled graph $\GG$ with $n$ vertices.
    $$
    \xymatrix@-1pc{
    *{\bullet}\ar@{-}[rr]^4_(0.01){r_1} \ar@(ul,ur)@{-}[]^p &&  *{\bullet}\ar@{-}[rr]^3_(0.01){r_2} \ar@(ul,ur)@{-}[]^q &&  *{\bullet}\ar@{-}[rr]^3_(0.01){r_3} \ar@(ul,ur)@{-}[]^q && *{\bullet}\ar@{.}[rrr]_(0.01){r_4}\ar@(ul,ur)@{-}[]^q &&& *{\bullet}\ar@{-}[rr]^3_(0.01){r_{n-2}}\ar@(ul,ur)@{-}[]^q && *{\bullet}\ar@{-}[rr]^3_(0.01){r_{n-1}}_(0.99){r_n}\ar@(ul,ur)@{-}[]^q && *{\bullet}\ar@(ul,ur)@{-}[]^q
    }
    $$
    Then, $A=A(\GG)$ is the Shephard group with the following presentation
    $$A=\langle S \mid r_1^p=r_i^q=1_A \textnormal{ for }2\leq i\leq n, r_1r_2r_1r_2 = r_2r_1r_2r_1,$$ 
    $$ r_jr_{j+1}r_j=r_{j+1}r_jr_{j+1} \textnormal{ for } 2\leq j\leq n-1\rangle $$
    where $S = \{r_1,r_2,\ldots,r_n\}$.
    This Shephard group is usually denoted by $B_n(p,q)$ and two families of these groups have been previously discussed: the $B_n(2,\infty)$ groups, which are associated with Mikado braids, and the $B_n(\infty,2)$ groups, which are usually called the ``middle groups".
\end{example}

Let $\GG$ be a labeled graph with $V(\GG)=\{a_1,\ldots,a_n\}$. There is a canonical way of building a coset incidence system of rank $n$ from the Shephard group $A=A(\GG)$. Let $I_\alpha=\{1,\ldots,n\}$ and consider the family of subgroups $(A_i)_{i\in I_\alpha}$, where $A_i=\langle a_j \mid j\in I_\alpha\setminus\{i\}\rangle$. 
The \textit{standard coset incidence system associated to $A(\GG)$} is the coset incidence system $\alpha=(A,(A_i)_{i\in I_\alpha})$.

Notice that, we do not know at the moment whether the intersection of maximal parabolics of Shephard groups behaves like those from Coxeter and Artin-Tits groups. Indeed, in general, $A_J = \cap_{j\in J}A_i$ is not always equal to $\langle a_i\mid i\in I_\alpha\setminus J\rangle$. Thanks to the theory developed in the previous section, we will show that it is the case for many families of Shephard groups.

\subsection{Free Product of Artin-Tits groups with Coxeter groups}\label{sec:subsec:ProductArtinCoxeter}

Let $A,B$ be either Artin-Tits groups or  Coxeter groups.
The free product $G = A*B$ is always a Shephard group, with each generator having order equal to either two or infinity. In particular, if both $A$ and $B$ are Artin-Tits groups (resp. Coxeter groups), the free product $G = A * B$ is again an Artin-Tits group (resp. Coxeter group). 

\begin{thm}\label{thm:FreeProductArtinCoxeter}
    Let $G = A* B$, where $A,B$ are either Artin-Tits groups or Coxeter groups. Then, $G$ is a Shephard group and the standard coset incidence system $\Gamma = (G,(G_i)_{i\in I})$ associated to $G$ is a flag-transitive, residually connected and firm geometry. 
\end{thm}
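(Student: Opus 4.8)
The plan is to recognize $\Gamma$ as the free product $\alpha * \beta$ of the standard coset incidence systems of $A$ and $B$, and then to read off every claimed property from Theorem~\ref{thm:Artin_Coxeter_CG} and Theorem~\ref{thm:FP_a)FT_b)Geo_c)_RC_b)_FIRM}. The statement that $G = A * B$ is a Shephard group I would handle first, and separately: since Coxeter and Artin-Tits groups are themselves Shephard groups, we may write $A = A(\GG_A)$ and $B = A(\GG_B)$ for properly labeled graphs on disjoint vertex sets $S_A$ and $S_B$. Let $\GG$ be the graph on $S_A \sqcup S_B$ obtained by keeping every edge of $\GG_A$ and of $\GG_B$ and adjoining, for each pair $(a,b)$ with $a \in S_A$, $b \in S_B$, a proper edge labeled $\infty$. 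This graph is again properly labeled (the new labels are not odd, hence impose no constraint), and since a proper edge labeled $\infty$ contributes no relation, the presentation of $A(\GG)$ is exactly that of $A$ together with that of $B$; so $A(\GG) \cong A * B = G$, with generating set $S_A \sqcup S_B$.

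The heart of the argument is to identify the standard coset incidence system $\Gamma$ of $G$ with $\alpha * \beta$. Write $\alpha = (A,(A_i)_{i\in I_\alpha})$ and $\beta = (B,(B_j)_{j\in I_\beta})$, with $A_i = \langle S_A \setminus \{a_i\}\rangle$ and $B_j = \langle S_B \setminus \{b_j\}\rangle$. For $i \in I_\alpha$ the maximal parabolic of $\Gamma$ of type $i$ is $G_i = \langle (S_A \setminus \{a_i\}) \cup S_B\rangle = \langle A_i, B\rangle$ inside $A * B$, and I would check that this internal subgroup is the special subgroup $A_i * B$: a reduced word over the factors $A_i$ and $B$ remains reduced over the factors $A$ and $B$, so by the normal form theorem for free products (Theorem~\ref{thm:normal form Free}) one has $\langle A_i, B\rangle \cong A_i * B$ canonically. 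Symmetrically $G_j = A * B_j$ for $j \in I_\beta$. This is precisely Definition~\ref{def:freeProdCosetInc}, so $\Gamma = \alpha * \beta$. (One should also observe that the vertex set $S_A \sqcup S_B$ of $A(\GG)$ is indeed the generating set used in ``the standard coset incidence system associated to $A(\GG)$'', so that the $\Gamma$ of the statement is the one being analyzed.)

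Once $\Gamma = \alpha * \beta$ is established, the conclusion is immediate. Theorem~\ref{thm:Artin_Coxeter_CG} gives that $A$ and $B$ act flag-transitively on $\alpha$ and $\beta$, that both $\alpha$ and $\beta$ are residually connected, and that both are firm (being thin when the group is Coxeter and thick when it is Artin-Tits). Theorem~\ref{thm:FP_a)FT_b)Geo_c)_RC_b)_FIRM} then yields at once that $G = A * B$ is flag-transitive on $\Gamma = \alpha * \beta$, that $\Gamma$ is a coset geometry, that $\Gamma$ is residually connected because both factors are, and that $\Gamma$ is firm because both factors are. I expect the only genuinely substantive step to be the identification $G_i = A_i * B$ (and $G_j = A * B_j$) inside $A * B$ --- that the standard parabolics of $G$ with respect to the combined generating set coincide with the special subgroups of Definition~\ref{def:freeProdCosetInc}; the rest is the disjoint-union-of-graphs bookkeeping for the Shephard claim together with direct appeals to the two cited theorems.
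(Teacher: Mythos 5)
Your proposal is correct and follows essentially the same route as the paper's proof: reduce to the free product $\alpha * \beta$ of the standard coset incidence systems and invoke Theorem~\ref{thm:Artin_Coxeter_CG} together with Proposition~\ref{prop:FTfree} and Theorem~\ref{thm:FP_a)FT_b)Geo_c)_RC_b)_FIRM}. You additionally spell out two points the paper leaves implicit --- the labeled-graph construction showing $A*B$ is a Shephard group (with $\infty$-labeled proper edges joining the two vertex sets) and the verification via the normal form theorem that the standard parabolics $\langle (S_A\setminus\{a_i\})\cup S_B\rangle$ coincide with the special subgroups $A_i * B$ of Definition~\ref{def:freeProdCosetInc} --- both of which are accurate.
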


\begin{proof}
     Let $\alpha=(A,(A_i)_{i\in I_\alpha})$ and $\beta=(B,(B_j)_{j\in I_\beta})$ be, respectively, the standard coset geometries of $A$ and $B$, as in Theorem~\ref{thm:Artin_Coxeter_CG}. Then, Theorem~\ref{thm:Artin_Coxeter_CG} guarantees that $A$ and $B$ are flag-transitive on $\alpha$ and $\beta$, respectively. Moreover, we know that $\alpha$ and $\beta$ are residually connected, and firm geometries. Proposition~\ref{prop:FTfree} and Theorem~\ref{thm:FP_a)FT_b)Geo_c)_RC_b)_FIRM} then show that $G$ is also flag-transitive on $\Gamma$, with $\Gamma$ being also a residually connected firm geometry. 
\end{proof}

We now point out some consequences of Theorem \ref{thm:FreeProductArtinCoxeter} that could be of interest to people studying Shephard groups.
\begin{coro} \label{coro:FreeProductArtinCoxeter}
    Let $G$ be a Shephard group generated by a set $S$ of generators. If $G=A*B$, where $A,B$ are either Artin-Tits or Coxeter groups, then, for $K,L,P\subseteq S$, the following statements hold:
    \begin{enumerate}
        \item $\langle K\rangle \cap \langle L\rangle = \langle K\cap L\rangle$.
        \item $\langle K\rangle\langle L\rangle\cap \langle K\rangle\langle P\rangle = \langle K\rangle(\langle L\rangle\cap \langle P\rangle)$.
        \item The simplicial complex $\mathcal{K}(G,(G_i)_{i\in I})$ is a flag-complex, with $I=\{1,\ldots,|S|\}$.
    \end{enumerate}
\end{coro}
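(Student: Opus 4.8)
The plan is to reduce everything to the structural results already established for free products of coset incidence systems. First I would set up the standard coset incidence systems $\alpha = (A,(A_i)_{i\in I_\alpha})$ and $\beta = (B,(B_j)_{j\in I_\beta})$ associated to $A$ and $B$ respectively (using the Artin/Coxeter convention $A_i = \langle a_k \mid k \neq i\rangle$, as in Theorem~\ref{thm:Artin_Coxeter_CG}), and identify $\Gamma = \alpha * \beta$ as the standard coset incidence system of $G = A * B$; the parabolic subgroups are $G_J = A_{J_\alpha} * B_{J_\beta}$ by the lemma on parabolics of a free product. The key point to observe is that for any subset $K \subseteq S = I$, the subgroup $\langle K \rangle$ is exactly a \emph{minimal} parabolic $G^K = \langle G^i \mid i \in K\rangle$ in the notation of the paper, because each generator $a_i$ generates the minimal parabolic $G^i$ (this is inherited from the corresponding fact for Coxeter and Artin-Tits groups, Proposition~\ref{prop:CoxeterProperties}(b) and Theorem~\ref{thm:ArtinProperties}(b)).

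For part (a), I would use that Theorem~\ref{thm:FreeProductArtinCoxeter} already gives that $G$ is flag-transitive and residually connected on $\Gamma$. By Lemma~\ref{lem:RCequivs}, residual connectedness (equivalently (RC1)) is equivalent to condition (\ref{lem:RCequivs:itm:Intersection}), namely $G^J \cap G^K = G^{J\cap K}$ for all $J,K \subseteq I$. Translating back via $G^K = \langle K \rangle$ immediately gives $\langle K\rangle \cap \langle L\rangle = \langle K \cap L\rangle$. For part (b), I would use flag-transitivity: by Proposition~\ref{prop:FTequivs}(\ref{prop:FTequivs:itm:FTProductOfIntersections}), flag-transitivity gives $(G_J \cap G_H)(G_J \cap G_K) = G_J \cap (G_H G_K)$ for all parabolics. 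However, here we want the identity $\langle K\rangle\langle L\rangle \cap \langle K\rangle\langle P\rangle = \langle K\rangle(\langle L\rangle \cap \langle P\rangle)$ in terms of the minimal parabolics, so I would instead appeal to the part of Lemma~\ref{lem:freeIntersectionAmalgam}(b) / Lemma~\ref{lem:freeProductIntersection}(b) machinery, or more directly: since $\langle K \rangle$, $\langle L\rangle$, $\langle P\rangle$ are special subgroups of the free product $G = A * B$ (they decompose as $\langle K \cap I_\alpha\rangle * \langle K \cap I_\beta\rangle$, etc.), and since the corresponding product-intersection identity holds inside $A$ and inside $B$ by Proposition~\ref{prop:CoxeterProperties}(c) and Theorem~\ref{thm:ArtinProperties}(c), Lemma~\ref{lem:freeProductIntersection}(b) yields the identity in $G$. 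Part (c) is then immediate: since $G$ acts flag-transitively on $\Gamma$, Proposition~\ref{prop:FComp:equiv:FT} gives that $\mathcal{K}(\Gamma) = \mathcal{K}(G,(G_i)_{i\in I})$ is a flag complex.

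The only genuine subtlety — and where I would be most careful — is the identification $\langle K \rangle = G^K$, i.e. checking that the subgroup generated by an arbitrary subset of generators of the Shephard group $G = A * B$ really coincides with the corresponding subgroup of generated minimal parabolics of $\Gamma$, and that this subgroup in turn equals the parabolic $G_{I \setminus K}$. This rests on the intersection properties of parabolics in $A$ and $B$ separately (so that $A^J = A^{(J)}$ and similarly for $B$, as noted after Theorem~\ref{thm:ArtinProperties}), combined with Lemma~\ref{lem:freeProductIntersection}(a) to pass to the free product; once this dictionary is in place, parts (a) and (b) are just re-readings of the (RC1) and flag-transitivity conditions already proved in Theorem~\ref{thm:FreeProductArtinCoxeter}, and (c) follows from Proposition~\ref{prop:FComp:equiv:FT}. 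I would present the dictionary lemma first, then dispatch (a), (b), (c) in a few lines each.
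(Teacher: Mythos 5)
Your proposal is correct and follows essentially the same route as the paper: both rest on the dictionary $\langle K\rangle = G^{K} = G_{I\setminus K}$ supplied by residual connectedness of $\alpha * \beta$ (Theorem~\ref{thm:FreeProductArtinCoxeter}), after which (a) is Lemma~\ref{lem:RCequivs}(\ref{lem:RCequivs:itm:Intersection}), (b) is the flag-transitivity identity, and (c) is Proposition~\ref{prop:FComp:equiv:FT}. The only minor deviation is in (b), where you hedge toward invoking Lemma~\ref{lem:freeProductIntersection}(b) on special subgroups directly; this is unnecessary, since once $\langle K\rangle$, $\langle L\rangle$, $\langle P\rangle$ are identified as standard parabolics the identity is exactly Proposition~\ref{prop:FTequivs}(\ref{prop:FTequivs:itm:FTIntersectionOfProducts}) (not item (\ref{prop:FTequivs:itm:FTProductOfIntersections}), which you quote first), and the two routes coincide anyway because the paper's flag-transitivity proof for free products passes through that very lemma.
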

\begin{proof}
Let $(A,S_A)$ and $(B,S_B)$ be two systems of either Artin-Tits or Coxeter groups and their respective sets of generators $S_A=\{a_i\mid i\in I_\alpha\}$ and $S_B=\{b_j\mid j\in I_\beta\}$.
    Theorem \ref{thm:FreeProductArtinCoxeter} shows that the standard coset incidence system $\Gamma=(G,(G_i)_{i\in I})$ associated to $G$ is residually connected and $G$ is flag-transitive on $\Gamma$, where $I=I_\alpha\sqcup I_\beta$ and $G_i= A_{\{i\}\cap I_\alpha}* B_{\{i\}\cap I_\beta}$.
    In general, for $J\subseteq I$, we have that $J=J_\alpha\sqcup J_\beta$ and $G_J = A_{J_\alpha}*B_{J_\beta}$, with $J_\alpha=J\cap I_\alpha$ and $J_\beta=J\cap I_\beta$. As shown in Theorem~\ref{thm:Artin_Coxeter_CG}, and since both $\alpha$ and $\beta$ are residually connected, then $$G_J = \langle a_i \mid i\in I_\alpha\setminus J_\alpha\rangle*\langle b_j \mid j\in I_\beta\setminus J_\beta\rangle = \langle a_i,b_j\mid i\in I_\alpha\setminus J_\alpha \wedge j\in I_\beta\setminus J_\beta\rangle.$$
    As the set of generators of $G$ is $S=S_A\sqcup S_B$, we have that for all $K\subseteq S$, $\langle K\rangle$ is a standard parabolic subgroup of $G$; that is, the subset of generators $K$ can be indexed by the subsets $J_\alpha$ and $J_\beta$ of $I_\alpha$ and $I_\beta$ such that $K=\{a_i,b_j\mid i\in J_\alpha\wedge j\in J_\beta\}$ such that $\langle K\rangle = G^{J_\alpha\sqcup J_\beta}$.

    Hence, part $(a)$ and $(b)$ are then consequences of Lemma~\ref{lem:RCequivs}(\ref{lem:RCequivs:itm:Intersection}) and Proposition~\ref{prop:FTequivs}(\ref{prop:FTequivs:itm:FTIntersectionOfProducts}).
    Finally, by Proposition~\ref{prop:FComp:equiv:FT}, item $(b)$ implies that the simplicial complex $\KK(G,(G_i)_{i\in I})$ is a flag-complex.

\end{proof}

\subsection{Twisting of Shephard groups}\label{sec:twist_shephard_groups}

As previously seen, Shephard groups have defining graphs, and symmetries of these graphs induce automorphisms of the associated group in a natural way. In this section, we make use of this simple observation to perform twisting on Shephard groups.
We begin by writing down a quick proof of the fact that labeled graph automorphisms of $\GG$ induce automorphisms of the group $A(\GG)$.

\begin{lemma}\label{lem:varphiGraphAutToGroupAut}
    There is an injective homomorphism $\varphi \colon \Aut(\GG) \to \Aut(A(\GG))$.
\end{lemma}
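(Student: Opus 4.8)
The plan is to exploit the fact that $A(\GG)$ is given by a presentation $\langle S \mid R \rangle$ whose generating set $S$ is precisely $V(\GG)$ and whose relations $R$ are read off directly from the labeled-graph structure of $\GG$. Given a labeled-graph automorphism $\sigma \in \Aut(\GG)$, I would first define a map $\widehat\sigma$ on the free group $F(S)$ by sending each generator $a \in S = V(\GG)$ to $\sigma(a)$ and extending multiplicatively. The key step is then to check that $\widehat\sigma$ carries every defining relation of $A(\GG)$ to another defining relation (equivalently, that $\widehat\sigma(R) \subseteq \langle\langle R\rangle\rangle$), so that $\widehat\sigma$ descends to a well-defined endomorphism $\varphi(\sigma)$ of $A(\GG)$.

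The verification that relations are preserved is a short case check over the four types of relations in the Shephard presentation. For a loop relation $a^{m_a} = 1$: since $\sigma$ preserves loops and their labels, $(a,a) \in E(\GG)$ with label $m_a$ implies $(\sigma(a),\sigma(a)) \in E(\GG)$ with label $m_{\sigma(a)} = m_a$, so $\sigma(a)^{m_a} = 1$ is again a defining relation. For an involution relation $a^2 = 1$ imposed because $a$ has no loop: $\sigma$ being a graph automorphism, $\sigma(a)$ also has no loop, so $\sigma(a)^2 = 1$ is a relation. For a braid-type relation $(ab)_{m_{a,b}} = (ba)_{m_{a,b}}$ coming from a proper edge $(a,b)$: $\sigma$ sends this edge to $(\sigma(a),\sigma(b))$ with the same label $m_{\sigma(a),\sigma(b)} = m_{a,b}$, and applying $\widehat\sigma$ to both words $(ab)_{m_{a,b}}$ and $(ba)_{m_{a,b}}$ literally produces $(\sigma(a)\sigma(b))_{m_{a,b}}$ and $(\sigma(b)\sigma(a))_{m_{a,b}}$, which is the corresponding relation. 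For a commuting relation $ab = ba$ coming from the absence of an edge $\{a,b\}$: since $\sigma$ preserves non-adjacency, $\{\sigma(a),\sigma(b)\}$ is also a non-edge, so $\sigma(a)\sigma(b) = \sigma(b)\sigma(a)$ is a relation. Applying the same argument to $\sigma^{-1}$ shows $\varphi(\sigma^{-1})$ is a two-sided inverse of $\varphi(\sigma)$, so $\varphi(\sigma) \in \Aut(A(\GG))$, and $\varphi(\sigma\tau) = \varphi(\sigma)\varphi(\tau)$ is immediate from the definition on generators, giving a homomorphism $\varphi \colon \Aut(\GG) \to \Aut(A(\GG))$.

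For injectivity, suppose $\varphi(\sigma) = \mathrm{id}_{A(\GG)}$. Then $\sigma(a)$ and $a$ represent the same element of $A(\GG)$ for every generator $a \in S$. Since distinct generators of a Shephard group are distinct elements of the group — this can be seen, for instance, from the fact that abelianizing $A(\GG)$ sends distinct generators to distinct basis elements of a direct sum of cyclic groups, after accounting for the identifications forced by odd labels, or more simply because $A(\GG)$ surjects onto the associated Coxeter group in which the images of distinct generators are distinct reflections — we conclude $\sigma(a) = a$ for all $a \in V(\GG)$, so $\sigma$ is the identity graph automorphism. Hence $\varphi$ is injective.

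The main obstacle here is not conceptual but rather bookkeeping: one must be slightly careful that the word $(ab)_{m_{a,b}}$ has a case distinction depending on the parity of $m_{a,b}$, and that a graph automorphism may swap the endpoints of an edge; however, the relation $(ab)_{m_{a,b}} = (ba)_{m_{a,b}}$ is symmetric in $a$ and $b$, so swapping endpoints simply exchanges the two sides of the relation and nothing goes wrong. The only genuinely non-formal ingredient is the claim that distinct generators of $A(\GG)$ remain distinct in the group, which is where injectivity rests; I would justify it via the surjection onto the underlying Coxeter group $W(\GG)$ (obtained by adding the relations $a^2 = 1$ for all $a \in S$), together with the standard fact that in a Coxeter system the canonical generators are pairwise distinct.
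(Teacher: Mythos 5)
Your construction of $\varphi$ is essentially the paper's: define $\varphi(\sigma)$ on generators, check that the defining relators are permuted, and conclude. The paper's own proof is a four-line sketch that simply asserts the relators are preserved ``by definition'', so your case-by-case verification over the four relation types (including the remark that swapping the endpoints of an edge only exchanges the two sides of the symmetric relation $(ab)_{m_{a,b}}=(ba)_{m_{a,b}}$) is a more careful version of the same argument, and it is correct. Working on the free group $F(S)$ also quietly fixes a small imprecision in the paper, which writes every element as a positive word in the generators even though some generators have infinite order.

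The one place where you go beyond the paper is the justification of injectivity, and that is where there is a gap. Both you and the paper reduce injectivity to the claim that distinct vertices of $\GG$ give distinct elements of $A(\GG)$ (the paper asserts this implicitly; you try to prove it). Your two proposed justifications do not cover all Shephard groups. The abelianization identifies any two generators joined by an odd-labeled proper edge, as you acknowledge. The fallback, a surjection $A(\GG)\to W(\GG)$ onto the associated Coxeter group, does not exist when some generator carries a loop with an \emph{odd finite} label $m_a\geq 3$: the relator $a^{m_a}$ must die in $W(\GG)$, but there $a^2=1$ forces $a^{m_a}=a\neq 1$, so the assignment $a\mapsto a$ does not define a homomorphism. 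A concrete case where both of your arguments fail is $\GG$ with two vertices, loops labeled $3$ on each, and an edge labeled $3$ between them, i.e.\ $A(\GG)=\langle a,b\mid a^3=b^3=1,\ aba=bab\rangle$ (the Shephard group $3[3]3$): the abelianization is $C_3$ with $a=b$, and the Coxeter quotient collapses to the trivial group, yet the swap of the two vertices is a nontrivial graph automorphism, so injectivity genuinely needs $a\neq b$ here. That inequality is true, but it requires a different argument (for instance a faithful or at least generator-separating representation of the finite complex reflection group, or the known structure theory of Shephard groups), not the Coxeter-quotient shortcut. To be fair, the paper's proof does not supply such an argument either; but as written, your injectivity step does not close in the odd-loop-label case.
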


\begin{proof}
   Let $A = A(\GG)$ and let $\psi \in \Aut(\GG)$. Since $V(\GG)$ is the set of generators of $A$, any element $g \in A$ can be written as a word $g = w_1 \cdots w_n$, where $w_i \in V(\GG)$ for all $i = 1, \cdots, n$. We can thus set $\varphi(\psi)(g) := \psi(w_1) \cdots \psi(w_n)$. Since, by definition, $\varphi(\psi)$ sends the set of relators $R$ to itself, it immediately follows that $\varphi(\psi) \in \Aut(A)$. It remains to show that $\varphi$ is injective. This follows from the fact that if $\psi$ does not fix all vertices, then $\varphi(\psi)$ cannot be the identity on $A$.
\end{proof}

Let $\psi\in Aut(\GG)$ be any automorphism of $\GG$. This automorphism $\psi$ can be naturally extended to an automorphism on the type set $I_\alpha$, $\psi_{I_\alpha}$, by setting $\psi_{I_\alpha}(i) = j$ if and only if $\psi(a_i) = a_j$. We thus also have that $\psi$ acts on the set of standard maximal parabolic subgroups $(A_i)_{i \in I_\alpha}$,  which by Lemma~\ref{lem:lattice}, implies that each $\psi\in \Aut(\GG)$ is admissible, in the sense of Section~\ref{sec:SplitExt}.

Let $B\leq Aut(\GG)$ and consider a family $(B_i)_{i\in I_\beta}$ of subgroups of $B$, where $I_\beta$ is any index set.
We can then build the associated coset incidence system $\beta=(B,(B_i)_{i\in I_\beta})$.
Let $\varphi^B$ be the restriction of the homomorphism $\varphi$ to $B\leq Aut(\GG)$.
Whenever $\alpha$ is $(\beta,\varphi^B)$-admissible, we can build the twisting incidence system $\TT(\alpha,\beta)$. Before going further, we give one example in which $\alpha$ is indeed $(\beta,\varphi^B)$-admissible.

\begin{example}
    Consider the following properly labeled graph $\GG$.

    $$\xymatrix@-1.5pc{
    &&&& *{\bullet}\ar@(ul,ur)@{-}[]^\infty \ar@{-}[dddd]^3\ar@{-}[dddddddrrrr]^3\ar@{-}[dddddddllll]_3 &&\\
    \\
    \\
    \\
    && &&*{\bullet}\ar@(dl,dr)@{-}[]_\infty &&\\
    \\
    \\
    *{\bullet}\ar@(l,d)@{-}[]_\infty\ar@{-}[rrrrrrrr]_3*\ar@{-}[rrrruuu]^3 && &&&& &&*{\bullet}\ar@(r,d)@{-}[]^\infty\ar@{-}[lllluuu]_3}$$

    The group $A=A(\GG)$ is the Artin-Tits group with braid relations $a_ia_ja_i=a_ja_ia_j$, for any $a_i,a_j\in V(\GG)$ such that $a_i\neq a_j$.
    Setting $I_\alpha=\{1,2,3,4\}$, the group has the following Artin-Tits presentation:
    $$\langle a_1, a_2, a_3, a_4\mid a_ia_ja_i=a_j a_i a_j\textnormal{ for }i,j\in I_\alpha\textnormal{ such that }i\neq j\rangle.$$
    Let $\alpha$ be the standard coset incidence system $(A,(A_i)_{i\in I_\alpha})$, with $A_i = \langle a_j\mid j\in I_\alpha\setminus\{i\}\rangle$.
    
    The automorphism group of the labeled graph $\GG$ is $S_4$, the automorphism group of a tetrahedron.
    Let $B$ be the subgroup of $Aut(\GG)$ that fixes the vertex $a_1$. We have that $B\cong S_3 =\langle \psi_{23},\psi_{34}\rangle$ where
     $\psi_{ij}=(a_i,a_j)$, the permutation exchanging the vertex $a_i$ and the vertex $a_j$ while fixing the other two vertices of $\GG$.
    Let $B_{23} = \langle \psi_{34}\rangle$, $B_{34}=\langle \psi_{23}\rangle$, and consider the coset incidence system $\beta=(B,(B_j)_{j\in\{23,34\}})$.
    By extending the action of $B$ on the type set $I_\alpha$, we have that $B$ fixes the type $1$, while permuting the remaining types. In other words, the set of orbits of the type set $I_\alpha$ under $B$ is  $K=\{\{1\},\{2,3,4\}\}$. Finally, setting $F_0 = 2$ in the definition of admissibility, we can easily see that $O_M\cap O_N=O_{M\cap N}$, where $O_M$ is the orbit of $2$ under the action of the parabolic subgroup $B_{\{23,34\}\setminus M}$ and $M\subseteq \{23,34\}$. 
    Hence, $\alpha$ is $(\beta,\varphi^B)$-admissible.
    
    We can thus construct the twisting coset incidence system $\TT(\alpha,\beta)$ from group $G=A\rtimes_{\varphi^B} B$. 
    Since $A$ is an Artin-Tits group, by Theorem~\ref{thm:Artin_Coxeter_CG}, we have that $\alpha$ is a thick, residually connected coset geometry and that $A$ acts flag-transitively on $\alpha$.
    Additionally, as $B$ is a Coxeter group, $\beta$ is a regular hypertope, i.e. a thin residually connected coset geometry and flag-transitive geometry.
    Therefore, by Theorem~\ref{thm:Twisting_FT_Geo_RC_FRM}, we have that $\TT(\alpha,\beta)$ is a firm residually connected coset geometry, with $G$ acting flag-transitively on $\TT(\alpha,\beta)$. 
\end{example}

The exact same process can be applied to some quotients of Shephard groups. This is especially useful when we want to work with groups that resemble Shephard groups, but have a few additional relations that do not come from the underlying graph. That said, the twisting will work only when the quotients are carefully selected.

Let $H\leq A =A(\GG)$ and let $B\leq Aut(\GG)$. We say \emph{$H$ is closed under $B$} if for all $\psi\in B$ we have $\varphi(\psi)(H)= H$.

\begin{lemma}\label{lem:normalclosureClosedUnderGraphAut}
    Let $H\leq A =A(\GG)$ and let $B\leq Aut(\GG)$.
    If $H$ is closed under $B$, then the normal closure $\langle\langle H\rangle\rangle$ of $H$ is also closed under $B$.
\end{lemma}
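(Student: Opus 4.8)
The plan is to unwind the definitions of "closed under $B$" and of the normal closure, and then to verify that the image of an arbitrary generator of $\langle\langle H \rangle\rangle$ under $\varphi(\psi)$ remains in $\langle\langle H\rangle\rangle$ for each $\psi \in B$. Recall that the normal closure $\langle\langle H \rangle\rangle$ of $H$ in $A = A(\GG)$ is the subgroup generated by the set $\{ g h g^{-1} \mid g \in A,\ h \in H \}$; equivalently, it is the smallest normal subgroup of $A$ containing $H$. Fix $\psi \in B$ and write $\Phi := \varphi(\psi) \in \Aut(A)$, which is a well-defined automorphism of $A$ by Lemma~\ref{lem:varphiGraphAutToGroupAut}.

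First I would take a typical generator $g h g^{-1}$ of $\langle\langle H\rangle\rangle$, with $g \in A$ and $h \in H$, and compute
\[
\Phi(g h g^{-1}) = \Phi(g)\,\Phi(h)\,\Phi(g)^{-1}.
\]
Since $H$ is closed under $B$, we have $\Phi(h) \in \Phi(H) = H$, and since $\Phi(g) \in A$, the element $\Phi(g)\Phi(h)\Phi(g)^{-1}$ is again of the form $g' h' g'^{-1}$ with $g' \in A$ and $h' \in H$, hence lies in $\langle\langle H\rangle\rangle$. Because $\Phi$ is a group homomorphism, it maps an arbitrary element of $\langle\langle H\rangle\rangle$ (a finite product of such conjugates and their inverses) to a finite product of elements of $\langle\langle H\rangle\rangle$, which again lies in $\langle\langle H\rangle\rangle$. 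This shows $\Phi(\langle\langle H\rangle\rangle) \subseteq \langle\langle H\rangle\rangle$.

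To upgrade this inclusion to an equality, I would apply the same argument to $\psi^{-1} \in B$ (note $B$ is a subgroup of $\Aut(\GG)$, so it is closed under inverses, and $H$ being closed under $B$ means it is closed under $\psi^{-1}$ as well, since $\varphi(\psi^{-1}) = \varphi(\psi)^{-1} = \Phi^{-1}$). This gives $\Phi^{-1}(\langle\langle H\rangle\rangle) \subseteq \langle\langle H\rangle\rangle$, hence $\langle\langle H\rangle\rangle \subseteq \Phi(\langle\langle H\rangle\rangle)$, and therefore $\Phi(\langle\langle H\rangle\rangle) = \langle\langle H\rangle\rangle$. Since $\psi \in B$ was arbitrary, $\langle\langle H\rangle\rangle$ is closed under $B$.

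There is essentially no hard part here: the statement is a direct consequence of the fact that an automorphism carries conjugates to conjugates and of the definition of the normal closure as generated by conjugates of $H$. The only thing to be a little careful about is making the argument at the level of generating sets rather than for an arbitrary element (to avoid any circularity), and to remember to run the argument for $\psi^{-1}$ as well so that one gets the equality $\varphi(\psi)(\langle\langle H\rangle\rangle) = \langle\langle H\rangle\rangle$ demanded by the definition of "closed under $B$", not merely an inclusion.
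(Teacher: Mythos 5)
Your proof is correct and follows essentially the same route as the paper's: check on the generating set of conjugates that $\varphi(\psi)$ sends $ghg^{-1}$ to $\varphi(\psi)(g)\varphi(\psi)(h)\varphi(\psi)(g)^{-1}$, use $\varphi(\psi)(H)=H$, and conclude. You are in fact slightly more careful than the paper, which only establishes the inclusion $\varphi(\psi)(\langle\langle H\rangle\rangle)\subseteq\langle\langle H\rangle\rangle$ before asserting equality; your extra step of running the argument for $\psi^{-1}$ to obtain the reverse inclusion is the right way to close that small gap.
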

\begin{proof}
    Let $H\leq A =A(\GG)$ and let $B\leq Aut(\GG)$.
    Suppose that $H$ is closed under $B$ and let $\langle\langle H\rangle\rangle = \langle a^{-1}ha \mid h\in H, a\in A\rangle$ be the normal closure of $H$ in $A$.
    Let $\psi\in B$. Then, $\varphi^B(\psi)(a^{-1}ha)=\psi(a^{-1})\psi(h)\psi(a)$.
    Notice that both $\psi(a^{-1})=\psi(a)^{-1}$ and $\psi(a)$ are inverses and group elements of $A$. Moreover, as $H$ is closed under $B$, we have that $\psi(h)\in H$. Therefore, for any element $a^{-1}ha\in \langle\langle H\rangle\rangle$, we have that $\psi(a^{-1})\psi(h)\psi(a)\in \langle\langle H\rangle\rangle$.
    Hence, for all $\psi\in B$, we have that $\varphi(\psi)(\langle\langle H\rangle\rangle)=\langle\langle H\rangle\rangle$.
\end{proof}

Being closed under $B$ is an essential condition. Hence, we explicitly show how to close any subgroup under the action of $B$. Given a subgroup $H\leq A$, the \emph{$B$-symmetrization of $H$} is the subgroup $$^B(H)= \langle\varphi(\psi)(h)\mid h\in H, \psi\in B \rangle.$$

\begin{lemma}\label{lem:B-symmHClosed}
    Given a subgroup $H\leq A=A(\GG)$, the subgroup $^B(H)$ is closed under graph automorphisms $B\leq \Aut(\GG)$.
\end{lemma}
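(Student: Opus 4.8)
The plan is to verify directly that applying $\varphi(\chi)$ to any of the defining generators of ${}^B(H)$ produces another defining generator of ${}^B(H)$, for every $\chi \in B$, and then to promote this to an equality of subgroups via a bijectivity argument. First I would recall from Lemma~\ref{lem:varphiGraphAutToGroupAut} that $\varphi \colon \Aut(\GG) \to \Aut(A)$ is a group homomorphism; in particular each $\varphi(\chi)$ is an automorphism of $A$, and $\varphi(\chi)\varphi(\psi) = \varphi(\chi\psi)$ for all $\chi,\psi \in B$.

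Next, take an arbitrary generator of ${}^B(H)$, that is, an element of the form $\varphi(\psi)(h)$ with $h \in H$ and $\psi \in B$. Applying $\varphi(\chi)$ gives $\varphi(\chi)\big(\varphi(\psi)(h)\big) = \varphi(\chi\psi)(h)$. Since $B$ is a subgroup of $\Aut(\GG)$, we have $\chi\psi \in B$, so $\varphi(\chi\psi)(h)$ is once again one of the defining generators of ${}^B(H)$. Because $\varphi(\chi)$ is a group automorphism and ${}^B(H)$ is generated by these elements, it follows that $\varphi(\chi)\big({}^B(H)\big) \subseteq {}^B(H)$.

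Finally, I would apply the same inclusion with $\chi^{-1} \in B$ in place of $\chi$, obtaining $\varphi(\chi^{-1})\big({}^B(H)\big) \subseteq {}^B(H)$; since $\varphi(\chi^{-1}) = \varphi(\chi)^{-1}$, this says ${}^B(H) \subseteq \varphi(\chi)\big({}^B(H)\big)$, and combining the two inclusions yields $\varphi(\chi)\big({}^B(H)\big) = {}^B(H)$, which is precisely the assertion that ${}^B(H)$ is closed under $B$. There is no genuine obstacle in this argument; the only point that needs care is that "closed under $B$" requires an equality of sets rather than a mere inclusion, which is exactly why the step with $\chi^{-1}$ (equivalently, the fact that $B$ is closed under taking inverses) is invoked in addition to the one-sided containment.
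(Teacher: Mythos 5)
Your proof is correct and is exactly the argument the paper is implicitly invoking when it dismisses the lemma as "a direct consequence of the definition": generators map to generators because $\varphi(\chi)\varphi(\psi)=\varphi(\chi\psi)$ with $\chi\psi\in B$, and equality (rather than mere containment) follows by also applying the inclusion to $\chi^{-1}$. You have simply written out the details the paper omits.
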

\begin{proof}
    This is a direct consequence of the definition of the $B$-symmetrization of $H$.
\end{proof}

Let $B\leq \Aut(\GG)$ and, for a set $I_\beta$, consider a family of subgroups $(B_i)_{i\in I_\beta}$ and construct the coset incidence system $\beta=(B,(B_i)_{i\in I_\beta})$. We now show that taking the quotient by a normal subgroup closed under $B$ does not break $(\beta,\varphi^B)$-admissibility.

\begin{prop}\label{prop:QuotientAdmissible}
    Consider a properly labeled graph $\GG$ and $A=A(\GG)$ its Shephard group.
    Let $H\leq A$ be a subgroup closed under graph automorphisms $B$ and let $\tilde{A}=A/\langle\langle H\rangle\rangle$, $\tilde{A_i} = A_i/\langle\langle H\rangle\rangle$.
    If $\alpha$ is $(\beta,\varphi^B)$-admissible, then $\tilde{\alpha}=(\tilde{A},(\tilde{A}_i)_{i\in I_\alpha})$ is $(\beta,\varphi^B)$-admissible.
\end{prop}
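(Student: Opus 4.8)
The plan is to verify the two conditions in Definition~\ref{def:GammaPhiAdmiss} for $\tilde{\alpha}$, transferring each from the corresponding condition for $\alpha$ through the quotient map $\pi \colon A \to \tilde{A} = A/\langle\langle H\rangle\rangle$. First I would observe that since $H$ is closed under $B$, so is $\langle\langle H\rangle\rangle$ by Lemma~\ref{lem:normalclosureClosedUnderGraphAut}; hence for every $\psi \in B$ the automorphism $\varphi^B(\psi)$ of $A$ descends to a well-defined automorphism $\widetilde{\varphi^B(\psi)}$ of $\tilde{A}$, because $\varphi^B(\psi)$ preserves $\langle\langle H\rangle\rangle$. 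This gives the induced homomorphism $B \to \Aut(\tilde{A})$, which I will still call $\varphi^B$ by abuse of notation, and which makes $\pi$ equivariant: $\pi(g^\psi) = \pi(g)^\psi$ for all $g \in A$, $\psi \in B$.

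Next I would check condition (a): $\varphi^B$ permutes the maximal parabolics $(\tilde{A}_i)_{i \in I_\alpha}$. Since $\varphi^B(\psi)$ permutes the $(A_i)_{i \in I_\alpha}$, with $\varphi^B(\psi)(A_i) = A_{\psi_{I_\alpha}(i)}$, applying $\pi$ and using that $\pi(A_i) = \tilde{A}_i$ together with equivariance gives $\widetilde{\varphi^B(\psi)}(\tilde{A}_i) = \tilde{A}_{\psi_{I_\alpha}(i)}$. In particular the action of $B$ on the type set $I_\alpha$ induced by $\tilde{\alpha}$ is exactly the same permutation action as the one induced by $\alpha$ — nothing about the types changes, only the groups sitting over them. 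Condition (b) is then immediate: the set $K$ of orbits of $B$ on $I_\alpha$ is literally the same combinatorial object for $\tilde{\alpha}$ as for $\alpha$, so there is still exactly one orbit $L$ with $|L| > 1$.

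Finally, for condition (c) — the (IPO) property $O_M \cap O_N = O_{M \cap N}$ — the key point is again that the orbits $O_J$ are defined purely in terms of the action of the parabolic subgroups $B_{I_\beta \setminus J}$ of $B$ on the type set (via a fixed representative $F_0 \in L$), and this action is unchanged when passing from $\alpha$ to $\tilde{\alpha}$. Since $\beta$, $B$, the index set $I_\beta$, the subgroups $B_i$, the orbit representative $F_0$, and the action of $B$ on $I_\alpha$ are all identical in the two situations, the sets $O_J$ coincide for $\tilde{\alpha}$ and $\alpha$, so (IPO) for $\tilde{\alpha}$ is literally (IPO) for $\alpha$, which holds by hypothesis. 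Assembling (a), (b), (c) yields that $\tilde{\alpha}$ is $(\beta, \varphi^B)$-admissible. The only genuinely non-formal step is the descent of the automorphisms to the quotient, which is exactly where the closedness of $\langle\langle H\rangle\rangle$ under $B$ is used; everything else is bookkeeping showing that the combinatorial data (types, orbits, the action on $I_\alpha$) is untouched by the quotient.
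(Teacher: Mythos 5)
Your proposal is correct and follows essentially the same route as the paper: both arguments use Lemma~\ref{lem:normalclosureClosedUnderGraphAut} to descend the automorphisms $\varphi^B(\psi)$ to $\tilde{A}$ and then observe that the induced permutation of the maximal parabolics $(\tilde{A}_i)_{i\in I_\alpha}$, and hence the action on the type set $I_\alpha$, is unchanged, so that conditions (b) and (c) of Definition~\ref{def:GammaPhiAdmiss} carry over verbatim. Your write-up is somewhat more explicit than the paper's in spelling out why (b) and (c) are untouched, but the underlying argument is identical.
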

\begin{proof}
    Suppose that $\alpha$ is $(\beta,\varphi^B)$-admissible. Hence, 
    as $\varphi^B$ permutes the maximal parabolic subgroups $(A_i)_{i\in I_\alpha}$, we also have that $\varphi^B$ permutes the maximal parabolic subgroups $(\tilde{A}_i)_{i\in I_\alpha}$. Indeed, 
    for $\psi\in B$, we have that $\varphi^B(\psi)(\tilde{A_i})= \langle \psi(a_j \langle\langle H\rangle\rangle) \mid j\in I_\alpha\setminus\{i\}\rangle$. As $H$ is a subgroup closed under $B$, by Lemma~\ref{lem:normalclosureClosedUnderGraphAut}, we obtain that $\psi(a_j \langle\langle H\rangle\rangle) = \psi(a_j)\langle\langle H\rangle\rangle$ and thus that 
    $\varphi^B(\psi)(\tilde{A_i}) = \langle a_j\langle\langle H\rangle\rangle \mid j\in I_\alpha\setminus\psi_{I_\alpha}(i)\rangle$. In conclusion, the action on the standard maximal parabolic subgroups, and thus on the type set $I_\alpha$, is unchanged. This shows that the action itself is unchanged, and thus the admissibility of $\alpha$ is inherited by $\tilde{\alpha}$.

\end{proof}

We just showed that taking quotients by a normal subgroup closed under $B$ does not break $(\beta,\varphi^B)$-admissibility. Nonetheless, it does in general break other properties such as flag-transitivity, residual connectedness or even firmness.

\begin{example}
    Consider the following properly labeled graph $\GG$.

    $$\xymatrix@-1pc{
     *{\bullet}\ar@{-}[rrr]^3 &&& *{\bullet} \\
     \\
     \\
     *{\bullet}\ar@{-}[uuu]^3\ar@{-}[rrr]_3  &&& *{\bullet}\ar@{-}[uuu]_3} $$

    The group $A=A(\GG)$ is the Coxeter group $\tilde{A_3}$ (the Coxeter group with bracket notation $[3^{4}]$).
    Considering $I_\alpha=\{1,2,3,4\}$, the Coxeter presentation of $A$ is 
    \begin{align*}
        A= \langle& a_1,a_2,a_3,a_4\mid  a_i^2=1 \textnormal{ for } i\in I_\alpha, a_1a_4 = a_4a_1, a_2a_3=a_3a_2, \\ &a_1a_2a_1=a_2a_1a_2, a_1a_3a_1=a_3a_1a_3, a_2a_4a_2=a_4a_2a_4, a_3a_4a_3=a_4a_3a_4\rangle
    \end{align*}
    
    The full automorphism group of $\GG$ is $D_4$, the dihedral group of order $8$. Considering $B\cong D_4$, we have that $B\cong \langle \psi_x,\psi_y \rangle$ can be expressed as a permutation group where $\psi_x=(a_1,a_2)(a_3,a_4)$ and $\psi_y=(a_2,a_3)$.
    Set $B_x=\langle \psi_y\rangle$ and $B_y = \langle \psi_x\rangle$.
    Given $\alpha = (A,(A_i)_{i\in I_\alpha})$ and $\beta=(B,(B_i)_{i\in \{x,y\}})$, it is straightforward  to show that $\alpha$ is $(\beta,\varphi)$-admissible. Note that in this case, $B$ acts transitively on $I_\alpha$. Additionally, as $(B,\{\psi_x,\psi_y\})$ is also a Coxeter system, by Theorem \ref{thm:Artin_Coxeter_CG}, we have that both $\alpha$ and $\beta$ are regular hypertopes. Therefore, by Corollary~\ref{coro:TwistingHypertope} we have that the Twisting coset incidence system $\TT(\alpha,\beta)$ is a regular hypertope.

    Consider now the subgroup $H=\langle (a_3a_4a_1a_2)^{2s}\rangle$, for some $s\geq 1$. Let $u=(a_3a_4a_1a_2)^{2s}$.
    Clearly, this subgroup is not closed under $B$, as $\varphi(\psi_x)(u)=(a_4a_3a_2a_1)^{2s}\notin H$.
    Even though $H$ is not closed under $B$, we have that its normal closure $\langle\langle H\rangle\rangle$ is. Indeed, for any element $g^{-1}ug\in \langle\langle H\rangle\rangle$, with $g\in A$, we have $$\varphi(\psi_x)(g^{-1}ug)= \psi_x(g)^{-1}(a_4a_3a_2a_1)^{2s}\psi_x(g)=\psi_x(g)^{-1} a_4a_3(u)^{-1}a_3a_4\psi_x(g)\in \langle\langle H\rangle\rangle,$$ 
    and 
    $$\varphi(\psi_y)(g^{-1}ug)= \psi_y(g)^{-1}(a_2a_4a_1a_3)^{2s}\psi_y(g)= \psi_y(g)^{-1}a_3a_2(u)a_2a_3\psi_y(g)\in \langle\langle H\rangle\rangle.$$
    
    We can thus consider $\tilde{A}=A/\langle\langle H\rangle\rangle$, $\tilde{A_i}=A_i/\langle\langle H\rangle\rangle$ and $\tilde{\alpha}=(\tilde{A},(\tilde{A}_i)_{i\in I_\alpha})$. By Proposition~\ref{prop:QuotientAdmissible}, we have that $\tilde{\alpha}$ is $(\beta,\varphi)$-admissible. 
    Note that when $s=1$, $\tilde{\alpha}$ is not a regular hypertope, as it fails to be residually connected, for example. Therefore, when $s=1$, the twisting incidence system $\TT(\tilde\alpha,\beta)$ is not a regular hypertope. Hence, one must be aware that properties of the incidence system $\alpha$ do not automatically transfer to $\tilde{\alpha}$.
    Nonetheless, by \cite[Theorem 5.1]{ens2018rank}, we know that $\tilde{\alpha}$ is a regular hypertope whenever $s \geq 2$. Therefore, for $s\geq 2$, the twisting coset incidence system $\TT(\tilde\alpha,\beta)$ is a regular hypertope.
\end{example}

\subsection{Twisting and Amalgamation of Artin-Tits groups}\label{sec:TwistArtin}

We now focus on Artin-Tits groups and explore more applications of twistings and amalgamations for these groups.
To start, we show how to obtain the Shephard group $B_n(2,\infty)$, associated with Mikado braids, by twisting an Artin-Tits groups. This can then be used to show some intersection properties of parabolic subgroups of the group $B_n(2,\infty)$.
Consider the Artin-Tits group $A$ of type $D_n$, defined by the following graph $\GG$.
$$\xymatrix@-1pc{
&& *{\bullet} \ar@{-}[rrd]^3_(0.01){a_1}\ar@(ul,ur)@{-}[]^\infty\\
\ar@{.}[rrrr] & *{}\ar@[->][u]^(0.2){\tau}\ar@[->][d]& && *{\bullet}\ar@(ul,ur)@{-}[]^\infty \ar@{-}[rr]^3_(0.01){a_2}&& *{\bullet}\ar@(ul,ur)@{-}[]^\infty  \ar@{-}[rr]^3_(0.01){a_3} && *{\bullet}\ar@(ul,ur)@{-}[]^\infty\ar@{--}[rr]_(0.01){a_4} && *{\bullet}\ar@(ul,ur)@{-}[]^\infty \ar@{-}[rr]^3_(0.01){a_{n-2}}_(0.99){a_{n-1}} && *{\bullet}\ar@(ul,ur)@{-}[]^\infty \\
&& *{\bullet} \ar@{-}[rru]_3^(0.01){a_n} \ar@(dl,dr)@{-}[]_\infty
}$$
Setting $I_\alpha=\{1,\ldots,n\}$, we have that  $A=\langle S\mid R\rangle$, where $S=\{a_i\mid i\in I_\alpha\}$ and 
$$R=\{a_1a_na_1^{-1}a_n^{-1}, a_na_3a_n(a_3a_na_3)^{-1}\}\cup \{a_ia_{i+1}a_i(a_{i+1}a_ia_{i+1})^{-1}\mid i,j\in I_\alpha\setminus\{n\}\}$$ $$\cup \{a_ia_j(a_ja_i)^{-1} \mid i,j\in I_\alpha \textnormal{ s.t. } |i-j|>1 \wedge \{i,j\}\neq\{3,n\}\}.$$
Start by building the coset incidence system $\alpha=(A,(A_i)_{i\in I_\alpha})$, where $A_i = \langle a_j\mid j\in I_\alpha\setminus\{i\}\rangle$.

Let $B=\langle \tau \rangle\leq \Aut(\GG)$, where $\tau$ is the graph automorphism of order two shown on the graph.
Consider $\beta=(B,(B_i)_{i\in I_\beta})$, where $|I_\beta|=1$. Setting the type set as $I_\beta=\{0\}$, we have that $B_0=\langle 1_B\rangle$, and $\beta= (\langle\tau\rangle,(\langle 1_B\rangle))$. Moreover, $\beta$ is a rank 1 coset geometry, hence it is trivially residually connected and flag-transitive. 

It is easy to check that $\alpha$ is $(\beta,\varphi)$-admissible. In particular, $\varphi(\tau)(A_1)=A_n$, $\varphi(\tau)(A_n)=A_1$ and $\varphi(A_i)=A_i$, for $2\leq i\leq n-1$. We have that the orbit on the type set of $I_\alpha$ is $K=\{\{1,n\},\{2\},\{3\},\ldots,\{n-1\}\}$, with $L=\{1,n\}$, and, defining $F_0=1$, it respects the orbit intersection property.
Hence, for $I=K\cup\{0\}$, we can build the twisting geometry $\TT(\alpha,\beta)=(G,(G_i)_{i\in I})$, where $G=A\rtimes_\varphi \langle \tau\rangle$, with subgroups $G_i= A_i\rtimes \langle \tau \rangle $, for $i\in K$, and $G_0 = A_n$ (notice that the orbit $O^0 = \{F_0\}$).
In $G$, we have that $\tau^{-1}a\tau=\varphi(\tau)(a)$ for an arbitrary element $a\in A$, which in turn is equivalent to $\tau a\tau$, as $\tau^2=1_G$.
Hence, we obtain the following result.
\begin{prop}\label{prop:Artin_to_MikadoBraid}
    Let $A$ be the Artin-Tits group of type $D_n$, $B$ be a group of order two, seen as a Coxeter group of rank one acting on $A$, and let $G=A\rtimes_\varphi \langle \tau\rangle$. Then $G$ is isomorphic to the Shephard group $B_n(2,\infty)$.
\end{prop}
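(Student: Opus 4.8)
The plan is to exhibit an explicit isomorphism by comparing presentations. On the one hand, by the standard presentation of an HNN-type semi-direct product, the group $G = A \rtimes_\varphi \langle \tau \rangle$ with $\tau^2 = 1_G$ has the presentation obtained from $\langle S \mid R \rangle$ for $A$ (of type $D_n$) by adjoining a new generator $\tau$, the relation $\tau^2 = 1$, and the relations $\tau a_i \tau = \varphi(\tau)(a_i)$ for $i \in I_\alpha$. Since $\varphi(\tau)$ swaps $a_1 \leftrightarrow a_n$ and fixes $a_2, \dots, a_{n-1}$, these conjugation relations read $\tau a_1 \tau = a_n$, $\tau a_n \tau = a_1$, and $\tau a_i \tau = a_i$ for $2 \le i \le n-1$. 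On the other hand, $B_n(2,\infty)$ has generators $r_1, \dots, r_n$ with $r_1^2 = 1$, all $r_i$ ($i \ge 2$) of infinite order, the braid relation $r_1 r_2 r_1 r_2 = r_2 r_1 r_2 r_1$ (from the edge labelled $4$), braid relations $r_j r_{j+1} r_j = r_{j+1} r_j r_{j+1}$ for $2 \le j \le n-1$, and commutations for non-adjacent generators.

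First I would set up the candidate map $\Phi \colon G \to B_n(2,\infty)$ on generators: send $\tau \mapsto r_1$, send $a_1 \mapsto r_1 r_2 r_1$ (a conjugate of $r_2$, hence of infinite order and an involution-conjugate only in the obvious sense), $a_n \mapsto r_2$, and $a_i \mapsto r_{i+1}$ for $2 \le i \le n-1$. The intuition is that the twisting ``folds'' the two tail nodes $a_1, a_n$ of the $D_n$ diagram onto a single node $r_2$, with the folding symmetry $\tau$ becoming the order-two generator $r_1$; the relation $a_1 = \tau a_n \tau$ in $G$ is then forced to become $r_1 r_2 r_1$, and the two length-$3$ braid relations involving $a_1, a_n$ and $a_3$ (namely $a_1 a_n = a_n a_1$ and $a_n a_3 a_n = a_3 a_n a_3$) should, after substitution and using $r_1^2=1$, collapse to the single relation $r_1 r_2 r_1 r_2 = r_2 r_1 r_2 r_1$. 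Conversely I would define $\Psi \colon B_n(2,\infty) \to G$ by $r_1 \mapsto \tau$, $r_2 \mapsto a_n$, $r_{j} \mapsto a_{j-1}$ for $3 \le j \le n$. The bulk of the work is the routine but careful verification that each defining relator of the source group maps to the identity in the target group, in both directions; this is a finite check, relation by relation, using only the defining relations of $A$ (type $D_n$), the relation $\tau^2 = 1$, and the conjugation relations $\tau a_1 \tau = a_n$ etc.

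The main obstacle — really the only nontrivial point — is checking that the $D_n$ braid relations involving the forked nodes $a_1$ and $a_n$ correspond exactly to the single ``$4$'' relation of $B_n(2,\infty)$, and that no extra relations are created or lost in the process. Concretely: in $G$ we have $a_1 a_n = a_n a_1$ (the two fork tines commute in $D_n$) and $a_3 a_n a_3 = a_n a_3 a_n$ while $a_1$ commutes with $a_3$ as well; I need to confirm that, after identifying $a_n = r_2$, $a_1 = r_1 r_2 r_1$, $a_3 = r_3$, these become consequences of $r_1^2=1$, $r_2 r_3 r_2 = r_3 r_2 r_3$, and $r_1 r_2 r_1 r_2 = r_2 r_1 r_2 r_1$ — and vice versa, that the ``$4$'' relation is a consequence of the $D_n$ relations together with $\tau a_1 \tau = a_n$. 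For instance $\tau a_1 \tau = a_n$ rewritten is $r_1 (r_1 r_2 r_1) r_1 = r_2$, which is automatic; and $a_1 a_n = a_n a_1$ becomes $(r_1 r_2 r_1) r_2 = r_2 (r_1 r_2 r_1)$, i.e. exactly $r_1 r_2 r_1 r_2 = r_2 r_1 r_2 r_1$ — so this is the precise bridge relation. Once this correspondence is pinned down, I would verify $\Phi$ and $\Psi$ are mutually inverse on generators (e.g. $\Psi\Phi(a_1) = \Psi(r_1 r_2 r_1) = \tau a_n \tau = a_1$), which is immediate, and conclude $G \cong B_n(2,\infty)$.
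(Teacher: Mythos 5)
Your proposal is the paper's proof in different clothing: the paper eliminates $a_n$ via the Tietze move $a_n=\tau a_1\tau$ and rewrites the presentation, which is exactly the content of your pair of mutually inverse maps $\Phi,\Psi$, and you correctly isolate the one non-routine point, namely that the commutation $a_1a_n=a_na_1$ of the two fork tines of $D_n$ turns into the order-$4$ relation $(r_1r_2)_4=(r_2r_1)_4$. (The paper keeps $a_1$ and eliminates $a_n$ while you do the reverse; this is immaterial by the $\tau$-symmetry.)

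One slip would derail the literal verification in your ``main obstacle'' paragraph. You assert simultaneously that $a_n$ braids with $a_3$ and that $a_1$ commutes with $a_3$; since $\tau$ is a diagram automorphism exchanging $a_1\leftrightarrow a_n$, the two tines must satisfy \emph{identical} relations with each $a_i$, $2\le i\le n-1$. In the $D_n$ diagram used here both tines braid with $a_2$ and commute with $a_3,\dots,a_{n-1}$ (the displayed relation $a_na_3a_n=a_3a_na_3$ in the source is a typo for $a_na_2a_n=a_2a_na_2$, as the rest of the paper's argument confirms). Moreover, under your own assignment $a_i\mapsto r_{i+1}$ for $2\le i\le n-1$ one has $a_3\mapsto r_4$, not $r_3$ as written in that paragraph; if the braid relation really sat at $a_3$, its image $r_2r_4r_2=r_4r_2r_4$ would be false in $B_n(2,\infty)$ and $\Phi$ would not be well defined. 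With the relations placed correctly (tines braiding with $a_2\mapsto r_3$), every image relation follows from $r_1^2=1$, $(r_1r_2)_4=(r_2r_1)_4$, the path braid relations and the commutations involving $r_1$, and the argument closes exactly as you describe.
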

\begin{proof}
    Let $G=A\rtimes_\varphi B$, where $A=\langle S\mid R\rangle$ has the presentation as above and $B=\langle \tau\mid \tau=\tau^{-1}\rangle$. Then, we have that the presentation of $G$ is
    $$G = \langle S\cup \{\tau\} \mid R, \tau^2=1_G, \tau a_1\tau = \varphi(\tau)(a_1),$$ 
    $$\tau a_n\tau = \varphi(\tau)(a_n), \tau a_i \tau = \varphi(\tau)(a_i),\textnormal{for }2\leq i\leq n-1\rangle.$$
    
    Notice that we can write the generator $a_n$ as $\tau a_1\tau$ and that this implies that $(a_1\tau)_4=(\tau a_1)_4$, $(\tau a_i)_2=(a_i\tau)_2$, for $2\leq i\leq n-1$, and $(a_na_i)_l=(a_ia_n)_l$ if and only if $ \tau(a_1a_i)_l\tau=\tau(a_ia_1)_l\tau$, with $l=3$ for $i=2$ and $l=2$ for $3\leq i\leq n-1$. Using Tietze transformations (see \cite{magnus2004combinatorial} for a precise definition), we can remove the generator $a_n$ from $G$, and rewrite all relations in the presentation so that we get
    $G = \langle \bar{S}\mid \bar{R}\rangle$
    with $\bar{S} = \{ \tau,a_1,a_2,a_3,\ldots,a_{n-1}\}$ and 
    $$\bar{R}=\{(a_1\tau)_4 (\tau a_1)_4^{-1}\}\cup \{(a_ia_{i+1})_3(a_{i+1}a_i)_3^{-1}\mid i,j\in I_\alpha\setminus\{n\}\}$$ $$\cup \{a_ia_j(a_ja_i)^{-1} \mid i,j\in I_\alpha\setminus\{n\} \textnormal{ s.t. } |i-j|>1 \}\cup \{(\tau a_i)_2(a_i\tau)_2^{-1}\mid 2\leq i \leq n-1 \}.$$
This is then the presentation associated to the following graph.
$$\xymatrix@-1pc{
 *{\bullet}\ar@{-}[rr]^4_(0.01){\tau} && *{\bullet}\ar@(ul,ur)@{-}[]^\infty \ar@{-}[rr]^3_(0.01){a_1} && *{\bullet}\ar@(ul,ur)@{-}[]^\infty \ar@{-}[rr]^3_(0.01){a_2}&& *{\bullet}\ar@(ul,ur)@{-}[]^\infty  \ar@{-}[rr]^3_(0.01){a_3} && *{\bullet}\ar@(ul,ur)@{-}[]^\infty\ar@{--}[rr]_(0.01){a_4} && *{\bullet}\ar@(ul,ur)@{-}[]^\infty \ar@{-}[rr]^3_(0.01){a_{n-2}}_(0.99){a_{n-1}} && *{\bullet}\ar@(ul,ur)@{-}[]^\infty
}$$
This shows that $G$ is indeed the Shephard group $B_n(2,\infty)$.
\end{proof}

Notice that, if we consider $I'= \{0\}\cup (I_\alpha\setminus\{n\}) = \{0,1,\ldots,n-1\}$ and $G$ with the presentation of the proof above, we still have that $G_0 = A_n$, $G_1 = A_{\{1,n\}}\rtimes\langle \tau\rangle$ and $G_i=A_i\rtimes\langle \tau\rangle$ for $2\leq i\leq n-1$. Hence, the coset incidence system of $G$ with this new presentation, $(G,(G_i)_{i\in I'})$, is isomorphic to the twisting geometry $\TT(\alpha,\beta)$.
\begin{prop}~\label{prop:Mikado_intersect_flagchmabercomplex}
    Let $G$ be the Shephard group $B_n(2,\infty)$, generated by $S=\{a_0,a_1,\ldots,a_{n-1}\}$. Then, for $J,K,L\subseteq S$, we have
    \begin{enumerate}
        \item $\langle J\rangle\cap \langle K\rangle = \langle J\cap K\rangle$;
        \item $\langle J\rangle\langle K\rangle\cap \langle J\rangle\langle L\rangle = \langle J\rangle(\langle K\rangle\cap \langle L\rangle)$.
        \item The simplicial complex $\mathcal{K}(G,(G_i)_{i\in I})$ is a flag-complex, where $I=\{0,\ldots,n-1\}$ and $G_i=\langle a_j\mid j\in I\setminus\{i\}\rangle$.
    \end{enumerate}
\end{prop}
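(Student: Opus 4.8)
The plan is to recognize that Proposition~\ref{prop:Mikado_intersect_flagchmabercomplex} is a direct corollary of the preceding machinery, in complete analogy with Corollary~\ref{coro:FreeProductArtinCoxeter} and Corollary~\ref{coro:FreeProductArtinCoxeter}'s proof. The key observation is the remark made just before the statement: the coset incidence system $(G,(G_i)_{i\in I'})$ attached to $B_n(2,\infty)$ with the generating set $S = \{a_0,a_1,\ldots,a_{n-1}\}$ is \emph{isomorphic} to the twisting geometry $\TT(\alpha,\beta)$, where $\alpha = (A,(A_i)_{i\in I_\alpha})$ is the standard coset incidence system of the Artin-Tits group $A$ of type $D_n$ and $\beta = (\langle\tau\rangle,(\langle 1_B\rangle))$. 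So the entire proof reduces to transporting flag-transitivity, residual connectedness, and the description of parabolic subgroups across that isomorphism.

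First I would invoke Theorem~\ref{thm:Artin_Coxeter_CG} to record that $A$ acts flag-transitively on $\alpha$, that $\alpha$ is a residually connected geometry (in fact thick), and that $A_J = \langle a_i \mid i \in I_\alpha \setminus J\rangle$ for all $J \subseteq I_\alpha$. Since $\beta$ is a rank $1$ coset geometry, it is trivially flag-transitive and residually connected; it also trivially satisfies the intersection-of-products condition. Next, since $\alpha$ is $(\beta,\varphi)$-admissible (checked in the discussion preceding Proposition~\ref{prop:Artin_to_MikadoBraid}), Theorem~\ref{thm:Twisting_FT_Geo_RC_FRM} gives that $G = A \rtimes_\varphi \langle\tau\rangle$ acts flag-transitively on $\TT(\alpha,\beta)$ and that $\TT(\alpha,\beta)$ is a residually connected coset geometry. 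Transporting along the isomorphism $(G,(G_i)_{i\in I'}) \cong \TT(\alpha,\beta)$, the same holds for the standard coset incidence system of $B_n(2,\infty)$ on the type set $I = \{0,\ldots,n-1\}$.

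Then I would identify the standard parabolic subgroups. Using Lemma~\ref{lem:TwistG_J} together with the identifications $G_0 = A_n$, $G_1 = A_{\{1,n\}}\rtimes\langle\tau\rangle$, and $G_i = A_i \rtimes \langle\tau\rangle$ for $2 \le i \le n-1$, and the fact that $A_J = \langle a_i \mid i \in I_\alpha\setminus J\rangle$, one checks that for every $J' \subseteq I$ the parabolic $G_{J'}$ equals $\langle a_j \mid j \in I \setminus J'\rangle$. Equivalently, for every subset $K \subseteq S$ of the generating set, $\langle K\rangle$ is a standard parabolic subgroup, namely $G^{I\setminus(\text{indices of }K)}$ in the notation of Lemma~\ref{lem:RCequivs}. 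Parts (a) and (b) then follow verbatim as in Corollary~\ref{coro:FreeProductArtinCoxeter}: part (a) from residual connectedness via Lemma~\ref{lem:RCequivs}(\ref{lem:RCequivs:itm:Intersection}) (reading $\langle J\rangle \cap \langle K\rangle = G^{\overline J}\cap G^{\overline K} = G^{\overline J\cap\overline K} = \langle J\cap K\rangle$), and part (b) from flag-transitivity via Proposition~\ref{prop:FTequivs}(\ref{prop:FTequivs:itm:FTIntersectionOfProducts}). Finally, part (c) is immediate from part (b) and Proposition~\ref{prop:FComp:equiv:FT}, which says $\KK(G,(G_i)_{i\in I})$ is a flag complex iff $G$ is flag-transitive on the associated coset incidence system.

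The only genuinely non-routine step is the bookkeeping that shows $G_{J'} = \langle a_j \mid j \in I\setminus J'\rangle$ for \emph{all} $J'\subseteq I$ — i.e., that in the twisted group the parabolic subgroup $A_{\{1,n\}}\rtimes\langle\tau\rangle$ generated by $\{a_2,\ldots,a_{n-1},\tau\}$ really is the standard parabolic on the complementary generators, and that intersecting these twisted parabolics does not produce anything larger than the obvious subgroup. This is exactly the content guaranteed by residual connectedness of $\TT(\alpha,\beta)$ plus Lemma~\ref{lem:RCequivs}(\ref{lem:RCequivs:itm:BottomUp}), so once those are in hand the identification is forced; I expect this to be the main (but still short) obstacle, and everything else is a transcription of the arguments already used for the free-product case.
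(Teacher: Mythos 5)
Your proposal is correct and follows essentially the same route as the paper: the paper's proof likewise identifies the coset incidence system of $B_n(2,\infty)$ with the twisting geometry $\TT(\alpha,\beta)$, invokes Theorem~\ref{thm:Twisting_FT_Geo_RC_FRM} for flag-transitivity and residual connectedness, and then derives (a), (b), (c) by the same argument as in Corollary~\ref{coro:FreeProductArtinCoxeter}. You simply spell out the bookkeeping (identifying each $\langle K\rangle$ with a standard parabolic via Lemma~\ref{lem:TwistG_J} and Lemma~\ref{lem:RCequivs}) that the paper leaves implicit.
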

\begin{proof}
    The group $G$, defined as in the proof of Proposition~\ref{prop:Artin_to_MikadoBraid}, yields a coset incidence system isomorphic to the twisting geometry $\TT(\alpha,\beta)$. Theorem \ref{thm:Twisting_FT_Geo_RC_FRM} then guarantees that this coset incidence system is a residually connected geometry on which $G$ acts flag-transitively. The three statements of the theorem are then consequences of various results detailed in Section \ref{sec:prelims}, following similar proofs as those in Corollary~\ref{coro:FreeProductArtinCoxeter}.
\end{proof}

Notice that, putting together the fact that the twisting geometry $T(\alpha,\beta)$ is residually connected and item (a) of the Proposition \ref{prop:Mikado_intersect_flagchmabercomplex}, it follows that, for any $J\subseteq \{0,\ldots,n-1\}$, we have $G_J=\langle a_i\mid i\in I\setminus J\rangle$ for the Shephard group $B_n(2,\infty)$. Hence, the parabolics subgroups of these groups as well behaved as those of Coxeter and Artin-Tits groups. This will be also the case for each of the groups we will investigate in the rest of this section.

We can generalize the process we just described. Indeed, as long as as there is a symmetry of the defining graph that fixes all generators except for a finite set, we can proceed in a similar fashion.
Let $A$ be the Artin-Tits group defined by the graph below, where $p_{i,j}\geq 2$ ($p_{i,j}\in\mathbb{N}$) or $p_{i,j}=\infty$, and the subgraph $\GG'$ with $V(\GG')=\{a_2,\ldots,a_{n-1}\}$ defines an Artin-Tits group $A' = \langle a_2,\ldots,a_{n-1}\mid R'\rangle$. Moreover, we require that $p_{1,i}=p_{n,i}$, so that we have a two-fold symmetry $\tau$ as before. 
   \begin{center}
         \begin{tikzpicture}[scale = 0.5]
         \filldraw[black] (2,-4) circle (2pt)  node[anchor=east]{$a_n$};
         \filldraw[black] (2,4) circle (2pt)  node[anchor=east]{$a_1$};
         \filldraw[black] (7,0.8) circle (0pt)  node[anchor=north]{$\vdots$};
         \filldraw[black] (7,2) circle (2pt)  node[anchor=south]{$a_3$};
         \filldraw[black] (7,-2) circle (2pt)  node[anchor=south]{$a_{n-2}$};
         \filldraw[black] (7,4) circle (2pt)  node[anchor=south]{$a_2$};
         \filldraw[black] (7,-4) circle (2pt)  node[anchor=south]{$a_{n-1}$};
         \draw [-] (2,-4) to [out=-135,in=-45,style={min distance=15mm}] node[below] {\tiny$\infty$} (2,-4);
         \draw [-] (2,4) to [out=135,in=45,style={min distance=15mm}] node[above] {\tiny$\infty$} (2,4);
         \draw [-] (7,2) to [out=-45,in=45,style={min distance=15mm}] node[right] {\tiny$\infty$} (7,2);
         \draw [-] (7,-2) to [out=-65,in=25,style={min distance=15mm}] node[right] {\tiny$\infty$} (7,-2);
         \draw [-] (7,4) to [out=-65,in=25,style={min distance=15mm}] node[right] {\tiny$\infty$} (7,4);
         \draw [-] (7,-4) to [out=-65,in=25,style={min distance=15mm}] node[right] {\tiny$\infty$} (7,-4);
         \draw (2,4) -- (2,-4)node [midway,left] (TextNode) {$p_{1,n}$};
         \draw (2,4) -- (7,2)node [midway] (TextNode) {$p_{1,3}$};
         \draw (2,-4) -- (7,2)node [midway, xshift=-0.2cm, yshift=-0.5 cm] (TextNode) {$p_{n,3}$};
         \draw (2,4) -- (7,4)node [midway] (TextNode) {$p_{1,2}$};
         \draw (2,-4) -- (7,4)node [midway, xshift=-0.5cm, yshift=-0.3 cm] (TextNode) {$p_{n,2}$};
         \draw (2,4) -- (7,-2)node[midway,xshift=0cm, yshift=0.4 cm] (TextNode) {$p_{1,n-2}$};
         \draw (2,-4) -- (7,-2)node[midway] (TextNode) {$p_{n,n-2}$};
         \draw (2,4) -- (7,-4)node[midway, xshift=-0.5cm, yshift=0.5cm] (TextNode) {$p_{1,n-1}$};
         \draw (2,-4) -- (7,-4)node[midway] (TextNode) {$p_{n,n-1}$};
         \draw (7.3,0) ellipse (2.2cm and 5cm);
    \end{tikzpicture}
    \end{center}

 We have the Artin-Tits group $A=\langle S\mid R\rangle$, where $S=\{a_i\mid i\in I_\alpha\}$, for $I_\alpha=\{1,\ldots,n\}$, and
$$R=R'\cup \{(a_1a_n)_{p_{1,n}}(a_na_1)_{p_{1,n}}^{-1}\mid \textnormal{if }p_{1,n}\neq\infty\}\cup$$
$$\cup\{(a_ia_j)_{p_{i,j}}=(a_ja_i)_{p_{i,j}}\mid i\in\{1,n\}\wedge 2\leq j\leq n-1\wedge p_{i,j}\neq\infty\}.$$

Let $B=\langle \tau\mid \tau^2=1_B\rangle$ and $\varphi: \Aut(\GG)\to \Aut(A)$ be the usual injective homomorphism. Notice that $\varphi(\tau)(a_1)=a_n$, $\varphi(\tau)(a_n)=a_1$ and, for $2\leq i \leq n-1$, $\varphi(\tau)(a_i)=a_i$.
Considering $\alpha=(A,(A_i)_{i\in I_\alpha})$ and $\beta=(\langle\tau\rangle,(\langle 1_B\rangle))$, we once again have that $\alpha$ is $(\beta,\varphi)$-admissible. Notice that the orbit of the type set is as before, $K = \{\{1,n\},\{2\},\ldots,\{n-1\}\}$.
Similar to Proposition~\ref{prop:Artin_to_MikadoBraid}, we can prove the following.

\begin{thm}\label{thm:twistArtinGroups_ShephardGroups}
    Let $A$ be the Artin-Tits group defined by the graph $\GG$ above, $B=\langle\tau \mid \tau^2=1_B\rangle$ and $\varphi$ defined as above. Then $G=A\rtimes_\varphi B$ is isomorphic to the Shephard group with presentation given by the following graph:
   \begin{center}
    \begin{tikzpicture}[scale = 0.5]
    
   \filldraw[black] (-2,0) circle (2pt)  node[anchor=north]{$\tau$};
   \filldraw[black] (2,0) circle (2pt)  node[anchor=north]{$a_1$};
   \filldraw[black] (7,0.8) circle (0pt)  node[anchor=north]{$\vdots$};
    \filldraw[black] (7,2) circle (2pt)  node[anchor=south]{$a_3$};
         \filldraw[black] (7,-2) circle (2pt)  node[anchor=south]{$a_{n-2}$};
         \filldraw[black] (7,4) circle (2pt)  node[anchor=south]{$a_2$};
         \filldraw[black] (7,-4) circle (2pt)  node[anchor=south]{$a_{n-1}$};
         \draw [-] (2,0) to [out=135,in=45,style={min distance=15mm}] node[above] {\tiny$\infty$} (2,0);
         \draw [-] (7,2) to [out=-45,in=45,style={min distance=15mm}] node[right] {\tiny$\infty$} (7,2);
         \draw [-] (7,-2) to [out=-65,in=25,style={min distance=15mm}] node[right] {\tiny$\infty$} (7,-2);
         \draw [-] (7,4) to [out=-65,in=25,style={min distance=15mm}] node[right] {\tiny$\infty$} (7,4);
         \draw [-] (7,-4) to [out=-65,in=25,style={min distance=15mm}] node[right] {\tiny$\infty$} (7,-4);
    \draw (2,0) -- (-2,0)node [midway,above] (TextNode) {$2p_{1,n}$};
    \draw (2,0) -- (7,2)node [midway] (TextNode) {$p_{1,3}$};
    \draw (2,0) -- (7,4)node [midway] (TextNode) {$p_{1,2}$};
    \draw (2,0) -- (7,-2)node[midway] (TextNode) {$p_{1,n-2}$};
    \draw (2,0) -- (7,-4)node[midway] (TextNode) {$p_{1,n-1}$};
    \draw (7.3,0) ellipse (2.2cm and 5cm);

    \end{tikzpicture}
    \end{center}
\end{thm}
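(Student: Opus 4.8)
The plan is to imitate the proof of Proposition~\ref{prop:Artin_to_MikadoBraid}: write down the standard presentation of the semidirect product $G=A\rtimes_\varphi B$, eliminate the generator $a_n$ by a Tietze transformation that sets $a_n:=\tau a_1\tau$, and then check that the rewritten relators are exactly the Shephard relators one reads off the displayed graph. In particular the claim is purely a presentation manipulation, and admissibility of $\alpha$ (which is already checked just before the statement) plays no role here beyond guaranteeing that $\varphi(\tau)$ permutes the $A_i$, so that $G$ is genuinely built from the semidirect product data.

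First I would record the presentation. Since $A=\langle S\mid R\rangle$ with $S=\{a_i\mid i\in I_\alpha\}$, $I_\alpha=\{1,\dots,n\}$, and $B=\langle \tau\mid \tau^2=1_B\rangle$, the group $G=A\rtimes_\varphi B$ has presentation
$$G=\big\langle\, S\cup\{\tau\}\ \big|\ R,\ \tau^2=1_G,\ \tau a_i\tau=\varphi(\tau)(a_i)\ \text{ for } i\in I_\alpha \,\big\rangle,$$
where $\varphi(\tau)$ interchanges $a_1$ and $a_n$ and fixes $a_i$ for $2\le i\le n-1$. Now perform the Tietze transformation removing $a_n$ with substitution $a_n=\tau a_1\tau$. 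The relation $\tau a_1\tau=a_n$ becomes the defining equation of the removed generator and is deleted; the relation $\tau a_n\tau=a_1$ turns into $\tau(\tau a_1\tau)\tau=a_1$, a consequence of $\tau^2=1_G$; and for $2\le i\le n-1$ the relation $\tau a_i\tau=a_i$, together with $\tau^2=1_G$, is equivalent to $\tau a_i=a_i\tau$, i.e. $(\tau a_i)_2=(a_i\tau)_2$. This last family says precisely that $\tau$ commutes with every vertex of $\GG'$, so no edge is drawn between $\tau$ and $\GG'$, as in the displayed graph. The relators in $R$ not involving $a_n$ (namely $R'$ and the braid relations $(a_1a_j)_{p_{1,j}}=(a_ja_1)_{p_{1,j}}$) are untouched and $a_1,\dots,a_{n-1}$ keep their infinite orders, so $\GG'$, the loop on $a_1$, and the edges from $a_1$ into $\GG'$ survive unchanged.

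Next I would handle the remaining relators. For $2\le j\le n-1$ with $p_{n,j}<\infty$, the relation $(a_na_j)_{p_{n,j}}=(a_ja_n)_{p_{n,j}}$ becomes, after substituting $a_n=\tau a_1\tau$ and repeatedly using $\tau a_j=a_j\tau$ to collect the $\tau$'s at the two ends of the word, equivalent to $\tau(a_1a_j)_{p_{n,j}}\tau=\tau(a_ja_1)_{p_{n,j}}\tau$, hence to $(a_1a_j)_{p_{n,j}}=(a_ja_1)_{p_{n,j}}$; since $p_{1,j}=p_{n,j}$, this relator is already present, so it is redundant. The one genuinely new relation comes from $(a_1a_n)_m=(a_na_1)_m$ with $m:=p_{1,n}$. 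Substituting $a_n=\tau a_1\tau$ produces alternating words in $a_1$ and $\tau$: when $m$ is even one gets directly $(a_1a_n)_m=(a_1\tau)_{2m}$ and $(a_na_1)_m=(\tau a_1)_{2m}$, so the relation reads $(a_1\tau)_{2m}=(\tau a_1)_{2m}$; when $m$ is odd one gets $(a_1a_n)_m=(a_1\tau)_{2m-1}$ and $(a_na_1)_m=(\tau a_1)_{2m+1}=\tau(a_1\tau)_{2m-1}\tau$, and left-multiplying the resulting equation by $\tau$ and using $\tau^2=1_G$ again yields $(a_1\tau)_{2m}=(\tau a_1)_{2m}$. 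In both cases the relation is the braid-type relation of length $2p_{1,n}$ between $\tau$ and $a_1$, exactly the edge labelled $2p_{1,n}$ in the displayed graph. Assembling the Tietze-transformed presentation then gives precisely the Shephard presentation attached to the graph in the statement, proving the isomorphism.

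The main obstacle is this last computational step: keeping track of the lengths and parities of the alternating words so that both parities of $p_{1,n}$ collapse to the same \emph{even} label $2p_{1,n}$, and verifying carefully that the substitution $a_n=\tau a_1\tau$ introduces no hidden relation between $\tau$ and the vertices of $\GG'$ (which is where the hypotheses $p_{1,j}=p_{n,j}$ and $\varphi(\tau)(a_j)=a_j$ are used). The special case $p_{1,n}=2$, where $a_1$ and $a_n$ commute in $A$, recovers the diagram-folding of the $D_n$ graph from Proposition~\ref{prop:Artin_to_MikadoBraid}, with $2p_{1,n}=4$.
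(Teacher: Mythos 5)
Your proposal is correct and follows essentially the same route as the paper's proof: present $G=A\rtimes_\varphi B$, eliminate $a_n$ via the Tietze substitution $a_n=\tau a_1\tau$, use $p_{1,j}=p_{n,j}$ and the commutation $\tau a_j=a_j\tau$ to see the relations involving $a_n$ and $\GG'$ are redundant, and carry out the same even/odd parity computation turning $(a_1a_n)_{p_{1,n}}=(a_na_1)_{p_{1,n}}$ into $(a_1\tau)_{2p_{1,n}}=(\tau a_1)_{2p_{1,n}}$. The only cosmetic difference is how the odd case is bookkept (you compare $(a_1\tau)_{2m-1}$ with $(\tau a_1)_{2m+1}$, the paper peels off the last letter), which leads to the same conclusion.
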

\begin{proof}
We proceed similarly to the proof of Proposition~\ref{prop:Artin_to_MikadoBraid}. We have that $A=\langle S\mid R\rangle$ with the above presentation and $B=\langle \tau\mid \tau^2=1_B\rangle$. Hence. the presentation of $G$ is
$$\langle S\cup \{\tau\} \mid R, \tau^2=1_B, \tau a_1\tau = \varphi(\tau)(a_1), \tau a_n\tau = \varphi(\tau)(a_n),$$
$$ \tau a_i \tau = \varphi(\tau)(a_i),\textnormal{for }2\leq i\leq n-1\rangle.$$
Using Tietze transformations, we can rewrite these relations without using $a_n$.
Notice that, for $2\leq i\leq  n-1$ we have $\varphi(\tau)(a_i)=a_i$, and thus $\tau a_i\tau=a_i$ if and only if $\tau a_i=a_i\tau$. This also implies that all relations $R'$ of $A'$ are unchanged by $\varphi(\tau)$. Additionally, since we have that $p_{1,i}=p_{n,i}$ for $2 \leq i \leq n-1$, we will prove that we can obtain all existant relations with $a_n$ from the existant relations with $a_1$ using $\tau$ (when $p_{1,i}=p_{n,i}\neq \infty$).
Since $a_n = \varphi(\tau)(a_1)=\tau a_1\tau$, consider the relation $(a_na_i)_{p_{n,i}}=(a_ia_n)_{p_{n,i}}$ with $p_{n,i}\neq\infty$ and $2\leq i\leq  n-1$. If $p_{n,i}$ is even, then  $$(a_na_i)_{p_{n,i}}= (a_na_i)^{p_{n,i}/2} = (\tau a_1 \tau a_i)^{p_{n,i}/2}= (\tau a_1  a_i\tau)^{p_{n,i}/2}=\tau( a_1  a_i)^{p_{n,i}/2}\tau.$$ Similarly we have that $(a_ia_n)_{p_{n,i}} = \tau (a_ia_1)^{p_{n,i}/2}\tau$. Hence, $$(a_na_i)_{p_{n,i}}=(a_ia_n)_{p_{n,i}}\Leftrightarrow \tau( a_1  a_i)^{p_{n,i}/2}\tau = \tau (a_ia_1)^{p_{n,i}/2}\tau\Leftrightarrow ( a_1  a_i)_{p_{n,i}} = (a_ia_1)_{p_{n,i}}.$$
When $p_{n,i}$ is odd, we can similarly derive that $(a_na_i)_{p_{n,i}}=(a_ia_n)_{p_{n,i}} \Leftrightarrow (a_1a_i)_{p_{n,i}}=(a_ia_1)_{p_{n,i}}$.
Notice that in either cases, when $p_{1,i}=p_{n,i}=\infty$, there is no relation defined in $R$. Finally, consider the relation $(a_1a_n)_{p_{1,n}}=(a_na_1)_{p_{1,n}}$. As $\varphi(\tau)(a_1)=a_n$ and $a_1^\tau=\tau a_1 \tau = \varphi(\tau)(a_1)$, we can rewrite the relation above as $(a_1 a_1^\tau )_{p_{1,n}}= (a_1^\tau a_1)_{p_{1,n}}$. Notice that, when $p_{1,n}$ is even, we have immediately that $(a_1\tau)_{2{p_{1,n}}}=(\tau a_1)_{2{p_{1,n}}}$. Consider then the case when $p_{1,n}$ is odd.
In this case we have that $(a_1 a_1^\tau)_{p_{1,n}} = (a_1 a_1^\tau)_{{(p_{1,n}})-1} a_1 = (a_1\tau)_{2(({p_{1,n}})-1)}a_1$ and $ (a_1^\tau a_1)_{p_{1,n}} = (a_1^\tau a_1)_{{(p_{1,n})}-1} a_1^\tau = (\tau a_1)_{2({(p_{1,n})}-1)} \tau a_1 \tau $. As $(a_1 a_1^\tau )_{p_{1,n}}= (a_1^\tau a_1)_{p_{1,n}}$ and $\tau=\tau^{-1}$, we have that $(a_1\tau)_{2({(p_{1,n})}-1)}a_1 = (\tau a_1)_{2({(p_{1,n})}-1)} \tau a_1 \tau \Leftrightarrow (a_1\tau)_{2{p_{1,n}}}=(\tau a_1)_{2{p_{1,n}}}$, as we wanted.
Finally, when ${p_{1,n}}=\infty$, this relation doesn't exist.
With this and using Tietze transformations, we can easily remove the generator $a_n$ from $G$, and rewrite all relations presentation as
    $G = \langle \bar{S}\mid \bar{R}\rangle$
    with $\bar{S} = \{ \tau,a_1,a_2,a_3,\ldots,a_{n-1}\}$ and 
    $$\bar{R}=R'\cup \{(a_1\tau)_{2p_{1,n}} (\tau a_1)_{2p_{1,n}}^{-1}\mid \textnormal{ if }p_{1,n}\neq \infty\}\cup$$
    $$\cup \{(a_1a_i)_{p_{1,i}}=(a_ia_1)_{p_{1,i}}\mid 2\leq i\leq n-1\wedge p_{1,i}\neq\infty\}$$
which can be represented by the graph in the statement of the theorem. 
\end{proof} 

The previous Proposition~\ref{prop:Artin_to_MikadoBraid} is a particular case of Theorem~\ref{thm:twistArtinGroups_ShephardGroups}, where $A'$ is the Artin-Tits group of type $A_{n-2}$, $p_{1,i}=p_{n,i}=2$, for $3\leq i\leq n-1$, $p_{1,2}=p_{n,2}=3$ and $p_{1,n}=2$.

As before, the maximal parabolic subgroups of $G$ are unchanged by the redefinition of the generators of $G$, i.e. considering $I'=\{0\}\cup(I_\alpha\setminus\{n\})=\{0,1,\ldots,n-1\}$, we have $G_0=A_{n}$, $G_1 = A_{1,n}\rtimes \langle \tau\rangle$ and $G_i=A_i\rtimes\langle\tau\rangle$ for $2\leq i\leq n-1$. Hence, the coset incidence system of $G$ with the presentation given by the above theorem is isomorphic to the twisting geometry $\TT(\alpha,\beta)$.

\begin{thm}
    Let $G$ be the Shephard group with presentation given by the following graph, where $p_{i,j}\in\{2,3,\ldots\}\cup\{\infty\}$.
\begin{center}
    \begin{tikzpicture}[scale = 0.5]
    
   \filldraw[black] (-2,0) circle (2pt)  node[anchor=north]{$a_0$};
   \filldraw[black] (2,0) circle (2pt)  node[anchor=north]{$a_1$};
   \filldraw[black] (7,0.8) circle (0pt)  node[anchor=north]{$\vdots$};
    \filldraw[black] (7,2) circle (2pt)  node[anchor=south]{$a_3$};
         \filldraw[black] (7,-2) circle (2pt)  node[anchor=south]{$a_{n-2}$};
         \filldraw[black] (7,4) circle (2pt)  node[anchor=south]{$a_2$};
         \filldraw[black] (7,-4) circle (2pt)  node[anchor=south]{$a_{n-1}$};
         \draw [-] (2,0) to [out=135,in=45,style={min distance=15mm}] node[above] {\tiny$\infty$} (2,0);
         \draw [-] (7,2) to [out=-45,in=45,style={min distance=15mm}] node[right] {\tiny$\infty$} (7,2);
         \draw [-] (7,-2) to [out=-65,in=25,style={min distance=15mm}] node[right] {\tiny$\infty$} (7,-2);
         \draw [-] (7,4) to [out=-65,in=25,style={min distance=15mm}] node[right] {\tiny$\infty$} (7,4);
         \draw [-] (7,-4) to [out=-65,in=25,style={min distance=15mm}] node[right] {\tiny$\infty$} (7,-4);
    \draw (2,0) -- (-2,0)node [midway,above] (TextNode) {$2p_{1,n}$};
    \draw (2,0) -- (7,2)node [midway] (TextNode) {$p_{1,3}$};
    \draw (2,0) -- (7,4)node [midway] (TextNode) {$p_{1,2}$};
    \draw (2,0) -- (7,-2)node[midway] (TextNode) {$p_{1,n-2}$};
    \draw (2,0) -- (7,-4)node[midway] (TextNode) {$p_{1,n-1}$};
    \draw (7.3,0) ellipse (2.2cm and 5cm);

    \end{tikzpicture}
    \end{center}
    For $S=\{ a_0, a_1,\ldots,a_{n-1}\}$ and $J,K,L\subseteq S$, we have
    \begin{enumerate}
        \item $\langle J\rangle\cap \langle K\rangle = \langle J\cap K\rangle$;
        \item $\langle J\rangle\langle K\rangle\cap \langle J\rangle\langle L\rangle = \langle J\rangle(\langle K\rangle\cap \langle L\rangle)$.
        \item The simplicial complex $\mathcal{K}(G,(G_i)_{i\in I})$ is a flag-complex, where $I=\{0,\ldots,n-1\}$ and $G_i=\langle a_j\mid j\in I\setminus\{i\}\rangle$.
    \end{enumerate}
\end{thm}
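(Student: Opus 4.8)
The plan is to follow exactly the strategy used for Proposition~\ref{prop:Mikado_intersect_flagchmabercomplex}, of which this theorem is the natural generalization, and to reduce everything to the structural results of Sections~\ref{sec:prelims} and~\ref{sec:SplitExt}. First I would recall the setup preceding Theorem~\ref{thm:twistArtinGroups_ShephardGroups}: $A$ is the Artin-Tits group defined by the graph $\GG$, $\alpha = (A,(A_i)_{i\in I_\alpha})$ its standard coset incidence system, $B = \langle \tau \mid \tau^2 = 1_B\rangle$ acting on $A$ through the two-fold symmetry $\tau$, and $\beta = (\langle\tau\rangle,(\langle 1_B\rangle))$ the rank one coset geometry. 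It was already checked that $\alpha$ is $(\beta,\varphi)$-admissible, with orbit set $K = \{\{1,n\},\{2\},\ldots,\{n-1\}\}$ on $I_\alpha$ and representative $F_0 = 1$ of $L = \{1,n\}$. By Theorem~\ref{thm:Artin_Coxeter_CG}, $A$ acts flag-transitively on $\alpha$ and $\alpha$ is residually connected, while $\beta$ is trivially flag-transitive and residually connected as a rank one system. Hence Theorem~\ref{thm:Twisting_FT_Geo_RC_FRM} applies: $G = A\rtimes_\varphi B$ acts flag-transitively on $\TT(\alpha,\beta) = (G,(G_i)_{i\in I})$ (with $I = K\cup\{0\}$), which is a residually connected coset geometry.

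Next I would invoke Theorem~\ref{thm:twistArtinGroups_ShephardGroups} together with the remark immediately following it: the Shephard group $G$ presented by the graph in the statement is isomorphic to $A\rtimes_\varphi B$, and under this isomorphism, writing $a_0 = \tau$ and keeping $a_1,\ldots,a_{n-1}$, the maximal parabolic subgroups become $G_0 = A_n$ (here $O^{\{0\}} = \{F_0\}$, so $A_{L\setminus O^{\{0\}}} = A_n$ and not $A_1$), $G_1 = A_{\{1,n\}}\rtimes\langle\tau\rangle$, and $G_i = A_i\rtimes\langle\tau\rangle$ for $2\le i\le n-1$ — precisely the maximal parabolic subgroups of $\TT(\alpha,\beta)$ after reindexing by $I' = \{0,1,\ldots,n-1\}$. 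Thus the standard coset incidence system $(G,(G_i)_{i\in I})$ with $G_i = \langle a_j \mid j\in I\setminus\{i\}\rangle$ is isomorphic to $\TT(\alpha,\beta)$, so it is a residually connected geometry on which $G$ acts flag-transitively. By Lemma~\ref{lem:RCequivs}(\ref{lem:RCequivs:itm:BottomUp})--(\ref{lem:RCequivs:itm:UpBottom}) its standard parabolic subgroups satisfy $G_J = G^{I\setminus J} = \langle a_i \mid i\in I\setminus J\rangle$; in particular every subgroup of the form $\langle K\rangle$ with $K\subseteq S$ coincides with a standard parabolic subgroup $G^K$ of $\TT(\alpha,\beta)$, after indexing $K$ by the corresponding subset of $I$.

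With this identification, parts (a) and (b) are then immediate, exactly as in Corollary~\ref{coro:FreeProductArtinCoxeter}: part (a) is Lemma~\ref{lem:RCequivs}(\ref{lem:RCequivs:itm:Intersection}), giving $\langle J\rangle\cap\langle K\rangle = G^J\cap G^K = G^{J\cap K} = \langle J\cap K\rangle$; and part (b) is Proposition~\ref{prop:FTequivs}(\ref{prop:FTequivs:itm:FTIntersectionOfProducts}), giving $\langle J\rangle\langle K\rangle\cap\langle J\rangle\langle L\rangle = G^J G^K\cap G^J G^L = G^J(G^K\cap G^L) = \langle J\rangle(\langle K\rangle\cap\langle L\rangle)$. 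Part (c) follows from Proposition~\ref{prop:FComp:equiv:FT}: since $G$ acts flag-transitively on $(G,(G_i)_{i\in I})$, the simplicial complex $\KK(G,(G_i)_{i\in I})$ is a flag complex.

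The only step requiring genuine care — and the main potential obstacle — is the bookkeeping in the second paragraph: verifying that the generator-by-generator identification of $G$ with $A\rtimes_\varphi B$ carries the ``graph'' maximal parabolics $\langle a_j \mid j\in I\setminus\{i\}\rangle$ onto the twisting maximal parabolics $A_{(L\setminus O^{I_\beta\cap\{i\}})\cup(K\cap\{i\})}\rtimes B_{I_\beta\cap\{i\}}$, in particular getting the asymmetry $G_0 = A_n$ right. Everything after that identification is a direct, formal application of the flag-transitivity and residual-connectedness criteria already established.
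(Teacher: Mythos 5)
Your proposal is correct and follows essentially the same route as the paper: the paper's proof is simply a reference back to Proposition~\ref{prop:Mikado_intersect_flagchmabercomplex}, which identifies the standard coset incidence system of $G$ with the twisting geometry $\TT(\alpha,\beta)$ via Theorem~\ref{thm:twistArtinGroups_ShephardGroups}, applies Theorem~\ref{thm:Twisting_FT_Geo_RC_FRM} to get flag-transitivity and residual connectedness, and then deduces (a), (b), (c) from Lemma~\ref{lem:RCequivs}, Proposition~\ref{prop:FTequivs} and Proposition~\ref{prop:FComp:equiv:FT} exactly as in Corollary~\ref{coro:FreeProductArtinCoxeter}. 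Your write-up merely makes explicit the bookkeeping (e.g.\ $G_0 = A_n$ and the identification of $\langle K\rangle$ with the parabolic $G^K$) that the paper leaves implicit.
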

\begin{proof}
    The proof is identical to the one of Proposition~\ref{prop:Mikado_intersect_flagchmabercomplex}
\end{proof}

Finally, we will consider the more general case where the symmetry of the graph is not an involution. Let $I=\{1,\ldots,n-1\}$ and $A$ be an Artin-Tits group generated by $S=\{a_i\mid i\in I\}$ and, for $k\geq 1$, let $$*^k_{A_1}A= \overbrace{A*_{A_1}A*_{A_1}A*_{A_1}\ldots*_{A_1}A}^{k \textnormal{ times}},$$ be the free amalgamated product of $k$ copies of $A$, where each amalgamation is over the subgroup $A_1=\langle a_j\mid I\setminus\{1\}\rangle$. Note that when $k=1$, $*^1_{A_1}A = A$ and, when $k=2$, we get the usual free product with amalgamation $*^2_{A_1}A = A*_{A_1}A$.
For now on, consider $k\geq 2$.
Hence, the presentation of $*^k_{A_1}$ is $\langle S\cup\{a_n,a_{n+1},\ldots,a_{n+k-2}\}\mid R\rangle$ with
$$R = R_{A_1}\cup \{(a_ia_j)_{p_{i,j}}=(a_ia_j)_{p_{i,j}}\mid i\in P \wedge j\in Q\wedge p_{i,j}=p_{1,j}\neq\infty\},$$
where $R_{A_1}$ are the relations of the subgroup $A_1$, $P=\{1,n,n+1,\ldots,n+k-2\}$ and $Q=\{2,3,\ldots,n-1\}$. As $A_1$ is the amalgamated subgroup, we have that all $p_{i,j}=\infty$, for $i,j\in P$.
Due to space limitations, we will not draw the graph $\GG$ of the presentation of $*^k_{A_1}A$, but we direct the readers' attention to the fact that, as before, this graph will have a symmetry on the vertices $\{a_1,a_n,\ldots,a_{n+k-2}\}$. In particular, we will consider a cyclic permutation of these vertices.
Let $B=\langle \tau\mid \tau^k=1_B\rangle\leq \Aut(\GG)$ and consider $\varphi: \Aut(\GG)\to \Aut(*^k_{A_1}A)$ such that $\varphi(\tau)(a_1)=a_n$, $\varphi(\tau)(a_{n+k-2})=a_1$ and, for $0\leq j\leq k-3$, $\varphi(\tau)(a_{n+j})=a_{n+j+1}$. As the previous cases, $\alpha$ is $(\beta,\varphi)$-admissible, with $K=\{\{1,n,n+1,\ldots, n+k-2\},\{2\},\{3\},\ldots,\{n-1\}\}$

\begin{thm}\label{thm:Amalg_twistArtinGroups_ShephardGroups} For $k\geq 2$, let $*^k_{A_1}A$ be the free amalgamated product of $k$ copies of the Artin-Tits group $A$ where each amalgamation is performed over the subgroup $A_1=\langle a_j\mid I\setminus\{1\}\rangle$. Let $\GG$ be the defining graph of $*^k_{A_1}A$ and $B=\langle\tau \mid \tau^k=1_B\rangle$. Then $G=A\rtimes_\varphi B$ is isomorphic to the Shephard group with presentation given by the following graph:
   \begin{center}
    \begin{tikzpicture}[scale = 0.5]
    
   \filldraw[black] (-2,0) circle (2pt)  node[anchor=north]{$\tau$};
   \filldraw[black] (2,0) circle (2pt)  node[anchor=north]{$a_1$};
   \filldraw[black] (7,0.8) circle (0pt)  node[anchor=north]{$\vdots$};
    \filldraw[black] (7,2) circle (2pt)  node[anchor=south]{$a_3$};
         \filldraw[black] (7,-2) circle (2pt)  node[anchor=south]{$a_{n-2}$};
         \filldraw[black] (7,4) circle (2pt)  node[anchor=south]{$a_2$};
         \filldraw[black] (7,-4) circle (2pt)  node[anchor=south]{$a_{n-1}$};
         \draw [-] (-2,0) to [out=135,in=45,style={min distance=15mm}] node[above] {k} (-2,0);
         \draw [-] (2,0) to [out=135,in=45,style={min distance=15mm}] node[above] {\tiny$\infty$} (2,0);
         \draw [-] (7,2) to [out=-45,in=45,style={min distance=15mm}] node[right] {\tiny$\infty$} (7,2);
         \draw [-] (7,-2) to [out=-65,in=25,style={min distance=15mm}] node[right] {\tiny$\infty$} (7,-2);
         \draw [-] (7,4) to [out=-65,in=25,style={min distance=15mm}] node[right] {\tiny$\infty$} (7,4);
         \draw [-] (7,-4) to [out=-65,in=25,style={min distance=15mm}] node[right] {\tiny$\infty$} (7,-4);
    \draw (2,0) -- (-2,0)node [midway,above] (TextNode) {$\infty$};
    \draw (2,0) -- (7,2)node [midway] (TextNode) {$p_{1,3}$};
    \draw (2,0) -- (7,4)node [midway] (TextNode) {$p_{1,2}$};
    \draw (2,0) -- (7,-2)node[midway] (TextNode) {$p_{1,n-2}$};
    \draw (2,0) -- (7,-4)node[midway] (TextNode) {$p_{1,n-1}$};
    \draw (7.3,0) ellipse (2.2cm and 5cm);

    \end{tikzpicture}
    \end{center}
\end{thm}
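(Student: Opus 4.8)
The plan is to follow the template of the proofs of Proposition~\ref{prop:Artin_to_MikadoBraid} and Theorem~\ref{thm:twistArtinGroups_ShephardGroups}, working entirely at the level of group presentations and using Tietze transformations to eliminate the ``extra'' generators $a_n, a_{n+1}, \dots, a_{n+k-2}$ coming from the $k-1$ additional copies of $A$ in $*^k_{A_1}A$. Write $H = *^k_{A_1}A = \langle S \cup \{a_n, \dots, a_{n+k-2}\} \mid R\rangle$ as in the paragraph preceding the statement, and $B = \langle \tau \mid \tau^k = 1_B\rangle$. Then $G = H \rtimes_\varphi B$ admits the presentation obtained by adjoining $\tau$ together with the relations $\tau^k = 1_B$ and $\tau a_i \tau^{-1} = \varphi(\tau)(a_i)$ for every generator $a_i$ of $H$. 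The defining formulas for $\varphi(\tau)$ give $a_n = \tau a_1 \tau^{-1}$ and $a_{n+j} = \tau a_{n+j-1}\tau^{-1}$ for $j \ge 1$, so that $a_{n+j} = \tau^{\,j+1} a_1 \tau^{-(j+1)}$ for all $0 \le j \le k-2$; moreover the remaining conjugation relation $\tau a_{n+k-2}\tau^{-1} = a_1$ becomes $\tau^{k} a_1 \tau^{-k} = a_1$, which is a consequence of $\tau^k = 1_B$ and hence may be discarded.

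Next I would apply Tietze transformations to remove the generators $a_n, \dots, a_{n+k-2}$, substituting $a_{n+j} = \tau^{\,j+1} a_1 \tau^{-(j+1)}$ everywhere in $R$. The relations $R_{A_1}$ of the amalgamated subgroup $A_1 = \langle a_2, \dots, a_{n-1}\rangle$ involve only generators on which $\tau$ acts trivially, so they are untouched; the relations $(a_1 a_j)_{p_{1,j}} = (a_j a_1)_{p_{1,j}}$ for $j \in Q$ with $p_{1,j} \neq \infty$ survive unchanged; and a relation $(a_{n+j}a_i)_{p_{1,i}} = (a_i a_{n+j})_{p_{1,i}}$ coming from the $(j{+}1)$-th copy, after conjugating by $\tau^{-(j+1)}$ and using $\tau a_i \tau^{-1} = a_i$ for $i \in Q$, becomes $(a_1 a_i)_{p_{1,i}} = (a_i a_1)_{p_{1,i}}$, which is already present and hence redundant. (This uses the short computation of moving a power of $\tau$ across an alternating word of even or odd length, exactly as in the corresponding step of the proof of Theorem~\ref{thm:twistArtinGroups_ShephardGroups}.) Crucially, $a_1$ and the generators $a_n, \dots, a_{n+k-2}$ lie in pairwise distinct free factors of the amalgamated product over $A_1$, so $R$ contains \emph{no} relation among them; consequently, after the substitution, there is no relation linking $\tau$ and $a_1$ other than $\tau^k = 1_B$. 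What remains is exactly $R_{A_1}$, the braid/commutation relations between $a_1$ and the $a_j$ ($j \in Q$), the commutations $\tau a_j = a_j \tau$ for $j \in Q$, and $\tau^k = 1_B$ --- that is, the Shephard presentation encoded by the graph in the statement, in which $\tau$ carries a loop labelled $k$ and the edge joining $\tau$ to $a_1$ is labelled $\infty$.

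I expect the main obstacle to be essentially bookkeeping: checking, uniformly in the parity of each $p_{1,i}$ and in $j$, that the relations contributed by the non-first copies collapse to relations already present after conjugating by the appropriate power of $\tau$, and confirming that the absence of any relation between $a_1$ and $a_n, \dots, a_{n+k-2}$ in $*^k_{A_1}A$ --- a consequence of the normal form for free products with amalgamation (cf.\ the reduced sequences used in Section~\ref{sec:Amalgams+HNN}) together with $p_{i,j} = \infty$ for $i,j \in P$ --- genuinely forces the $\tau$--$a_1$ edge to be labelled $\infty$ rather than some finite integer. Once these points are settled, the claimed isomorphism follows by reading off the reduced presentation; as in the earlier results, one also notes that the maximal parabolic subgroups are unaffected by this change of generating set, so the associated coset incidence system is again the twisting geometry $\TT(\alpha,\beta)$.
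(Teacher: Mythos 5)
Your proposal is correct and follows essentially the same route as the paper's proof: express each $a_{n+j}$ as a conjugate of $a_1$ by $\tau^{j+1}$, check that the relations contributed by the extra copies collapse (via conjugation by the appropriate power of $\tau$, using that $\tau$ commutes with the $a_i$ for $i\in Q$) to relations already present, and eliminate the redundant generators by Tietze transformations to read off the graph presentation. The only differences are cosmetic — you write conjugation as $\tau a\tau^{-1}$ where the paper writes $\tau^{-(j+1)}a_1\tau^{j+1}$, and you make explicit two points the paper leaves implicit, namely that the relation $\tau^k a_1\tau^{-k}=a_1$ is discarded as a consequence of $\tau^k=1$ and that the absence of relations among the generators indexed by $P$ is what yields the $\infty$ label on the $\tau$--$a_1$ edge.
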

\begin{proof}
    The proof follows similar arguments as the proof of Theorem~\ref{thm:twistArtinGroups_ShephardGroups}. We will just focus on the main difference, which is the fact that now $\tau$ is not an involution. Our goal is to rewrite all generators in $\{a_n,\ldots,a_{n+k-2}\}$ in terms of $a_1$ and $\tau$, and achieve that all relators involving these generators can be obtained from relations independent of $\{a_n,\ldots,a_{n+k-2}\}$, by conjugation by $\tau^{j+1}$, with $0\leq j\leq k-2$.
    First, notice that $R_{A_1}$ are unchanged by the action of $\tau$. Consider a relation $(a_1a_i)_{p_{1,i}}=(a_ia_1)_{p_{1,i}}$ of $G$, with $i\in Q$. Hence, we have a relation $(a_{n+j}a_i)_{p_{n+j,i}}=(a_ia_{n+j})_{p_{n+j,i}}$, with $0\leq j\leq k-2$ and $p_{1,i}=p_{n+j,i}$. We have that $a_{n+j}=\varphi(\tau^{j+1})(a_1)=\tau^{-(j+1)}a_1\tau^{j+1}$.
    We can then rewrite $(a_{n+j}a_i)_{p_{1,i}}$ as $(\tau^{-(j+1)}a_1\tau^{j+1}a_i)^{p_{1,i}/2} $, if $p_{1,i}$ is even, otherwise $(\tau^{-(j+1)}a_1\tau^{j+1}a_i)^{(p_{1,i}-1)/2}\tau^{-(j+1)}a_1\tau^{j+1}$.
    Since, for $i\in Q$, all $a_i$ commute with $\tau$ ($\tau^{-1}a_i\tau = a_i$), we have either 
    $$(\tau^{-(j+1)}a_1\tau^{j+1}a_i)^{p_{1,i}/2} =\tau^{-(j+1)}(a_1a_i)^{p_{1,i}/2} \tau^{j+1}=\tau^{-(j+1)}(a_1a_i)_{p_{1,i}} \tau^{j+1}$$
    or
    $$(\tau^{-(j+1)}a_1\tau^{j+1}a_i)^{(p_{1,i}-1)/2}\tau^{-(j+1)}a_1\tau^{j+1}= \tau^{-(j+1)}(a_1a_i)^{(p_{1,i}-1)/2}\tau^{j+1}\tau^{-(j+1)}a_1\tau^{j+1}$$ $$=\tau^{-(j+1)}(a_1a_i)^{(p_{1,i}-1)/2}a_1\tau^{j+1} = \tau^{-(j+1)}(a_1a_i)_{p_{1,i}} \tau^{j+1}.$$
    Hence, in either cases, $(a_{n+j}a_i)_{p_{1,i}}=\tau^{-(j+1)}(a_1a_i)_{p_{1,i}} \tau^{j+1}$. Similarly, we have that $(a_ia_{n+j})_{p_{1,i}}=\tau^{-(j+1)}(a_ia_1)_{p_{1,i}} \tau^{j+1}$. Then, $(a_{n+j}a_{i})_{p_{1,i}}=(a_ia_{n+j})_{p_{1,i}}\Leftrightarrow\tau^{-(j+1)}(a_1a_i)_{p_{1,i}} \tau^{j+1}=\tau^{-(j+1)}(a_ia_1)_{p_{1,i}} \tau^{j+1}$, i.e. we can obtain these relation from the relations involving $a_1$.
    
    Hence, we can represent $G$ as the group given by the graph presented in the statement of the theorem.
\end{proof}

\begin{thm}
    Let $G$ be the Shephard group with presentation given by the following graph, where $p_{i,j}\in\{2,3,\ldots\}\cup\{\infty\}$.
   \begin{center}
    \begin{tikzpicture}[scale = 0.5]
    
   \filldraw[black] (-2,0) circle (2pt)  node[anchor=north]{$a_0$};
   \filldraw[black] (2,0) circle (2pt)  node[anchor=north]{$a_1$};
   \filldraw[black] (7,0.8) circle (0pt)  node[anchor=north]{$\vdots$};
    \filldraw[black] (7,2) circle (2pt)  node[anchor=south]{$a_3$};
         \filldraw[black] (7,-2) circle (2pt)  node[anchor=south]{$a_{n-2}$};
         \filldraw[black] (7,4) circle (2pt)  node[anchor=south]{$a_2$};
         \filldraw[black] (7,-4) circle (2pt)  node[anchor=south]{$a_{n-1}$};
         \draw [-] (-2,0) to [out=135,in=45,style={min distance=15mm}] node[above] {k} (-2,0);
         \draw [-] (2,0) to [out=135,in=45,style={min distance=15mm}] node[above] {\tiny$\infty$} (2,0);
         \draw [-] (7,2) to [out=-45,in=45,style={min distance=15mm}] node[right] {\tiny$\infty$} (7,2);
         \draw [-] (7,-2) to [out=-65,in=25,style={min distance=15mm}] node[right] {\tiny$\infty$} (7,-2);
         \draw [-] (7,4) to [out=-65,in=25,style={min distance=15mm}] node[right] {\tiny$\infty$} (7,4);
         \draw [-] (7,-4) to [out=-65,in=25,style={min distance=15mm}] node[right] {\tiny$\infty$} (7,-4);
    \draw (2,0) -- (-2,0)node [midway,above] (TextNode) {$\infty$};
    \draw (2,0) -- (7,2)node [midway] (TextNode) {$p_{1,3}$};
    \draw (2,0) -- (7,4)node [midway] (TextNode) {$p_{1,2}$};
    \draw (2,0) -- (7,-2)node[midway] (TextNode) {$p_{1,n-2}$};
    \draw (2,0) -- (7,-4)node[midway] (TextNode) {$p_{1,n-1}$};
    \draw (7.3,0) ellipse (2.2cm and 5cm);

    \end{tikzpicture}
    \end{center}
    For $S=\{ a_0, a_1,\ldots,a_{n-1}\}$ and $J,K,L\subseteq S$, we have
    \begin{enumerate}
        \item $\langle J\rangle\cap \langle K\rangle = \langle J\cap K\rangle$;
        \item $\langle J\rangle\langle K\rangle\cap \langle J\rangle\langle L\rangle = \langle J\rangle(\langle K\rangle\cap \langle L\rangle)$.
        \item The simplicial complex $\mathcal{K}(G,(G_i)_{i\in I})$ is a flag-complex, with $I=\{0,\ldots,n-1\}$, where $G_i=\langle a_j\mid j\in I\setminus\{i\}\rangle$.
    \end{enumerate}
\end{thm}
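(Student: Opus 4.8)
The plan is to identify the standard coset incidence system $(G,(G_i)_{i\in I})$, where $I=\{0,\dots,n-1\}$ and $G_i=\langle a_l\mid l\in I\setminus\{i\}\rangle$, with the twisting geometry $\TT(\alpha,\beta)$ constructed just before and in Theorem~\ref{thm:Amalg_twistArtinGroups_ShephardGroups}, and then to read off (1)--(3) from the group-theoretic criteria of Section~\ref{sec:prelims}, as in Corollary~\ref{coro:FreeProductArtinCoxeter} and Proposition~\ref{prop:Mikado_intersect_flagchmabercomplex}. Here $\alpha=(*^k_{A_1}A,(A_i)_{i\in I_\alpha})$ is the standard coset incidence system of the $k$-fold amalgam $*^k_{A_1}A$ (so $I_\alpha=\{1,\dots,n+k-2\}$ and $A_i=\langle a_l\mid l\in I_\alpha\setminus\{i\}\rangle$), $\beta=(\langle\tau\rangle,(\langle 1_B\rangle))$ is the rank-one geometry on the cyclic group of order $k$, and $\varphi$ acts by cyclically permuting the generators $a_1,a_n,a_{n+1},\dots,a_{n+k-2}$.

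The heart of the argument is to show that $\alpha$ is a flag-transitive, residually connected geometry whose parabolic subgroups behave well, i.e.\ $A_J=\langle a_l\mid l\in I_\alpha\setminus J\rangle$ for every $J\subseteq I_\alpha$ (equivalently, by Theorem~\ref{thm:CosetRC} and Lemma~\ref{lem:RCequivs}, that $\alpha$ is residually connected, has trivial Borel subgroup, and has minimal parabolics $A^i=\langle a_i\rangle$). I would obtain this by exhibiting $\alpha$ as an iterated free amalgamated product of coset incidence systems: $\alpha$ is the amalgam, along the shared type set $L=\{2,\dots,n-1\}$ and over the common parabolic $A_1=\langle a_2,\dots,a_{n-1}\rangle$, of $k$ copies of the standard coset geometry of the Artin-Tits group $A$, the copies sharing the generators $a_2,\dots,a_{n-1}$ but carrying distinct generators $a_1,a_n,\dots,a_{n+k-2}$ otherwise. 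At each stage the compatibility on $L$ is immediate from Theorem~\ref{thm:ArtinProperties}(b), since the parabolics to be identified are literally $\langle a_l\mid l\in L\setminus J\rangle$ on both sides, and the parabolic being amalgamated stays equal to $A_1$; since each Artin-Tits factor is flag-transitive, residually connected and thick by Theorem~\ref{thm:Artin_Coxeter_CG}, repeated use of Theorem~\ref{thm:Amalg_a)FT_b)Geo_c)_RC_b)_FIRM} shows $\alpha$ is flag-transitive and residually connected, and the parabolic formula (including the triviality of the Borel subgroup $A_{I_\alpha}$) follows from the explicit description of parabolics of an amalgam given in Section~\ref{sec:Amalgams+HNN}.

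Granting this, $\beta$ is trivially flag-transitive and residually connected (it has rank one), and $\alpha$ is $(\beta,\varphi)$-admissible by the remark preceding Theorem~\ref{thm:Amalg_twistArtinGroups_ShephardGroups}; hence Theorem~\ref{thm:Twisting_FT_Geo_RC_FRM} gives that $G=(*^k_{A_1}A)\rtimes_\varphi\langle\tau\rangle$ acts flag-transitively on $\TT(\alpha,\beta)$ and that $\TT(\alpha,\beta)$ is a residually connected coset geometry. Combining Lemma~\ref{lem:TwistG_J}, the parabolic formula for $\alpha$, and the presentation of $G$ produced by Theorem~\ref{thm:Amalg_twistArtinGroups_ShephardGroups}, I would compute the maximal parabolics of $\TT(\alpha,\beta)$ exactly as was done right after Theorem~\ref{thm:twistArtinGroups_ShephardGroups}: one gets $G_0=A_{\{n,\dots,n+k-2\}}=\langle a_1,\dots,a_{n-1}\rangle$, $G_1=A_{\{1,n,\dots,n+k-2\}}\rtimes\langle\tau\rangle=\langle a_0,a_2,\dots,a_{n-1}\rangle$, and $G_j=A_{\{j\}}\rtimes\langle\tau\rangle=\langle a_l\mid l\in I\setminus\{j\}\rangle$ for $2\le j\le n-1$, so that $\TT(\alpha,\beta)\cong(G,(G_i)_{i\in I})$; the same computation, using that $\alpha$ has trivial Borel and minimal parabolics $A^i=\langle a_i\rangle$, yields $G^i=\langle a_i\rangle$ for every $i\in I$.

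It then remains only to translate. For $K\subseteq S$ the subgroup $\langle K\rangle$ equals the parabolic $G^{K'}$ for the corresponding type set $K'\subseteq I$, so (1) is Lemma~\ref{lem:RCequivs}(\ref{lem:RCequivs:itm:Intersection}); (2) follows from Proposition~\ref{prop:FTequivs}(\ref{prop:FTequivs:itm:FTIntersectionOfProducts}) applied to the parabolics $G^{K'}=G_{I\setminus K'}$ (legitimate by Lemma~\ref{lem:RCequivs}(\ref{lem:RCequivs:itm:UpBottom})) together with (1); and (3) is Proposition~\ref{prop:FComp:equiv:FT}, since $G$ acts flag-transitively on $(G,(G_i)_{i\in I})$. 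I expect the second paragraph to be the only genuine obstacle: establishing that the $k$-fold Artin-Tits amalgam really is a flag-transitive, residually connected coset geometry with the expected parabolic lattice requires doing the iterated-amalgamation bookkeeping carefully; everything afterwards is a transcription of the argument already used for $B_n(2,\infty)$.
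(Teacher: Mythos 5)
Your proposal is correct, and its overall architecture coincides with the paper's: identify $(G,(G_i)_{i\in I})$ with the twisting geometry $\TT(\alpha,\beta)$ of Theorem~\ref{thm:Amalg_twistArtinGroups_ShephardGroups}, invoke Theorem~\ref{thm:Twisting_FT_Geo_RC_FRM} for flag-transitivity and residual connectedness, and translate (1)--(3) via Lemma~\ref{lem:RCequivs}, Proposition~\ref{prop:FTequivs} and Proposition~\ref{prop:FComp:equiv:FT}, exactly as in Proposition~\ref{prop:Mikado_intersect_flagchmabercomplex} (the paper's proof is literally ``identical to'' that one). The one place where you genuinely diverge is the step you correctly flag as the only real obstacle: establishing that $\alpha=(*^k_{A_1}A,(A_i)_{i\in I_\alpha})$ is a flag-transitive, residually connected geometry with $A_J=\langle a_l\mid l\in I_\alpha\setminus J\rangle$. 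You obtain this by iterating the paper's free-amalgamated-product construction on coset geometries (Theorem~\ref{thm:Amalg_a)FT_b)Geo_c)_RC_b)_FIRM}), checking compatibility on $L=\{2,\dots,n-1\}$ at each stage and using that the amalgamating parabolic stays equal to $A_1$; this is valid and entirely self-contained within the paper's machinery. The paper instead relies on the observation that the presentation of $*^k_{A_1}A$ displayed before Theorem~\ref{thm:Amalg_twistArtinGroups_ShephardGroups} is itself an Artin--Tits presentation (all generators of infinite order, all relations braid relations), so Theorem~\ref{thm:Artin_Coxeter_CG} and Theorem~\ref{thm:ArtinProperties} apply directly to the amalgam. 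Your route is longer but avoids re-invoking the external van der Lek/Godelle--Paris results for the larger Artin--Tits group and instead derives the needed parabolic lattice from the amalgamation theorems proved in Section~\ref{sec:Amalgams+HNN}; the paper's route is shorter but leaves this identification implicit. Your computation of the maximal and minimal parabolics of $\TT(\alpha,\beta)$ (in particular $G_0=\langle a_1,\dots,a_{n-1}\rangle$, $G_L=\langle a_0,a_2,\dots,a_{n-1}\rangle$, and $G^i=\langle a_i\rangle$ for all $i\in I$) matches what Lemma~\ref{lem:TwistG_J} gives, so the final translation step is sound.
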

\begin{proof}
    The proof is identical to the one of Proposition~\ref{prop:Mikado_intersect_flagchmabercomplex}
\end{proof}

\subsection{Graph of Coset Incidence Systems}\label{subsec:graphofcosetgeom}

We conclude by highlighting a promising direction for the operations defined in this paper.
As previously stated, amalgamations and HNN-extensions are the two building blocks of Bass-Serre theory. Since we now have analogous constructions for amalgams and HNN-extensions in the settings of coset incidence systems, it seems natural to wonder whether more concepts of Bass-Serre theory can be applied to these structures. In this section, we propose a possible definition of \textit{graph of coset incidence systems}. This is a hands-on, and perhaps naive, approach to the problem. We believe a more detailed investigation of the interplay between Bass-Serre theory and coset incidence systems is certainly worthwhile and necessary.

We first briefly recall Serre's formalism for graphs, which is slightly different from the one we have used elsewhere. In this section, a \textit{graph} $\mathcal{G}$ consists of a vertex set $V$, an edge set $E$, a pairing of each edge $e$ with a reverse edge $\bar{e}$ such that $e \neq \bar{e}$ and finally an origin map $o \colon E \to V$. The vertex $o(e)$ is called the origin vertex for $e$ and the vertex $o(\bar{e})$ is called the terminal vertex of $e$. We now define a version of graph of groups for coset incidence systems. A \textit{graph of coset incidence systems} $\mathbf{G}$ consists of the following data:

\begin{enumerate}
    \item A graph $\mathcal{G}$,
    \item An assignment of a \textit{vertex coset incidence system} $\alpha_v = (G^v, (G^v_i)_{i \in I_{\alpha_v}})$ to each vertex $v \in \mathcal{G}$,
    \item An assignment of an \textit{edge coset incidence system} $\alpha_e = (G^e, (G^e_i)_{i \in I_{\alpha_e}})$ to every edge $e \in \mathcal{G}$ such that $\alpha_e = \alpha_{\bar{e}}$,
    \item \textit{Boundary monomorphisms} $\varphi_e \colon G^e \to G^{o(e)}$ for all edge $e \in \mathcal{G}$ such that each $\varphi_e$ is an injective group homomorphism such that $\varphi_e \circ \varphi_{\bar{e}}^{-1}$ is an admissible automorphism between $\alpha_{o(e)}$ and $\alpha_{o(\bar{e})}$.
\end{enumerate}

Notice that in Section \ref{sec:Amalgams+HNN}, in the case of amalgamations, for simplicity of notation, we got rid of the defining isomorphism $\varphi$ by considering both the groups $A$ and $B$ as subgroups of $G = A *_\varphi B$. Therefore, instead of a notion of admissibility for $\varphi$, we defined a notion of compatibility of $\alpha$ and $\beta$. Equivalently, we could just as well have said that $\varphi: C \to D$ is admissible if it induces a lattice automorphism between parabolic subgroups of $\alpha$ inside of $C=A_{I_\alpha\setminus J}$ and parabolic subgroups of $\beta$ inside of $D=B_{I_\beta\setminus K}$, where $A_{I_\alpha\setminus J}\cong B_{I_\beta\setminus K}$. This is the notion we use in the definition of graph of coset incidence systems.
Moreover, the definition of the equivalence relation can be translated into this context. In particular, for $i_1\in J$ and $i_2\in K$, we have that $i_1\sim_\varphi i_2$ if and only if $\varphi(A_{I_\alpha\setminus J \cup \{i_1\}})=B_{I_\beta\setminus K \cup \{i_2\}}$ or  $\varphi^{-1}(B_{I_\beta\setminus K \cup \{i_2\}}) = A_{I_\alpha\setminus J \cup \{i_1\}}$. It can be easily checked that the type set resulting from this equivalence relation, $\widetilde{I}$, gives exactly the same parabolic subgroups as the ones when we consider $A$ and $B$ as subgroups of $G$, sharing a type subset. In particular, in that context, for $L=I_\alpha\cap I_\beta$, the above equivalence relation simply gives a set of singletons for each type.

A key feature of graphs of groups is that they have a fundamental group, which is uniquely defined up to natural isomorphism. Loosely speaking, one first chooses an orientation and a spanning tree $T$ for $\mathcal{G}$ and then considers the group obtained by amalgamating all the vertex groups along the edge groups of the spanning tree. It then remains to perform HNN-extensions of that group using the edge groups of the edges not in $T$, respecting the chosen orientation. The group obtained does not depend, up to isomorphism, on the choice of spanning tree $T$. Does the same hold for graphs of coset incidence systems? Can we define a fundamental geometry of a graph of coset incidence systems? We leave this question open, but we present a non-trivial example in which everything works out as desired.

\begin{example}\label{example:graphcosetgeom}
   Let $\mathcal{G}$ be the cyclic graph with three vertices, where a choice of orientation is indicated by the directed black arrows. 

$$\xymatrix@-0.8pc{&& *{\bullet} && \\
\\
*{\bullet}\ar@/^/@{->}[rruu]^{\alpha_{(A,B)}=\alpha_{(B,A)}}^(0.99){\alpha_B} \ar@{<.}[rruu]
\ar@/_/@{<-}[rrrr]_{\alpha_{(A,C)}=\alpha_{(C,A)}}_(0.01){\alpha_A}_(0.99){\alpha_C} \ar@{.>}[rrrr]
&& &&*{\bullet}\ar@/_/@{->}[lluu]_{\alpha_{(B,C)}=\alpha_{(C,B)}} \ar@{<.}[lluu] \\
}$$
The three vertex coset incidence systems are $\alpha_A = (A,(A_i)_{i\in I_A})$, $\alpha_B = (B,(B_i)_{i\in I_\beta})$ and $\alpha_C = (C,(C_i)_{i\in I_\gamma})$, where the vertex groups $A=\langle a_1,a_2\rangle$, $B=\langle b_3,b_4\rangle$, $C=\langle  c_5,c_6 \rangle$ are free groups. The type sets are $I_A=\{1,2\}$, $I_B=\{3,4\}$ and $I_C=\{5,6\}$, respectively, and the maximal parabolical subgroups $A_i$ (resp. $B_i$ or $C_i$) are $\langle a_j\mid I_A\setminus\{i\}\rangle$ (resp. $\langle b_j\mid I_B\setminus\{i\}\rangle$ or $\langle c_j\mid I_C\setminus\{i\}\rangle$).
The groups $A$, $B$ and $C$ can be seen as Artin-Tits groups with no braid relations. Therefore, by Theorem~\ref{thm:ArtinProperties}, we have that for $J\subseteq I_A$, $A_J=\langle a_j \mid j\in I_A\setminus J\rangle$, and equivalently for the parabolics of $\alpha_B$ and $\alpha_C$. Also, by Theorem \ref{thm:Artin_Coxeter_CG}, the three incidence systems $\alpha_A, \alpha_B$ and $\alpha_C$ are residually connected, flag-transitive coset geometries.

Let $e=(i,j)$ be an edge of $\GG$, with $o(e)=i$ and $o(\bar{e})=j$. Recall that, by definition, we have that $\alpha_{(i,j)}=\alpha_{(j,i)}$.
In this example, we will have that the edge groups $D=\langle d_{7}\rangle$, $E=\langle e_{8}, e_{9}\rangle$ and $F=\langle f_{10}\rangle$ are free groups and the edge coset incidence systems are
$\alpha_{(A,B)}=\alpha_{(B,A)}= (D,(\langle \{1_D\}\rangle))$, $\alpha_{(A,C)}=\alpha_{(C,A)}=(E,(\langle e_{9}\rangle,\langle e_{8}\rangle))$ and $\alpha_{(B,C)}=\alpha_{(C,B)}=(F ,(\langle \{1_F\}\rangle))$.
Finally, the monomorphisms between edges and vertices are as follows:
\begin{itemize}
    \item $\varphi_{(A,B)}:D\to A$, where $\varphi_{(A,B)}(d_{7})=a_1$; and $\varphi_{(B,A)}:D\to B$ where $\varphi_{(B,A)}(d_{7})=b_4$;
    \item $\varphi_{(A,C)}:E\to A$, where $\varphi_{(A,C)}(e_8)=a_1$ and $\varphi_{(A,C)}(e_9)=a_2$; and $\varphi_{(C,A)}:E\to C$ where $\varphi_{(C,A)}(e_8)=c_{6}$ and $\varphi_{(C,A)}(e_9)=c_{5}$;
    \item $\varphi_{(B,C)}:F\to B$, where $\varphi_{(B,C)}(f_{10})=b_3$; and $\varphi_{(C,B)}:F\to C$ where $\varphi_{(C,B)}(f_{10})=c_{6}$.
\end{itemize}
Notice that, in each of the cases above, $\varphi_{(i,j)}\circ\varphi_{(j,i)}^{-1}$ is an admissible isomorphism between a parabolic subgroup of $\alpha_i$ and a parabolic subgroup of $\alpha_j$.
For example, consider the edge $(B,C)$ and set $\varphi_{(B,C)}\circ\varphi_{(C,B)}^{-1}=:\gamma$. We have that
$\gamma(c_{6})=b_3$. 
Notice that this means that $B_{\{4\}}\cong C_{\{5\}}$.
Moreover, for any $J\subseteq I_C\setminus\{5\} = \{6\}$, there exists a $K\subseteq I_B\setminus \{4\}=\{3\}$ such that 
$\gamma(C_{\{5\}\cup J})= B_{\{4\}\cup K}$.
Therefore, $\gamma$ induces an isomorphism between the lattice of parabolic subgroups of $C_{\{5\}}$ and the lattice of parabolic subgroups of $B_{\{4\}}$.

We now would like to compute the fundamental coset geometry of this graph of coset geometries using the three different possible spanning trees and show that they are isomorphic. Consider the three distinct spanning trees of $\GG$. These are $T_1$, consisting of the edges $(A,B)$ and $(C,B)$, $T_2$, consisting of the edges $(A,B)$ and $(C,A)$, and $T_3$, consisting of the edges $(C,B)$ and $(C,A)$.
Let $S_A=\{a_1,a_2\}$, $S_B=\{b_3,b_4\}$ and $S_C=\{c_5,c_6\}$.

\textbf{Using the spanning tree $T_1$:} We begin by amalgamating along the coset geometries on $T_1$.
The admissible isomorphisms are $\gamma_{(A,B)}=\varphi_{(B,A)}\circ\varphi_{(A,B)}^{-1}$ and $\gamma_{(C,B)}=\varphi_{(B,C)}\circ\varphi_{(C,B)}^{-1}$. Hence, have that 
$\gamma_{(A,B)}(a_1)=b_4$ and $\gamma_{(C,B)}(c_{6})=b_3$.
Notice that, these isomorphisms create equivalence classes between types. We thus have that $i\sim i$, for $i\in I_A\cup I_B\cup I_C$, and $1\sim 4$ and $3\sim 6$.
Hence, the type set $I_{T_1}$ of the geometry obtained by amalgamating along $T_1$ is $I_{T_1}=\{\{1,4\},\{2\},\{3,6\},\{5\}\}$.
We obtain the coset incidence geometry $\Gamma_{T_1}=(G,(G_i)_{i\in I_{T_1}})$, with $G=\langle S_A\cup S_B\cup S_C \mid  a_1 = b_4, b_3=c_{6}\rangle$ and, for $i\in I_{T_1}$, $G_i=\langle A_{i\cap I_A}, B_{i\cap I_B}, C_{i\cap I_C}\rangle$.
It remains to perform the HNN-extension of $\Gamma_{T_1}$ on the remaining edge of $\mathcal{G}$, using the isomorphism $\gamma_{(C,A)}=\varphi_{(A,C)}\circ\varphi_{(C,A)}^{-1}$. Notice that $\gamma_{(C,A)}$ establishes an isomorphism from $C$ to $A$. Let $\widehat{I_{T_1}}=I_A\cup I_B\cup I_C$ and notice that $A_{I_A}=\langle 1_A\rangle$, $B_{I_B}=\langle 1_B\rangle$ and $C_{I_C}=\langle 1_C\rangle$.
Then $G_{\widehat{I_{T_1}}\setminus I_C}=\langle A_{I_A},B_{I_B}, C\rangle = C$ and, equivalently, $G_{\widehat{I_{T_1}}\setminus I_A} = A$. 
This means that $\gamma_{(C,A)}$
corresponds to an isomorphism from $G_{\widehat{I_{T_1}}\setminus I_C}$ to $G_{\widehat{I_{T_1}}\setminus I_A}$. We can express $G_{\widehat{I_{T_1}}\setminus I_C}$ equivalently as $G_{\widehat{I_{T_1}}\setminus\{3,5\}}$, as $c_{6}=b_3$, and ultimately, $G_{\widehat{I_{T_1}}\setminus I_C}\cong G_{\widehat{I_{T_1}}\setminus\{5,3\}}\cong G_{I_{T_1}\setminus\{\{5\},\{3,6\}\}}$. Similarly, $G_{\widehat{I_{T_1}}\setminus I_A}\cong G_{I_{T_1}\setminus\{\{1,4\},\{2\}\}}$.
Let $\widetilde{I_{T_1}}$ be the set of equivalence classes induced by $\gamma_{(C,A)}$, so that $\widetilde{I_{T_1}}=\{ \{\{1,4\},\{3,6\}\}, \{\{2\},\{5\} \}\}$. 
The HNN-extension of $\Gamma_{T_1}$ is $\Gamma_{T_1}*_{\gamma_{(C,A)}}=(\overline{G},(\overline{G}_i)_{i\in \widetilde{I_{T_1}}\cup\{t\}})$, where $$\overline{G}=\langle S_A, S_B, S_C, t_G\mid  a_1 = b_4, b_3=c_{6}, t_G^{-1}c_{5}t_G=a_2, t_G^{-1}c_{6}t_G=a_1\rangle,$$ with $\overline{G}_i=\langle G_i, t_G\rangle$ for $i\in \widetilde{I_{T_1}}$ and $$\overline{G}_t=G_{(I_{T_1}\setminus\{\{5\},\{3,6\}\})\setminus(I_{T_1}\setminus\{\{1,4\},\{2\}\})}=G_{\{\{1,4\},\{2\}\}\setminus\{\{5\},\{3,6\}\}}=G_{\{\{1,4\},\{2\}\}}.$$
Now, as each $i\in \widetilde{I_{T_1}}$ is a set, we have that $\overline{G}_i = \langle G_i,t_G\rangle = \langle \cap_{j\in i} G_j,t_G\rangle =\langle G_{\widehat{i}}, t_G \rangle =  \overline{G}_{\widehat{i}}$. Hence, we can simplify $\widetilde{I_{T_1}}$ as the set $\{\{1,4,3,6\},\{2,5\}\}$.

\textbf{Using the spanning tree $T_2$:} Proceeding in a similar fashion as for $T_1$, after amalgamating along $T_2$, we obtain the coset geometry $\Gamma_{T_2}=(H,(H_i)_{i\in I_{T_2}})$, with $H=\langle S_A, S_B, S_C\mid b_4 = a_1 = c_6, a_2=c_{5}\rangle$, $H_i=\langle A_{i\cap I_A}, B_{i\cap I_B}, C_{i\cap I_C}\rangle$ and $I_{T_2}=\{\{1,4,6\},\{2,5\},\{3\}\}$
We then perform the HNN-extension on the last edge of $\GG$, using the isomorphism $\gamma_{(C,B)}=\varphi_{(B,C)}\circ\varphi_{(C,B)}^{-1}$, which maps the parabolic subgroup $H_{I_{T_2}\setminus\{\{1,4,6\}\}}$ to $H_{I_{T_2}\setminus\{\{3\}\}}$.
The resulting geometry is $\Gamma_{T_2}*_{\gamma_{(C,B)}}=(\overline{H},(\overline{H}_i)_{i\in \widetilde{I_{T_2}}\cup \{t\}})$, with $\widetilde{I_{T_2}}= \{\{1,4,3,6\},\{2,5\}\}$, with $$\overline{H}=\langle S_A, S_B, S_C,t_H \mid b_4 = a_1 = c_6, a_2=c_{5},  t_H^{-1}c_{6}t_H=b_3\rangle,$$ $\overline{H}_i=\langle H_i,t\rangle$ for $i\in \widetilde{I_{T_2}}$, and $\overline{H}_t=H_{\{\{3\}\}}$.
The groups $\overline{G}$ and $\overline{H}$ are naturally isomorphic, as they are both the fundamental group of the same graph of groups. Additionally, as $\widetilde{I_{T_2}}=\widetilde{I_{T_1}}$, for each $J\subseteq \widetilde{I_{T_2}}$, we have $\overline{H}_J\cong\overline{G}_J$. 
Indeed, taking $J=\{\{1,3,4,6\}\}$, we have that $\overline{H}_J=\langle  a_2, c_5, t_H\rangle$ and $\overline{G}_J=\langle a_2, c_5, t_G\rangle$, which, as $a_2=c_5$ in both cases, both subgroups are isomorphic to $\ZZ*\ZZ$. Taking $J=\{\{2,5\}\}$ instead gives $\overline{H}_J = \langle a_1,b_3,b_4,c_6,t_H\rangle$ and $\overline{H}_J = \langle a_1,b_3,b_4,c_6,t_G\rangle$, which, by the relations of $\overline{H}$ and $\overline{G}$, can be rewritten as $\overline{H}_J = \langle c_6,t_H\rangle$  and $\overline{G}_J = \langle c_6,t_G\rangle$, where both are isomorphic also to $\ZZ*\ZZ$. As $\overline{H}_{\widetilde{I_{T_2}}}=\langle t_H\rangle \cong \langle t_G\rangle = \overline{G}_{\widetilde{I_{T_1}}}$, we have that $\overline{H}_J \cong \overline{G}_J$, for any $J\subseteq \widetilde{I_{T_2}}$. Finally, we have that $\overline{H}_t = H_3 = \langle a_1, a_2, b_4, c_5, c_6\rangle\cong \ZZ*\ZZ$ and $\overline{G}_t = G_{\{\{1,4\},\{2\}\}} = \langle b_3, c_5, c_6\rangle\cong \ZZ*\ZZ$, i.e. $\overline{H}_t\cong \overline{G}_t$. Moreover, for $J\subseteq \widetilde{I_{T_2}}$, we can equally prove that $\overline{H}_{J\cup \{t\}}\cong\overline{G}_{J\cup \{t\}}$.

Hence, the lattice of parabolic subgroups of $\Gamma_{T_1}*_{\gamma_{(C,A)}}$ is isomorphic to $\Gamma_{T_2}*_{\gamma_{(C,B)}}$, and therefore, $\Gamma_{T_1}*_{\gamma_{(C,A)}}\cong \Gamma_{T_2}*_{\gamma_{(C,B)}}$.

\textbf{Using the spanning tree $T_3$:} Lastly, by considering the spanning tree $T_3$, we get the coset geometry $\Gamma_{T_3}=(K,(K_i)_{i\in I_{T_3}})$, with $K=\langle S_A,S_B,S_C\mid a_1 = c_6=b_3, a_2=c_5 \rangle$, $K_i = \langle A_{i\cap I_A}, B_{i\cap I_B}, C_{i\cap I_C}\rangle$ and $I_{T_3}=\{\{1,3,6\},\{2,5\},\{4\}\}$.
We then perform the HNN-extension on the last edge of $\GG$, using the isomorphism $\gamma_{(A,B)}=\varphi_{(B,A)}\circ\varphi_{(A,B)}^{-1}$, which maps the parabolic subgroup $K_{I_{T_3}\setminus\{\{1,3,6\}\}}$ to $K_{I_{T_3}\setminus\{\{4\}\}}$.
The resulting geometry is $\Gamma_{T_3}*_{\gamma_{(A,B)}}=(\overline{K},(\overline{K}_i)_{i\in \widetilde{I_{T_3}}\cup \{t\}})$, with $\widetilde{I_{T_3}}= \{\{1,4,3,6\},\{2,5\}\}$, with $$\overline{K}=\langle S_A, S_B, S_C,t_K \mid b_3 = a_1 = c_6, a_2=c_{5},  t_K^{-1}a_{1}t_K=b_4\rangle,$$ $\overline{K}_i=\langle K_i,t\rangle$ for $i\in \widetilde{I_{T_3}}$, and $\overline{K}_t=K_{\{\{4\}\}}$.
Using similar arguments as before, we can prove that $\Gamma_{T_3}*_{\gamma_{(A,B)}}$ is isomorphic to the coset geometries using the other two trees.

\end{example}

\section{Discussions and Future Work}\label{sec:future}

We conclude this article by discussing possible future directions of research. We have defined several operations on coset incidence systems inspired by classical group constructions and have shown that properties such as flag-transitivity and residual connectedness are naturally preserved. In this sense, our constructions can be considered satisfactory. However, an open question remains as to whether these are the “correct” constructions corresponding to each classical group operation. For the free product with amalgamation, the semi-direct product, and the twisting operation, there are multiple reasons to believe that our constructions are indeed the most natural. 
Firstly, these operations have already been studied in the context of regular polytopes~\cite[Chapters 4 and 8]{ARP}, albeit with many restrictions. For example, the amalgamation of the groups of two regular polytopes always results in an automorphism group that does not have a linear Coxeter diagram, a key feature of the automorphism of regular polytopes. Hence, in order to recover a linear diagram, the resulting group from the amalgamation needs to be quotiented by extra relations. But this process on taking quotients is delicate and often breaks flag-transitivity or residual connectedness. This is usually described as the ``amalgamation problem" of regular polytopes. In our construction, we do not require any linearity condition, hence we are not burdened by these questions. The same can be said about the twisting operation, which is very restrictively defined in~\cite{ARP} to guarantee that the resulting structure has a linear diagram.
The approach we present here extends these constructions in a natural way by loosening the constraints. 

The situation becomes less clear for HNN-extensions. In this case, we are not aware of any prior work relating HNN-extension to incidence geometries. In fact, neither HNN-extensions nor free products (with amalgamation) typically produce groups that can be though of as automorphism groups of polytopes or other structures with linear diagrams.
In the case of HNN-extension, the choice of maximal parabolic subgroups when constructing a coset incidence system for an HNN-extension was not as immediate as in the case of free amalgamated products. In particular, the treatment of the stable letter $t$ of the HNN-extension was a point of ongoing discussion. One could, for instance, define the HNN-extension so that the maximal parabolic subgroup $G_t$ is the whole original group $A$, or alternatively, exclude the type $t$ entirely, with the type set being solely $\widetilde{I}$. Ultimately, our choice for the maximal parabolic subgroups in the HNN-extension was guided by the requirement that certain important properties be preserved. For example, whenever $\varphi$ is an automorphism of the whole group $A$, the HNN-extension $A*_\varphi$ can also be realized as a semi-direct product. It is therefore natural to expect that the coset incidence system constructed by considering $A*_\varphi$ as an HNN-extension or as a semi-direct product should be comparable. This consideration led us to believe that the stable letter $t$ should always be included as a type, consistent with our semi-direct product constructions, as discussed in Example~\ref{example:SemiDirectProduct}.

Nevertheless, defining $G_t = A$ in general leads to complications, particularly with the notion of a graph of coset geometries introduced in Section~\ref{subsec:graphofcosetgeom}. With this definition, different choices of spanning trees for the graph can result in different maximal parabolic subgroups of type $t$ for the fundamental coset geometry, even in simple cases. For instance, in Example~\ref{example:graphcosetgeom}, choosing the spanning tree $T_1$ yields $G \cong F_4$, the free group of rank $4$, whereas choosing $T_2$ gives $H \cong F_3$. If we were to set the maximal parabolic of type $t$ to be equal to the starting groups, we would get that $\overline{G}_t\ncong \overline{H}_t$, and thus we would never obtain a well defined notion of fundamental geometry of a graph of coset incidence systems. 

At a fundamental level, the question of whether the coset geometries we construct are the appropriate analogues of the algebraic operations considered depends on what is regarded as the essence of these operations. One possible approach to formalizing this would be to translate the universal properties defining amalgamations, HNN-extensions, and semi-direct products into the category of coset incidence geometries, and to verify whether our constructions satisfy these versions of 
the universal properties. We believe that such an approach could be fruitful, not only in the context of this paper, but as a general guiding principle for future research topics.

In this article, we have only touched upon Bass-Serre theory with our proposed definition of a graph of coset geometries. Further study in this direction would likely be both interesting in its own right and illuminating for the ``naturality" questions discussed above.
 
Finally, the twisting operation we have introduced has already demonstrated, in Section~\ref{sec:applications}, a wide range of applications, particularly with respect to Shephard groups. We have illustrated different uses of this operation, whether by exploiting symmetries of the defining graph of a Shephard group or by constructing a suitably labeled graph $\GG$ using the $j$-elements of a coset incidence system $\beta$. This highlights the versatility of the twisting construction, including its usefulness in studying intersection properties of parabolic subgroups in certain families of infinite Shephard groups. However, there are some limitations inherent to the current definition of this operation. In Definition~\ref{def:GammaPhiAdmiss} of $(\beta,\varphi)$-admissibility, we allow only one orbit on the type set of $\alpha$ to have size greater than one. This restriction excludes split extensions in which the automorphism group induces two or more distinct non-singleton orbits on the type set. The main obstacle to generalize our construction for multiple complex orbits lies in condition $(c)$, which imposes conditions on the intersection of the orbits. We believe that, to extend the definition of $(\beta,\varphi)$-admissibility, it would be necessary to verify the intersection property (Equation~\ref{eq:IPO}) for each non-singleton orbit. We plan to investigate this direction in future work.

\bibliographystyle{ieeetr} 
\bibliography{refs}
\end{document}